\pdfoutput=1
\documentclass[letterpaper, oneside, reqno, 10pt]{amsart}
\usepackage[margin=1in]{geometry}

\usepackage{amsmath,amssymb,mathtools}
\usepackage[foot]{amsaddr}

\usepackage{placeins}

\usepackage{amssymb} 
\usepackage{stmaryrd}
\SetSymbolFont{stmry}{bold}{U}{stmry}{m}{n}

\usepackage{graphicx}

\usepackage{setspace}
\usepackage[colorlinks=true, pdfstartview=FitV, linkcolor=blue, citecolor=Green, urlcolor=WildStrawberry, linktoc=page]{hyperref} %

\usepackage{array}
\usepackage{ragged2e}

\usepackage{bm}

\usepackage{dsfont}
\usepackage[T1]{fontenc}
\usepackage[utf8]{inputenc}

\DeclareFontFamily{OT1}{rsfs}{}
\DeclareFontShape{OT1}{rsfs}{n}{it}{<-> rsfs10}{}
\DeclareMathAlphabet{\mathscr}{OT1}{rsfs}{n}{it}
\usepackage{mathrsfs}
\usepackage{MnSymbol}

\usepackage{silence}
\usepackage[final,nopatch=footnote]{microtype} %

\usepackage{enumitem}
\usepackage[dvipsnames]{xcolor}

\usepackage[normalem]{ulem}

\usepackage{booktabs} %

\usepackage{comment}

\usepackage{braket}

\usepackage{etoolbox}
\newtoggle{focs}
\toggletrue{focs}
\newcommand{\iffocs}[2]{\iftoggle{focs}{#1}{#2}}

\definecolor{darkgreen}{rgb}{0,0.5,0}
\definecolor{darkblue}{rgb}{0,0,0.7}
\definecolor{darkred}{rgb}{0.9,0.1,0.1}

\newcommand{\js}[1]{{\color{ForestGreen}{[JS: #1]}}}

\newcommand{\holden}[1]{{\color{purple}{[HL: #1]}}}

\renewcommand{\js}[1]{}
\renewcommand{\holden}[1]{}

\newlength{\bibitemsep}
\let\oldthebibliography\thebibliography
\renewcommand\thebibliography[1]{%
  \oldthebibliography{#1}%
  \setlength{\parskip}{\bibitemsep}%
  \setlength{\itemsep}{-7pt}%
}

\newtheoremstyle{break}%
{}{}%
{\itshape}{}%
{\bfseries}{.\vphantom{$p_{p_{p_p}}$}}%
{\newline}
{\thmname{#1}\thmnumber{ #2}\thmnote{\ \,\textmd{(#3)}}}

\theoremstyle{break}

\newtheorem{proposition}{Proposition}
\newtheorem{theorem}[proposition]{Theorem}
\newtheorem{lemma}[proposition]{Lemma}
\newtheorem{corollary}[proposition]{Corollary}

\theoremstyle{remark}
\newtheorem{remark}[proposition]{Remark}

\theoremstyle{definition}
\newtheorem{definition}[proposition]{Definition}

\newtheorem{desideratum}{Desideratum}

\newtheorem*{lemma*}{Lemma}

\newcommand{\vocab}[1]{\emph{#1}}

\numberwithin{equation}{section}
\numberwithin{proposition}{section}
\numberwithin{figure}{section}
\numberwithin{table}{section}

\newcommand{\Z}{\mathbb{Z}}

\newcommand{\R}{\mathbb{R}}

\newcommand{\calN}{\mathcal N}

\renewcommand{\P}{\mathop{{}\mathbb{P}}}
\renewcommand{\Pr}{\P}
\newcommand{\Cov}{\mathop{{}\boldsymbol{\mathrm{Cov}}}}
\newcommand{\E}{\mathop{{}\mathbb{E}}}

\newcommand{\hQ}{\widehat{Q}}
\renewcommand{\hm}{\widehat{m}}
\newcommand{\hx}{\widehat{x}}
\newcommand{\hy}{\widehat{y}}
\newcommand{\hw}{\widehat{w}}

\newcommand{\hp}{\widehat{p}}
\newcommand{\hf}{\widehat{f}}
\newcommand{\tx}{\widetilde{x}}
\newcommand{\ty}{\widetilde{y}}
\newcommand{\tw}{\widetilde{w}}

\renewcommand{\le}{\leqslant}
\renewcommand{\ge}{\geqslant}
\renewcommand{\leq}{\leqslant}
\renewcommand{\geq}{\geqslant}

\renewcommand{\subset}{\subseteq}
\renewcommand{\bar}{\overline}

\renewcommand{\tilde}{\widetilde}
\newcommand{\td}{\widetilde}

\renewcommand{\hat}{\widehat}

\newcommand{\subeq}{\subseteq}

\newcommand{\be}{\beta}
\newcommand{\ga}{\gamma}
\newcommand{\de}{\delta}

\newcommand{\lm}{\lambda}

\newcommand{\ph}{\varphi}
\newcommand{\De}{\Delta}
\newcommand{\ep}{\varepsilon}
\newcommand{\eps}{\varepsilon}
\newcommand{\si}{\sigma}
\newcommand{\Si}{\Sigma}
\newcommand{\om}{\omega}
\newcommand{\Om}{\Omega}

\newcommand{\rh}{\rho}

\newcommand{\mg}[0]{m}
\newcommand{\ub}[2]{\underbrace{#1}_{#2}}

\newcommand{\dd}[2]{\frac{d #1}{d #2}}

\newcommand{\ddd}[1]{\frac{d}{d #1}}

\DeclareMathOperator{\dist}{\mathcal{L}}

\DeclareMathOperator{\tr}{tr}

\DeclareMathOperator{\sign}{sign}

\newcommand{\Tr}{\mathsf{Tr}}

\newcommand{\KL}{\operatorname{KL}}
\newcommand{\TV}{\operatorname{TV}}

\newenvironment{e*}{\begin{equation*}}{\end{equation*}\ignorespacesafterend}
\newcommand{\norm}[1]{\left\lVert{#1}\right\rVert}

\renewcommand{\Tr}{\mathsf{Tr}}

\newcommand{\TAP}{{\mathrm{TAP}}}
\newcommand{\FT}{\calF_{\,\mathrm{TAP}}}

\newcommand{\ip}[2]{\langle#1, #2\rangle}
\newcommand{\iprod}[1]{\langle#1\rangle}

\newcommand{\an}[1]{\left\langle#1\right\rangle}

\newcommand{\sT}{\mathsf{T}}

\newcommand{\Lip}{\mathsf{Lip}}
\newcommand{\ot}{\otimes}

\newcommand{\ba}[1]{\left[ {#1} \right]}
\newcommand{\bc}[1]{\left\{ {#1} \right\}}
\newcommand{\pa}[1]{\left( {#1} \right)}
\newcommand{\ve}[1]{\left\Vert {#1}\right\Vert}

\newcommand{\fc}[2]{\frac{#1}{#2}}
\newcommand{\rc}[1]{\frac{1}{#1}}

\newcommand{\pf}[2]{\pa{\frac{#1}{#2}}}
\newcommand{\prc}[1]{\pa{\frac{1}{#1}}}

\newcommand{\sumo}[2]{\sum_{#1=1}^{#2}}

\newcommand{\gd}[0]{\nabla}

\DeclareMathOperator{\sech}{sech}

\newcommand{\Id}[0]{I}
\DeclareMathOperator{\id}{id}

\DeclareMathOperator{\Spec}{Spec}

\renewcommand{\tr}{\operatorname{tr}}
\DeclareMathOperator{\diag}{\mathsf{diag}}

\renewcommand{\norm}[1]{\left\lVert{#1}\right\rVert}

\newcommand{\sop}{_\mathsf{op}}
\newcommand{\opnorm}[1]{\ensuremath{\left\lVert #1 \right\rVert\sop}}
\newcommand{\ED}{E_{\mathcal{D}_n}}
\newcommand{\lpnorm}[2][2]{\ensuremath{\left\lVert {#2} \right\rVert_{#1}}}
\newcommand{\schnorm}[2][2]{\ensuremath{\left\lVert {#2} \right\rVert_{L^{#1}}}}
\newcommand{\Lpnorm}[2][2]{\ensuremath{\left\lVert {#2} \right\rVert_{L^{#1}}}}
\newcommand{\lipnorm}[1]{\ensuremath{\left\lVert {#1} \right\rVert_{\Lip}}}

\newcommand{\poly}{\mathsf{poly}}

\renewcommand{\le}{\leqslant}
\renewcommand{\leq}{\leqslant}
\renewcommand{\ge}{\geqslant}
\renewcommand{\geq}{\geqslant}

\usepackage{prettyref}
\newcommand{\savehyperref}[2]{\texorpdfstring{\hyperref[#1]{#2}}{#2}}

\protected\def\verythinspace{%
  \ifmmode
    \mskip0.5\thinmuskip
  \else
    \ifhmode
      \kern0.083em
    \fi
  \fi
}
\newrefformat{eq}{\savehyperref{#1}{\textup{(\ref*{#1})}}}
\newrefformat{e}{\savehyperref{#1}{\textup{(\ref*{#1})}}}
\newrefformat{ineq}{\savehyperref{#1}{\textup{(\ref*{#1})}}}
\newrefformat{eqn}{\savehyperref{#1}{\textup{(\ref*{#1})}}}
\newrefformat{l}{\savehyperref{#1}{Lemma~\ref*{#1}}}
\newrefformat{lem}{\savehyperref{#1}{Lemma~\ref*{#1}}}
\newrefformat{def}{\savehyperref{#1}{Definition~\ref*{#1}}}
\newrefformat{d}{\savehyperref{#1}{Definition~\ref*{#1}}}
\newrefformat{t}{\savehyperref{#1}{Theorem~\ref*{#1}}}
\newrefformat{thm}{\savehyperref{#1}{Theorem~\ref*{#1}}}
\newrefformat{cor}{\savehyperref{#1}{Corollary~\ref*{#1}}}
\newrefformat{c}{\savehyperref{#1}{Corollary~\ref*{#1}}}
\newrefformat{cha}{\savehyperref{#1}{Chapter~\ref*{#1}}}
\newrefformat{sec}{\savehyperref{#1}{\S\verythinspace\ref*{#1}}}
\newrefformat{s}{\savehyperref{#1}{\S\verythinspace\ref*{#1}}}
\newrefformat{subsec}{\savehyperref{#1}{\S\verythinspace\ref*{#1}}}
\newrefformat{app}{\savehyperref{#1}{\S\verythinspace\ref*{#1}}}
\newrefformat{tab}{\savehyperref{#1}{Table~\ref*{#1}}}
\newrefformat{fig}{\savehyperref{#1}{Figure~\ref*{#1}}}
\newrefformat{hyp}{\savehyperref{#1}{Hypothesis~\ref*{#1}}}
\newrefformat{alg}{\savehyperref{#1}{Algorithm~\ref*{#1}}}
\newrefformat{a}{\savehyperref{#1}{Algorithm~\ref*{#1}}}
\newrefformat{rem}{\savehyperref{#1}{Remark~\ref*{#1}}}
\newrefformat{item}{\savehyperref{#1}{Item~\ref*{#1}}}
\newrefformat{step}{\savehyperref{#1}{step~\ref*{#1}}}
\newrefformat{conj}{\savehyperref{#1}{Conjecture~\ref*{#1}}}
\newrefformat{fact}{\savehyperref{#1}{Fact~\ref*{#1}}}
\newrefformat{p}{\savehyperref{#1}{Proposition~\ref*{#1}}}
\newrefformat{prop}{\savehyperref{#1}{Proposition~\ref*{#1}}}
\newrefformat{prob}{\savehyperref{#1}{Problem~\ref*{#1}}}
\newrefformat{claim}{\savehyperref{#1}{Claim~\ref*{#1}}}
\newrefformat{clm}{\savehyperref{#1}{Claim~\ref*{#1}}}
\newrefformat{relax}{\savehyperref{#1}{Relaxation~\ref*{#1}}}
\newrefformat{rem}{\savehyperref{#1}{Remark~\ref*{#1}}}
\newrefformat{red}{\savehyperref{#1}{Reduction~\ref*{#1}}}
\newrefformat{part}{\savehyperref{#1}{Part~\ref*{#1}}}
\newrefformat{ex}{\savehyperref{#1}{Exercise~\ref*{#1}}}
\newrefformat{property}{\savehyperref{#1}{Property~\ref*{#1}}}
\newrefformat{type}{\savehyperref{#1}{Type~\ref*{#1}}}
\newrefformat{eg}{\savehyperref{#1}{Example~\ref*{#1}}}
\newrefformat{obs}{\savehyperref{#1}{Observation~\ref*{#1}}}
\newrefformat{que}{\savehyperref{#1}{Question~\ref*{#1}}}
\newrefformat{cond}{\savehyperref{#1}{Condition~\ref*{#1}}}
\newrefformat{ass}{\savehyperref{#1}{Assumption~\ref*{#1}}}
\newrefformat{not}{\savehyperref{#1}{Notation~\ref*{#1}}}
\newrefformat{cond}{\savehyperref{#1}{Condition~\ref*{#1}}}
\newcommand{\Sref}[1]{\hyperref[#1]{\S\ref*{#1}}}

\let\pref=\prettyref
\let\Cref=\prettyref

\renewcommand{\eps}{\varepsilon}

\newcommand{\calA}{\mathcal A}
\newcommand{\calB}{\mathcal B}

\newcommand{\calD}{\mathcal D}
\newcommand{\calE}{\mathcal E}
\newcommand{\calF}{\mathcal F}
\newcommand{\calG}{\mathcal G}

\newcommand{\calL}{\mathcal L}

\renewcommand{\calN}{\mathcal N}

\newcommand{\calS}{\mathcal S}

\newcommand{\calW}{\mathcal W}

\newcommand{\one}[0]{\mathds{1}}
\renewcommand{\set}[2]{\left\{{#1}:{#2}\right\}}

\renewcommand{\R}{\mathbb R}
\newcommand{\C}{\mathbb C}

\renewcommand{\Z}{\mathbb Z}

\newcommand{\ved}[0]{\ve{\cdot}}
\newcommand{\fS}[0]{\mathfrak S}

\newcommand{\drift}{\mathrm{drift}}

\newcommand{\iy}{\infty}

\newcommand{\cT}[0]{c_{\TAP}}

\newcommand{\tref}[1]{\textup{\ref{#1}}}

\newcommand{\lip}[0]{^{\mathrm{Lip}}}
\newcommand{\err}[0]{^{\mathrm{err}}}
\newcommand{\stp}[0]{^{\mathrm{stop}}}

\newcommand{\bP}[0]{\mathbf{P}}
\newcommand{\bQ}[0]{\mathbf{Q}}
\newcommand{\bW}[0]{\mathbf{W}}

\newcommand{\esm}{\varepsilon_{\textup{sample}}}
\newcommand{\ewt}{\varepsilon_{\textup{weight}}}
\newcommand{\ert}{\varepsilon_{\textup{ratio}}}
\newcommand{\Safe}[0]{\mathcal{S}_{\beta, A}}

\usepackage{needspace}
\newlength{\ppartneed}
\newcommand{\ppart}[1]{%
  \par\addvspace{\smallskipamount}%
  \noindent\textit{#1.}\hspace{0.5em}\ignorespaces%
}

\usepackage{algorithm}
\usepackage{algpseudocode}
\algnewcommand\algorithmicinput{\textbf{Input: }}
\algnewcommand\INPUT{\State\algorithmicinput}
\algnewcommand\algorithmicinitialize{\textbf{Initialize: }}
\algnewcommand\INIT{\State\algorithmicinitialize}
\algnewcommand\algorithmicrun{\textbf{Run: }}
\algnewcommand\RUN{\State\algorithmicrun}
\algnewcommand\algorithmicupdate{\textbf{Update: }}
\algnewcommand\UPDATE{\State\algorithmicupdate}
\algnewcommand\algorithmicset{\textbf{Set: }}
\algnewcommand\SET{\State\algorithmicset}
\algnewcommand\algorithmicquery{\textbf{Query: }}
\algnewcommand\QUERY{\State\algorithmicquery}
\algnewcommand\algorithmicoutput{\textbf{Output: }}
\algnewcommand\OUTPUT{\State\algorithmicoutput}

\makeatletter
\newcommand\appendix@section[1]{%
  \refstepcounter{section}%
  \orig@section*{\@Alph\c@section.\texorpdfstring{\,\,\,\;}{}#1}
}
\let\orig@section\section
\g@addto@macro\appendix{\let\section\appendix@section}
\makeatother

\renewcommand{\paragraph}[1]{\medskip\noindent{\bf #1{.}}}

\makeatletter
\newcommand{\saveequation}[2]{%
  #2 \label{#1}
  \protected@write\@mainaux{}{\string\SAVEEQUATION{#1}{\unexpanded{\unexpanded{#2}}}}%
}
\newcommand{\savetagequation}[3]{%
  #3 \label{#1} \tag{#2}
  \protected@write\@mainaux{}{\string\SAVEEQUATION{#1}{\unexpanded{\unexpanded{#3}}}}%
}
\newcommand{\SAVEEQUATION}[2]{%
  \global\@namedef{SAVEDEQUATION@#1}{#2}%
}
\newcommand{\repeatequation}[1]{%
  \ifcsname SAVEDEQUATION@#1\endcsname
    \@nameuse{SAVEDEQUATION@#1}\tag{\ref{#1}}%
  \else
    ?? \notag
  \fi
}
\makeatother

\setlist[itemize]{topsep=-4pt, partopsep=2pt}

\usepackage{thmtools}
\declaretheoremstyle[%
  spaceabove=-2pt,%
  spacebelow=6pt,%
  headfont=\normalfont\itshape,%
  postheadspace=1em,%
  qed=\qedsymbol%
]{mystyle} 
\declaretheorem[name={Proof},style=mystyle,unnumbered,
]{prf}

\togglefalse{focs}

\begin{document}

\author{Holden Lee}
\address{Department of Applied Mathematics and Statistics, Johns Hopkins University, USA}
\email{\href{mailto:hlee283@jhu.edu}{hlee283@jhu.edu}}

\author{Juspreet Singh Sandhu}
\address{Department of Computer Science, Colorado State University, USA}
\email{\href{mailto:jsinghsa@ucsc.edu}{js.sandhu@colostate.edu}}

\author{Jonathan Shi}
\address{No affiliation}
\email{\href{mailto:jshi@cs.cornell.edu}{jshi@cs.cornell.edu}}

\title[Potential Hessian Ascent IV: Sampling the Sherrington--Kirkpatrick model at $\beta < 1$]{Potential Hessian Ascent IV: \\ Sampling the Sherrington--Kirkpatrick model at $\beta < 1$}

\begin{abstract}
\small
\noindent 
We give a polynomial-time algorithm to sample from the Gibbs measure of the Sherrington--Kirkpatrick (SK) model with $o_n(1)$ error in total-variation distance (TVD) at any inverse-temperature $\beta < 1$. The algorithm combines algorithmic stochastic localization (ASL) with rejection sampling over path-space via Jarzynski's equality (JE). \\

\noindent The analysis extends the authors' prior $\beta < 1/2$ result \cite{davies2026potential} by replacing all global regularity requirements in the stochastic differential equation error analysis with local regularity around likely trajectories. The relaxed regularity is established using Celentano's proof of the local strong convexity of the TAP free energy~\cite{celentano2024sudakov}. The analysis utilizes the cavity interpolation theory and free probability toolkit developed in the authors' previous result, where the former applies nearly verbatim and the latter applies supplemented with Lipschitz and $C^2$ extensions of various functions. \\

\noindent The ASL and JE analysis arises from using the TAP free energy as an efficiently computable proxy for the actual free energy of the stochastically localized Gibbs measure \cite[\S 3]{davies2026potential}.
We give a list of \vocab{desiderata} encapsulating the approximation and regularity properties required of the TAP free energy, relaxing those of \cite[\S 2.5]{davies2026potential} to only require local regularity.
These generic desiderata are potentially applicable in other settings where a free energy surrogate exists, giving algorithmic sampling guarantees while bypassing the usual functional inequality--based approach.

\end{abstract}

\maketitle

\thispagestyle{empty}
\vspace{-5mm}
\renewcommand{\baselinestretch}{0.9}\normalsize
{
  \hypersetup{linkcolor=Red}
  \setcounter{tocdepth}{1}
  \tableofcontents
}
\renewcommand{\baselinestretch}{1.0}\normalsize

\newpage 
\pagenumbering{arabic}

\section{Introduction}

The Gibbs measure for the Sherrington--Kirkpatrick (SK) model of system-size $n$ is a (random) probability measure on $\{-1,1\}^n$ given by
\[
    \mu_{\beta A}(\sigma) = \frac{1}{Z(\beta A)}\exp\left(\frac{1}{2}\an{\sigma,\beta A\sigma}\right)\,,
\]
where $A \sim \mathsf{GOE}(n)$ and the constant $Z(\beta A) := \sum_{\sigma \in \{-1,1\}^n} \exp\left(\frac{1}{2}\an{\sigma,\beta A\sigma}\right)$ is the partition function. 

Building on the cavity interpolation theory and free-probability toolkit developed by the authors in their previous result \cite{davies2026potential}, we give a polynomial time algorithm to sample from the Gibbs measure of the SK model at every fixed inverse temperature $\beta < 1$ with $o_n(1)$ total-variation distance (TVD) error. In the absence of an external field, this characterizes the entire high-temperature regime \cite{deAlmeidaThouless1978,Lopatto2026ReplicaSymmetry} and $\beta = 1$ is believed to be the onset of computational hardness for sampling \cite{el2022sampling, sellke2025exponentially} since the system exhibits full replica-symmetry breaking (fRSB) at $\beta > 1$ \cite{jekel2024pha,lopatto2026full}. This is another step towards fully resolving the conjecture that Glauber dynamics exhibits fast-mixing for the the SK model below the dAT line \cite[Open-Problem 15]{bandeira2025randomstrasse101}. In forthcoming work \cite{lee2026weak}, we make further progress towards the conjecture by generalizing the conductance arguments and Wiener analysis techniques developed in another prior result of the authors \cite{davies2026weak} -- we prove functional inequalities with access to only ``local'' regularity properties of the algorithmic stochastic localization (ASL) process.\footnote{\,A consequence of the generalization of \cite[\S 3, \S 4 \& \S 6]{davies2026weak} in the forthcoming work \cite{lee2026weak} is a weak Poincar\'e inequality for the SK model up to $\beta < 1$, followed by a proof that a warm start can be algorithmically obtained.} %

\begin{theorem}%
\label{t:main}
    Let $0 < \beta < 1$. Given any fixed $\delta > 0$, for all $n \ge n_0(\beta,\delta)$, with probability at least $1-\delta$ over $A\sim\mathsf{GOE}(n)$, \pref{alg:main} gives a sample from a distribution that is $o_n(1)$ close in TVD error to $\mu_{\beta A}$ in polynomial time.
\end{theorem}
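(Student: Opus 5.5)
The argument will follow the architecture of \cite{davies2026potential}. First, reduce \pref{t:main} to a generic statement about algorithmic stochastic localization corrected by Jarzynski rejection sampling, whose hypotheses are a list of desiderata on a free-energy surrogate, and then verify those desiderata for the TAP free energy at every $\beta<1$. The stochastic localization process $y_t = t\sigma + B_t$ with $\sigma\sim\mu_{\beta A}$ has drift equal to the mean $m_t$ of the tilted measure $\mu_{\beta A, y_t}$. The algorithm replaces $m_t$ by the minimizer $\hat m(y_t)$ of the TAP free energy $\FT(\cdot\,;y_t)$, discretizes the resulting SDE, and runs it to a time $T$. It then accepts the final spin configuration with probability proportional to a Jarzynski-type path weight. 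That weight combines the TAP free energy at the endpoints with the accumulated work along the discretized path. By Jarzynski's equality, the accepted law equals $\mu_{\beta A}$ up to errors controlled by (i) the approximation error between $\FT$ and the true log-partition function of the tilted measure, (ii) the discretization error of the SDE, and (iii) the variance of the path weights, which governs the acceptance probability.

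The key steps are as follows.

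\textbf{Step 1.} State the localized desiderata. These require that, along trajectories lying in a ``safe'' set $\Safe$ of $(t,y)$ (which has probability $1-o(1)$ under the true localization path), $\FT$ has a unique critical point $\hat m(y)$ in a neighborhood, is locally strongly convex there, and has gradient and Hessian that are Lipschitz in $y$ locally.

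\textbf{Step 2.} Redo the SDE error analysis. Girsanov/KL bounds between the true and algorithmic path measures, together with Jarzynski weight concentration, are computed with stopping times at the first exit from $\Safe$. Every use of global Lipschitzness is then replaced by the local version, and the exit probability is charged to the TVD error.

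\textbf{Step 3.} Verify the desiderata at $\beta<1$. Local strong convexity of $\FT$ near $\hat m$ comes from Celentano's Sudakov--Fernique argument \cite{celentano2024sudakov}. The approximation of the true free energy and mean by the TAP quantities comes from the cavity interpolation of \cite{davies2026potential}, which should carry over essentially verbatim. The free-probability estimates (spectral control of $\beta A$ plus diagonal corrections) are reused after replacing the functions involved by Lipschitz or $C^2$ extensions that agree with the original ones on the safe region.

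\textbf{Step 4.} Give the algorithmic implementation. Inside the strongly convex basin, $\hat m$ is computed by AMP initialization followed by Newton or gradient descent, so every step runs in polynomial time.

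The main obstacle is Step 2 combined with Step 3: showing that the algorithmic trajectory, not only the true one, stays in $\Safe$ with high probability. Here the error analysis is circular. Closeness of the paths requires regularity, and regularity holds only on paths that stay close. To break the circularity I will use a bootstrapping stopping-time argument. Up to the exit time $\tau$, the local regularity bounds give a KL bound between the two path laws. By Pinsker's inequality, the algorithmic path then exits $\Safe$ before $T$ with probability at most the exit probability of the true path plus $o(1)$. Celentano's convexity has to hold uniformly over a neighborhood of typical $y_t$ for all $t\le T$, and as $\beta\uparrow1$ the convexity modulus degenerates. Keeping $T$, the step size, and the neighborhood radius as $\beta$-dependent constants should keep all errors at $o_n(1)$ for fixed $\beta<1$.
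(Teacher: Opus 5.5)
Your architecture matches the paper's: ASL-TAP steered by local TAP convexity, stopped at exit from a safe set, corrected by Jarzynski rejection sampling, with the cavity and free-probability inputs reused through Lipschitz/$C^2$ extensions. However, two steps fail as stated.

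\textbf{The proposal never produces a sample from $\mu_{\beta A}$.} At the finite time $T=T(\beta)$ the tilted measure $\mu_{\beta A,y_T}$ is not a point mass, so there is no ``final spin configuration'' to accept. Moreover, the density ratio is not a function of TAP quantities alone. By \pref{l:je-survival} it is $e^{w_T}e^{\FT(m_T,y_T)-\FT(0,0)}e^{-nT/2}Z_{\beta A,y_T}/Z_{\beta A,0}$, which contains the true partition function of the localized measure at the endpoint. Substituting $-\FT(m_T,y_T)$ for $\log Z_{\beta A,y_T}$ and charging the difference to your error (i) cannot give $o(1)$ TVD. Rejection sampling needs this ratio up to a factor $e^{\pm o(1)}$ (modulo a $y_T$-independent constant), and nothing in the TAP analysis controls $\log Z_{\beta A,y_T}+\FT(m_T,y_T)$ to $o(1)$ as $y_T$ varies. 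The paper has a separate second stage for exactly this. ASL-TAP-JE only samples the tilt $y_T$. $T$ is taken large enough that $\mu_{\beta A,y_T}$ puts all but $e^{-\Omega(n)}$ of its mass on the Hamming ball $B(\sign(y_T),\ep(\beta)n)$. The Kumar--Sarkar--Tian--Zhu sampler on that ball (\pref{thm:localized-sampler}, valid for every $\beta$, uniformly in $y$ and $\beta'\le\beta$) is then used twice: to draw $\sigma\sim\mu_{\beta A,y_T}$, and, through simulated annealing in $\beta'$, to estimate $Z_{\beta A,y_T}$ to relative error $o(1)$ for the ratio oracle. This is one of the components that did not extend past $\beta=1/2$ in the earlier work, so it cannot be left implicit.

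\textbf{Your way of breaking the circularity fails quantitatively.} The Girsanov bound between SL and ASL-TAP is only $O(1)$, not $o(1)$; this is why reweighting is needed at all, as your item (iii) concedes. So Pinsker gives only an $O(1)$ bound on total variation and cannot show that the algorithmic path exits $\Safe(c)$ with probability $o(1)$. Moreover, the drift error is controlled only under the true law (through \ref{d:Q-error}), so Girsanov naturally bounds $\KL(\text{true}\,\|\,\text{algorithmic})$. A bound in that direction does not prevent the algorithmic law from placing constant mass on an event that is rare under the true law.

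The paper instead uses a synchronous coupling and a scale separation. The Gr\"onwall estimate of \pref{l:W-gw} under local Lipschitzness gives $\E\bigl[\one_{\mathcal G}\,\|X_t-\hat X_t\|^2\bigr]=O(1)$, where $\mathcal G$ is the event that the pair stays in the expanded safe set. The log-martingale bound of \pref{l:safe-pair} upgrades this to $\sup_{t\le T}\|X_t-\hat X_t\|\le r_2=\Theta(\sqrt n)$ with probability $1-o_n(1)$. Since \ref{d:safe-set-stay-inside} puts a ball of radius $\rho\sqrt n$ around the true path inside $\Safe(c)$, the coupled algorithmic path survives. What you need is this $O(1)$-versus-$\sqrt n$ separation (or an equivalent pathwise argument), not a divergence bound.

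Relatedly, you flag but do not supply the mechanism that makes Celentano's fixed-time convexity uniform over $t\in[0,T]$. The paper obtains it from three ingredients: convexity balls around AMS iterates at fixed times, the AMS time-stability estimate for $m_t$ on short intervals, and a finite subcover of $[0,T]$.
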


\subsection{Technical overview} The prior result of the authors gives a sampling algorithm for $\beta < 1/2$ through a set of ``desiderata'' \cite[\S 2]{davies2026potential} which include pointwise regularity of certain matricial functions of operators that drive the algorithmic process. There are known counter-examples to such pointwise regularity for $0.798 < \beta < 1$ --- these are a consequence of the fact that the TAP free energy for the SK model \cite{fan2021tap} is known not to be \emph{globally} strongly convex in $(-1,1)^n$ when $0.798 < \beta < 1$ \cite[Theorem 1.2]{gufler2023concavity}. However, a result of Celentano \cite{celentano2024sudakov} uses control of the empirical distribution of approximate-message passing (AMP) iterates \cite[Proposition 3.8]{celentano2024sudakov} in conjunction with the Sudakov--Fernique inequality to demonstrate local strong convexity for the TAP free energy at $\beta < 1$. This hints that a ``relaxation'' of the desiderata in \cite[\S 2]{davies2026potential} to require regularity only over a ``safe'' set of points $S_A(c) \subset (-1,1)^n$, those where the TAP free energy continues to be locally strongly convex, likely allows the analysis template developed in \cite[\S 3--6]{davies2026potential} to cover the entire high-temperature regime. 

In this work, we accomplish this goal through two important contributions:
\begin{enumerate}[itemsep=0.2em]
    \item We develop the precise formulation for local regularity of various key matricial functionals of the ideal and algorithmic processes, updating the desiderata to reflect this (\pref{subsec:desiderata}).
    \item We systematically extend the SDE error analysis (\pref{sec:sde-analysis}), control over the diagonal of the algorithmic covariance (\pref{sec:resolvent-analysis}), and proofs for the regularity properties (\pref{sec:alg-properties}) developed in \cite[\S 3, \S 6 \& \S B]{davies2026potential} to hold over non-convex sets under \emph{local} regularity, and demonstrate uniform local strong convexity of the TAP free energy along the stochastic localization (SL) process (\pref{sec:uniform-tap-convexity}), thereby proving the requisite ``relaxed'' desiderata.    
\end{enumerate}

\subsubsection*{Algorithm}
The only change to the algorithm of \cite{davies2026potential} is to reject any trajectories which leave the safe set, and using a modified sampler known to work for any value of $\be$, provided by \cite{kumar2026high}.

\begin{algorithm}[!ht]
\caption{Informal description of our algorithm}
\label{alg:informal}
\begin{algorithmic}[1]
\Repeat{}
\State Initialize $y_0=0$.
\State $\td y_T \leftarrow \mathsf{approximate}(dy_t = m_tdt + dB_t)$ for time $T=O(1)$, while computing Jarzynski weights $\td w_T$, restarting if trajectory exits safe set.
\State Accept $\td y_T$ with some probability computed from $\td w_T$.
\Until{accept} \Comment{Now $\td y_T$ is approximately from the time-$T$ distribution of $dy_t = m_tdt + dB_t$.}
\State Initialize $\si_0=\sign(\td y_T)$.
\State Run the sampler for the localized distribution (\pref{thm:localized-sampler}).
\end{algorithmic}
\end{algorithm}

\subsubsection*{Analysis} The cavity interpolation theory built
 to bound the error in approximating the Gibbs covariance by the inverse of the TAP Hessian already works up to $\beta < 1$ (see \pref{sec:cavity-interpolation}) and the free probability analysis template remains roughly the same\footnote{\,The inverse of the TAP Hessian (a resolvent) needs to be regularized so that it extends smoothly outside the safe set. This leads to certain analytic technicalities, but does not alter the overall structure of the argument in \cite[\S B]{davies2026potential}. The authors do, however, give a simpler version of the free interpolation argument.} (see \pref{sec:resolvent-analysis}).

The first step in the analysis comes with a proof that, with high probability over the input and the SL process, the TAP Hessian continues to be uniformly locally strongly convex at any time $t \in [0,T(\beta)]$ (\pref{sec:uniform-tap-convexity}). This proof combines Celentano's argument for local strong convexity of the TAP Hessian \cite[Theorem 2]{celentano2024sudakov} in small balls around late-stage AMP iterates with the characterization of ``stability'' of the SL process for the SK model established in \cite[Lemma 4.9]{el2022sampling} (\pref{sec:uniform-strong-convexity-proof}). From this, it is a straightforward corollary to obtain a L\"owner order ``sandwich'' for the TAP Hessian on the safe set (\pref{sec:lowner-sandwich-safe-set}).

The second step in the analysis deals with regularity properties of the inverse of the TAP Hessian, which we interpret as a resolvent.
In \cite[\S B]{davies2026potential}, we obtained precise control over the diagonal elements of the squared resolvent, whose importance arises from the fact that those diagonal elements give a measure of how close the TAP Hessian is to being a covariance on the discrete hypercube (the diagonal elements of the covariance matrix are the variances of individual coordinates, which for binary variables are fully determined by their means).
Using that diagonal control, \cite[\S 6]{davies2026potential} established regularity and SL-like properties for the coefficients of the algorithmic SDE.

In the present work, we first introduce a $C^2$ extension of the resolvent from the safe set $S_A(c)$ onto the entirety of $(-1,1)^n$ (\pref{sec:block-regularization}).
This $C^2$ extension is swapped into both parts of the diagonal control argument where regularity is needed to bound fluctuations: the chaining argument obtaining uniform concentration of the resolvent diagonal (\pref{sec:block-mixed-lipschitzness}), as well as a Gaussian interpolation argument comparing the expected resolvent to its freely independent limit (\pref{sec:option-c-free-comparison}).

The regularity and SL-likeness of the coefficients then follow (\pref{sec:alg-events} -- \pref{sec:alg-phd-asl-tap-closeness}) almost exactly as in \cite[\S 6.1--\S 6.5]{davies2026potential}, only with Lipschitz bounds proved directly through the resolvent identity instead of through derivative bounds on straight lines that now are no longer guaranteed to exist within the safe set.
For SDE well-posedness and transportation inequalities, we swap in a simple Lipschitz extension of the coefficients, and analyze the modifed SDE which is guaranteed to agree with the original up until exit from the safe set (\pref{sec:well-posed-transportation}).
Finally, the Jarzynski weights are the only Lipschitz bound that can't be obtained only with the resolvent identity: for these, we design short paths within the safe set and integrate bounded derivatives along these short paths to obtain the Lipschitz bound (\pref{sec:alg-je-extension}).

The third step in the analysis is straightforward given existing work -- a simple choice of $T \ge T(\beta)$ gives a localized distribution where the external field is strong enough to ensure concentration on a small Hamming ball with high probability over the field, and fast mixing on any Hamming ball of that size \cite{kumar2026high}.

The final step in the analysis is to bound the error between the SDE's of the ideal and algorithmic process, when the latter is designed to stop the moment the process leaves the safe set. This requires redoing the approach taken in \cite[\S 3]{davies2026potential} with delicate error handling for the stopped process to obtain $O(1)$ KL divergence bounds, followed by the a ``survival''-based Jarzynski's equality (JE) analysis that takes into account the stopped process for the analysis of the rejection sampling step.

\subsection{Related work}
\label{sec:related-work}
There are two main approaches to sampling from spin glass models. One is to prove functional inequalities (e.g.\ Poincar\'e or modified log-Sobolev inequalities), typically via stochastic localization, and deduce rapid mixing of Glauber or Langevin dynamics. It can be sufficient to show weak functional inequalities, which essentially establish the efficacy of these dynamics given a warm start. The other is to algorithmize stochastic localization itself. Our method combines these perspectives: we use ASL and Jarzynski reweighing up to a large finite time, and then use a fast-mixing result for the localized distribution. Since our previous result \cite{davies2026potential}, a few notable results have made progress on understanding sampling from the SK model in various settings -- we review only these and a few auxiliary results, and refer the reader to \cite[\S 1.4]{davies2026potential} for a more complete overview.

\subsubsection*{Functional inequalities and fast mixing} Recently, the authors proved a weak Poincar\'e inequality for the SK model when $\beta<1/2$ by transferring regularity of approximate stochastic localization to the target measure, yielding fast-mixing for Glauber dynamics from a warm start \cite{davies2026weak}. Shortly thereafter, Wang \cite{wang2026optimal} proved fast-mixing of single-site Glauber dynamics for the SK model at $\beta<1/2$, from every initial configuration and uniformly over all external fields. Even more recently, Boban, Li, and Oveis Gharan \cite{boban2026rank} introduced rank-$1$-perturbed trickledown theorems and obtained polynomial mixing of Glauber dynamics for the SK model at $\beta\le 1/2+\varepsilon$ for some explicit absolute constant $0<\varepsilon<10^{-4}$. For $\beta\le1/2+\varepsilon$, these results imply fast-mixing for Glauber dynamics.

Bandeira, El Alaoui, and R\"odder \cite{bandeira2026mixing} prove polynomial mixing of Glauber dynamics for high-overlap Gibbs measures, including the planted SK model with sufficiently strong Gaussian external field, which we use to sample from a sufficiently localized distribution. Kumar, Sarkar, Tian, and Zhu \cite{kumar2026high} give polynomial-time samplers for the SK model on sufficiently highly-magnetized slices at every fixed inverse temperature, and combine these with annealing to obtain samplers under strong external fields.

The regime $\beta<1$ is sharp for our approach and believed to be the onset of hardness for efficient sampling. Specifically, it is know that no stable algorithm (a class containing ASL) can sample with small $W_2$-distance error for
$\beta>1$ \cite{el2022sampling}, and worst-case Glauber dynamics mixes in exponential time at sufficiently low temperature \cite{sellke2025exponentially}.

\begin{table}[t]
\centering
\small
\setlength{\tabcolsep}{4pt}
\renewcommand{\arraystretch}{1.15}
\begin{tabular}{@{}p{0.27\textwidth}p{0.27\textwidth}p{0.13\textwidth}p{0.22\textwidth}@{}}
\toprule
Work & Method & Guarantee & Regime ($\beta<\cdots$) \\
\midrule
\multicolumn{4}{@{}l}{\emph{Functional inequalities / fast mixing}} \\[2pt]
Eldan, Koehler, and Zeitouni \cite{eldan2022spectral}; Anari et al.\ \cite{anari2022entropic}
& PI / MLSI via SL & $\varepsilon$-TVD & $1/4$ \\[3pt]
Anari, Koehler, and Vuong \cite{anari2024trickle}
& MLSI via SL & $\varepsilon$-TVD & $\approx0.295$ \\[3pt]
Davies et al.\ \cite{davies2026weak}
& weak PI; warm-start Glauber & $o(1)$-TVD & $1/2$ \\[3pt]
Wang \cite{wang2026optimal}
& PI / MLSI; Glauber & $\varepsilon$-TVD & $1/2$ (any ext. field) \\[3pt]
Boban, Li, and Oveis Gharan \cite{boban2026rank}
& rank-$1$ trickledown; Glauber & $\varepsilon$-TVD & $\le 1/2+\varepsilon$ \\[4pt]
\multicolumn{4}{@{}l}{\emph{Algorithmic stochastic localization}} \\[2pt]
El Alaoui, Montanari, and Sellke \cite{el2022sampling}; Celentano \cite{celentano2024sudakov}
& ASL + AMP & $o(n)$-$W_2^2$ & $1/2$, then $1$ \\[3pt]
Davies et al.\ \cite{davies2026potential}
& ASL-TAP / PHD + JE & $o(1)$-TVD & $1/2$ \\[3pt]
This work
& ASL-TAP / PHD + JE & $o(1)$-TVD & $1$ \\
\bottomrule
\end{tabular}
\vspace{1mm}
\caption{Sampling guarantees for the SK model. ``$\varepsilon$-TVD'' denotes error
$\varepsilon$ achievable for every fixed $\varepsilon>0$ with running time
polynomial in $1/\varepsilon$; ``$o(1)$-TVD'' denotes error vanishing as
$n\to\infty$.}
\label{tab:related-sampling}
\end{table}

\subsubsection*{Interpolations, overlaps and covariances}
Various mathematical results and techniques provide the foundation for analyzing the Gibbs overlap array and covariance for ``planted'' spin-glass models (cf. \cite[\S 1.4]{davies2026potential}). Most relevant to this paper, the prior result of the authors \cite[\S 4--5]{davies2026potential} develops the precise cavity moment estimates for the planted SK model with SL field needed for the covariance approximation. While stated in \cite[\S 5]{davies2026potential} as a result that is bottlenecked at $\beta < 1/2$, the result extends easily to $\beta < 1$ (see \pref{sec:cavity-interpolation}). Inspired by the ability to Guerra interpolate the planted SK model with SL tilt, Li and the last two authors prove sub-Gaussian overlap concentration for the Curie--Weiss random-field model at $\beta<1$ with an external field satisfying the Nishimori fixed-point condition \cite{li2026overlap}.

\subsubsection*{PHA, free probability and local TAP convexity}
Subag pioneered Hessian Ascent---following the top eigenvector of the Hessian of the objective function---to optimize spherical spin glass models \cite{subag2021following}, which inspired both Montanari's AMP-based optimization algorithm \cite{montanari2021optimization} and the later Potential Hessian Ascent (PHA) framework for the SK model \cite{jekel2024pha,jekel2025pha2}. For spin glasses on the hypercube, PHA adds a coordinate-wise potential to the objective function, recovering the generalized TAP free energy \cite{chen2023generalized}. Resolvent bounds and techniques from free-probability control the diagonal entries of matrix functions of the Hessian, which determine how individual coordinates evolve and give access to regularity properties. In the prior work of the authors, these techniques were used to provide global regularity properties \cite[\S 6]{davies2026potential}. The present work establishes diagonal control in the region of local TAP convexity even when global control is impossible.
A simplification of the previous argument is enabled by comparisons of matricial functions of Gaussian random matrices with their free limits developed by Bandeira, Boedihardjo, and van Handel \cite{bandeira2023matrix}.

\section{Sampling via ASL-JE under local regularity}

Our sampling algorithm closely follows the implementation of \vocab{algorithmic stochastic localization} described in \cite[\S 2]{davies2026potential}, so we defer to that section for the full explanation, and will describe only the differences here.
In multiple places, the original approach used \emph{global strong convexity} of the TAP free energy and the consequent regularity of various random processes defined over the domain $(-1,1)^n$ of the magnetization.
Whereas this global strong convexity has a particularly simple proof at $\beta < 1/2$, it is known to fail for $\beta > 0.798$ ~\cite{gufler2023concavity}.
Thus, in the current regime of interest $1/2 \le \beta < 1$, many of the TAP-based quantities used in the argument of \cite{davies2026potential} actually fail to exist.

We circumvent this issue by giving an analysis that only requires \emph{local convexity} of the TAP free energy around likely SL paths, which can be proved up to $\beta < 1$ by building upon an argument of Celentano~\cite{celentano2024sudakov}.
We define a \vocab{safe subset} of $(-1,1)^n$ characterized by this local convexity property, and relax our SDE and JE analyses to require regularity only up to (the unlikely event of an) escape from this safe set.
Different parts of the argument require different treatments of the complement of the safe set, including Lipschitz or $C^2$ extensions of functions to the complement of the safe set, the construction of short paths that stay inside the safe set, and the use of the resolvent identity to prove Lipschitz bounds without derivative bounds on straight-line paths.

\subsection{TAP free energy and the safe set} First, we recall the definition of the TAP free energy, then use that to define our notion of safe sets.
\begin{definition}[TAP free energy for the planted SK model, {\cite{thouless1977solution,fan2021tap}}]\label{def:f-tap}
    Denote the TAP free energy for the Hamiltonian $\an{m,Am}$ at inverse temperature $\beta < 1$ as
    \begin{align}
    \label{e:FTAP}
    \calF_{\TAP}(m,y) &:= -\fc{\be}{2}\iprod{m,A m} - \iprod{y,m} - \sumo in h(m_i) - \rc 4 n\be^2 \pa{1-\rc n \norm{m}_2^2}^2,
    \end{align}
where $h(x) := -\fc{1+x}{2}\log \pf{1+x}2 - \fc{1-x}{2}\log \pf{1-x}2$ and $\hat{Q}(m) := \nabla_m^2\calF_{\TAP}(m)^{-1}$ where $m \in (-1,1)^n$.
\end{definition} 
Note that $\nabla_m^2\calF_{\TAP}(m,y)$ does not depend on $y$, so we will write this simply as $\nabla_m^2\calF_{\TAP}(m)$. Note
\begin{align}
\label{e:Hess-FTAP}
    \nabla_m^2\calF_{\TAP} = -\beta A + \diag\prc{1-m^2} + \beta^2\left(1-\fc{\norm{m}_2^2}{n}\right)\Id_n - \fc{2\beta^2}{n} mm^{\sT} =: \hQ(m)^{-1},
\end{align}
where the definition is when $\nabla_m^2\calF_{\TAP}(m)\succ 0$. 
We define the set of safe points in the solid cube as those near which $\calF_{\TAP}(m,\cdot)$ is sufficiently strongly convex in $m$.
\begin{definition}[Safe set in the hypercube]\label{def:good-points}
    Fix $A \in \R^{n\times n}$ and $D(m) := \diag\left(\frac{1}{1-m^2}\right)$ for any $m \in (-1,1)^n$. Let
        \[ \calS_{\be,A}(c) := \left\{m \in (-1,1)^n \mid c\,D(m) \prec \nabla^2\calF_{\TAP}(m)\right\}\,. \]
\end{definition}
Given $m\in (-1,1)^n$, there is a unique $y\in \R^n$ such that $(m,y)$ solve the TAP equation $\gd_m\FT(m,y)=0$. Define $\hy:(-1,1)^n\to \R^n$ to be this value of $y$:
\begin{align}\label{e:hy}
\hy(m):= -\fc{\be}2 \an{m,Am} - \sumo in h(m_i) - \rc 4 n \be^2 \pa{1-\rc n \ve{m}_2^2}^2.
\end{align}
Note that $\hy$ may not be injective even on $\Safe(c)$, but it is locally; see \pref{l:inv-f}.

\subsection{Desiderata}\label{subsec:desiderata} We now provide the updated the list of desiderata, modifying those provided in \cite[\S 2]{davies2026potential} to work within the safe set and to present a more minimal interface.
These desiderata formalize the requirement of the TAP free energy to be a good approximation of the actual free energy while having good regularity properties, to prove efficient sampling with low error measured in total variation distance.

The following desiderata are written over SL paths $\{m_t\}_{t \ge 0}$ which come as a solution to HD. 
\begin{enumerate}[itemsep=0.7em,label={(d\arabic*)},start=1] 
    \item\label{d:Q-error} \textbf{($O(1)$-error for covariance estimate, \pref{thm:covar-estimate})} The following covariance estimate holds for $0 < \beta < 1$:
    \[
        \E_{A,B_t}\left[\norm{\nabla_m^2\FT(m_t)
        Q_t - \Id_n}^2_F\right] \le \eps_{\mathrm{cov}}(\beta, t)^2\,,
    \]
    where $\sup_{t\in[0,T]}\eps_{\mathrm{cov}}(\beta, t) < \infty$.
    \item[(d2a)] \makeatletter\def\@currentlabel{(d2a)}\makeatother \label{d:safe-set-stay-inside} \textbf{(Uniform local strong convexity of TAP Hessian over SL process, \pref{thm:uniform-local-strong-convexity})}
    For every $T$, there are constants $\rho:=\rho(\beta, T)>0$, $ \cT :=c_{\TAP}(\beta,T) > 0$ such that
    \[ \Pr_{A,B_t}\left\{\forall t \in [0,T]\,,\; B_{\square}(m_t, \rho\sqrt{n}) \subseteq \Safe(c_{\TAP})\right\} \ge 1 - o_n(1), \]
    where $B_{\square}(x,r) := \{u \in (-1,1)^n: \lpnorm{u-x} < r\}$ is the $x$-centered ball of radius $r$ in the $\ell_2$ distance within $(-1,1)^n$.
\end{enumerate}

The following desiderata are written over all $m \in \Safe(c_{\TAP})$, on a high-probability event on $A$: 
\begin{enumerate}[itemsep=0.7em,label={(d\arabic*)},start=3]
    \item[(d2b)]\makeatletter\def\@currentlabel{(d2b)}\makeatother \label{d:Q-reg} \textbf{(Local regularity of the TAP Hessian, \pref{lem:tap-convexity})} There exist finite $c_{\TAP}(\beta,T), C_{\TAP}(\beta) > 0$, such that
    \[
        C_{\TAP}(\beta)^{-1}D^{-1}(m) \preceq \hat{Q}(m) \preceq c_{\TAP}(\beta,T)^{-1}D^{-1}(m). 
    \]
    \item \label{d:drift-error}
    \textbf{(Drift error in PHD-TAP, \pref{cor:alg-desiderata})} 
    \[
    \ve{f(m)-m}_2^2 \le \ep_{\mathrm{drift}}(\be)^2,\]
    where $\ep_{\mathrm{drift}}(\be)$ is a constant depending only on $\beta$ and 
    \begin{align}\saveequation{e:Itomag}{f(\mg) &= \left(E_{\calD_n}\left[\hQ(\mg)^2\right]D(\mg)^2 - \frac{\beta^2}{n}\left(\Tr\left[\hat{Q}^2(\mg)\right]\Id_n +2\hat{Q}^2(\mg)\right)\right)\mg}
\end{align}
    \item \label{d:Q-Lip}
    \textbf{(Lipschitzness of diffusion and drift terms, \pref{cor:alg-desiderata})}
    \begin{enumerate}
        \item
        There exists a constant $L$  such that for all $\mg, w\in \Safe(c_{\TAP})$, 
    \[
    \ve{\hQ(\mg) - \hQ(w)}_F \le L\ve{\mg-w}_2.
    \]
    
        \item There exists a constant $L_{\mathrm{drift}}$  such that for all $\mg, w\in \Safe(c_{\TAP})$, with $f$ defined as in \cite[(2.3)]{davies2026potential},
    \[
    \ve{\hat{Q}(m)(m-f(m)) - \hat{Q}(w)(w-f(w))}_2 \le L_{\mathrm{drift}}\ve{m - w}_2.
    \]
    \end{enumerate}
        \item\label{d:JE-lip}
        \textbf{(Lipschitzness of JE weight integrand, \pref{cor:alg-desiderata})} Let
        \[\omega(m):=\frac12\bigl(\Tr[\hat{Q}(m)]+\lpnorm{m}^2\bigr).\]
        Then there is an $L_\omega<\infty$ such that
\[
       |\omega(m)-\omega(w)|\le L_\omega\lpnorm{m-w}
                       \qquad(\forall m,w\in\Safe(c)).
\]
\end{enumerate}
Note that as a consequence of these desiderata, we will also be able to prove \ref{d:safe-set-stay-inside} for the algorithmic $\hat{m}_t$. This is done via a SDE-based argument in \pref{l:safe-pair}.

Finally, we need an efficient sampling algorithm for the localized distribution. For large enough time, the measure concentrates on a small Hamming ball, so the following statement suffices.
\begin{enumerate}
    \item[(d6)] \makeatletter\def\@currentlabel{(d6)}\makeatother 
    \label{d:sample-localized}
    \textbf{(Efficient sampling on small enough wedges, \pref{thm:localized-sampler})} 
    There exists small enough $\ep(\be)>0$ such that with high probability over $A$, there is a $\poly(n,\log(1/\ep))$-time algorithm to sample from $\mu_{\be' A,y}|_{B(\sign(y),\ep(\be)n)}$ with TV distance $\ep$, for all $y\in \R^n$ and all $0\le \be'\le \be$, where $B(\si,k)= \set{\si'\in \{\pm 1\}^n}{d(\si,\si')\le k}$ is the Hamming ball around $\si$ with radius $k$.
\end{enumerate}

One of the high-probability events we will need is simply an operator norm bound on $A$.

\begin{proposition}[GOE operator norm concentration, {\cite[Theorem II.1]{davidson2001local}}]
\label{prop:goe-norm-tail}
Let $A \sim \mathsf{GOE}(n)$. Then, 
\begin{equation}
\label{eq:goe-norm-tail}
    \Pr\{\opnorm{A}>3\}\le2e^{-n/4}.
\end{equation}
\end{proposition}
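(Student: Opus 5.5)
This is a classical statement cited from Davidson--Szarek, so the plan is to reprove it from two standard ingredients: a bound on the median or mean of $\opnorm{A}$, and Gaussian concentration for Lipschitz functions. We use the normalization in which the off-diagonal entries of $A$ have variance $1/n$ and the diagonal entries variance $2/n$, so that the spectrum concentrates on $[-2,2]$. The argument has three steps.

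\emph{Step 1: the mean.} The first step is to show $\E\opnorm{A}\le 2+o(1)$, and in particular $\E\opnorm{A}\le 2+1/2$ for all $n$ above a small threshold. The cleanest route is Slepian/Sudakov--Fernique comparison. Write
\[
\lambda_{\max}(A)=\sup_{\|u\|=1}\langle u,Au\rangle
\]
and compare the Gaussian process $u\mapsto\langle u,Au\rangle$ with
\[
u\mapsto \sqrt{2/n}\,\langle g,u\rangle\cdot\sqrt{2}\;\text{-type processes}.
\]
The standard Gordon-type comparison for symmetric matrices gives $\E\lambda_{\max}(A)\le 2$. Symmetry then handles $\lambda_{\min}$, and $\opnorm{A}=\max(\lambda_{\max},-\lambda_{\min})$ costs only an extra $O(n^{-1/2})$ via concentration. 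Alternatively, one can invoke Davidson--Szarek's estimate $\E\lambda_{\max}\le 2$ directly.

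\emph{Step 2: Lipschitz concentration.} The map $A\mapsto\opnorm{A}$ is $1$-Lipschitz in Frobenius norm. Parametrize $A$ by independent standard Gaussians $g_{ij}$, $i\le j$, via $A_{ij}=g_{ij}/\sqrt n$ for $i\ne j$ and $A_{ii}=\sqrt{2/n}\,g_{ii}$. Then
\[
\|A\|_F^2=\tfrac{2}{n}\sum_{i\le j} g_{ij}^2,
\]
so the Lipschitz constant in $g$ is $\sqrt{2/n}$. Gaussian concentration gives
\[
\Pr\{\opnorm{A}>\E\opnorm{A}+s\}\le e^{-ns^2/4}.
\]
Applied separately to $\lambda_{\max}$ and $-\lambda_{\min}$, with a union bound, this gives
\[
\Pr\{\opnorm{A}>2+s\}\le 2e^{-ns^2/4}.
\]

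\emph{Step 3: conclude.} Take $s=1$, which yields exactly $2e^{-n/4}$, provided the mean bound in Step 1 is $\le 2$ exactly rather than $2+o(1)$. This is why the comparison argument must give the sharp $\E\lambda_{\max}\le 2$ rather than merely $2+o(1)$.

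The main obstacle is this sharp, non-asymptotic mean bound. If only $\E\lambda_{\max}\le 2+Cn^{-1/2}$ were available, the constant would degrade slightly; one would then absorb the loss for $n$ large, which suffices for our purposes since the paper's statements are all for $n\ge n_0$. The rest is routine.
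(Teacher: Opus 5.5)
Your proposal is correct and follows the paper's argument. The paper cites the one-sided Davidson--Szarek eigenvalue tail bound $\Pr\{\lambda_{\max}(A)>2+t\}\le e^{-nt^2/4}$ and applies it to $A$ and $-A$ at $t=1$, which is exactly your Step 3 union bound; your Steps 1--2 reprove that cited bound by the standard Gaussian-comparison plus Lipschitz-concentration argument. Your worry about the mean is unfounded and the $n_0$ fallback is not needed. Comparing $\langle u,Au\rangle$ with $Y_u=2n^{-1/2}\langle g,u\rangle$, the increments satisfy $\frac4n(1-\langle u,v\rangle^2)\le\frac8n(1-\langle u,v\rangle)$ for unit $u,v$, so Sudakov--Fernique gives $\E\lambda_{\max}(A)\le 2n^{-1/2}\E\|g\|\le 2$ exactly.
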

\begin{proof}
This is given by the Gaussian eigenvalue tail bound of \cite[Theorem II.11]{davidson2001local} applied to $A$ and $-A$.
\end{proof}

\subsection{Algorithm} We display the full algorithm in \pref{alg:main}.  
First, we run ASL-TAP-JE up to stopping time $\tau_c$ as derived in \cite[\S 2.2--2.3]{davies2026potential}; we separate this out into \pref{alg:asl-ta-je-dre}. As in \cite[Algorithm 3]{davies2026potential} we conduct rejection sampling as described in \cite[\S 2.3]{davies2026potential} to turn our our weighted sample for $\rh_T$ into an approximate sample for $\mu_T$. Finally, we sample from the localized distribution $\mu_{y_T}$ %
using our approximate sample as a warm-start.

\begin{algorithm}[!ht]
\caption{SK Sampler}
\label{alg:main}
\begin{algorithmic}[1]
\INPUT matrix $A$, inverse temperature $\be$, total time $T$ which is a multiple of step size $\eta$. %
\OUTPUT Approximate sample from Gibbs measure $\mu_A$.
\State \textbf{Part 1:} Approximately sample from $\mu_T$. 
\State Run \pref{a:ars} (Approximate Rejection Sampler) %
with subroutine for sampling and density ratio estimation given by \pref{a:asl-ta-je-dre} (ASL-TAP-JE with density ratio estimation) to obtain $\td y_T$.
\State \textbf{Part 2:} Approximately sample from localized distribution $\mu_{\beta A,\td y_T}$.
\State Run the sampler for $\mu_{\be A, \si_0}$ restricted to the Hamming ball $B(\si_0, c(\be)n)$, given by \pref{thm:localized-sampler} (\cite{kumar2026high}).
\end{algorithmic}
\end{algorithm}

\begin{algorithm}[!ht]
\caption{(ASL-TAP-JE) with density ratio estimation}
\begin{algorithmic}[1]
\INPUT matrix $A$, inverse temperature $\be$, total time $T$, step size $\eta$, number of steps $S$ for mirror descent, error parameter $\ep$
\State Let $\td y_0=0$, $\td \mg_0=0$, $\td x_0=0$, and $\tw_0=0$. 
\For{$t\in \{0,\eta,\ldots, T-\eta\}$}
    \State Draw $\xi_t \sim \calN(0,I_n)$. 
    \State $\td y_{t+\eta} = \td y_t + \eta \td\mg_t + \sqrt{\eta}\xi_t$.
    \State $\td x_{t+\eta}^{(0)} = \td x_t +\eta \ba{D(\td\mg_t)\hQ(\td\mg_t) (\td \mg_t - f(\td \mg_t)) + \ED\pa{\hQ(\td\mg_t)^2} D(\td \mg_t)^2\td \mg_t} + \sqrt{\eta}D(\td \mg_t)\hQ(\td \mg_t) \xi_t$.
    \For{$s\in \{0,\ldots, S-1\}$} \Comment{Solve $\nabla_m\calF_\TAP(\mg,y) = 0$ for $\mg$ when $y=\td y_{t+\eta}$ using mirror descent}
    \label{line:asl-tap-je-mirror-descent}
        \State $\td x_{t+\eta}^{(s+1)} = \td x_{t+\eta}^{(s)} - \lm\gd_m \FT(\tanh(\td x_{t+\eta}^{(s)}),\td y_{t+\eta})$ where $\gd_m \FT$ is given in \eqref{e:dTAP} and $\lm=C_{\TAP}(\beta)^{-1}$ in \ref{d:Q-reg}.
    \EndFor
    \State $\td x_{t+\eta} = \td x_{t+\eta}^{(S)}$.
    \State $\td m_{t+\eta} = \tanh(\td x_{t+\eta})$. 
    \State $\td w_{t+\eta}=\td w_t + \fc{\eta}2 \ba{\Tr(\hQ(\td\mg_t))+ \ve{\td\mg_t}^2}$.
    \State If $\gd^2 \FT(\td m_{t+\eta})\not\succ cD(m)$, then restart.
\EndFor
\State Estimate $\hat Z_{\be A, \td y_T}$ 
using simulated annealing (\cite[Algorithm 5]{davies2026potential}) with sampler given by given by \pref{thm:localized-sampler} (\cite{kumar2026high}).
\OUTPUT $\pa{\td y_T , e^{\tw_T}\hat Z_{\be A, \td y_T}e^{\FT(\td \mg_T,\td y_T)}}$
\end{algorithmic}
\label{a:asl-ta-je-dre}
\label{alg:asl-ta-je-dre}
\end{algorithm}

\section{Quantitative SDE analysis}\label{sec:sde-analysis}

We first recall the various processes which appear in our algorithm and analysis. For a full derivation, see \cite[\S3]{davies2026potential}.
First, the ideal process we would like to approximate is stochastic localization in $y$-space or Hessian dynamics in $m$-space: 
\begin{align}
    \savetagequation{e:SL}{SL}{dy_t &= m_t dt + dB_t,& y_0&=0},\\
    \savetagequation{e:HD}{HD}{dm_t &= Q_tdB_t,& m_0&=\E_{x_0 \sim \mu}[\sigma]=0},\\
    \mg_t &= \an{\si}_{y_t} = \E_{\mu_{y_t}}\si, & Q_t &= \Cov(\mu_{y_t}),\label{e:mg}
\end{align}
where $\mu_{y}(\si) \propto \mu(\si)e^{\an{y,\si}}$ and $\mu=\mu_{\be A}$ is the desired Gibbs measure. 
Two natural algorithmic processes arise from using an estimate of $m_t$ in \eqref{e:SL} or an estimate of $Q_t$ in \eqref{e:HD}, and then enforcing the TAP equation 
\begin{align}
\savetagequation{e:TAP}{TAP}{0=\gd\FT(m,y) &= -\be A m - y + \ub{\rc 2\log \pf{1+\mg}{1-\mg}}{\tanh^{-1}(\mg)} 
    + \be^2 \pa{1-\rc n \ve{m}_2^2}m}
\end{align}
to solve for the other variable ($y$ or $m$). This constraint is given by differentiating \eqref{e:TAP}, which gives
\begin{align}
\label{e:dTAP}
        d\hat y_t &= \hat Q(\hat \mg_t)^{-1} d\hat \mg_t + f(\hat \mg_t) dt.
\end{align}
Define $\hQ(m)$ as in \eqref{e:Hess-FTAP}.
Then the two options give 
\begin{align}
\savetagequation{e:ASL-TAP}{ASL-TAP}{&\begin{cases}\begin{aligned}
        d\hat y_t &= \hat \mg_tdt+dB_t\\
        d\hat \mg_t &= \hQ(\hat \mg_t)\pa{\hat \mg_t - f(\hat \mg_t)} dt + \hQ(\hat \mg_t)dB_t
    \end{aligned}\end{cases}}\\
\savetagequation{e:PHD-TAP}{PHD-TAP}{&\begin{cases}\begin{aligned}
        d\hat \mg_t &= \hQ(\hat \mg_t)dB_t\\
        d\hat y_t &= f(\hat \mg_t)dt + dB_t 
    \end{aligned}\end{cases}}
\end{align}
These are defined if $\hm_t$ stays inside a safe set $\Safe(c)$, and we can extend then outside by modifying the drift and diffusion terms appropriately---see \pref{sec:well-posed} and \pref{sec:transportation}. For \eqref{e:ASL-TAP}, we define Jarzynski weights by 
    \begin{align}
    \savetagequation{e:JE}{JE}{
dw_t &= \om(\hm_t)\,dt, & 
\om(m):&= \rc 2 \ba{\Tr(\hQ(m)) + \ve{m}^2}}.
    \end{align}
Finally, for analysis purposes, we also need to consider \eqref{e:SL} and keep track of the estimated magnetization for tilt $y_t$ according to \eqref{e:TAP}, which we denote as $m^y_t$ (i.e., we consider $y$ under the law of the SL process, but find what the $m$ would be if it were the algorithmic process). We derive this by solving \eqref{e:dTAP} with $d\hy_t$ replaced by $dy_t$ in \eqref{e:SL}. 
\begin{align}
\savetagequation{e:SL-TAP}{SL-TAP}{&\begin{cases}\begin{aligned}
        dy_t &= \mg_t\,dt+dB_t\\
        dm^y_t &= \hQ(m^y_t) (dy_t - f(m^y_t)\,dt)= \hQ(m^y_t) ((m_t- f(m_t^y)) \,dt + dB_t).
\end{aligned}\end{cases}}
\end{align}

\subsection{$O(1)$ Wasserstein and KL bounds}

We first consider a general setup and give a Wasserstein bound for general SDE's with different drift and diffusion terms, when Lipschitzness holds locally. 
Let $\calS\subeq \Om \subeq \R^n$, where $\calS$ and $\Om$ are open and $\calS$ is considered the ``safe set''.
Consider 2 It\^o processes
\begin{align}\label{e:sdes}
\begin{split}
    dX_t &= f_t%
    \,dt + \Si_t%
    \,dB_t\\
    d\hat X_t &= \hat f_t%
    \,dt + \hat \Si_t %
    \,dB_t
    \end{split}
\end{align}
and assume that pathwise unique strong solutions exist for $X_t$, and for $\hat X_t$ up to the first exit time 
\[\tau = \inf\set{t\ge 0}{\hat X_t\nin \calS}.\]
Suppose that $X_0=\hat X_0$. 
We will think of $X_t$ as an ideal process and $\hat X_t$ as an approximate process. 
Define the expanded safe set and safe pair set by 
\allowdisplaybreaks
\begin{align*}
\calS(r) &= \set{x\in \Om}{B(x, r)\cap \Om\subeq \calS},\\
\calS^2(r_1,r_2) &= \set{(x_1,x_2)\in \Om^2}{x_1\in \calS(r_1), \, \ve{x_1-x_2} < r_2},\quad r_1\ge r_2\\
\calS^2(r) &= \calS^2(r,r),
\end{align*}
and define the path-space analogues by 
\allowdisplaybreaks
\begin{align*}
\fS_t &= \set{\ga}{\forall s\in [0,t], \, \ga_s\in \calS}\\
\fS_t (r) &= \set{\ga}{\forall s\in [0,t], \, \ga_s\in \calS(r)}\\
\fS_t^2(r_1,r_2) &= \set{(\ga_1,\ga_2)}{\forall s\in [0,t], \, (\ga_{1,s}, \ga_{1,t})\in \calS^2(r_1,r_2)},\quad r_1\ge r_2\\
\fS_t^2(r) &=\fS_t^2(r,r).
\end{align*}
Note that if $(\ga_1,\ga_2)\in \fS_t^2(r_1,r_2)$, then $\ga_1,\ga_2\in \fS_t$.
Note that if we apply this to the safe set $\calS=\Safe(c)$, then for the expanded safe set of \ref{d:safe-set-stay-inside}, 
\[
\set{(y_{[0,t]}, m_{[0,t]})}{\forall t\in [0,t],\,B_{\square}(m_t, r)\subeq \Safe(c)}
\subeq \calS(r),
\]
so the $1-o_n(1)$ probability transfers to $\calS(r)$.
\begin{lemma}[Gr\"onwall bound for Wasserstein error of SDE's, under local Lipschitzness]\label{l:W-gw}
Consider the setup of \eqref{e:sdes} where $X_t$ and $\hat X_t$ are diffusion processes, that is, $f_t = f_t(X_t)$, $\Si_t = \Si_t(X_t)$, $\hat f_t = \hat f_t(X_t)$, $\hat \Si_t = \hat \Si(\hat X_t)$.
Suppose also that for $t\in [0,T]$,
\begin{enumerate}
    \item 
    $\E \ba{
    \one_{\tau(\hat X_t)>t}
    \ve{f_t(\hat X_t)-\hat f_t(\hat X_t)}^2}\le \ep_f(t)^2$ and 
    $\E \ba{
    \one_{\tau(X_t)>t}
    \ve{\Si_t(X_t)-\hat \Si_t(X_t)}_F^2}\le \ep_\Si(t)^2$, where $X_t, \hat X_t$ evolve according to the SDE \eqref{e:sdes}.
    \item 
    $f_t$ is $L_f$-segmentwise Lipschitz and 
    $\hat \Si_t$ is $L_\Si$-segmentwise Lipschitz on $\calS$ (with respect to the $\ve{\cdot}_2$ and $\ve{\cdot}_F$ norms), %
    for some $L_f, L_\Si>0$.
    \item 
    $\P(X_{[0,T]} \in \fS_T(r)) \ge 1-\de_1$.
\end{enumerate}
    Then for $t\in [0,T]$, 
    with $X_t$ and $\hat X_t$ driven by the same Brownian motion in \eqref{e:sdes} (i.e. synchronous coupling), 
we have 
    \[
\E \ba{\one_{\fS^2_t(r)}[X_{[0,t]}, \hat X_{[0,t]}]\ve{X_t -\hat X_t}^2} \le 
\int_0^t 
\pa{\fc{\sqrt 2}{L_f} \ep_f(s)^2 + 2\ep_\Si(s)^2}
e^{(2\sqrt 2 L_f + 2L_\Si^2)(t-s)}ds.
    \]
    Moreover, if $r_2\le r$ and $t\le \rc{2L_1}\log r_2$,
    \[
\P[(X_{[0,t]}, \hat X_{[0,t]}) \nin \fS^2(r,r_2)] \le 
\de_1 + 
\fc{32 tL_\Si^2 + \int_0^t 32 \ep_\Si(s)^2 + 8 \ep_f(s)\,ds}{\log r_2} + \rc{r_2}.
    \]
\end{lemma}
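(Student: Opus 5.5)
The plan is to run the standard synchronous-coupling Grönwall argument, but localized to the event that the pair of paths stays in the safe pair set. Set $\Delta_t = X_t - \hat X_t$ and define the stopping time
\[
\tau' = \inf\set{s\ge 0}{(X_s,\hat X_s)\nin \calS^2(r)}.
\]
Since $\fS^2_t(r)$ is the event $\{\tau'>t\}$, it suffices to bound $g(t) := \E\ba{\ve{\Delta_{t\wedge\tau'}}^2}$, because $\one_{\fS^2_t(r)}\ve{\Delta_t}^2 \le \ve{\Delta_{t\wedge\tau'}}^2$. By Itô's formula applied to the stopped process,
\[
d\ve{\Delta_s}^2 = 2\an{\Delta_s, f_s(X_s)-\hat f_s(\hat X_s)}\,ds + \ve{\Si_s(X_s)-\hat\Si_s(\hat X_s)}_F^2\,ds + dM_s
\]
for $s<\tau'$, where $M$ is a local martingale. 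Its expectation vanishes after a further localization, using that all quantities are bounded on the safe set.

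Next split each difference through the intermediate term:
\[
f(X)-\hat f(\hat X) = \pa{f(X)-f(\hat X)} + \pa{f(\hat X)-\hat f(\hat X)},\qquad \Si(X)-\hat\Si(\hat X) = \pa{\Si(X)-\hat\Si(X)} + \pa{\hat\Si(X)-\hat\Si(\hat X)}.
\]
Before $\tau'$, $X_s\in\calS(r)$ and $\ve{X_s-\hat X_s}<r$, so the segment $[X_s,\hat X_s]$ lies in the ball $B(X_s,r)\cap\Om\subeq\calS$. Segmentwise Lipschitzness then gives $\ve{f(X)-f(\hat X)}\le L_f\ve{\Delta}$ and $\ve{\hat\Si(X)-\hat\Si(\hat X)}_F\le L_\Si\ve\Delta$. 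This is the reason for using $\calS^2(r)$ rather than $\calS$.

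The error terms are handled as follows:
\begin{itemize}
\item For the drift cross term, Young's inequality with parameter $\sqrt2 L_f$ gives $2\an{\Delta, e_f}\le \sqrt2 L_f\ve\Delta^2 + \fc{1}{\sqrt2 L_f}\ve{e_f}^2$.
\item For the diffusion term, $(a+b)^2\le 2a^2+2b^2$ is used.
\item The indicators $\one_{\tau>s}$ in hypothesis (1) are dominated by $\one_{\tau'>s}$, since before $\tau'$ both processes lie in $\calS$.
\end{itemize}
This yields
\[
g'(t)\le (2L_f+\sqrt2 L_f+2L_\Si^2)\,g(t) + \tfrac{\sqrt2}{L_f}\ep_f^2+2\ep_\Si^2.
\]
The constant needs matching; a different Young split gives $2\sqrt2 L_f$. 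Grönwall then gives the first claim. The precise constants are bookkeeping and I will tune the Young parameter to match the stated exponent.

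For the probability bound, decompose the failure event. Either $X$ leaves $\calS(r)$, which has probability at most $\de_1$ by hypothesis (3), or $X$ stays in $\calS(r)$ and $\ve{\Delta}$ first reaches $r_2$ at time $\tau'\le t$. The latter is the main obstacle. Markov's inequality on $g$ would give an exponential-in-$t$ bound, whereas the statement has a $1/\log r_2$ rate together with the restriction $t\le\rc{2L_1}\log r_2$. This suggests applying Itô's formula to $\log(1+\ve{\Delta_{s\wedge\tau'}}^2)$ instead. Its drift is bounded by a constant times $L$ plus $\ep_f$ and $\ep_\Si^2$ terms, uniformly in $\ve\Delta$, because dividing by $1+\ve\Delta^2$ linearizes the growth. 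The quadratic variation of its martingale part is bounded by $4\ve{\Si-\hat\Si}_F^2/(1+\ve\Delta^2)$.

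Then $\E\sup_{s\le t}\log(1+\ve{\Delta_{s\wedge\tau'}}^2)$ is controlled by the integrated drift plus a BDG/Doob bound on the martingale. On the bad event this supremum is at least $\approx 2\log r_2$, and Markov's inequality gives the $(32tL_\Si^2+\int 32\ep_\Si^2+8\ep_f)/\log r_2$ term. The linear drift contribution $2L_1 t\le\log r_2$ is absorbed by subtracting it on the event, which explains the time restriction. The residual $1/r_2$ would come from the Doob-type maximal inequality applied to the martingale term, or from handling $\ve\Delta\le1$ versus $>1$. I expect to fiddle with that choice, and with the exact definition of $L_1$, to reproduce the stated constants.
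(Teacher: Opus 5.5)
Your Gr\"onwall part follows the paper's argument. You stop at the exit $\tau'$ from $\calS^2(r)$, split the drift difference through $f(\hat X)$ and the diffusion difference through $\hat\Si(X)$, use $[X_s,\hat X_s]\subeq B(X_s,r)\cap\Om\subeq\calS$ for the Lipschitz terms, and integrate. Two small repairs are needed:
\begin{itemize}
\item The comparison you need is $\one_{\tau'>s}\le\one_{\tau>s}$. Your justification proves this, but the sentence states it backwards.
\item Your displayed inequality mixes constants. Young with parameter $\sqrt2L_f$ gives error coefficient $1/(\sqrt2L_f)$, not $\sqrt2/L_f$. The paper applies Young to the whole cross term, $2\an{\De,f(X)-\hat f(\hat X)}\le c\ve{\De}^2+c^{-1}\ve{f(X)-\hat f(\hat X)}^2$. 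It then uses $\ve{f(X)-\hat f(\hat X)}^2\le2L_f^2\ve{\De}^2+2\ve{f(\hat X)-\hat f(\hat X)}^2$ with $c=\sqrt2L_f$, which gives exactly $2\sqrt2L_f$ and $\sqrt2/L_f$. In your split, the parameter $L_f/\sqrt2$ gives exponent $(2+1/\sqrt2)L_f\le2\sqrt2L_f$ with coefficient $\sqrt2/L_f$, so the stated bound holds a fortiori.
\end{itemize}

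The genuine gap is in the probability bound. Your setup matches \pref{l:safe-pair}: It\^o on $\log(1+\ve{\De}^2)$, with $L_1=2L_f+4L_\Si^2$ and $F_s=2\ve{f(\hat X_s)-\hat f(\hat X_s)}+4\ve{\Si(X_s)-\hat\Si(X_s)}_F^2$. The drift is at most $L_1+F_s$, and the quadratic-variation density is at most $8L_\Si^2+8\ve{\Si(X_s)-\hat\Si(X_s)}_F^2$.

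Set $A=8tL_\Si^2+8\int_0^t\ep_\Si^2$ and $B=\int_0^t(2\ep_f+4\ep_\Si^2)$. Markov on the supremum, with BDG or Doob for the martingale part $N$, only yields $\E\sup_{s\le t}N_s\le2(\E[N]_t)^{1/2}\le2\sqrt A$. After subtracting $L_1t\le\rc2\log r_2$ you get roughly $\de_1+\fc{2B+4\sqrt A}{3\log r_2}$.

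That bound is valid, and it would suffice downstream, where everything is $O(1)$ and $r_2\to\infty$. It is not the stated inequality, and no tuning of this route gives it: for small $A$, $\sqrt A/\log r_2$ is not dominated by $A/\log r_2+1/r_2$. The linear dependence on $A$ and the $1/r_2$ remainder come neither from Doob nor from splitting $\ve{\De}\le1$.

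The missing tool is the exponential martingale inequality $\P(\sup_{s\le t}N_s\ge x,\,[N]_t\le v)\le e^{-x^2/(2v)}$. The paper applies it on the event $\calA=\{[N]_{t\wedge\tau'}\le2A/\de,\ \int_0^{t\wedge\tau'}F\le2B/\de\}$, which has probability at least $1-\de$ by Markov, with $x=2\log r_2-L_1t-2B/\de$. It then chooses $\de=4\max\{A,B\}/\log r_2$. This forces $x\ge\log r_2$ and makes the exponential term at most $1/r_2$. The Markov cost is $\de\le(32tL_\Si^2+\int_0^t(32\ep_\Si^2+8\ep_f))/\log r_2$, which is exactly the stated bound.
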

Note in particular that if $\ep_f(t), \ep_{\Si}(t), L_f,L_\Si = O(1)$ (uniformly over $t\in [0,T]$), $\de_1 = o_n(1)$, and $r = \om_n(1)$, then the expectation is $O(1)$ and the bad probability is $o_n(1)$.
\begin{prf}
    Let $\calF_t$ be the filtration for the Brownian motion.
    Let $\De_t:= X_t-\hat X_t$. 
    Note 
    \[
d\De_t = (f_t(X_t) - \hat f_t(\hat X_t))dt
+ (\Si_t(X_t) - \hat \Si_t(\hat X_t))dB_t.
    \]
    By Itô's formula, for $t\ge u$, 
    \allowdisplaybreaks
    \begin{align*}
d\ve{\De_t}^2 &= 
2\an{\De_t, (f_t(X_t) - \hat f_t(\hat X_t))dt
+ (\Si_t(X_t) - \hat \Si_t(\hat X_t))dB_t}
+ \ve{\Si_t(X_t) - \hat \Si_t(\hat X_t)}_F^2dt\\
\implies
\ddd t\E \ba{\ve{\De_t}^2 \mid \calF_u}
&= 2 \E \ba{\an{\De_t, f_t(X_t) - \hat f_t(\hat X_t)} \mid \calF_u} + 
\E \ba{\ve{\Si_t(X_t) - \hat \Si_t(\hat X_t)}_F^2\mid \calF_u}\\
&\le c\E \ba{\ve{\De_t}^2\mid \calF_u} + \rc{c}\E\ba{ \ve{f_t(X_t)-\hat f_t(\hat X_t)}^2\mid \calF_u} + \E\ba{ \ve{\Si_t(X_t) - \hat \Si_t(\hat X_t)}_F^2\mid \calF_u}
    \end{align*}
    for any $c>0$.
Now for $x\in \fS(r)$ and $\hat x\in B(x,r)$, by the definition of $\fS(r)$ and segmentwise Lipschitzness on $\calS$, 
\allowdisplaybreaks
\begin{align*}
    \ve{\hat f_t(\hat x) - f_t(x)}^2 &\le 2 \ve{ f_t(\hat x) - f_t(x)}^2 + 
    2 \ve{\hat f_t(\hat x) - f_t(\hat x)}^2
    \le 2L_f^2 \ve{\hat x - x}^2 + 2 %
    \ve{\hat f_t(\hat x) - f_t(\hat x)}^2
    \\
    \ve{\hat \Si_t(\hat x) - \Si_t(x)}_F^2
    &\le 2\ve{\hat \Si_t(\hat x) - \hat \Si_t(x)}_F^2 + 2\ve{\hat \Si_t(x) -  \Si_t(x)}_F^2
        \le 
        2L_{\Si}^2\ve{\hat x - x}^2 + 2 %
        \ve{\hat \Si_t(x) -  \Si_t(x)}_F^2.
    \end{align*}
    Plugging in gives that for $t=u$, on the event $\fS^2(r)$, 
    \allowdisplaybreaks
    \begin{align*}
        \ddd t\E \ba{\ve{\De_t}^2\mid \calF_u}
        &\le 
        \pa{c+\fc{2L_f^2}{c} + 2L_\Si^2} \E \ba{\ve{\De_t}^2\mid \calF_u} + \fc{2}{c} \E \ba{\ve{\hat f_t(\hat X_t) - f_t(\hat X_t)}^2\mid \calF_u} + 2 \E \ba{\ve{\hat \Si_t(X_t) -  \Si_t(X_t)}_F^2\mid \calF_u}\\
        &= 
        \pa{2\sqrt 2 L_f + 2L_\Si^2} \E \ba{\ve{\De_t}^2\mid \calF_u} + \fc{\sqrt 2}{L_f} \E \ba{\ve{\hat f_t(\hat X_t) - f_t(\hat X_t)}^2\mid \calF_u} + 2\E \ba{\ve{\hat \Si_t(X_t) -  \Si_t(X_t)}_F^2\mid \calF_u}
    \end{align*}
     by taking $c=\sqrt 2 L_f$.
Now take $L = 2\sqrt 2 L_f + 2L_\Si^2$ and %
$E_t(x,\hat x) = \fc{\sqrt2}{L_f}\ve{\hat f_t(\hat x) - f_t(\hat x)}^2 + 2\ve{\hat \Si_t(x) -  \Si_t(x)}_F^2$, and $\ep(t) = \fc{\sqrt 2}{L_f} \ep_f(t)^2 + 2\ep_\Si(t)^2$.
This shows that for $t=u$, on the event $\fS^2(r)$,
\[
        \ddd t\E [M_t \mid \calF_u]\le 0
        \text{ where }
        M_t = e^{-Lt} \ve{\De_t}^2 - \int_0^t %
        E_t(X_t, \hat X_t) e^{-Ls}\,ds 
\]
Define the stopping time 
\[
\tau_2 = \inf\set{t\in [0,T]}{(X_{[0,t]}, \hat X_{[0,t]})\nin \fS^2_t(r)}.
\]
On the event that $\tau_2 \le u$, we have $\E[\ve{\De_{t\wedge \tau_2}}^2 \mid \calF_u]=\ve{\De_{\tau_2}}^2$, so the above chain of inequalities also holds. Hence 
\[
        \ddd t\E \ba{M_{t\wedge \tau_2}} \le 0.
\]
Hence
\allowdisplaybreaks
\begin{align*}
\E[ \one_{\fS_t^2(r)}[X_{[0,t]}, \hat X_{[0,t]}] \ve{X_t-\hat X_t}^2] \le \E\ba{\ve{\De_{t\wedge \tau_2}}^2} &\le 
\int_0^t 
\E\ba{\one_{\fS^2(r)}[X_{[0,t]}, \hat X_{[0,t]}] E_t(X_t,\hat X_t)}
e^{L(t-s)}ds\\
&\le \int_0^t 
\ep(s)
e^{L(t-s)}ds.
\end{align*}
The second part follows from the next lemma, \pref{l:safe-pair} with $L_b = L_f$, $\ep_b(t) = \ep_f(t)$, $L_C=L_\Si$, $\ep_C(t) = \ep_\Si(t)$.
\end{prf}

The following lemma shows that, given that an ``ideal'' process stays within the expanded safe set with high probability, given Lipschitz and expected error bounds for the drift and diffusion terms for the difference, the approximate process also stays close with high probability.
\begin{lemma}
\label{l:safe-pair}
    Consider the setup of \eqref{e:sdes}, and suppose that
    $\De_t = X_t - \hat X_t$ satisfies
    \[
d\De_t = b_t \,dt + C_t\,dB_t.
    \]
Define the stopping time $\tau_2 = \inf\set{t\ge 0}{(X_t,\hat X_t)\nin \calS^2(r)}$.
Suppose that we can write $b_t = b_t^{\mathrm{Lip}} + b_t^{\mathrm{err}}$ and 
$C_t = C_t^{\mathrm{Lip}} + C_t^{\mathrm{err}}$ such that when $(X_t, \hat X_t)\in \calS^2(r)$,
\begin{align*}
    \ve{b_t^{\mathrm{Lip}}} &\le L_b \ve{\De_t},&
    \E \ba{\one_{t<\tau_2}\ve{b_t^{\mathrm{err}}}^2} &\le \ep_b(t)^2\\
    \ve{C_t^{\mathrm{Lip}}}_F &\le L_C \ve{\De_t},&
    \E \ba{\one_{t<\tau_2}\ve{C_t^{\mathrm{err}}}_F^2} &\le \ep_C(t)^2.
\end{align*}
Suppose that $\P(X_{[0,t]}\nin\fS_t(r)) \le \de_1$.
Then if $r\le r_2$ and $t\le \rc{2L_1}\log r_2$,
    \[
\P[(X_{[0,t]}, \hat X_{[0,t]}) \nin \fS^2(r,r_2)] \le 
\de_1 + 
\fc{32 tL_C^2 + \int_0^t 32 \ep_C(s)^2 + 8 \ep_b(s)\,ds}{\log r_2} + \rc{r_2}.
    \]
\end{lemma}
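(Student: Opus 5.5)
The plan is to control the pair process through a logarithmic Lyapunov function of the difference, stopped at the first bad time. We then use a maximal inequality to turn an $O(1)$ expected growth into a $1/\log r_2$ tail. I read $L_1$ in the hypothesis as the Lipschitz constant $L_b$ of the drift. I also read the conditions as the regime where $r_2$ is large and at most $r$, so that $\calS^2(r,r_2)$ makes sense.

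\textbf{Step 1 (reduction to two events).} Let $\tau_1$ be the first time $X$ leaves $\calS(r)$. Let $\tau_3=\inf\{s:\ve{\De_s}\ge r_2\}$, and set $\sigma=\tau_1\wedge\tau_3\wedge t$. The pair can only leave $\fS^2(r,r_2)$ before time $t$ if $X$ leaves $\fS_t(r)$ or if $\ve{\De}$ reaches $r_2$ first. By hypothesis the first event has probability at most $\de_1$. It remains to bound $\P(\tau_3\le \tau_1\wedge t)$. Before $\sigma$ we have $X_s\in\calS(r)$ and $\ve{\De_s}<r_2\le r$, so $(X_s,\hat X_s)$ is in the safe pair set. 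Hence the decompositions $b=b\lip+b\err$ and $C=C\lip+C\err$ with their bounds are available on $[0,\sigma)$.

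\textbf{Step 2 (Itô for $Y_s=\log(1+\ve{\De_s}^2)$).} Itô's formula gives
\[
dY_s=\frac{2\an{\De_s,b_s}+\ve{C_s}_F^2}{1+\ve{\De_s}^2}\,ds-\frac{2\ve{C_s^{\sT}\De_s}^2}{(1+\ve{\De_s}^2)^2}\,ds+\frac{2\an{\De_s,C_s\,dB_s}}{1+\ve{\De_s}^2}.
\]
The second drift term is nonpositive and can be dropped. The drift contributions are bounded as follows:
\begin{itemize}
\item Lipschitz drift: $2\an{\De,b\lip}/(1+\ve{\De}^2)\le 2L_b$, a deterministic linear growth.
\item Error drift: $2\ve{\De}\ve{b\err}/(1+\ve{\De}^2)\le\ve{b\err}$, whose expectation is at most $\ep_b(s)$ by Cauchy--Schwarz.
\item Diffusion correction: $\ve{C}_F^2/(1+\ve{\De}^2)\le 2L_C^2+2\ve{C\err}_F^2$.
\end{itemize}
The martingale term has quadratic variation density at most $4\ve{C}_F^2\ve{\De}^2/(1+\ve{\De}^2)^2\le 8L_C^2+8\ve{C\err}_F^2$. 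So the stopped local martingale $M_{s\wedge\sigma}$ is a true $L^2$ martingale with
\[
\E M_\sigma^2\le \int_0^t\pa{8L_C^2+8\ep_C(s)^2}ds.
\]

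\textbf{Step 3 (tail bound).} On $\{\tau_3\le\tau_1\wedge t\}$ we have $Y_\sigma\ge\log(1+r_2^2)\ge 2\log r_2$. The constraint $t\le\frac1{2L_b}\log r_2$ makes the deterministic part $2L_bt$ at most $\log r_2$. Therefore on this event
\[
\int_0^\sigma\pa{\ve{b\err}+2L_C^2+2\ve{C\err}_F^2}ds+\sup_{s\le\sigma}M_s\ \ge\ \log r_2 .
\]
Split the threshold into two halves.
\begin{itemize}
\item Integral half: Markov's inequality gives probability at most $2\int_0^t\pa{\ep_b+2L_C^2+2\ep_C^2}ds/\log r_2$.
\item Martingale half: Doob's (or Chebyshev's) maximal inequality on $\sup M$ gives a bound of order $\E M_\sigma^2/(\log r_2)^2$. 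I would simplify this either to a bound by $\int(32L_C^2+32\ep_C^2)/\log r_2$, using $\log r_2\ge1$, or by truncating at level $\log r_2$ at the cost of the additive $1/r_2$ term.
\end{itemize}
In either case the $+1/r_2$ term absorbs the discrepancy between $\log(1+r_2^2)$ and $2\log r_2$ and any truncation losses. Collecting terms, and using generous numerical constants, gives the stated bound
\[
\de_1+\frac{32tL_C^2+\int_0^t\pa{32\ep_C^2+8\ep_b}ds}{\log r_2}+\frac1{r_2}.
\]

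\textbf{Main obstacle.} The delicate point is Step 1's localization. The error bounds on $b\err$ and $C\err$ are only in expectation on $\{t<\tau_2\}$, so everything must be stopped at $\sigma\le\tau_2$ before taking expectations. One must also check that the Lipschitz decomposition is valid along the whole stopped path. This needs $\ve{\De}<r_2\le r$ together with $X\in\calS(r)$, which keeps $\hat X$ inside $\calS$, so the segmentwise Lipschitz bounds apply. Getting exactly the constants $32$, $8$ and the $1/r_2$ term is bookkeeping that I would not try to match precisely in the first pass.
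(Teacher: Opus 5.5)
Your argument is correct. It shares the paper's skeleton: the Lyapunov function $\log(1+\|\Delta_t\|^2)$, the same pointwise bounds on the drift and on the quadratic-variation density, and localization at a time no later than $\tau_2$. The concentration step, however, is genuinely different.

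In the paper, $L_1$ is defined inside the proof as $2L_b+4L_C^2$, so the diffusion correction goes into the deterministic growth rate. The paper then forms $N_t=\log(1+\|\Delta_t\|^2)-L_1t-\int_0^tF_s\,ds$ with $F_s=2\|b_s^{\mathrm{err}}\|+4\|C_s^{\mathrm{err}}\|_F^2$. It applies the exponential martingale inequality $\mathbb{P}(\sup N\ge x,\ [N]\le v)\le e^{-x^2/(2v)}$ on an event where $[N]$ and $\int F$ are controlled by Markov at level $\delta$, and finally tunes $\delta=4\max\{A,B\}/\log r_2$. The additive $1/r_2$ is exactly the leftover Gaussian tail.

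You instead keep only $2L_b$ as deterministic growth, move the $2L_C^2$ term into the Markov part, and apply plain Chebyshev to the stopped martingale using $\mathbb{E}M_\sigma^2=\mathbb{E}\,[M]_\sigma$. No maximal inequality is needed, since on the bad event the threshold is already exceeded at the single time $\sigma$. What your route buys:
\begin{itemize}
\item It is more elementary.
\item It only needs $2L_bt\le\log r_2$, which the paper's condition $L_1t\le\tfrac12\log r_2$ implies, so your reading of $L_1$ proves a slightly stronger statement.
\item The martingale contribution is $O((\log r_2)^{-2})$, so you do not need the $1/r_2$ term, and your truncation remark can be dropped.
\end{itemize}

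To land under the stated constants when $\log r_2\ge1$, split the threshold unevenly: $\tfrac13\log r_2$ for the drift integral and $\tfrac23\log r_2$ for the martingale. This gives coefficients $24$, $24$, $3$ in place of $32$, $32$, $8$. For $0<\log r_2<1$, a direct check shows the stated right-hand side is either at least $1$ or still dominates your bound.

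The paper's exponential inequality would only pay off if $[N]$ and $\int F$ were controlled almost surely. Since they are controlled only in expectation, the Markov step dominates in both proofs, and both give the same $1/\log r_2$ rate.
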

\begin{prf}
We will use a (super)martingale concentration inequality. Let $r_t = \sqrt{\ve{\De_t}^2+1}$.
By It\^o's formula,
\allowdisplaybreaks
\begin{align*}
    d\log (\ve{\De_t}^2 + 1) 
    & = \fc{2\an{\De_t, b_t \,dt + C_t\,dB_t}}{r_t^2}  
    + \fc{2\ve{C_t}_F^2}{r_t^2}\,dt
    - \fc{\ve{C_t^\top \De_t}^2}{r_t^4}dt.
\end{align*}
On the event $\fS_t^2(r)$, 
\allowdisplaybreaks
\begin{align*}
    \fc{\ve{b_t}}{r_t} &\le 
    \fc{\ve{b_t\lip}}{r_t} + 
    \fc{\ve{b_t\err}}{r_t}
    \le L_b  + 
    \ve{b_t\err}\\
    \fc{\ve{C_t}_F^2}{r_t^2} 
    &\le \fc{2\ve{C_t\lip}_F^2}{r_t^2} + \fc{2\ve{C_t\err}_F^2}{r_t^2}
        \le 
        2L_C^2%
         +  
        2\ve{C_t\err}_F^2.
\end{align*}
Hence, for $t<\tau_2$, 
\allowdisplaybreaks
\begin{align*}
    d\log (\ve{\De_t}^2 + 1) 
    & \le \pa{2L_b + 4L_C^2 + 2 \ve{b_t\err} + 4\ve{C_t\err}_F^2}dt + \fc{2\an{\De_t, C_t\,dB_t}}{r_t^2}.
\end{align*}
Hence, letting $L_1 = 2L_b + 4L_C^2$, $F_t =2 \ve{b_t\err} + 4\ve{C_t\err}_F^2$, and $N_t = \log (\ve{\De_t}^2 + 1)  - L_1 t - \int_0^t F_s\,ds$, for $t<\tau_2$,
\allowdisplaybreaks
\begin{align*}
    dN_t \le \fc{2\an{\De_t, C_t\,dB_t}}{r_t^2}.
\end{align*}
The quadratic variation is bounded as 
\allowdisplaybreaks
\begin{align*}
    [N]_{t\wedge \tau_2} &\le \int_0^{t\wedge \tau_2} \fc{4\ve{\De_s}^2 \opnorm{C_s}^2}{r_s^4}\,ds\le \int_0^{t\wedge \tau_2} \fc{4\ve{C_s}_F^2}{r_s^2}\,ds
    \le \int_0^{t\wedge \tau_2} 8L_C^2 + 8 \ve{C_s\err}_F^2\,ds\le 8tL_C^2 +  8\int_0^{t\wedge \tau_2} \ve{C_s\err}_F^2\,ds.
\end{align*}
We have the martingale concentration bound
\begin{align*}
    \P \pa{\sup_{s\le t\wedge \tau_2} N_s \ge x, \, [N]_{t\wedge \tau_2} \le v}\le e^{-\fc{x^2}{2v}}.
\end{align*}
We now take expectations for $[N]_{t\wedge \tau_2}$ and the random part of $N_{t\wedge \tau_2}$. %
We have
\allowdisplaybreaks
\begin{align*}
    \E[N]_{t\wedge \tau_2} &\le 8 tL_C^2 + 8\int_0^t \E\ba{\one_{s\le \tau_2}\ve{C_s\err}_F^2}\,ds
    \le 8 tL_C^2 + 8\int_0^{t} \ep_C(s)^2\,ds=:A\\
    \E \int_0^{t\wedge \tau_2} F_s\,ds &\le \int_0^t 2\ep_b(s) + 4 \ep_C(s)^2 \,ds=:B.
\end{align*}
By Markov's inequality, with probability $1-\de$,
\begin{align*}
    [N]_{t\wedge \tau_2} &\le \fc{2}{\de} A,&%
    \int_0^{t\wedge \tau_2} F_s\,ds &\le \fc{2}{\de} B. %
\end{align*}
Let $\calA$ be the event when both these hold. Then 
\begin{align*}
    \P \pa{\sup_{s\le t\wedge \tau_2} N_s \ge x, \, \calA}\le \exp\pa{-\fc{x^2}{4A/\de}}.
\end{align*}
Note that if $\sup_{s\le t\wedge \tau_2} \ve{\De_s}^2 \ge e^{L_1t+\fc{2}{\de}B+x}$, under $\calA$, then the above event holds. 
We have, for $e^{L_1s + \fc{2}{\de}B+x}\le r$ and $x\ge 0$, 
\allowdisplaybreaks
\begin{align*}
    &\P\pa{\{\forall t\in [0,T],\, X_t\in \calS\}^c\cup \bc{\sup_{s\le t}\ve{\De_s}^2 \ge e^{L_1t + \fc{2}{\de}B+x}}}\\
    &\le 
    \P\pa{\{\forall t\in [0,T],\, X_t\in \calS\}^c}
    + 
    \P\pa{\calA^c}
    + 
    \P\pa{\{\forall t\in [0,T],\, X_t\in \calS\}\cap \calA\cap \bc{\sup_{s\le t\wedge \tau_2}\ve{\De_s}^2 \ge e^{L_1t + \fc{2}{\de}B+x}}}\\
    &\le \de_1 + \de + \exp\pa{-\fc{x^2}{4A/\de}}.
\end{align*}
The first inequality is because, if  $\ve{\De_t}^2 \ge e^{L_1s + \fc{2}{\de}B+x}$, then either $t<\tau_2$ and hence $\sup_{s\le t\wedge \tau_2} \ve{\De_s}^2\ge e^{L_1s + \fc{2}{\de}B+x}$, or $t\ge \tau_2$. In the latter case, 
\[
\sup_{s\le t\wedge \tau_2} \ve{\De_s}^2\ge
\ve{\De_{\tau_2}}^2\ge r\ge e^{L_1s + \fc{2}{\de}B+x}
\]
as well. Choosing $x$ such that $e^{L_1s + \fc{2}{\de}B+x}=r_2^2\le r^2$ gives
\[
\P\ba{(X_{[0,t]}, \hat X_{[0,t]}) \nin \fS^2(r,r_2)}
\le 
\de_1 + \de + \exp\pa{-\fc{\de(2\log r_2 - L_1 t - \fc{2B}{\de})^2}{4A}}.
\]
If $L_1t\le \rc 2\log r_2$, then taking $\de = \fc{4\max\{A,B\}}{\log r_2}$ gives that this is $\le \de_1 + \fc{4\max\{A,B\}}{\log r_2} + \rc{r_2}$. 
\end{prf}

Given a path $(y_t)_{t\ge 0}$, define $m_t^y$ to be the magnetization process if $(m_t^y, y_t)$ solves (ASL-TAP), up to the exit time from $\calS_{\be, A}(c)$. 
Note that because we only have unique solutions to the TAP equation locally but not globally, $m_t^y$ cannot simply be defined as a function $\hat m(y_t)$ as in \cite{davies2026potential}, but can depend on the path. If $\hat y(\Safe(c))$ is not simply connected, then a loop in $y$-space may lift to a path with different endpoints in $m$-space.
\begin{lemma}[Error in estimated magnetization]
\label{l:m-error}
Suppose that \ref{d:Q-error}, \ref{d:safe-set-stay-inside}, \ref{d:Q-reg}, \ref{d:drift-error}, and \ref{d:Q-Lip} hold to time $T$.
    Let $y_t$ denote the SL process, and $\mg_t=\an{\si}_{y_t}$ be the true magnetization for tilt $y_t$. %
    Then for each $t\in [0,T]$ there is a constant $C(t)$ such that 
    \[
\E\ba{\one_{\calG_t}\ve{m_t^y-\mg_t}^2}\le C(t),
    \]
    where $\calG_t = \{((m,y)_{[0,t]}, (\hm,\hy)_{[0,t]})
\in \fS_t^2(\rh\sqrt n,c\rh\sqrt n/2)\}$ and $\P(\calG_t)=1-o_n(1)$. Here, $c=\cT(\be, T)$ and $\rh = \rh(\be, T)$ as in \ref{d:safe-set-stay-inside}.
\end{lemma}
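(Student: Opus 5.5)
We apply \pref{l:W-gw} (and its companion \pref{l:safe-pair}) to the pair of processes in which the ideal process is $X_t=m_t$ evolving by \eqref{e:HD}, i.e.\ $dm_t = Q_t\,dB_t$. The approximate process is $\hat X_t = m^y_t$ evolving by \eqref{e:SL-TAP}, i.e.\ $dm^y_t = \hQ(m^y_t)\pa{(m_t - f(m^y_t))\,dt + dB_t}$. Both are driven by the same Brownian motion $B_t$, so this is exactly the synchronous coupling of \pref{l:W-gw}, and $m_0 = m^y_0 = 0$.

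The difference $\De_t = m_t - m^y_t$ then satisfies
\[
d\De_t = -\hQ(m^y_t)(m_t - f(m^y_t))\,dt + (Q_t - \hQ(m^y_t))\,dB_t.
\]
Because the drift of $m^y_t$ depends on $m_t$ as well as on $m^y_t$, I would use the more flexible decomposition of \pref{l:safe-pair} together with the Gr\"onwall argument in the proof of \pref{l:W-gw}, rather than the diffusion-form hypothesis of \pref{l:W-gw} verbatim. The drift splits as
\[
-\hQ(m^y_t)(m_t - f(m^y_t)) = \ub{-\hQ(m^y_t)(m^y_t - f(m^y_t)) + \hQ(m_t)(m_t-f(m_t))}{b\lip_t} \;\ub{-\hQ(m_t)(m_t - f(m_t)) - \hQ(m^y_t)(m_t - m^y_t)}{b\err_t + \text{Lip}}.
\]
\begin{itemize}
\item The first bracket is bounded by $L_{\mathrm{drift}}\ve{\De_t}$ by \ref{d:Q-Lip}(b), since both points lie in $\Safe(c)$ on the safe-pair event.
\item The term $\hQ(m^y_t)\De_t$ is bounded by $c^{-1}\ve{\De_t}$ using \ref{d:Q-reg}, since $D^{-1}\preceq I$.
\item The term $\hQ(m_t)(m_t-f(m_t))$ has norm at most $c^{-1}\ep_{\mathrm{drift}}$ by \ref{d:Q-reg} and \ref{d:drift-error}. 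This applies because $m_t\in\Safe(c)$ on the event from \ref{d:safe-set-stay-inside}.
\end{itemize}
The diffusion term splits similarly:
\[
Q_t - \hQ(m^y_t) = \pa{\hQ(m_t)-\hQ(m^y_t)} + \pa{Q_t-\hQ(m_t)}.
\]
The first piece has Frobenius norm at most $L\ve{\De_t}$ by \ref{d:Q-Lip}(a). For the second piece, write $Q_t - \hQ(m_t) = \hQ(m_t)(\gd^2\FT(m_t)Q_t - I)$, whose Frobenius norm is at most $c^{-1}\ve{\gd^2\FT(m_t)Q_t-I}_F$ on the safe event. Its expected square is therefore at most $c^{-2}\ep_{\mathrm{cov}}(\be,t)^2$ by \ref{d:Q-error}, uniformly bounded on $[0,T]$.

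With these bounds, the Itô computation in the proof of \pref{l:W-gw} goes through unchanged: all Lipschitz constants are $O(1)$ and all error terms are $O(1)$ in expectation. On the event $\fS^2_t(r)$ with $r=\rho\sqrt n$, this gives $\E\ba{\one\,\ve{\De_t}^2}\le \int_0^t \ep(s)e^{L(t-s)}ds =: C(t)$, a constant depending only on $\be$ and $t$.

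For the probability $\P(\calG_t)=1-o_n(1)$, I would apply \pref{l:safe-pair} with $r=\rho\sqrt n$, $r_2 = c\rho\sqrt n/2$, and $\de_1=o_n(1)$ from \ref{d:safe-set-stay-inside} (transferred to $\fS_t(r)$ as noted before \pref{l:W-gw}). Since $\log r_2\to\infty$, the bound gives $\P(\calG_t^c)\le o_n(1) + O(1)/\log n + O(n^{-1/2})$. The condition $t\le \rc{2L_1}\log r_2$ holds for $n$ large since $t\le T$ is fixed.

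The $y$-coordinates in the definition of $\calG_t$ are handled as follows. The $y$-path is shared by both processes, so its difference is identically zero, and only the $m$-components can separate. I also need $m^y_t$ to be well-defined up to time $t$, i.e.\ not to exit $\Safe(c)$. This holds on $\calG_t$: $m_t\in\calS(\rho\sqrt n)$ and $\ve{\De}<c\rho\sqrt n/2<\rho\sqrt n$, so $m^y_t\in\Safe(c)$, and the local invertibility of \pref{l:inv-f} gives pathwise uniqueness up to $\tau$.

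The main obstacle is the bookkeeping that the error terms are only controlled in expectation on the good event. In particular, \ref{d:Q-error} is stated unconditionally, so multiplying by indicators only helps. The drift error $\ep_b$ appears linearly, not squared, in \pref{l:safe-pair}, and this must be checked to be integrable; Cauchy–Schwarz suffices.
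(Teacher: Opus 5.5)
Your argument is sound, but it takes a different route from the paper. As written, it also proves the bound on a different good event than the one in the statement.

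\textbf{Comparison of routes.} The paper never couples $m_t$ with $m^y_t$ directly.
\begin{enumerate}
\item It synchronously couples the ideal pair $(m_t,y_t)$ with the PHD-TAP pair $(\hm_t,\hy_t)$, where $d\hm_t=\hQ(\hm_t)\,dB_t$ and $d\hy_t=f(\hm_t)\,dt+dB_t$. This is a genuine diffusion, so \pref{l:W-gw} applies verbatim. It uses only Lipschitzness of $\hQ$ and of the identity, with \ref{d:Q-error} and \ref{d:drift-error} as error inputs. The result is $O(1)$ control of $\ve{m_t-\hm_t}^2+\ve{y_t-\hy_t}^2$ on $\calG_t$.
\item It then applies the local Hadamard theorem \pref{l:inv-f} to $\hy\circ\tanh$ in dual coordinates, to cope with balls truncated by the cube. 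This shows that on $\calG_t$ the lifted path $m^y_s$ is the local inverse of $\hy$ near $\hm_s$ evaluated at $y_s$. Hence $\ve{m^y_s-\hm_s}\le c^{-1}\ve{y_s-\hy_s}$, and the triangle inequality finishes.
\end{enumerate}
Your route is instead the decomposition the paper uses later in \pref{l:my-safe}, where it only yields survival. You upgrade it by rerunning the Gr\"onwall computation for the pair $(m_t,m^y_t)$. That pair is not of diffusion form, but its drift and diffusion mismatches are controlled by the same quantities, so the computation goes through. Your route is shorter and avoids the path-lifting and cube-truncated inverse-function argument entirely. The paper's route gets a verbatim application of \pref{l:W-gw}, and control of the PHD-TAP pair as a by-product.

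Two small points. First, the detour through $\hQ(m_t)(m_t-f(m_t))$ and \ref{d:Q-Lip}(b) is unnecessary. The term $\hQ(m^y_t)(m^y_t-f(m^y_t))$ is already bounded by $c^{-1}\ep_{\mathrm{drift}}$ while $m^y_t\in\Safe(c)$, which is the split used in \pref{l:my-safe}. Second, pathwise uniqueness of $m^y$ up to exit from $\Safe(c)$ comes from local Lipschitzness of its coefficients, not from \pref{l:inv-f}.

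\textbf{The caveat on the event.} In $\calG_t$, the pair $(\hm,\hy)$ is the PHD-TAP process, not $(m^y,y)$. So $\hy_t\neq y_t$, and your remark that the $y$-difference vanishes identically is a misreading. What you actually prove is $\E[\one_{\calG'_t}\ve{m^y_t-m_t}^2]\le C(t)$, where $\calG'_t=\{(m_{[0,t]},m^y_{[0,t]})\in\fS^2_t(\rh\sqrt n,c\rh\sqrt n/2)\}$ and $\P(\calG'_t)=1-o_n(1)$.

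This is just as good for the downstream uses. In \pref{c:O1-KL}, and for $\tau'$ in the proof of \pref{t:main}, one only needs some high-probability stopping event. However, to get the bound on the stated $\calG_t$ you would additionally need $\calG_t\subseteq\{(m_{[0,t]},m^y_{[0,t]})\in\fS^2_t(\rh\sqrt n)\}$. That inclusion is exactly what the paper's local-inverse step provides: it gives $\ve{m^y_s-m_s}<(1+c)\rh\sqrt n/2\le\rh\sqrt n$.
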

\begin{prf}
Well-posedness of the SDE's is proven by \pref{lem:stay-in-cube}.
Consider the synchronously coupled SDE's
\allowdisplaybreaks
\begin{align*}
    (\textup{PHD-TAP})\quad &
    \begin{cases}
        d\hm_t = \hat Q(\hm_t)dB_t\\
        d\hy_t = f(\hm_t)dt + dB_t
    \end{cases} \\
    (\textup{Ideal})\quad &
    \begin{cases}
        d\mg_t= Q(\mg_t)dB_t = \Cov(\mu_{y_t}) dB_t\\
        dy_t= \mg_tdt+dB_t
    \end{cases}
\end{align*}
over $\Om:= (-1,1)^n\times \R^n$. 
Then
\allowdisplaybreaks
\begin{align*}
    \E_{A, B_t}\ba{\one_{\Safe(c)}[m_t] \ve{Q(m_t) - \hQ(m_t)}_F^2}
    & \le 
     \E_{A, B_t}\ba{\one_{\Safe(c)} [m_t]\opnorm{\hQ(m_t)}^2 \ve{Q(m_t) - \hQ(m_t)}_F^2}\\
     &\le_{\tref{d:safe-set-stay-inside}}
     c^{-2} \E_{A, B_t}\ba{\one_{\Safe(c)}  [m_t]\ve{\hQ(m_t)^{-1}Q(m_t) - I_n}_F^2} \\
     &= c^{-2}\E_{A, B_t}\ba{\one_{\Safe(c)}[m_t]\norm{\nabla_m^2\FT(m_t)
        Q_t - \Id_n}^2_F}\\
    &\le_{\tref{d:Q-error}} c^{-2}\eps_{\mathrm{cov}}(\beta, t)^2\,.
\end{align*}
Therefore, by Markov's inequality, for any $\de$, with probability $\ge 1-\de$ over $A$, 
\[
\E_{B_t}\ba{\one_{\Safe(c)}[m_t] \ve{Q(m_t) - \hQ(m_t)}_F^2} = O(1)
\]
where we consider $\be, T,\de$ fixed. We also have, with probability $\ge 1-o_n(1)$ over $A$, 
\begin{align*}
\E_{B_t} \ba{\one_{\Safe(c)}[\hm_t] \ve{f(\hm_t) - \hm_t}^2}
&\le_{\tref{d:drift-error}} \ep_{\mathrm{drift}}^2 = O(1).
\end{align*}
By \ref{d:Q-Lip}, we have that $\hQ$ is $O(1)$-Lipschitz with respect to $\ved_{\Om}$, and the identity map is clearly 1-Lipschitz. \ref{d:safe-set-stay-inside} supplies $1-o_n(1)$ probability of $(m_t,y_t)$ staying inside $\Safe(c)$. 
Define $\fS_t^2(r_1,r_2)$ using $\Safe(c)$ as the safe set. 
Then \pref{l:W-gw} applies to give 
\allowdisplaybreaks
\begin{align*}
\E\ba{\one_{\fS_t^2(\rh\sqrt n)}[(m,y)_{[0,t]}, (\hm,\hy)_{[0,t]}] 
\ve{(m_t,y_t) - (\hm_t,\hy_t)}^2
} &= O(1)\\
\P[((m,y)_{[0,t]}, (\hm,\hy)_{[0,t]})\nin \fS_t^2(\rh\sqrt n, \rh\sqrt n/2)] &= o_n(1).
\end{align*}
Consider the map $\hy$ given by \eqref{e:hy}. 
We aim to apply \pref{l:inv-f}, but because the ball is truncated by the cube, we work in dual space, $x=\tanh^{-1}(m)$. First, we claim that for $m\in (-1,1)^n$, $B(\tanh^{-1}(m),r)\subeq \tanh^{-1}(B(m,r)\cap (-1,1)^n)$. Since $\tanh:\R^n\to (-1,1)^n$ is a homeomorphism, this is equivalent to $\tanh(B(\tanh^{-1}(m),r))\subeq B(m,r)\cap (-1,1)^n$. This is clear since $\tanh$ is contractive.

Now consider $\hat y\circ \tanh: \tanh^{-1}(\Safe(c))\to \R^n$. Note $D(\hat y\circ \tanh)(\tanh^{-1}(m))\succeq \hQ(m)^{-1}D(m)^{-1} \succeq cI_n$. 
When $((m,y)_{[0,t]}, (\hm,\hy)_{[0,t]})\in \fS_t^2(\rh\sqrt n, \rh\sqrt n/2)$, $B(\tanh^{-1}(\hm_t), \rh\sqrt n/2)\subeq \tanh^{-1}(B(\hat m_t, \rh\sqrt n/2)\cap (-1,1)^n)\subeq \tanh^{-1}(\Safe(c))$, so by \pref{l:inv-f}, 
\[B(\hy(\hm_t) = \hy_t, c\rh\sqrt n/2) \subeq 
\hy(B(\hat m_t, \rh\sqrt n/2)\cap (-1,1)^n)
\subeq \hy(\Safe(c)).\] 
Now let 
\[\calG_t = \{((m,y)_{[0,t]}, (\hm,\hy)_{[0,t]})
\in \fS_t^2(\rh\sqrt n,c\rh\sqrt n/2)\}.\] 
Under $\calG_t$, for all $s\in [0,t]$,
$y_s\in B(\hy_s,c\rh\sqrt n/2)$,  %
so  considering the local inverse map $\hm_{\hy_s}:B(\hy_s, c\rh\sqrt n/2)\to B(\hm_s, \rh\sqrt n/2)$, by continuity of $m^y_s$, we have that $\hm_{\hy_s}(y_s)= m^y_s$ and
\[
\ve{m^y_s - \hm_s}
= \ve{\hm_{\hy_s}(y_s) - \hm_{\hy_s}(\hy_s)}
\le c^{-1}\ve{y_s-\hy_s}.
\]
since $D\hm_{\hy_s}(y) = \hQ(\hm_{\hy_s}(y))$ has operator norm at most $c^{-1}$.
Hence under $\calG_t$,
\allowdisplaybreaks
\begin{align*}
    \E\ba{\one_{\calG_t}\ve{\mg_t- m_t^y}^2}
    &\lesssim
    \E[\one_{\calG_t}\ve{\mg_t-\hm_t}^2] + \E[\one_{\calG_t}\ve{\hm_t-m_t^y}^2] \\
    &\lesssim  \E[\one_{\calG_t}\ve{\mg_t-\hm_t}^2] + \E[\one_{\calG_t}\ve{\hy_t-y_t}^2] 
    =O(1). \qedhere
\end{align*}
\begin{lemma}[{Local Hadamard  Theorem, \cite[5.3.11]{ortega2000iterative}}]\label{l:inv-f}
    Let $f:\Omega\to \R^n$ be a continuously differentiable map where $\Omega \subseteq \R^n$ is open. Suppose $B(m_0,r)\subseteq \Omega$ and $Df\succeq cI_n$ on $\Omega$, $c>0$. Then $B(f(m_0), cr)\subeq f(\Omega)$.
    Moreover, $f$ is injective on any convex subset of $\Om$.
\end{lemma}
\end{prf}
A standard Girsanov argument (see e.g. \cite[\S 4.4]{chewi2024log}) then bounds the KL divergence between the SL and ASL-TAP.
\begin{corollary}[$O(1)$ KL error for ASL-TAP]
\label{c:O1-KL}
Suppose that \ref{d:Q-error}, \ref{d:safe-set-stay-inside}, \ref{d:Q-reg}, \ref{d:drift-error}, and \ref{d:Q-Lip} hold to time $T$. Let $y_t$ and $\hy_t$ follow (SL) and (ASL-TAP) (with any extension outside of $\Safe(c)$). %
Then there is a distribution $Q_T$ such that 
\[
\TV(\dist(y_T), Q_T) = o_n(1), \quad 
\KL(Q_T\|\dist(\hy_T)) = O(1).
\]
\end{corollary}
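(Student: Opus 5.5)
The plan is a Girsanov comparison between (SL) and (ASL-TAP), with the drift mismatch controlled only on the good event of \pref{l:m-error}. Outside that event we pay in total variation rather than KL. Both processes have identity diffusion in $y$-space: $dy_t = m_t\,dt + dB_t$ and $d\hy_t = \hm_t\,dt + dB_t$. In (ASL-TAP) the drift $\hm_t$ is a path functional of $\hy_{[0,t]}$, namely the magnetization obtained by solving $d\hm = \hQ(\hm)(d\hy - f(\hm)\,dt)$. Driving this same functional with the SL path produces exactly $m^y_t$ from (SL-TAP). So the ASL-TAP law on path space is the law of $dy = m^y_t(y_{[0,t]})\,dt + dB_t$, and the relative drift between the two $y$-processes, both evaluated along the SL path, is $m_t - m^y_t$.

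First I define a stopping time $\tau$, the first time the coupled pair leaves $\fS^2_t(\rh\sqrt n, c\rh\sqrt n/2)$, i.e. the first time $\calG_t$ fails. I also truncate at $\tau' = \inf\{t : \int_0^t \ve{m_s - m_s^y}^2\,ds \ge K\}$ for a large constant $K$. Next I let $Q_T$ be the law of $y_T$ under the process that follows (SL) up to $\tau\wedge\tau'$ and afterwards evolves with the (ASL-TAP) drift $m^y_t$ (or any fixed extension).

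Since the two processes coincide before $\tau\wedge\tau'$, we have
\[
\TV(\dist(y_T), Q_T) \le \P(\tau\wedge\tau' \le T) \le \P(\calG_T^c) + \P\Bigl(\calG_T,\ \int_0^T\ve{m_s-m_s^y}^2ds\ge K\Bigr).
\]
The first term is $o_n(1)$ by \pref{l:m-error}. For the second, Markov's inequality together with $\E[\one_{\calG_t}\ve{m_t-m_t^y}^2]\le C(t)$ gives at most $\int_0^T C(s)\,ds/K$. Letting $K = K_n \to \infty$ slowly makes it $o_n(1)$, but then the KL bound is only $O(K_n)$. To obtain a genuine $O(1)$ bound, I will instead take $\tau' = \infty$ and bound the KL using the expectation directly, as in the next step.

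The KL bound comes from Girsanov and the data-processing inequality on path space. The relevant drift difference is $m_s - m_s^y$ for $s<\tau$ and zero afterwards. This gives
\[
\KL(Q_T\|\dist(\hy_T)) \le \tfrac12 \E\int_0^{T\wedge\tau}\ve{m_s-m^y_s}^2ds \le \tfrac12\int_0^T \E[\one_{\calG_s}\ve{m_s-m_s^y}^2]\,ds \le \tfrac12\int_0^T C(s)\,ds = O(1).
\]
I use $\{s<\tau\}\subseteq\calG_s$ and the monotonicity of $\calG_s$ in $s$. Continuity of $C(s)$ follows from the Gr\"onwall form in \pref{l:W-gw}.

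The main obstacle is justifying Girsanov: the drift must be adapted and Novikov-type conditions must hold, since $m^y$ is only defined via path lifting up to exit from the safe set. I will handle this with the standard localization argument (see \cite[\S 4.4]{chewi2024log}). Before $\tau$ the lifted magnetization is well defined and lies in the safe set by \pref{l:inv-f} and the construction in \pref{l:m-error}. I then apply the KL chain rule with stopping times $\tau_k = \tau\wedge\inf\{t:\int_0^t\ve{m_s - m_s^y}^2ds\ge k\}$, which only requires the expectation bound, and pass to $k\to\infty$ using lower semicontinuity of KL. Any extension of (ASL-TAP) outside $\Safe(c)$ affects only times after $\tau$, where the drift difference has been set to zero.
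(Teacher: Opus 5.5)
Your proof is correct. It uses the same ingredients as the paper: Girsanov on $y$-path space with identity diffusion, the bounds $\E[\one_{\calG_s}\ve{m_s-m^y_s}^2]\le C(s)$ and $\P(\calG_T^c)=o_n(1)$ from Lemma~\ref{l:m-error}, and data processing down to the time-$T$ marginal. Where you differ is the choice of $Q_T$, and that difference matters.

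The paper stops \emph{both} processes at $\tau$, bounds $\KL(\dist(y_{T\wedge\tau})\|\dist(\hy_{T\wedge\tau}))$, and sets $Q_T=\dist(y_{T\wedge\tau})$. It then remarks that $\dist(\hy_{T\wedge\tau})$ and $\dist(\hy_T)$ are $o_n(1)$-close in TV. That remark does not by itself give a bound on $\KL(Q_T\|\dist(\hy_T))$, because KL is not stable under small TV perturbations of its second argument. Also, stopping $\hy$ at ``the same'' $\tau$ tacitly requires writing $\tau$ as a functional of the $\hy$-path, and under the synchronous coupling that is a different functional from the one read off the $y$-path.

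Your spliced process (SL drift before $\tau$, ASL-TAP drift after) avoids both issues because it uses a single stopping-time functional of one path. This is legitimate: $B_t=y_t-\int_0^t m_s\,ds$ can be recovered from the SL path, and the coupled hatted pair in $\calG_t$ is a strong-solution functional of $B$ up to exit. The drift discrepancy relative to the \emph{unstopped} ASL-TAP law is then exactly $(m_t-m^y_t)\one_{t<\tau}$, so your bound lands directly on $\KL(Q_T\|\dist(\hy_T))$, as the statement requires. The price is that the ASL-TAP drift must be an adapted path functional after exit, which is what ``any extension'' in the statement supplies.

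Two parts of your write-up can be dropped. Since $m_t,m^y_t\in(-1,1)^n$, you have $\ve{m_t-m^y_t}^2\le 4n$. So Novikov holds outright at fixed $n$ (as the paper notes), and your $\tau_k$ coincide with $\tau$ once $k>4nT$; this makes both the $\tau'$ detour and the lower-semicontinuity step unnecessary. You also need only that $C(s)$ is bounded on $[0,T]$, not continuous, and this is supplied by the Gr\"onwall form of Lemma~\ref{l:W-gw}.
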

\begin{prf}
Let $\tau=\inf\set{t\ge0}{((m_t,y_t),(\hm_t,\hy_t))\nin \calS_t^2(\rh\sqrt n, c\rh\sqrt n/2)}$. 
Let $\bP_T$ and $\hat \bP_T$ denote the path measures of $(y_{t\wedge \tau})_{t=0}^T$ and $(\hy_{t\wedge \tau})_{t=0}^T$. 
Since $m^y_t , m_t$ are bounded, Novikov's condition holds: $\E \exp\pa{\rc 2 \int_0^T \ve{m^y_t - m_t}^2dt}<\iy$. 
By the data processing inequality and Girsanov's Theorem,
\allowdisplaybreaks
\begin{align*}
    \KL(\dist(y_{T\wedge \tau})\|\dist(\hy_{T\wedge \tau}))
    &\le 
    \KL(\bP_T\|\hat \bP_T)
    \le 
    \E_{\bP_T}
    \log \dd{\bP_T}{\hat \bP_T}\\
    &=
    -\E  
    \int_0^{T\wedge \tau} \ip{\mg_t - m^y_t}{dB_t} + \rc 2 \E
    \int_0^{T\wedge \tau} \ve{\mg_t - m^y_t}^2dt\\
    &= \rc 2 
    \int_0^{T\wedge \tau} \E\ve{\mg_t - m^y_t}^2\,dt = O(1),
\end{align*}
    where the last equality follows from \pref{l:m-error}. Noting that $\TV(\dist(y_{T\wedge \tau}), \dist(y_T)), \TV(\dist(\hy_{T\wedge \tau}), \dist(\hy_{T}))\le \P[\tau<T] = o_n(1)$ finishes the proof.
\end{prf}

\subsection{Jarzynski's equality with survival}
In the following, we derive a specific Jarzynski's equality formula to compare the Radon-Nikodym derivative between the SL-TAP and ASL-TAP process on path space for $m$, as well as the final-time marginal $m_T$, restricted to survival of the path. 
Note that we consider the final-time marginal $m^y_T$ rather than $y_T$, because the map $\hat y(m)$ may not be injective.
As long as the survival event has probability $1-o(1)$, we can then do rejection sampling with the provided ratio, with an error of $o(1)$.
\begin{lemma}[Jarzynski's equality with survival for SL-to-ASL ratio]
\label{l:je-survival}
Let $\tau = \inf\set{t\ge 0}{m_t \nin \Safe(c)}$ and $S_T = \{\tau>T\}$. For a measure $\bQ_T$ on path space $m:[0,T]\to \R^n$, let $\bQ_T^s (\cdot) = \bQ_T^s(\cdot \cap S_T)$, and let $q_T^s(m) = \E_{m_{[0,T]}\sim \bQ_T}[\one_{S_T} \mid m_T = m]$, i.e., the marginal distribution of $m_T$ under $\bQ_T^s$. 
Let $\bP_T$ denote the path measure $m_{[0,T]}$ in \eqref{e:SL-TAP} and $\hat \bP_T$ denote the path measure of $\hm_{[0,T]}$ in the \eqref{e:ASL-TAP}. 
    Then we have that
    \allowdisplaybreaks
    \begin{align*}
        \dd{\bP_T^s}{\hat \bP_T^s}(m_{[0,T]})
        &= e^{w_T}\cdot 
        \fc{e^{-\FT(0,0)}}{e^{-\FT(m_T, \hy(m_T))}} \cdot e^{-\fc{nT}{2}} \cdot \fc{Z_{\be A, \hy(m_T)}}{Z_{\be A,0}}
        \\
        \dd{p^s_T}{\hat p_T} (m) &= 
        \E[e^{w_T}\one_{S_T} \mid m_T=m] \cdot 
        \fc{e^{-\FT(0,0)}}{e^{-\FT(m, \hy(m))}} \cdot e^{-\fc{nT}{2}} \cdot \fc{Z_{\be A, \hat y(m)}}{Z_{\be A,0}}
    \end{align*}
    where $w_T$ is defined by 
    \begin{align*}
dw_t &= \om(m_t)\,dt, & 
\om(m):&= \rc 2 \ba{\Tr(\hQ(m)) + \ve{m}^2}.
    \end{align*}
\end{lemma}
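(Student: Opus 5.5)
The plan is to follow the Jarzynski derivation of \cite[\S 3]{davies2026potential}, carried out only up to the exit time $\tau$, with the survival indicator $\one_{S_T}$ inserted at every step. On the event $S_T$ both \eqref{e:SL-TAP} and \eqref{e:ASL-TAP} are driven by the same diffusion coefficient $\hQ(m)$, which is nondegenerate on $\Safe(c)$ by \ref{d:Q-reg}. So on $\{\tau>T\}$ the two path measures on $m$-space are mutually absolutely continuous, and Girsanov's theorem applies. The processes are stopped at $\tau$ and the Radon--Nikodym derivative is restricted to $S_T$; since $S_T$ is measurable with respect to the path, $d\bP_T^s/d\hat\bP_T^s = \one_{S_T}\, d\bP_T/d\hat\bP_T$ on paths of the stopped processes.

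\textbf{Step 1 (Girsanov in $m$-space).} Write both SDEs as $dm = \hQ(m)\,(b\,dt + dB_t)$. The $y$-space drifts are $b^{\mathrm{SL}} = \an{\si}_{y_t}$, the true magnetization at the tilt $y_t=\hy(m_t)$ (locally well defined along the path), and $b^{\mathrm{ASL}} = m$; in both cases we subtract $f(m)$. The drift difference lies in the range of $\hQ$, so
\[
\log \dd{\bP_T^s}{\hat\bP_T^s} = \int_0^T \an{\mg^{\mathrm{true}}_t - m_t,\, dy_t} - \rc2\int_0^T \bigl(\ve{\mg^{\mathrm{true}}_t}^2-\ve{m_t}^2\bigr)\,dt,
\]
where $dy_t = \hQ(m_t)^{-1}dm_t + f(m_t)\,dt$ is computed pathwise from $m_{[0,T]}$. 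This is the identity that $y$ is the SL observation process, with $m$ a deterministic local reparametrization of it.

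\textbf{Step 2 (turning stochastic integrals into boundary terms).} Use Itô's formula on two quantities.

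First, take $\log Z_{\be A,y_t}$. Its gradient in $y$ is $\mg^{\mathrm{true}}$, so
\[
d\log Z = \an{\mg^{\mathrm{true}}, dy} + \rc2\Tr\Cov\,dt + \cdots,
\]
where the remaining terms come from the quadratic variation $d[y]=I\,dt$. Combining this with the $-\rc2\ve{\mg^{\mathrm{true}}}^2$ term, exactly as in SL, the $\mg^{\mathrm{true}}$-dependence collapses to $\log Z_{\be A,y_T}-\log Z_{\be A,0}-nT/2$.

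Second, take $\FT(m_t,\hy(m_t))$. By the TAP equation \eqref{e:TAP}, $\gd_m\FT=0$ along the path, so
\[
d\,\FT(m_t,\hy(m_t)) = \partial_y \FT\, dy + (\text{Itô correction}) = -\an{m_t,dy_t} + (\text{Itô correction}).
\]
The Itô correction, together with the $+\rc2\ve{m_t}^2$ term, should produce exactly $\om(m_t)\,dt = \rc2[\Tr\hQ(m_t)+\ve{m_t}^2]\,dt$, because the second variation of $\FT$ restricted to the TAP manifold has inverse $\hQ$.

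Substituting both computations yields the first displayed formula. Note that $y_T=\hy(m_T)$, which is why the result is expressed through $\hy(m_T)$ and avoids any injectivity issue.

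\textbf{Step 3 (marginal).} Apply the disintegration formula
\[
\dd{p^s_T}{\hat p_T}(m) = \E_{\hat\bP_T}\Bigl[\one_{S_T}\dd{\bP_T^s}{\hat\bP_T^s}\Bigm| m_T=m\Bigr].
\]
Every factor except $e^{w_T}$ depends only on $m_T$, so those factors pull out of the conditional expectation, leaving $\E[e^{w_T}\one_{S_T}\mid m_T=m]$.

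The main obstacle is the Itô bookkeeping in Step 2: verifying that the second-order terms from $\log Z$ and from $\FT(m,\hy(m))$ combine into exactly $\om\,dt$, and doing so while $\hy$ is only locally invertible. I would handle the local invertibility by working along the path with the lifted local inverse of \pref{l:inv-f}, so that every quantity is a function of the current $m_t$ and no global inverse is needed. I would then check the constant term at $t=0$, where $m_0=0$ and $y_0=0$, to confirm the factor $e^{-\FT(0,0)}$.
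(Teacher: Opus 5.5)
Your proposal is correct, but it is organized differently from the paper. You apply Girsanov once, directly between the two laws: the relative drift is $\mg^{\mathrm{true}}_t-m_t$, bounded by $2\sqrt n$. You then turn the stochastic integral into boundary terms using two potentials, $\log Z_{\be A,y}$ for the true drift and $\FT$ for the TAP drift.

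The paper instead compares each law separately to the Wiener measure $\bW_T$ in $y$-space. The stopped ASL law is compared to $\bW_T$ by Girsanov together with the same identity $d\FT(m_t,y_t)=-\an{m_t,dy_t}-\frac12\Tr\hQ(m_t)\,dt$ that you use. The SL law is compared to $\bW_T$ with no It\^o calculus at all, via the mixture representation $y_t=t\si+B_t$ with $\si\sim\mu_{\be A}$. Averaging $\exp(\an{\si,y_T}-nT/2)$ over $\si$ gives $e^{-nT/2}Z_{\be A,y_T}/Z_{\be A,0}$ directly, which makes the endpoint-only dependence immediate.

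Your $\log Z$ route reaches the same factor only through the identity $\Tr\Cov(\mu_y)+\ve{\mg^{\mathrm{true}}(y)}^2=n$, which follows from $\si_i^2=1$. You should state this explicitly. Your ``$+\cdots$'' is in fact empty, because $\frac12\Tr\Cov\,dt$ is the entire second-order term. Equivalently, $e^{-nt/2}Z_{\be A,y}/Z_{\be A,0}$ is space-time harmonic, which is exactly what the mixture representation encodes.

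The step you flag as the main obstacle is a one-line computation. Use $\gd_m^2\FT=\hQ^{-1}$, $\gd_m\gd_y\FT=-\Id_n$, $d[m]_t=\hQ^2\,dt$ and $d[m,y]_t=\hQ\,dt$. The It\^o correction is then $\frac12\Tr(\hQ^{-1}\hQ^2)-\Tr\hQ=-\frac12\Tr\hQ$, and adding $\frac12\ve{m_t}^2$ gives $\om(m_t)$. No inverse of $\hy$ is needed for this bookkeeping, since $y_t=\hy(m_t)$ is a forward map.

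Your route buys a single Girsanov step with a bounded drift difference, and presents the result as a symmetric comparison of the exact and surrogate free energies. The paper's route gets the SL factor for free and isolates the surviving ASL-versus-Wiener density as a separate identity.
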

\begin{prf}
    We rederive the calculations from \cite[\S3.2]{davies2026potential}, but taking $m=m^y_t$ instead of assuming there is a unique TAP solution $m=\hm(y_t)$. Given that $(m_t,y_t)$ is an It\^o process satisfying \eqref{e:TAP} for all $t$, by It\^o's formula,
\begin{align*}
    d\FT(m_t,y_t) &=
    \ub{\an{\gd_m\FT(m_t,y_t), dm_t}}0
    - \an{m_t,dy_t}
    + \rc 2 \Tr\pa{\gd_m^2 \FT(m_t,y_t) d[y]_t}
    - \Tr(d[m,y]_t).
\end{align*}
When $y_t$ satisfies \eqref{e:SL}, letting $m^y_t$ be the solution to \eqref{e:TAP}, 
\begin{align*}
    d\FT(m_t^y,y_t) &= 
    -\an{m_t^y, dy_t} + \rc 2 \Tr\pa{\gd_m^2 \FT(m_t,y_t) \hQ(m_t^y) \hQ(m_t^y)^\top }dt
    - \Tr(\hQ(m^y_t))\\
    &= -\an{\hat m^y_t,dy_t} - \rc 2 \Tr(\hQ(m^y_t))\,dt.
\end{align*}
Let $\bW_T$ denote the Wiener measure. 
We consider the stopped ASL process defined by 
\[
d\hy_t = \one_{t<\tau} \hm_t \,dt + dB_t,
\]
and denote the path measure by $\hat \bP_T^y$.
By Girsanov's Theorem,
\begin{align*}
    \dd{\hat{\bP}_T^y}{\bW_T}(y_{[0,T]}) &= \exp\pa{
\int_0^{T\wedge \tau}
\an{m^y_t, dy_t} - \rc 2\int_0^{T\wedge \tau} \ve{m^y_t}^2\,dt
    }.
\end{align*}
Then
\allowdisplaybreaks
\begin{align*}
    \one_{S_T}[m^y_{[0,T]}] e^{\hw_T(m^y_{[0,T]})} \dd{\hat{\bP}_T^y}{\bW_T}(y_{[0,T]})
    &= \one_{S_T}\exp\pa{\int_0^T \rc 2 \pa{\Tr(\hQ(m^y_t)) + \ve{m^y_t}^2}\,dt}\\
    &= \one_{S_T}\exp\pa{-\int_0^T d\FT(m_t^y,y_t)}= \exp(\FT(0,0) - \FT(m_T^y, y_T)).
\end{align*}
Define the survival event on paths $y_{[0,T]}$ to be that $m^y_{t}\in \Safe(c)$ for $t\le T$.
Therefore, 
\[
\dd{\hat\bP^{y,s}_T}{\bW^s_T}(y_{[0,T]}) = e^{-\hw_T(m_{[0,T]}^y)} \fc{e^{- \FT(m^y_T, y_T)}}{e^{-\FT(0,0)}}.
\]
The SL process is given by $y_t = t\si + B_t$, i.e., $dy_t = \si\,dt + dB_t$, where $\si\sim \mu_{\be A}$. By Girsanov's Theorem, conditioned on $\si$,
\allowdisplaybreaks
\begin{align*}
    \dd{\bP_T^y(\cdot\mid \si)}{\bW_T} (y_{[0,T]})
    &= \exp\pa{\int_0^T\an{\si, dy_t} - \rc 2 \int_0^T \ve{\si}^2\,dt} 
    = \exp\pa{\an{\si,y_T} - \fc{nT}{2}}.
\end{align*}
Averaging over $\si\sim \mu_{\be A}$ then gives
\begin{align*}
    \dd{\bP_T^y}{\bW_T}(y_{[0,T]}) &= \sum_{\si\in \{\pm 1\}^n} 
    \fc{\exp\pa{\fc\be2 \an{\si, A\si}}}{Z_{\be A, 0}} \exp\pa{\an{\si,y_T} - \fc{nT}{2}} = e^{-\fc{nT}{2}} \fc{Z_{\be A, y_T}}{Z_{\be A, 0}}.
\end{align*}
Note this only depends on the endpoint of the path. We have that $\dd{\bP_T^s}{\bW_T^s}$ equals the same expression. Then
\allowdisplaybreaks
\begin{align*}
    \dd{\bP_T^{y,s}}{\hat{\bP}_T^{y,s}}
    &= \dd{\bP_T^{y,s}}{\bW_T^s}\Big/
     \dd{\hat{\bP}_T^{y,s}}{\bW_T^s}
    = e^{\hw_T(m^y_{[0,T]})}\cdot 
        \fc{e^{-\FT(0,0)}}{e^{-\FT(m^y_T, y_T)}} \cdot e^{-\fc{nT}{2}} \cdot \fc{Z_{\be A, y_T}}{Z_{\be A,0}}.
\end{align*}
Pushing forward to $m$-space via $y_{[0,T]}\mapsto m^y_{[0,T]}$, multiplying by $\one_{S_T}$, and taking a conditional expectation with respect to the endpoint $m^y_T$ then gives the second result.
\end{prf}

We need to show that with high probability, for the SL process $y_t$, considering the $m^y_t$ chosen to satisfy the TAP equation, survival occurs with high probability.
\begin{lemma}
\label{l:my-safe}
Suppose that \ref{d:Q-error}, \ref{d:safe-set-stay-inside}, \ref{d:Q-reg}, \ref{d:drift-error}, and \ref{d:Q-Lip} hold to time $T$.
    Let $y_t$ solve (SL). 
    Define $\fS_t$ on the magnetization path space corresponding to $\Safe(c)$. 
    Then for all $t\in [0,T]$,
    \[
    \P[m^y_t\nin \fS_t] = o_n(1).
    \]
\end{lemma}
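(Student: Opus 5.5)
The plan is to reduce survival of $m^y_t$ to the event $\calG_t$ already constructed in the proof of \pref{l:m-error}. The ideal process $(m_t,y_t)$ (SL/HD) and the algorithmic process $(\hm_t,\hy_t)$ (PHD-TAP) are synchronously coupled, and $m^y_t$ is the TAP lift of the SL path $y_t$. By \pref{l:m-error}, whose probability statement comes from \pref{l:W-gw} and \pref{l:safe-pair}, the event
\[
\calG_t=\{((m,y)_{[0,t]},(\hm,\hy)_{[0,t]})\in\fS_t^2(\rh\sqrt n,c\rh\sqrt n/2)\}
\]
has $\P(\calG_t)=1-o_n(1)$. It therefore suffices to show that on $\calG_t$ we have $m^y_s\in\Safe(c)$ for all $s\le t$.

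On $\calG_t$, for every $s\le t$ we have $\ve{y_s-\hy_s}<c\rh\sqrt n/2$, and the ball around $\hm_s$ of radius $\rh\sqrt n/2$ (intersected with the cube) lies in $\Safe(c)$. Following the proof of \pref{l:m-error}, I would pass to dual coordinates $x=\tanh^{-1}(m)$ and apply \pref{l:inv-f} to $\hy\circ\tanh$. Its derivative satisfies $\hQ(m)^{-1}D(m)^{-1}\succeq cI_n$, so we obtain a local inverse
\[
\hm_{\hy_s}:B(\hy_s,c\rh\sqrt n/2)\to B(\hm_s,\rh\sqrt n/2)\cap(-1,1)^n\subeq\Safe(c).
\]
The point $\hm_{\hy_s}(y_s)$ is then a TAP solution for tilt $y_s$ lying in $\Safe(c)$.

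The key step, which I expect to be the main obstacle, is identifying this local inverse with the path-defined $m^y_s$. The danger is that the path lift could differ from it because $\hy$ is not globally injective. I would argue by continuity, using a stopping-time argument. Let $\tau'=\inf\{s: m^y_s\neq\hm_{\hy_s}(y_s)\}$. At $s=0$ both equal $0$. The map $s\mapsto\hm_{\hy_s}(y_s)$ is continuous: the balls move continuously, and local inverses on overlapping balls agree, since $f$ is injective on convex pieces by \pref{l:inv-f}. Both curves solve $\nabla_m\FT(\cdot,y_s)=0$. Because $\nabla^2\FT\succ 0$ on $\Safe(c)$, the implicit function theorem gives local uniqueness of TAP solutions near any point of the curve. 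Two continuous solution curves that agree at time $s$ therefore agree on a neighborhood of $s$, so the coincidence set is open; it is also closed by continuity. Hence $\tau'>t$ on $\calG_t$. In particular $m^y_s$ stays in $B(\hm_s,\rh\sqrt n/2)\subeq\Safe(c)$, so the SL-TAP SDE is well-defined and never exits before time $t$.

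Combining these, $\{m^y\notin\fS_t\}\subeq\calG_t^c$, and so $\P[m^y\notin\fS_t]\le\P(\calG_t^c)=o_n(1)$. One small point needs care: \pref{l:m-error} asserts $\P(\calG_t)=1-o_n(1)$ only on a high-probability event for $A$, obtained via Markov. I would absorb this by letting the Markov parameter $\delta\to0$ slowly, or by simply stating the bound jointly over $A$ and $B$.
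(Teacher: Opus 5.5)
Your argument is correct, but it takes a different route from the paper.

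\textbf{The paper's route.} The paper does not use the PHD-TAP coupling here. It writes the SDE for $m_t-m^y_t$ (true magnetization under \eqref{e:HD} minus the TAP lift under \eqref{e:SL-TAP}). It splits the drift into $\hQ(m^y_t)(m_t-m^y_t)$ and $\hQ(m^y_t)(m^y_t-f(m^y_t))$, and the diffusion into $\hQ(m_t)-\hQ(m^y_t)$ and $Q(m_t)-\hQ(m_t)$. These are bounded by \ref{d:Q-reg}, \ref{d:drift-error}, \ref{d:Q-Lip} and \ref{d:Q-error}. It then applies \pref{l:safe-pair} directly to the pair $(m,m^y)$, giving $m^y_s\in B(m_s,\rho\sqrt n)\subseteq\Safe(c)$ for all $s\le t$ with probability $1-o_n(1)$. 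That route is purely stochastic and never needs local invertibility of $\hy$, nor an identification of the path lift with a local inverse.

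\textbf{What your route buys.} It needs no new SDE estimate. It nests the survival event inside the very event $\calG_t$ on which \pref{l:m-error} proves its moment bound, which is convenient when the two are combined in \pref{l:je-rs}. And your open--closed argument actually proves the identity $m^y_s=\hm_{\hy_s}(y_s)$, which the proof of \pref{l:m-error} only asserts ``by continuity''.

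\textbf{What it costs.} You inherit the deterministic inversion step, and two details there need care.
\begin{itemize}
\item Injectivity of $\hy$ on each ball separately does not force local inverses at nearby times to agree. Use the slack $\ve{m_s-\hm_s}<c\rho\sqrt n/2<\rho\sqrt n/2$ to pass to the primal set $B(\hm_s,\rho\sqrt n/2+\epsilon)\cap(-1,1)^n$. This set is convex, lies in $\Safe(c)$, and contains both preimages. On it $\hy$ is injective, because its Jacobian $\hQ^{-1}$ is symmetric positive definite there.
\item The dual Jacobian $\hQ(m)^{-1}D(m)^{-1}$ is not symmetric, so ``$\succeq cI_n$'' is not meaningful as written. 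It should be replaced by a lower bound on its smallest singular value, namely $1/\opnorm{D(m)\hQ(m)}$, which is controlled by \pref{lem:alg-resolvent-bounds-DQ}. This only changes the radius $c\rho\sqrt n/2$ to another constant multiple of $\sqrt n$, which \pref{l:safe-pair} handles equally well.
\end{itemize}
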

\begin{prf}
    Abbreviate $\td m_t= m^y_t$. We consider the mismatch between the true magnetization and the magnetization estimated from following TAP equation,
\begin{align*}
    dy_t &= m_t \,dt + dB_t\\
    dm_t &= Q(m_t)\,dB_t\\
    d\td m_t &= \hQ(\td m_t) (dy_t - f(\td m_t)\,dt)= \hQ(\td m_t) ((m_t- f(\td m_t)) \,dt + dB_t).
\end{align*}
Let $\td \De_t= m_t - \td m_t$. We have
\allowdisplaybreaks
\begin{align*}
d\td \De_t &= 
[\ub{\hQ(\td m_t)(m_t - \td m_t)}{b\lip} + \ub{\hQ(\td m_t)(\td m_t - f(\td m_t))}{b\err}]\,dt + 
[\ub{(\hQ(m_t) - \hQ(\td m_t))}{C\lip} + \ub{(Q(m_t) - \hQ(m_t))}{C\err}] \,dB_t
\end{align*}
Note
\allowdisplaybreaks
\begin{align*}
    \ve{b_t\lip} &\le c^{-1}\ve{\De_t} & \E\ba{\one_{t<\tau}\ve{b_t\err}^2} &\le c^{-2}\ep_{\mathrm{drift}}^2\\
    \ve{C_t\lip}_F &\le L\ve{\De_t} & 
    \E\ba{\one_{t<\tau}\ve{C_t\err}_F^2}&\le \ep_{\mathrm{cov}}^2.
\end{align*}
Then \pref{l:safe-pair} gives that (defining the safe sets on just the magnetization path space)
\[
\P[\td m_{[0,t]} \nin \fS_t]\le 
\P[(m_{[0,t]}, \td m_{[0,t]})\nin \fS^2_t(\rh\sqrt{n})] = o_n(1).
\]
\end{prf}

\subsection{Well-posedness}
\label{sec:well-posed}
\label{sec:well-posed-transportation}
We consider the initial condition $\hat m_0=0$ and the equations
\begin{align}
\label{eq:asl-tap-m}
 d\hat m_t&=\hat Q(\hat m_t)(\hat m_t-f(\hat m_t))\,dt
                                      +\hat Q(\hat m_t)\,dB_t,\\
\label{eq:phd-m}
 d\hat m_t&=\hat Q(\hat m_t)\,dB_t,
\end{align}
for ASL-TAP and PHD, respectively. %
Each equation is used up to its first exit from $\Safe(c)$, with its
own exit time denoted $\tau_c$.
As in \cite[\S6.6]{davies2026potential}, we introduce the dual coordinates
$\hat x_t=\tanh^{-1}(\hat m_t)$ and write
\[
 \hat\Sigma(x):=D(m)\hat Q(m),\qquad
 \hat f^*(x):=E_{\mathcal D_n}[\hat Q(m)^2]D(m)^2m,
 \qquad m=\tanh(x).
\]
Here $\hat f^*$ is not a convex conjugate. The two dual equations are
\begin{align}
\label{eq:d-PHD}
 d\hat x_t&=\hat f^*(\hat x_t)\,dt+\hat\Sigma(\hat x_t)\,dB_t,
                                                     \tag{d-PHD}\\
\label{eq:d-ASL-TAP}
 d\hat x_t&=\left[D(\hat m_t)\hat Q(\hat m_t)
              (\hat m_t-f(\hat m_t))+\hat f^*(\hat x_t)\right]dt
                     +\hat\Sigma(\hat x_t)\,dB_t. \tag{d-ASL-TAP}
\end{align}

\begin{lemma}[Boundedness of dual SDE coefficients]
\label{l:dual-phd-bounded}
Assume \ref{d:drift-error}. Then the following hold within the domain $\{x\in\R^n:\tanh(x)\in\Safe(c)\}$:
\begin{enumerate}[itemsep=0.1em]
\item \label{i:lip-hS}
$\opnorm{\hat\Sigma} \le c^{-1}$.
\item \label{i:lip-hfs}
$\lpnorm{\hat f^*} \le c^{-2}\sqrt{n}$.
\item With $m = \tanh(x)$, $\lpnorm{D(m)\hat Q(m)(m-f(m))} \le c^{-1}\eps_{\drift}$
\end{enumerate}
\end{lemma}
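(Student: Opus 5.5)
The plan is to read everything off the defining inequality of the safe set. For $m\in\Safe(c)$ we have $\hQ(m)^{-1}=\nabla^2\FT(m)\succ c\,D(m)$, which is \ref{d:Q-reg} in the form $\hQ(m)\prec c^{-1}D(m)^{-1}$. Equivalently, the symmetrized operator satisfies $0\prec D^{1/2}\hQ D^{1/2}\preceq c^{-1}I_n$. I will prove item (\ref{i:lip-hS}) first and then derive the other two items from it, using only elementary linear algebra plus \ref{d:drift-error}.

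For (\ref{i:lip-hS}), write $H=\hQ(m)^{-1}$ and $D=D(m)$, and set $v=Hu$, so that $\hat\Sigma v = D\hQ v = Du$. The claim is then equivalent to the inequality $\|Du\|\le c^{-1}\|Hu\|$ for all $u$. My first attempt is Cauchy--Schwarz,
\[ \|Du\|\,\|Hu\|\ \ge\ \langle Du, Hu\rangle, \]
which reduces the claim to the matrix inequality $\tfrac12(DH+HD)\succeq c\,D^2$. To get at this, I will write $H = D + M$ with $M=-\beta A+\beta^2(1-\|m\|^2/n)I-\tfrac{2\beta^2}{n}mm^\sT$. The inequality then reads $(1-c)D^2+\tfrac12(DM+MD)\succeq 0$, and the task is to deduce this from $(1-c)D+M\succ 0$.

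This step is the main obstacle: because $\hat\Sigma$ is not symmetric, its operator norm is not controlled by the spectrum of the similar matrix $D^{1/2}\hQ D^{1/2}$. The conjugation $D^{\pm1/2}$ is also unbounded near the faces of the cube, so the naive estimate
\[ \|\hat\Sigma\|\sop \le \|D^{1/2}\|\sop\,\|D^{1/2}\hQ D^{1/2}\|\sop\,\|D^{-1/2}\|\sop \]
is useless there. If the symmetrized inequality cannot be obtained directly, I would try a congruence argument: multiply $(1-c)D+M\succ 0$ by suitable weights, exploiting $D\succeq I$ and the fact that $M$ has bounded operator norm on the high-probability event $\|A\|\sop\le 3$ of \pref{prop:goe-norm-tail}. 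As a fallback, I would prove (\ref{i:lip-hS}) in the weighted norm $\|D^{-1/2}\cdot\|$, where the bound $c^{-1}$ is immediate, and check that the downstream uses in \pref{sec:well-posed} only need that weighted version.

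Given (\ref{i:lip-hS}), item (\ref{i:lip-hfs}) is coordinatewise. The $i$-th entry of $\hat f^*$ is $(\hQ^2)_{ii}D_{ii}^2m_i=\|\hQ D e_i\|^2 m_i$. Since $\hQ D=(D\hQ)^\sT=\hat\Sigma^\sT$, this entry is at most $\|\hat\Sigma\|\sop^2|m_i|\le c^{-2}$ in absolute value. Summing the squares over $n$ coordinates gives $\|\hat f^*\|_2\le c^{-2}\sqrt n$. Item (3) follows by submultiplicativity, $\|D\hQ(m-f(m))\|_2\le\|\hat\Sigma\|\sop\,\|m-f(m)\|_2$, together with \ref{d:drift-error}, which gives the bound $c^{-1}\eps_{\drift}$.
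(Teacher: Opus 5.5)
There is a genuine gap at item~(1), and your items (2) and (3) are both derived from it. Your suspicion is right. The bound $\opnorm{D\hQ}\le c^{-1}$ does not follow from $\hQ^{-1}\succ cD$ and $D\succeq\Id$. Consequently neither your Cauchy--Schwarz reduction to $\tfrac12(DH+HD)\succeq cD^2$ (which would imply the bound) nor any congruence argument can deliver the constant $c^{-1}$. To see this, take
\[
  D=\diag(1,d),\qquad H=D+\begin{pmatrix}0&b\\ b&0\end{pmatrix},\qquad
  DH^{-1}=\frac{1}{d-b^2}\begin{pmatrix}d&-b\\ -bd&d\end{pmatrix}.
\]
Here $H\succ cD$ holds whenever $c<1$ and $(1-c)^2d>b^2$. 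On the other hand, the first column of $DH^{-1}$ is $\frac{d}{d-b^2}(1,-b)^{\sT}$, so $\opnorm{DH^{-1}}\ge\sqrt{1+b^2}$.
\begin{itemize}
\item With $b=1$, $d=100$, $c=0.8$, this gives $\opnorm{DH^{-1}}\ge\sqrt2>1.25=c^{-1}$.
\item With $c=\tfrac12$, $d>4b^2$ and $b\to\infty$, the norm is not even bounded.
\end{itemize}
So the off-diagonal part $M:=H-D$ must be controlled, which means some hypothesis such as $\opnorm{A}\le 3$ has to enter. The paper's one-line proof makes exactly the inference you were wary of, and it is not valid. Item (1) should carry an $n$-independent constant in place of $c^{-1}$, under $\opnorm{A}\le3$.

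The missing idea is to use the resolvent identity and accept a larger constant. For $m\in\Safe(c)$ we have $\hQ^{-1}\succ cD\succeq c\Id$, so $\opnorm{\hQ}\le c^{-1}$. The remainder
\[
M=\hQ^{-1}-D=\beta^2(1-\lpnorm{m}^2/n)\Id-\beta A-\tfrac{2\beta^2}{n}mm^{\sT}
\]
satisfies $\opnorm{M}\le3\beta+3\beta^2$ on $\{\opnorm{A}\le3\}$. Then $\hat\Sigma=D\hQ=\Id-M\hQ$ gives
\[
\opnorm{\hat\Sigma}\le C_{\beta,c}:=1+(3\beta+3\beta^2)/c,
\]
which is exactly the bound the paper proves later in \pref{lem:alg-resolvent-bounds}. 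Your coordinatewise argument for (2) and your submultiplicativity argument for (3) then go through verbatim with $c^{-1}$ replaced by $C_{\beta,c}$. This yields $\lpnorm{\hat f^*}\le C_{\beta,c}^2\sqrt n$ and $\lpnorm{D\hQ(m-f(m))}\le C_{\beta,c}\eps_{\drift}$. These $n$-independent constants are all that \pref{lem:stay-in-cube} needs.

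Your weighted-norm fallback would not suffice there. The It\^o isometry step uses the unweighted bound $\lVert\hat\Sigma\rVert_F^2\le Cn$, and your proof of (2) needs the unweighted $\opnorm{\hQ D}$. The weighted bound only yields $\opnorm{\hQ D^{1/2}}\le c^{-1}$.
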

\begin{proof}
    These follow directly from the bound on $D(m)\hat{Q}(m)$ given by \pref{def:good-points}, the bound $\lpnorm{m} \le \sqrt{n}$ implied by $m \in (-1,1)^n$, and the bound on $\lpnorm{m - f(m)}$ given by \ref{d:drift-error}.
\end{proof}

\begin{lemma}[ASL-TAP and PHD stay in the cube up to safe-set exit]
\label{lem:stay-in-cube}
Assume $\opnorm{A} \le 3$, \ref{d:drift-error}, and $0^n\in\Safe(c)$ for $c > 0$.
Then the primal and dual equations \eqref{eq:asl-tap-m}--\eqref{eq:phd-m}
and \eqref{eq:d-PHD}--\eqref{eq:d-ASL-TAP} have pathwise unique strong
solutions up to their respective first exits from the good set $\Safe(c)$ or its
dual-coordinate image $\tanh^{-1}(\Safe(c))$. Their stopped solutions are defined for all
$t\ge0$, and almost surely
\[
            \hat m_{t\wedge\tau_c}\in(-1,1)^n
                           \qquad(t<\infty).
\]
The ASL tilt and JE weight also have finite limits at a finite exit and
can be stopped there.
\end{lemma}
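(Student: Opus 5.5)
We work primarily in the dual coordinates $\hat x=\tanh^{-1}(\hat m)$. Set $U:=\tanh^{-1}(\Safe(c))\subseteq\R^n$. This set is open, since $m\mapsto \nabla^2\FT(m)-cD(m)$ is continuous on $(-1,1)^n$, so $\Safe(c)$ is open and $\tanh^{-1}$ is a homeomorphism. It also contains $0$ by assumption.

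\emph{Local Lipschitzness.} On $U$, the coefficients $\hat\Sigma$, $\hat f^*$ and $D\hat Q(\hat m-f(\hat m))$ are $C^1$. Indeed, $\hat Q(m)=\nabla^2\FT(m)^{-1}$, and $\nabla^2\FT(m)$ is a smooth function of $m$ (hence of $x$) that is invertible on $\Safe(c)$. The functions $E_{\calD_n}$, $D(m)=\cosh^2(x)$, $\Tr$ and $f$ are polynomial or smooth in these entries. Hence all coefficients are locally Lipschitz on $U$.

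\emph{Strong solutions up to exit.} Exhaust $U$ by the open sets $U_k=\{x\in U:\ve{x}<k,\ \mathrm{dist}(x,U^c)>1/k\}$. On each $U_k$ the coefficients are globally Lipschitz, since they are $C^1$ on a neighborhood of the compact closure. Multiply them by a smooth cutoff equal to $1$ on $U_k$ to get globally Lipschitz SDEs. These have pathwise unique strong solutions, and the solutions for different $k$ agree up to the exit time $\tau_k$ from $U_k$. Setting $\tau_c=\lim_k\tau_k$ gives a unique strong solution up to $\tau_c$. The primal equations \eqref{eq:asl-tap-m}--\eqref{eq:phd-m} follow by Itô's formula applied to $\hat m=\tanh(\hat x)$; this is exactly how the dual drifts were derived, as in \cite[\S6.6]{davies2026potential}. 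Conversely, any primal solution inside $\Safe(c)$ maps to a dual solution, so uniqueness transfers.

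\emph{The solution stays in the cube: the main step.} The danger is that $\tau_c=\lim\tau_k$ could be finite because $\hat x$ explodes to infinity, which corresponds to $\hat m$ reaching $\partial[-1,1]^n$, rather than because $\hat x$ hits $\partial U$ at a finite point. We rule out explosion using \pref{l:dual-phd-bounded}. On $U$ the drift has norm at most $c^{-2}\sqrt n+c^{-1}\eps_{\drift}$ and the diffusion has operator norm at most $c^{-1}$, so $\ve{\hat\Sigma}_F\le c^{-1}\sqrt n$. Therefore, for finite $t$,
\[
\ve{\hat x_{t\wedge\tau_k}}\le (c^{-2}\sqrt n+c^{-1}\eps_{\drift})t+\Big\|\int_0^{t\wedge\tau_k}\hat\Sigma\,dB\Big\|.
\]
The stochastic integral is a martingale with quadratic variation at most $c^{-2}nt$. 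By BDG or Doob's inequality, $\E\sup_{s\le t}\ve{\hat x_{s\wedge\tau_k}}^2$ is bounded uniformly in $k$. Hence $\sup_{s<\tau_c\wedge t}\ve{\hat x_s}<\infty$ almost surely. The same bounds show that $\hat x_{s}$ has a limit as $s\uparrow\tau_c$ whenever $\tau_c<\infty$: the drift term converges absolutely, and the stochastic integral converges by martingale convergence. This limit $\hat x_{\tau_c}$ is finite and lies in $\partial U$. Thus $\hat m_{\tau_c}=\tanh(\hat x_{\tau_c})\in(-1,1)^n$, and stopping there defines the process for all $t\ge 0$ with $\hat m_{t\wedge\tau_c}\in(-1,1)^n$.

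\emph{Tilt and JE weight.} On $U$, $\hat y$ and $w$ have bounded integrands: $\ve{\hat m}\le\sqrt n$, and $\omega(m)\le\frac12(nc^{-1}+n)$ because $\hat Q\preceq c^{-1}D^{-1}\preceq c^{-1}I$. So $\hat y_t=\int\hat m\,dt+B_t$ and $w_t=\int\omega(\hat m)\,dt$ have finite limits at a finite $\tau_c$ and can be frozen there.
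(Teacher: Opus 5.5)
Your proposal is correct and follows essentially the same route as the paper. Both arguments localize the dual SDE on the exhaustion $U_k$ with smooth cutoffs and use consistency of the globally Lipschitz solutions; both then use the uniform coefficient bounds of \pref{l:dual-phd-bounded} to show that $\hat x$ has a finite limit at a finite $\tau_c$ lying in $\partial U$, and finish with It\^o's formula for $\tanh$ and bounded integrands for $\hat y$ and $\hat w$. The only cosmetic difference is that you use BDG/Doob plus martingale convergence, whereas the paper zeroes the integrands via $\mathbf 1_{\{s<\tau_c\}}$ and uses It\^o isometry to obtain the continuous extension directly.
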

\begin{proof}
Write either dual equation as
$d\hat x_t=b(\hat x_t)\,dt+\hat\Sigma(\hat x_t)\,dB_t$ on
$U:=\tanh^{-1}(\Safe(c))$, with $\hat x_0=0$.
Its coefficients are smooth on $U$. Then \pref{l:dual-phd-bounded} gives
\[
    \sup_{x\in U}\lpnorm{b(x)}\le \infty,
    \qquad
    \sup_{x\in U}\opnorm{\hat\Sigma(x)}\le \infty.
\]
With $\operatorname{dist}$ being the distance between a point and a set, define
\[
    U_k:=\{x\in U:\lpnorm{x}<k,\
                  \operatorname{dist}(x,U^c)>k^{-1}\}.
\]
For $k$ large enough that $0\in U_k$, use
$\overline U_k\subset U_{k+1}$ to choose
$\chi_k\in C_c^\infty(U_{k+1})$ with $0\le\chi_k\le1$ and
$\chi_k=1$ on $\overline U_k$.
The coefficient pairs $(\chi_k b,\chi_k\hat\Sigma)$, extended by zero
outside $U$, are globally Lipschitz, so
\cite[Theorem 5.2.1]{oksendal2003stochastic} gives global strong solutions
$\hat x^{(k)}$ with these coefficients, the same Brownian motion, and
initial value $0$.
Set $\tau_k:=\inf\{t:\hat x_t^{(k)}\notin U_k\}$.
The stopped uniqueness argument in that theorem gives, for $\ell\ge k$,
\[
    \tau_k\le\tau_\ell,\qquad
    \hat x^{(\ell)}_{t\wedge\tau_k}
      =\hat x^{(k)}_{t\wedge\tau_k}.
\]
Thus $\tau_c:=\lim_k\tau_k$ and
$\hat x_t:=\hat x_t^{(k)}$ whenever $t<\tau_k$ define the pathwise
unique solution before $\tau_c$.

Taking the integrands to be zero for $s\ge\tau_c$, we have
$\E_B\int_0^T\mathbf1_{\{s<\tau_c\}}
\|\hat\Sigma(\hat x_s)\|_F^2\,ds\le C_{\beta,c}^2nT$.
Hence It\^o isometry, \cite[Theorem 3.2.5]{oksendal2003stochastic},
and the drift bound give the continuous extension
\[
    \hat x_{t\wedge\tau_c}
    :=\int_0^t\mathbf1_{\{s<\tau_c\}}b(\hat x_s)\,ds
      +\int_0^t\mathbf1_{\{s<\tau_c\}}\hat\Sigma(\hat x_s)\,dB_s,
    \qquad t<\infty.
\]
On $\{\tau_c<\infty\}$, continuity gives $\hat x_{\tau_k}\in\partial U_k$, so
\[
    \lpnorm{\hat x_{\tau_k}}=k
    \quad\text{or}\quad
    \operatorname{dist}(\hat x_{\tau_k},U^c)=k^{-1}.
\]
Since $\hat x_{\tau_k}\to\hat x_{\tau_c}\in\R^n$, the first alternative is impossible for large $k$. Thus $\hat x_{\tau_c}\in\partial U$, and $\tau_c$ is the first exit from $U$.

It\^o's formula applied to $\tanh$ and locally to $\tanh^{-1}$ as in \cite[Lemma 6.9]{davies2026potential}, gives the primal equations, pathwise uniqueness, and $\hat m_{t\wedge\tau_c}=\tanh(\hat x_{t\wedge\tau_c})\in(-1,1)^n$.
Finally, for $t<\tau_c$, recover
\[
    \hat y_t=\int_0^t\hat m_s\,ds+B_t,\qquad
    \hat w_t=\frac12\int_0^t
       \bigl(\Tr\hat Q(\hat m_s)+\lpnorm{\hat m_s}^2\bigr)\,ds.
\]
Since $\lpnorm{\hat m_s}\le\sqrt n$ and $0\le\Tr\hat Q(\hat m_s)\le n/c$ before exit, both have finite limits at a finite $\tau_c$ and can be stopped there.
\end{proof}

\subsection{Pathwise concentration}
\label{sec:transportation}
We use the same transportation argument as
\cite[Definition 6.10 and Theorems 6.11--6.12]{davies2026potential}, applying it with globally Lipschitz extensions of the SDE's.
We briefly restate the definition and theorems here.

\begin{definition}
\label{def:Tp-transport-inequality}
A probability measure $P$ on a metric space satisfies a $T_p$ transportation inequality with constant $C$ if $d_{W,p}(P,\mu)^2\le2C H(\mu\mid P)$ for every probability measure $\mu$, where $d_{W,p}$ denotes Wasserstein distance and $H$ relative
entropy.
\end{definition}

\begin{theorem}[$T_1$ inequality and sub-Gaussian concentration~{\cite[Theorem 1.1, Bobkov \& G\"otze]{djellout2004}}]
\label{thm:t1-subgaussian}
If $P$ satisfies $T_1$ with constant $C$, then every integrable
$K_F$-Lipschitz function $F$ satisfies
\[
 \E_P [e^{\lambda(F-\E_P[F])}]
       \le e^{\lambda^2CK_F^2/2}\quad(\forall \lambda\in\R),\qquad
 P\{|F-\E_P[F]|>r\}\le2e^{-r^2/(2CK_F^2)}\quad(\forall r>0).
\]
\end{theorem}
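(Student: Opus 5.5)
The plan is the standard Herbst-type argument: first convert $T_1$ into a sub-Gaussian bound on the Laplace transform of every Lipschitz function, then apply a Chernoff bound. The key input is the Bobkov--G\"otze duality. $P$ satisfies $T_1$ with constant $C$ if and only if
\[
\E_P\bigl[e^{\lambda(F-\E_P F)}\bigr]\le e^{\lambda^2 C/2}
\]
for every $1$-Lipschitz integrable $F$ and every $\lambda\in\R$. Since the statement only asks for the forward direction, I only need that implication.

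The derivation of the moment bound proceeds as follows.
\begin{enumerate}
\item Fix a $K_F$-Lipschitz $F$. Rescaling $F/K_F$ reduces to the case $K_F=1$, and replacing $F$ by $-F$ reduces to $\lambda>0$.
\item Apply the Kantorovich--Rubinstein duality $d_{W,1}(\mu,P)=\sup_{\Lip(G)\le 1}\bigl(\E_\mu G-\E_P G\bigr)$. Combined with $T_1$, this gives, for every $\mu\ll P$,
\[
\E_\mu F-\E_P F\le \sqrt{2C\,H(\mu\mid P)}\le \frac{H(\mu\mid P)}{\lambda}+\frac{\lambda C}{2},
\]
where the second step is AM--GM.
\item Invoke the Donsker--Varadhan variational formula
\[
\log \E_P e^{\lambda(F-\E_P F)}=\sup_{\mu}\Bigl\{\lambda(\E_\mu F-\E_P F)-H(\mu\mid P)\Bigr\}.
\]
Substituting the bound from step 2 makes the right-hand side at most $\lambda^2C/2$, which is exactly the claimed moment bound.
\end{enumerate}

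The main obstacle is integrability in the Donsker--Varadhan formula. If $e^{\lambda F}$ might not be $P$-integrable, I will first apply the argument to the truncations $F_N=\max(\min(F,N),-N)$. These are still $1$-Lipschitz and bounded, so the variational formula is valid for them, and the moment bound holds for each $F_N$. Letting $N\to\infty$, Fatou's lemma passes the bound to $F$; dominated convergence handles $\E_P F_N\to\E_P F$, using the integrability of $F$ assumed in the statement. It also follows that $T_1$ forces $P$ to have a finite first moment, since $d_{W,1}(\delta_{x_0},P)$ is finite.

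Finally, the tail bound is a Chernoff argument. Markov's inequality gives
\[
P\{F-\E_P F>r\}\le e^{-\lambda r+\lambda^2CK_F^2/2}.
\]
Optimizing at $\lambda=r/(CK_F^2)$ yields $e^{-r^2/(2CK_F^2)}$. Applying the same argument to $-F$ bounds the lower tail, and a union bound gives the factor $2$.
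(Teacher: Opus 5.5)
Your argument is correct and is the standard Bobkov--G\"otze duality proof behind the result. It combines the easy direction of Kantorovich--Rubinstein with $T_1$, then the Donsker--Varadhan variational formula with AM--GM, truncation to bounded $F_N$ plus Fatou, and finally a Chernoff bound. The paper only cites this result and gives no proof of its own.

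One aside is misjustified, though you do not need it. $T_1$ says nothing about $d_{W,1}(\delta_{x_0},P)$, because $H(\delta_{x_0}\mid P)=\infty$ unless $P$ has an atom at $x_0$. Finiteness of the first moment does follow if you instead test $T_1$ on $P$ conditioned to a bounded set of positive mass. However, your truncation to bounded $F_N$ already makes every expectation well defined, so you can simply drop the remark.
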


\begin{theorem}[$T_2$ transport inequality for regular It\^o processes~{\cite[modification of Theorem 1]{ustunel2012transportation}}]
\label{thm:t2-transport-SDE}
Let $C,K,T>0$. Consider the global solution of $dX_t=b(X_t)\,dt+\sigma(X_t)\,dB_t$, $X_0=0$, on $[0,T]$.
Suppose that $\opnorm{\sigma(x)}\le C$ for all $x\in\R^n$, that $b$ is $K$-Lipschitz in Euclidean norm, and that $\sigma$ is
$K/\sqrt n$-Lipschitz in $\schnorm{\cdot}$.
Then its path law on $C^0([0,T],\R^n)$ with uniform Euclidean distance satisfies $T_2$ with constant $3TC^2\exp(3T(T+4)K^2)$.
\end{theorem}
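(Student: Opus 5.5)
This is a modification of \"Ust\"unel's theorem, so I would follow his proof strategy: reduce the $T_2$ inequality on path space to a Girsanov-type comparison plus a stability estimate for the SDE.

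\emph{Step 1 (representing competitors).} Let $P$ be the law of $X$ and $\mu\ll P$ a competitor with $H(\mu\mid P)<\infty$. I would realize $\mu$ on Wiener space. Pull $\mu$ back along the strong solution map $B\mapsto X(B)$ to get a measure $\nu$ on Wiener space with $H(\nu\mid W)=H(\mu\mid P)$; equality holds because $X$ is a strong solution, and for the general case one uses the conditional expectation of the density given $X$. By F\"ollmer's representation, under $\nu$ the canonical process is $B_t=\tilde B_t+\int_0^t u_s\,ds$ for a $\nu$-Brownian motion $\tilde B$, and
\[
H(\nu\mid W)=\tfrac12\E_\nu\int_0^T\lpnorm{u_s}^2ds.
\]

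\emph{Step 2 (coupling).} Let $X$ solve the SDE driven by $B$ and $Y$ solve it driven by $\tilde B$, both under $\nu$. Then $X\sim\mu$ and $Y\sim P$, so this is a coupling and
\[
d_{W,2}(P,\mu)^2\le\E_\nu\sup_{t\le T}\lpnorm{X_t-Y_t}^2 .
\]
The difference satisfies
\[
d(X-Y)=\bigl(b(X)-b(Y)+\sigma(X)u\bigr)dt+\bigl(\sigma(X)-\sigma(Y)\bigr)d\tilde B .
\]

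\emph{Step 3 (stability estimate).} Bound each term using $(a+b+c)^2\le3(a^2+b^2+c^2)$:
\begin{itemize}
\item Drift: by Cauchy--Schwarz, $\bigl\lVert\int_0^t (b(X)-b(Y))\bigr\rVert^2\le TK^2\int_0^t\lpnorm{X-Y}^2$.
\item Forcing: $\bigl\lVert\int_0^t\sigma(X)u\bigr\rVert^2\le TC^2\int_0^T\lpnorm{u}^2$.
\item Martingale: by Doob, $\E\sup_t\bigl\lVert\int_0^t(\sigma(X)-\sigma(Y))\,d\tilde B\bigr\rVert^2\le4\E\int\schnorm{\sigma(X)-\sigma(Y)}^2$.
\end{itemize}
The last bound is where the hypothesis stated in $\schnorm{\cdot}$ with the $K/\sqrt n$ scaling enters. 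It is meant to be the Frobenius-type norm scaled so that the Hilbert--Schmidt Lipschitz constant is $K$, which gives a bound of $4K^2\E\int\lpnorm{X-Y}^2$.

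Writing $\phi(t)=\E\sup_{s\le t}\lpnorm{X_s-Y_s}^2$, we get
\[
\phi(t)\le3TC^2\E\int_0^T\lpnorm{u}^2+3(T+4)K^2\int_0^t\phi .
\]
Gr\"onwall then gives
\[
\phi(T)\le3TC^2\,e^{3T(T+4)K^2}\cdot2H(\mu\mid P),
\]
which is exactly $2\cdot[3TC^2\exp(3T(T+4)K^2)]H(\mu\mid P)$ as claimed.

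\emph{Main obstacle.} The delicate step is Step 1: making the pullback/representation rigorous so that relative entropy is preserved, or at least not increased, together with the localization needed when $u$ is only square-integrable. I would handle this by stopping at $\inf\{t:\int_0^t\lpnorm{u}^2\ge k\}$ and letting $k\to\infty$. Everything else is standard once the norm conventions on $\sigma$ are matched to the Itô isometry.
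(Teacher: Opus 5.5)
Your proposal is correct and is essentially the argument behind the paper's proof: the paper just cites the statement as the time-homogeneous special case of Theorem 6.12 of the authors' earlier work, which is \"{U}st\"{u}nel's F\"ollmer-drift coupling plus Doob and Gr\"onwall argument that you reconstruct, and your bookkeeping reproduces the constant $3TC^2\exp(3T(T+4)K^2)$ exactly. One notational fix: the It\^o isometry requires the unnormalized Frobenius norm $\|\cdot\|_F=\sqrt n\,\schnorm{\cdot}$, so your Doob bound should read $4\E\int\|\sigma(X)-\sigma(Y)\|_F^2$, and the $K/\sqrt n$ hypothesis then bounds this by $4K^2\E\int\lpnorm{X-Y}^2$, as you conclude.
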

\begin{proof}
This is the time-homogeneous special case of \cite[Theorem 6.12]{davies2026potential}.
\end{proof}

We now apply the transportation argument to the SDE's with Lipschitz-extended coefficients.

\begin{theorem}[Transport and sub-Gaussian concentration for an ASL-TAP extension]
\label{thm:mSDE-path-subgaussian}
Assume $\opnorm{A} \le 3$, \ref{d:drift-error}, \ref{d:Q-Lip}, and $0^n\in\Safe(c)$ for $c > 0$. Let $0<T<\infty$.
Then there is a global strong solution
\begin{equation}
\label{eq:alg-global-extension}
 d\hat m_t^{\mathrm{ext}}=b_{\mathrm{ext}}(\hat m_t^{\mathrm{ext}})\,dt
              +\sigma_{\mathrm{ext}}(\hat m_t^{\mathrm{ext}})\,dB_t,
                    \qquad \hat m_0^{\mathrm{ext}}=0,
\end{equation}
whose coefficients agree on $\Safe(c)$ with
$\hat Q(m)(m-f(m))$ and $\hat Q(m)$, respectively.
Driven by the same Brownian motion, it agrees with the original ASL-TAP process up to their common first exit from $\Safe(c)$.
Let $P$ be its path law on $\calW=C^0([0,T],\R^n)$ with the uniform norm $\|x\|_\infty=\sup_{0\le t\le T}\lpnorm{x_t}$.
For a constant $K$ depending on $L$ and $L_{\drift}$, set
\begin{equation}
\label{eq:alg-T2-constant}
            c_{T_2}:=3Tc^{-2}\exp\!\left(3T(T+4)K^2\right).
\end{equation}
Then:
\begin{enumerate}[label=\textup{(\roman*)},itemsep=0.3em]
\item For every probability measure $\mu$ on $\calW$,
$d_{W,2}(P,\mu)^2\le2c_{T_2}H(\mu\mid P)$.
\item For every $P$-integrable $K_F$-Lipschitz functional $F$,
\begin{equation}
\label{eq:alg-global-MGF}
 \E_P [e^{\lambda(F-\E_P[F])}]
       \le\exp\!\left(\frac{\lambda^2c_{T_2}K_F^2}{2}\right)
                                      \qquad(\forall \lambda\in\R).
\end{equation}
In sub-Gaussian tail-bound form,
$P\{|F-\E_P[F]|>r\}\le2\exp(-r^2/(2c_{T_2}K_F^2))$ for $r>0$.
\end{enumerate}
\end{theorem}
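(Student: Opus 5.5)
The plan is to build globally Lipschitz coefficients on $\R^n$ that agree with $b(m):=\hat Q(m)(m-f(m))$ and $\sigma(m):=\hat Q(m)$ on $\Safe(c)$. We then invoke \pref{thm:t2-transport-SDE} for the extended SDE and deduce (ii) from (i) via \pref{thm:t1-subgaussian}.

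\emph{Step 1 (extension).} By \ref{d:Q-Lip}(b), $b$ is $L_{\drift}$-Lipschitz on $\Safe(c)$ in $\lpnorm{\cdot}$. A Kirszbraun extension gives a map $b_{\mathrm{ext}}:\R^n\to\R^n$ with the same constant $L_{\drift}$ that agrees with $b$ on $\Safe(c)$. By \ref{d:Q-Lip}(a), $\sigma$ is $L$-Lipschitz from $(\Safe(c),\lpnorm{\cdot})$ into the Hilbert space of symmetric matrices with the Frobenius norm. Kirszbraun again gives an $L$-Lipschitz extension $\tilde\sigma$, which may have unbounded operator norm. We then compose with the projection $\Pi$ onto the closed convex set $\{M=M^\sT: 0\preceq M\preceq c^{-1}\Id_n\}$. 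In the Frobenius inner product this is the nearest-point projection: clip the eigenvalues to $[0,c^{-1}]$. It is $1$-Lipschitz in $\ve{\cdot}_F$. On $\Safe(c)$ we have $\Pi\tilde\sigma=\hat Q$, since \pref{def:good-points} together with $D(m)\succeq \Id_n$ gives $0\prec\hat Q(m)\preceq c^{-1}D(m)^{-1}\preceq c^{-1}\Id_n$. So $\sigma_{\mathrm{ext}}:=\Pi\circ\tilde\sigma$ has $\opnorm{\sigma_{\mathrm{ext}}}\le c^{-1}$ and is $L$-Lipschitz in $\ve{\cdot}_F$. Globally Lipschitz coefficients give a pathwise unique global strong solution \cite[Theorem 5.2.1]{oksendal2003stochastic}.

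\emph{Step 2 (agreement).} Drive the extension and the stopped ASL-TAP process of \pref{lem:stay-in-cube} with the same Brownian motion from $0\in\Safe(c)$. On $[0,\tau_c)$ the two SDEs have identical coefficients along the path. We localize with the same $U_k$ and $\tau_k$ as in \pref{lem:stay-in-cube}, working in primal coordinates; there the coefficients are Lipschitz on each $U_k$. Pathwise uniqueness for Lipschitz coefficients then shows the two solutions coincide up to $\tau_k$ for every $k$, and hence up to $\tau_c$. It follows that their first exits from $\Safe(c)$ coincide.

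\emph{Step 3 (transport and concentration).} The hypotheses of \pref{thm:t2-transport-SDE} hold with $C=c^{-1}$. Its Lipschitz hypothesis on $\sigma$ is phrased in $\schnorm{\cdot}$, presumably the normalized Frobenius norm $n^{-1/2}\ve{\cdot}_F$; in that norm $\sigma_{\mathrm{ext}}$ is $L/\sqrt n$-Lipschitz. So the theorem applies with $K=\max\{L,L_{\drift}\}$ and yields $T_2$ with constant $3Tc^{-2}\exp(3T(T+4)K^2)=c_{T_2}$, which is (i). Since $d_{W,1}\le d_{W,2}$, $T_2$ implies $T_1$ with the same constant, and \pref{thm:t1-subgaussian} gives \eqref{eq:alg-global-MGF} and the tail bound, which is (ii).

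The main obstacle is Step 1: simultaneously keeping the operator-norm bound $c^{-1}$ and the Frobenius Lipschitz constant outside the (non-convex) safe set. The eigenvalue-clipping projection resolves this. If it were unavailable, we would instead compose with the $1$-Lipschitz radial retraction onto the Frobenius ball of radius $\sqrt n/c$. But that only bounds $\ve{\sigma}_F$, not the operator norm, so we would then need a version of \pref{thm:t2-transport-SDE} that uses a Frobenius bound.
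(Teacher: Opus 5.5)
Your proposal is correct and follows essentially the same route as the paper's proof. That route is: Kirszbraun extensions of the drift and of $\hat Q$ (the latter in Frobenius norm), then the Frobenius projection onto $\{0\preceq Q\preceq c^{-1}\Id_n\}$, which restores the operator-norm bound while keeping the Lipschitz constant and the values on $\Safe(c)$, then pathwise uniqueness to identify the paths until exit, and finally \pref{thm:t2-transport-SDE} with $C=c^{-1}$, after which $T_2\Rightarrow T_1$ and \pref{thm:t1-subgaussian} give (ii). The $U_k$-localization in your Step 2 is more than needed, since \ref{d:Q-Lip} already makes the primal coefficients Lipschitz on all of $\Safe(c)$, but it does no harm.
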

\begin{proof}
Follow \cite[Theorem 6.13]{davies2026potential}, extending from
$\Safe(c)$ rather than from the whole cube.
By \ref{d:Q-Lip}, Kirszbraun's theorem gives
extensions satisfying
\begin{equation}
\label{eq:alg-extended-coefficients}
 \|\sigma_{\mathrm{ext}}(x)-\sigma_{\mathrm{ext}}(y)\|_F
       \le K\lpnorm{x-y},\qquad
 \lpnorm{b_{\mathrm{ext}}(x)-b_{\mathrm{ext}}(y)}
       \le K\lpnorm{x-y}.
\end{equation}
Project the diffusion extension in Frobenius norm onto the closed convex set $\{Q\in\mathrm{Sym}_n:0\preceq Q\preceq c^{-1}\Id_n\}$.
This preserves its Lipschitz constant and its values on the good set.
Global strong existence follows, and pathwise uniqueness identifies the paths until exit. 
Apply \pref{thm:t2-transport-SDE} with $C=c^{-1}$ and the above $K$. 
As in \cite[Theorem 6.13]{davies2026potential}, $d_{W,1}\le d_{W,2}$ then implies $T_1$, and \pref{thm:t1-subgaussian} gives (ii).
\end{proof}

\begin{lemma}[Indicator-weighted centered exponential moments]
\label{lem:indicator-MGF}
Let $F$ be a real random variable with
$\E [e^{s(F-\E[F])}]\le e^{vs^2/2}$ for all $s\in\R$, for some $v>0$.
Then for any event $S$ and every $\lambda\in\R$,
\[
 \E\!\left[\mathbf1_S e^{\lambda(F-\E[F\mid S])}\right]
                              \le e^{2v\lambda^2}.
\]
If $\Pr(S)=0$, then the left-hand side is defined to be zero.
\end{lemma}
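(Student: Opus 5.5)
Write $c_S:=\E[F\mid S]$ and $\mu:=\E[F]$. The plan is to split the exponent as
\[
\lambda(F-c_S)=\lambda(F-\mu)+\lambda(\mu-c_S).
\]
This reduces the claim to two estimates: the exponential moment of $F-\mu$ on $S$, and the size of the deterministic shift $\mu-c_S$. We may assume $p:=\Pr(S)>0$, since otherwise the left-hand side is zero by convention. Dropping the indicator in the first estimate gives
\[
\E[\mathbf 1_S e^{\lambda(F-\mu)}]\le e^{v\lambda^2/2}.
\]
So it remains to show $e^{\lambda(\mu-c_S)}\le e^{3v\lambda^2/2}$, or at least to produce a product bound of at most $e^{2v\lambda^2}$.

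The shift is the main obstacle, because $|\mu-c_S|$ can be as large as $\sqrt{2v\log(1/p)}$, which is not controlled by $\lambda$ alone. So I will not bound the two factors separately. Instead I will use the fact that the left-hand side carries a factor of $p$ from the indicator, and trade it off against the shift.

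For the shift, Jensen on the conditional law gives, for every $s\in\R$,
\[
p\,e^{s(c_S-\mu)}\le \E[\mathbf 1_S e^{s(F-\mu)}]\le e^{vs^2/2}.
\]
Optimizing in $s$ yields $|c_S-\mu|\le\sqrt{2v\log(1/p)}=:a$.

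The main estimate applies Hölder, or simply Cauchy--Schwarz:
\[
\E[\mathbf 1_S e^{\lambda(F-\mu)}]\le p^{1/2}\,\E[e^{2\lambda(F-\mu)}]^{1/2}\le p^{1/2}e^{v\lambda^2}.
\]
Then
\[
\E[\mathbf 1_S e^{\lambda(F-c_S)}]\le p^{1/2}e^{v\lambda^2}e^{|\lambda|a}=\exp\!\Big(v\lambda^2+|\lambda|\sqrt{2vL}-L/2\Big),\qquad L=\log(1/p)\ge0.
\]
Maximizing $|\lambda|\sqrt{2vL}-L/2$ over $L\ge0$ gives $\le v\lambda^2$, by AM--GM. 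Hence the whole expression is at most $e^{2v\lambda^2}$, as claimed.

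The remaining checks are routine: measurability, finiteness of $\E[F\mid S]$ (which follows from the sub-Gaussian assumption), and the sign conventions when $\lambda<0$, which is handled by using $|\lambda|$ and the two-sided bound on $c_S-\mu$. The only delicate point is choosing the Hölder exponent so that the final constant is exactly $2$; with exponent $2$ the constants match, which is presumably where the stated $2v$ comes from.
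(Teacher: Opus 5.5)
Your proof is correct and uses the same two ingredients as the paper: Jensen on the conditional law to get $p\,e^{s(c_S-\mu)}\le e^{vs^2/2}$, and Cauchy--Schwarz to get $\mathbb{E}[\mathbf 1_S e^{\lambda(F-\mu)}]\le p^{1/2}e^{v\lambda^2}$. The only difference is that the paper substitutes $s=-2\lambda$ directly, which gives $p^{1/2}e^{\lambda(\mu-c_S)}\le e^{v\lambda^2}$ in one step. Your route optimizes first over $s$ (to get $|c_S-\mu|\le\sqrt{2vL}$) and then over $L$, which reaches exactly the same bound by a longer path.
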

\begin{proof}
Let $q=\Pr(S) > 0$ since the $q=0$ case holds by convention.
Conditional Jensen's inequality and the MGF hypothesis at $s=-2\lambda$
give
\[
    q\,e^{-2\lambda(\E[F\mid S]-\E[F])}
    \le q\E[e^{-2\lambda(F - \E[F])}\mid S] = \E\!\left[\mathbf1_S e^{-2\lambda(F-\E[F])}\right]
    \le e^{2v\lambda^2}.
\]
Taking square roots and then applying Cauchy--Schwarz, we obtain
\begin{align*}
    \E\!\left[\mathbf1_S e^{\lambda(F-\E[F\mid S])}\right]
    &=
    e^{\lambda(\E[F]-\E[F\mid S])}
    \E\!\left[\mathbf1_S e^{\lambda(F-\E[F])}\right]\\
    &\le
    \bigl(q\,e^{-2\lambda(\E[F\mid S]-\E[F])}\bigr)^{1/2}
    \bigl(\E[e^{2\lambda(F-\E[F])}]\bigr)^{1/2}\\
    &\le e^{v\lambda^2}e^{v\lambda^2}
     =e^{2v\lambda^2},
\end{align*}
where the last step also uses the MGF hypothesis at $s=2\lambda$.
\end{proof}

\begin{corollary}[Sub-gaussian concentration of JE weights]
\label{cor:JE-subgaussian}
    Suppose $0^n\in\Safe(c)$ and assume \ref{d:drift-error}, \ref{d:Q-Lip}, and \ref{d:JE-lip}.
    Stop the original ASL-TAP-JE process at
$\tau_c$, and set $S_T=\{\tau_c>T\}$.
Then for every $0<T<\infty$ and $\lambda\in\R$,
\begin{equation}
\label{eq:alg-killed-JE-MGF}
 \E_B\!\left[\mathbf1_{S_T}
    e^{\lambda(\hat w_T-\E_B[\hat w_T\mid S_T])}\right]
      \le\exp\!\left(2\lambda^2c_{T_2}T^2L_\omega^2\right),
\end{equation}
where $c_{T_2}$ is as in \pref{thm:mSDE-path-subgaussian} and
$L_\omega$ as in \ref{d:JE-lip}.
The left-hand side is zero by convention if $\Pr_B(S_T)=0$.
\end{corollary}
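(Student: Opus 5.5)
The plan is to pass from the stopped ASL-TAP process to the globally Lipschitz extension of \pref{thm:mSDE-path-subgaussian}, apply the sub-Gaussian MGF bound there to a suitable path functional, and then restrict to the survival event using \pref{lem:indicator-MGF}.

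\textbf{Step 1: coupling with the extension.} Let $\hat m^{\mathrm{ext}}$ be the global strong solution of \eqref{eq:alg-global-extension} with the same Brownian motion. By \pref{thm:mSDE-path-subgaussian}, it agrees with the original ASL-TAP magnetization up to $\tau_c$. On $S_T$ the two paths therefore coincide on all of $[0,T]$.

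\textbf{Step 2: an extended weight functional.} Since $\omega$ is $L_\omega$-Lipschitz on $\Safe(c)$ by \ref{d:JE-lip}, a McShane extension gives $\omega_{\mathrm{ext}}:\R^n\to\R$ that is $L_\omega$-Lipschitz and equals $\omega$ on $\Safe(c)$. Define the path functional
\[
F(\gamma):=\int_0^T\omega_{\mathrm{ext}}(\gamma_t)\,dt \qquad \text{on } \calW=C^0([0,T],\R^n).
\]
For two paths $\gamma,\gamma'$,
\[
|F(\gamma)-F(\gamma')|\le TL_\omega\|\gamma-\gamma'\|_\infty,
\]
so $F$ is $K_F=TL_\omega$-Lipschitz in the uniform norm. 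It is integrable under $P$ because $|\omega_{\mathrm{ext}}(x)|\le|\omega(0)|+L_\omega\lpnorm{x}$ and the path law has sub-Gaussian tails. On $S_T$, by Step 1, $F(\hat m^{\mathrm{ext}}_{[0,T]})=\hat w_T$.

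\textbf{Step 3: restrict to survival.} By \pref{thm:mSDE-path-subgaussian}(ii), $F$ has centered MGF bounded by $e^{v\lambda^2/2}$ with $v=c_{T_2}T^2L_\omega^2$. On $S_T$ we have $F=\hat w_T$, so $\E[F\mid S_T]=\E_B[\hat w_T\mid S_T]$ and
\[
\mathbf1_{S_T}e^{\lambda(\hat w_T-\E[\hat w_T\mid S_T])}=\mathbf1_{S_T}e^{\lambda(F-\E[F\mid S_T])}.
\]
Applying \pref{lem:indicator-MGF} with $S=S_T$ bounds this by
\[
e^{2v\lambda^2}=\exp\bigl(2\lambda^2c_{T_2}T^2L_\omega^2\bigr),
\]
which is the claimed estimate. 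The convention for $\Pr_B(S_T)=0$ matches the one in that lemma.

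\textbf{Main obstacle.} The delicate point is Step 1, identifying $\hat w_T$ with a functional of the extended path. This needs the stopped JE weight to be well defined, which \pref{lem:stay-in-cube} provides, and the agreement of paths up to exit. A secondary point is that \ref{d:JE-lip} holds only on $\Safe(c)$, which need not be convex. This is harmless because McShane extension requires only the metric Lipschitz bound, not convexity of the domain.
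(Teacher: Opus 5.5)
Your proposal is correct and is essentially the paper's own proof. The paper also extends $\omega$ by the McShane formula $\omega_{\mathrm{ext}}(x)=\inf_{m\in\Safe(c)}\{\omega(m)+L_\omega\lpnorm{x-m}\}$, applies \pref{thm:mSDE-path-subgaussian}(ii) to the $TL_\omega$-Lipschitz functional $F(x)=\int_0^T\omega_{\mathrm{ext}}(x_t)\,dt$, identifies $F(\hat m^{\mathrm{ext}})=\hat w_T$ on $S_T$ via path agreement, and concludes with \pref{lem:indicator-MGF}; your explicit integrability check using $0^n\in\Safe(c)$ is a detail the paper leaves implicit.
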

\begin{proof}
    Follow the path-functional argument in
\cite[Corollary 6.14(d5)]{davies2026potential}, using
\ref{d:JE-lip} instead of integrating a gradient along
the whole cube, and extend $\omega$ to all of $\R^n$, for example by
\[
 \omega_{\mathrm{ext}}(x):=
       \inf_{m\in\Safe(c)}
             \{\omega(m)+L_\omega\lpnorm{x-m}\}.
\]
This agrees with $\omega$ on the good set and is $L_\omega$-Lipschitz.
The path functional $F(x):=\int_0^T\omega_{\mathrm{ext}}(x_t)\,dt$
is $TL_\omega$-Lipschitz and integrable for the global law in
\pref{thm:mSDE-path-subgaussian}, so \eqref{eq:alg-global-MGF} gives a sub-Gaussian MGF with
variance proxy $v=c_{T_2}T^2L_\omega^2$ for $F(\hat m^{\mathrm{ext}})$ on the extended SDE.
On $S_T$, the original and extended paths agree, hence
$F(\hat m^{\mathrm{ext}})=\hat w_T$ and their surviving-path means coincide, so
finally, apply \pref{lem:indicator-MGF} to remove paths outside of $S_T$ from the bound.
\end{proof}

\subsection{Algorithmic error analysis and proof of main theorem}

The following lemma will allow us to give a final error bound for the algorithm, given all the sources of error. 
\begin{lemma}[Jarzynski's equality with survival and rejection sampling]
\label{l:je-rs}
    Suppose that %
    $f_t$ and $\hat f_t$ are such that pathwise unique strong solutions to the following SDE's exist:
    \begin{align}
        & dx_t = f_t\, dt + dB_t&& x_0\sim  p_0\\
        \label{e:je-generic}
        &%
        d\hx_t = \hf_t\, dt+ dB_t 
        && %
        \hx_0\sim  p_0. %
    \end{align}
    Let $\calS\subeq \R^n$ and for a process $\ga$ adapted to $B_t$, define the stopping time $\tau(\ga):= \inf_{t\ge 0} \set{t}{\ga_t\nin \calS}$. 
    Let $\bP_T,\hat\bP_T$ be the path measures for $x_{[0,T]},\hx_{[0,T]}$ and $\bP_T^s, \hat\bP_T^s$ be the path measures for $x_{[0,T]}, \hx_{[0,T]}$ conditioned on survival, $S_T = \{\tau > T\}$.
    Let $ p_t, \hat p_t$ be the density of $x_t, \hat x_t$, respectively and $p_t^s$, $\hp_t^s$ be the densities for $x_t$, $\hat x_t$, restricted to surviving paths $S_t$. 
    Suppose that $\hw_t(\hx_{[0,T]})$ is defined so that 
    \[
\dd{\bP_T^s}{\hat{\bP}_T} (\ga) = \one_{S_T}(\ga) \fc{e^{\hat w_T(\ga)}}{\hat Z_T} \fc{R_2(\ga_T)}{Z_2}
    \]
    where $\hat Z_t = \E[e^{\hw_t}\mid S_t]$ 
    and so $\dd{p_T^s}{\hat p_T}(x) = \E[\one_{S_T}(\hx_{[0,T]}) e^{\hat w_T}/\hat Z_T \mid \hx_T=x] \fc{R_2(x)}{Z_2}$.
    Here, $\hw_t=\hw_t(\hx_{[0,t]})$ unless specified otherwise.
    Suppose with $L=\exp\pa{\fc{3}{\ep}\pa{\fc 12 \int_0^T \ep(t)^2dt+1}}$,
    \begin{enumerate}
    \item[(0)] (Survival probability) $\bP_T(S_T) \ge 1-\de$ and $\hat \bP_T(S_T) \ge 1-\hat \de$.
        \item (Drift error) For a stopping time $\tau'$ such that $\tau'\le \tau$, we have
        $\E_{x_{[0,T]}}\ba{\one_{\{t<\tau'\}}\ve{\hf_t-f_t}^2}\le \ep(t)^2$ and $\bP_T[\tau'> T]\le \de'$.
        \item (Exponential upper tail for log-weights)
        $\E\ba{\one_{S_T}(\hx_{[0,T]}) e^{\hw_T}\one_{e^{\hw_T}/\hat Z_T\ge C_1}}\le \ep_1 \hat Z_T$, for $\ep_1 = \fc{c_3\ep}{3 L}$.
        \item (Lower tail for log-weights)
        $\P\ba{\{\hx_{[0,T]}\in S_T\}\cap \bc{\fc{e^{\hw_T}}{\hat Z_T}\le c_3}}\le \ep_3$, for $\ep_3 = \fc{\ep}{3 L}$. 
    \end{enumerate}
    Then $\E\ba{
            \fc{e^{\hw_T}}{\hat Z_T} \dd{ p_T}{\rh_T}(\hx_T)\wedge \fc{2C_1L}{c_3}}\ge 1-\ep-\de$, and given query access to $R_2(x)$, we can sample from $ p_T$ with TV error $2\ep + %
            \de + \de'$ by $O\pf{C_1^2L^2\log\prc\ep}{c_3^2}$ simulations of \eqref{e:je-generic}, provided that $2\ep+\de<\rc2$, by choosing the parameters  $C_1'=C_2'=\fc{C_1L}{c_3}$ and using the estimate 
            $e^{\hw_T} R_2(\hx_T)$
            in \pref{a:ars}. 
Suppose moreover that %
$\tx_{[0,T]}$
is a random process and %
$\tw_T(x_{[0,T]})$ are weights such that 
\begin{enumerate}[resume]
    \item (Path error) \label{i:je-path-error}
    $\TV(\hat \bP_T^s, \td{\bP}_T^s) \le \ep_4$ for some $\td{\bP}_T^s$ such that $\dd{\td{\bP}_T^s}{\td{\bP}_T}\le 1$, where $\ep_4 = \fc{\ep}{384C_1^{\prime2}}$.\footnote{We define TV error for measures which may not be probability measures in the same way, $\TV(P,Q) = \sup_A |P(A)-Q(A)|$.}
    \item (Weight error) \label{i:je-ewt}
    $\td \bP_T^s \pa{|\tw_T - \hw_T|> \fc{\ep_4}2}\le \fc{\ep_4}2$.
    \item (Ratio error) \label{i:je-ert}
    $\td R_2(x)$ is a (randomized) estimate such that %
\allowdisplaybreaks
\begin{align*}
(a) &&\forall x,\quad  \P\pa{\fc{\td R_2(x_T)}{R_2(x_T)}\nin [0, e^{\ep_4/2}] \Big| x_T=x} &\le \fc{\ep_4}2\\
(b)&&    \P\pa{\fc{\td R_2(x_T)}{R_2(x_T)}\nin [e^{-\ep_4/2}, e^{\ep_4/2}]}&\le \ep
\end{align*}
\end{enumerate}
Then we can sample from $p_T$ with TV error $6\ep+\de + \de'+ 768{C_1'}^2\hat \de$ by $O\pf{C_1^2L^2\log\prc\ep}{c_3^2}$ simulations of \eqref{e:je-generic} using the estimate %
$e^{\tw_T(\td x_{ [0,T]})} \td R_2(\tx_T)$
            in \pref{a:ars}, provided that this error is $<\rc2$.
\end{lemma}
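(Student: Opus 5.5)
The plan is to follow the template of the corresponding rejection-sampling lemma in \cite[\S 3]{davies2026potential}, with every quantity restricted to surviving paths. Write $W:=\one_{S_T}e^{\hw_T}/\hat Z_T$ and $M:=2C_1L/c_3$. The first part of the lemma has three steps, carried out in order:
\begin{enumerate}
\item Control the true-to-approximate density ratio on most of the $p_T$ mass.
\item Show that the truncated product $W\,\frac{R_2(\hx_T)}{Z_2}\wedge M$ captures almost all of $p_T^s$.
\item Feed this into the generic guarantee for \pref{a:ars}.
\end{enumerate}

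\emph{Step 1 (density ratio).} Stop both path measures at $\tau'$. Girsanov together with hypothesis (1) gives
\[
\KL\bigl(\bP_{T\wedge\tau'}\,\|\,\hat\bP_{T\wedge\tau'}\bigr)\le \tfrac12\int_0^T\ep(t)^2\,dt .
\]
Let $\Lambda=\log\frac{d\bP}{d\hat\bP}$ be the log-likelihood ratio. Its positive part satisfies $\E_{\bP}[\Lambda_+]\le \KL+1$. Markov's inequality then shows that, outside a set of $\bP$-mass $\ep/3$, we have $\Lambda\le \log L$, which is the choice of $L$ in the statement. Data processing transfers this bound to the endpoint densities. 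On the event $\{\tau'> T\}$, which costs at most $\de'$, the stopped and unstopped laws agree.

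\emph{Step 2 (truncated acceptance mass).} By the assumed factorization,
\[
\frac{dp_T^s}{d\hat p_T}(x)=\E[W\mid \hx_T=x]\,\frac{R_2(x)}{Z_2}.
\]
I split the surviving $\hat\bP$-paths into three classes:
\begin{enumerate}
\item Paths with $W\ge C_1$. By hypothesis (2) their contribution is at most $\ep_1=\frac{c_3\ep}{3L}$.
\item Paths with $W\le c_3$. By hypothesis (3) they have probability at most $\ep_3=\frac{\ep}{3L}$. Since $\frac{dp_T}{d\hat p_T}\le L$ on the good set, they carry at most $\ep/3$ of the $p_T$ mass.
\item The remaining paths, where $c_3<W<C_1$. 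Here I aim to show that the ratio factor satisfies $\frac{R_2}{Z_2}\lesssim L/c_3$ off a set of small mass, so that the product $W\frac{R_2}{Z_2}$ is below $M$ and the truncation is inactive.
\end{enumerate}
I expect this last bound to be the main obstacle. The quantity $\E[W\mid \hx_T=x]$ is a conditional mean, so the pointwise information $W>c_3$ must be converted into a lower bound on $\E[W\mid\hx_T]$ for most endpoints. I would attempt this by Markov's inequality applied to $\P(W\le c_3\mid \hx_T)$, which is where the factor $2$ in $M$ enters. The restriction to $S_T$ is handled by the survival bound $\bP_T(S_T)\ge1-\de$.

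Collecting the terms gives $\E\bigl[W\frac{dp_T}{d\hat p_T}(\hx_T)\wedge M\bigr]\ge 1-\ep-\de$. The generic analysis of \pref{a:ars} then yields TV error $2\ep+\de+\de'$ using $O(M^2\log(1/\ep))$ trials, with caps $C_1'=C_2'=C_1L/c_3$.

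\emph{Second part (perturbation).} The approximate pipeline replaces three ingredients: the path law by $\td\bP$, the weights by $\tw$, and the ratio by $\td R_2$. Each replacement changes the capped acceptance function $\min\{\cdot,C_1'\}$ by an additive amount in a bounded way. The path error (4) and the weight error (5) each cost $O(\ep_4)$ of mass, since the weights are multiplied by factors in $[e^{-\ep_4/2},e^{\ep_4/2}]$ except on $\ep_4/2$ mass. The ratio error (6b) costs $\ep$. Condition (6a) guarantees that the estimator never inflates acceptance beyond the cap in the one-sided direction.

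Rescaling by the cap, as in the robustness lemma for \pref{a:ars}, multiplies these errors by $O(C_1^{\prime2})$. This explains the choice $\ep_4=\ep/(384C_1'^2)$. The paths of $\hat\bP$ that fail to survive have mass $\hat\de$, and they enter the accepted distribution with weight at most $C_1'^2$ after normalization, which produces the $768C_1'^2\hat\de$ term. Summing these contributions gives the stated bound $6\ep+\de+\de'+768C_1'^2\hat\de$.
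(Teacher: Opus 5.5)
Your skeleton is the paper's: the KL bound for the $\tau'$-stopped laws plus Markov, a split by the size of $e^{\hw_T}/\hat Z_T$, then \pref{l:ars}. Step~2, however, has a genuine gap, and it comes from passing to endpoint densities.

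First, that passage is not justified as written. Data processing only moves the KL bound to the time-$T$ marginals of the \emph{stopped} laws. Only the ideal probability $\bP_T[\tau'\le T]$ is controlled; $\hat\bP_T[\tau'\le T]$ is not assumed small. So the law of $\hx_{T\wedge\tau'}$ need not be close to $\hat p_T$. Agreement of stopped and unstopped laws on $\{\tau'>T\}$ is a statement about paths. It says nothing directly about $\frac{dp_T^s}{d\hat p_T}(x)$, which averages over paths with $\tau'\le T$ as well.

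Second, the obstacle you then face, lower-bounding the conditional mean $\E[W\mid\hx_T]$, does not close with the prescribed constants. Because $W=0$ off $S_T$, hypothesis (3) only gives $\P(W\le c_3)\le\ep_3+\hat\de$. Your Markov step therefore drags in $\hat\de$, which is absent from the first conclusion. Its factors of $2$ also turn the terms $L\ep_3$ and $L\ep_1/c_3$ into $2\ep/3$ each. Even after repairing the first issue, your route gives something like $1-\de-\de'-\tfrac53\ep-2L\hat\de$, not the stated bound with the given $\ep_1,\ep_3$.

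The missing observation is that the hypothesis is a \emph{pathwise} factorization,
$\rho(\ga):=\frac{d\bP_T^s}{d\hat\bP_T}(\ga)=\one_{S_T}(\ga)\frac{e^{\hw_T(\ga)}}{\hat Z_T}\frac{R_2(\ga_T)}{Z_2}$,
so the paper never leaves path space. Take the path events $G_1=\{\rho\le L\}$, $G_2=\{e^{\hw_T}/\hat Z_T\ge c_3\}$, $G_3=\{e^{\hw_T}/\hat Z_T\le C_1\}$, with complements $B_i$. Pointwise on $S_T\cap G_1\cap G_2$ one has $R_2(\ga_T)/Z_2=\rho\,\hat Z_T e^{-\hw_T}\le L/c_3$, with no conditioning at all. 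The three losses are:
\begin{itemize}
\item[(i)] $\E[\rho\one_{S_T\cap B_1}]=\bP_T(S_T\cap B_1)\le\ep/3+\de'$. This is your KL--Markov bound kept on paths, where $\rho$ equals the stopped-law ratio on $\{\tau'>T\}$.
\item[(ii)] $\E[\rho\one_{S_T\cap G_1\cap B_2}]\le L\ep_3=\ep/3$.
\item[(iii)] $\E[\rho\one_{S_T\cap G_1\cap G_2\cap B_3}]\le\frac{L}{c_3}\ep_1=\ep/3$.
\end{itemize}
Hence $\E[\one_{S_T}\rho\wedge\frac{C_1L}{c_3}]\ge1-\de-\de'-\ep$. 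The cap $C_1L/c_3$ already suffices, so the factor $2$ in the statement is slack, not something the proof must generate. \pref{l:ars} is then applied to the path measures $Q=\hat\bP_T$, $P=\bP_T$, and one marginalizes at time $T$ only at the end.

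In the second part your direction is right, but the $768C_1'^2\hat\de$ does not come from reweighting non-surviving paths. The paper uses $\td R=e^{\tw_T}\td R_2$ with the events $G_1'=\{\td R_2/R_2\in[0,e^{\ep_4/2}]\}\cap\{|\tw_T-\hw_T|\le\ep_4/2\}$ and $G_2'=\{\td R_2/R_2\in[e^{-\ep_4/2},e^{\ep_4/2}]\}$. It shows $\td\bP_T({G_1'}^c)\le\td\bP_T^s({G_1'}^c)+\hat\de\le\ep_4+\hat\de$ and $\bP_T({G_2'}^c)\le\ep$. It then feeds $\ewt=\ep+\de+\de'$, $\de_2=\ep$, $\de_1=\esm=\ep_4+\hat\de$, $\ert=\ep_4$ into \pref{l:ars}. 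Since $\hat\de$ appears in both $\de_1$ and $\esm$, each multiplied by $384C_1'C_2'$, this gives $5\ep+\de+\de'+768C_1'^2\hat\de$. One more $\ep$ for failure of the quantile or rejection step yields $6\ep$.
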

As the proof is very similar to \cite[Lemma 3.9]{davies2026potential}, but with restriction to survival events in the right places, we defer it to \pref{s:alg-error}. Note that we directly use the Radon-Nikodym derivative on path space rather than on the time-$T$ marginal. We also allow an earlier stopping time in bounding the drift error: $\tau'$ can be more complicated (as in Lemma \ref{l:m-error}), so we only require the probability of $\tau'\le T$ to be bounded under $\bP_T$, whereas we need the probability of $\tau\le T$ to be bounded under both $\bP_T$ and $\hat \bP_T$.

In particular, note that if all the parameters are constant in $n$ and $\de,\de',\hat\de = o_n(1)$, then there is a choice of $\ep=o_n(1)$ such that the total error is still $o_n(1)$.

We can now prove the main theorem. As many of the arguments carry over from \cite[\S8]{davies2026potential} in a straightforward manner, we focus on the necessary modifications.

\begin{prf}[Proof of \pref{t:main}]
We check the conditions of \Cref{l:je-rs} where the law $\bP_T$ of the ideal process $m_t$ is given by \eqref{e:HD}, and the law of the approximate process $\hat \bP_T$ is given by \eqref{e:ASL-TAP} with weights \eqref{e:JE}. The algorithm computes the Euler-Maruyama discretization with some step size $h$ to be determined:
\begin{align*}
    \ty_{t+h} &= \ty_t + h \td{\mg}_t + \sqrt{h}\xi_t, \quad \xi_t \sim \calN(0,I_n), & \ty_0&=0\\
    \tw_{t+h} &= \tw_t + \fc h2 \ba{\Tr(\hQ(\td m_t)) + \ve{\td m_t}^2}. 
\end{align*}
We first consider the case where $\td m_t$ is computed exactly from $\td y_t$ from \eqref{e:TAP}.
We take $\calS = \Safe(c)$, so that $\tau=\inf \set{t\ge 0}{m_t\nin \Safe(c)}$ and $S_T = \{\forall t\in [0,T], \, m_t \in \Safe(c)\}$. We take 
\[\tau' = \inf\set{t\ge 0}{((m_t,y_t),(\hm_t,\hy_t))\nin \calS^2(\rh\sqrt n , c\rh\sqrt n/2)},\] 
where the expanded pair safe set is defined with $\calS=\set{(m,y)}{m\in \Safe(c)}$. The Radon-Nikodym derivative is in the form specified by the lemma, by \pref{l:je-survival}.

\ppart{Part 0 (survival probability)} Note that $\bP_T(S_T^c) = o_n(1)$ directly from \ref{d:safe-set-stay-inside}, while $\bP_T(S_T^c) = o_n(1)$ follows from \pref{l:my-safe}.

\ppart{Part 1 (drift error)} This holds with $\ep(t) = O(1)$ by \Cref{l:m-error}. 

\ppart{Part 2 \& 3 (tails for log-weights)} 
This follows from the subgaussianity of Jarzynski weights, \pref{cor:JE-subgaussian} by \cite[Lemma 3.10]{davies2026potential}, applied to the measure conditional on survival. 

\ppart{Part 4 (path error)} 
Discretization error in \cite{davies2026potential} is analyzed in path space, using Lipschitzness of a uniquely defined $\hm(y)$ solving \eqref{e:TAP}. Instead, we can conduct the analysis on a lift of $y$-space $\hat y(\calS(c))$ based on homotopy equivalence in $\hat y(\calS(c))$; then $\hm$ is well-defined and Lipschitz on the lifted space. With $O\prc n$ step size, Brownian excursions are $o(1)$ with high probability, so when $\hm$ lies within the expanded safe set, all arguments can be done within a single branch.
We find that it suffices to take $O\pf{\ep_4^2}{n}$ step size with an appropriate constant to obtain TV error $\ep_4$.

\ppart{Part 5 (weight error)} Following the discretization arguments in \cite{davies2026potential} with the above remarks, to obtain the needed error bound, it suffices to take $h=O\pf{\ep_4^2}{n^2}$ with an appropriate constant.

\ppart{Part 6 (ratio error)} 
We need to check the estimation of $\hat Z_{\be A,y}$. 
With high probability over $A$, we can estimate the partition function on any small enough Hamming ball $B(\si, c(\be)n)\in \{\pm1\}^n$, by using the sampling guarantee for all temperatures $\be'\le \be$ (\ref{thm:localized-sampler}) along with simulated annealing, as in \cite[\S7.5]{davies2026potential}; this gives an underestimate with high probability not depending on $\si$.
When $\si = \sign(y_T)$ where $y_T$ is the SL tilt, for large enough $T=T(\be)$, with probability $1-e^{-\Om(n)}$, this wedge contains all but $e^{-\Om(n)}$ of the mass of the entire partition function, by \cite[Lemma 7.43]{davies2026potential}. 

\ppart{Putting everything together} Thus by \Cref{l:je-rs}, the output of Part 1 of \pref{alg:main} is a sample $\ty_T\sim \td  p_T$ with $\TV(\td p_T,  p_T)=o_n(1)$, in $\poly(n)$ time. 
By \pref{thm:localized-sampler}, %
with $1-o_n(1)$ probability over $y_T\sim  p_T$, we obtain a sample $\hat \si$ that is $o_n(1)$ in TV distance from $ \mu_{\be A, y_T}$ in polynomial time. 

Letting $\td \mu$ be the distribution of $\hat\si$ for tilt $\ty_T$, we have by the chain rule for TV distance that
\begin{align*}
    \TV(\td \mu, \mu)
    &\le 
    \TV(\td  p_T,  p_T) + 
    \E_{y_T\sim  p_T} \TV (\calL(\hat\si | y_T), \calL(\si | y_T) ) = o_n(1),
\end{align*}
as needed.

\ppart{Error in solving \eqref{e:TAP}} Robustness to approximating the solution to \eqref{e:TAP} is shown in \cite[\S D]{davies2026potential}. The same mirror descent analysis works with initialization that is within a ball around the true solution $m$ that is inside the expanded safe set, which is ensured under Brownian excursions at every step being at most $O(1)$.
\end{prf}

\section{Uniform TAP Hessian positivity over the stochastic localization process}\label{sec:uniform-tap-convexity}
We will use a ``hybrid'' argument that combines the local strong convexity of the TAP Hessian $\hat{Q}(\cdot)$ for $0 < \beta < 1$ in small balls around the AMP iterates $\{\bar{m}_t\}_{t \ge 0}$ due to a result of Celentano \cite{celentano2024sudakov} and the $o(\sqrt{n})$-$\ell_2$ bound between the AMP sequence $\{\bar{m}_t\}_{t \ge 0}$ and SL process $\{m_t\}_{t\ge 0}$ implied by the result of El-Alaoui, Montanari and Sellke \cite{el2022sampling} with a covering argument. This will immediately imply desideratum \ref{d:safe-set-stay-inside}, namely:
\stepcounter{desideratum}
\begin{desideratum}
    For every $0 \le T(\beta) < \infty$, there exist $\rho(\beta, T), c_{\TAP}(\beta,T) > 0$ such that
    \[ \Pr_{A,B_t}\left\{\forall t \in [0,T] \;.\; B_{\square}(m_t, \rho\sqrt{n}) \subseteq \calS_A(c_{\TAP})\right\} \ge 1 - o_n(1)\,. \]
\end{desideratum}

\paragraph{AMS Hessian, TAP Hessian and AMP iterates} Let $D(m_t) := \mathsf{diag}\left(\frac{1}{1-m^2_t}\right)$. The Hessian used in \cite{el2022sampling} is given as
\[
    \bar{Q}(m_t)^{-1} := \beta^2(1-q^*(t))\Id_n - \beta A + D(m_t)\,.
\]
Recall the Hessian of the SL-tilted planted-SK model used in \cite[\S 4]{davies2026potential} is 
\[
    \nabla^2\calF_{\TAP}(m_t,\cdot) := \beta^2\left(1-\frac{\norm{m_t}^2_2}{n}\right)\Id_n -\beta A + D(m_t) - \frac{\beta^2}{n}x_0x_0^\sT -\frac{2\beta^2}{n}m_tm_t^\sT\,.
\]
The AMP iterates, run for $k \ge k_0(\eps,\beta,t)$ iterations at \emph{every} $t \in [0,T(\beta)]$, start at $\bar{m}_{-1} = 0^n$ and $y_t = tx_0 + g_0$ where $g_0 \sim \calN(0,\Id_n)$, and follow the standard updates
\begin{align*}
    \bar{m}_j &= \tanh(z_j)\,, \\
    z_{j+1} &= \beta \left(A + \frac{\beta}{n}x_0x_0^\sT\right) + y_t - \beta^2\left(1 - \frac{\norm{\bar{m}_j}^2_2}{n}\right)\bar{m}_{j-1}\,,
\end{align*}
run for $j \in [0,k]$ iterations. The result of Celentano \cite{celentano2024sudakov} considers a comparison with a linear functional of an auxiliary Gaussian field, the definition of which requires considering an approximate isometry $\mathbf T$ from the  subspace spanned by the AMP magnetization estimates $\{\bar{m}_0,\dots,\bar{m}_{k-1}\}$ to that of the vectors that come from the ``pure'' SK component
\[
    g_j = \beta A \bar{m}_{j-1} - \beta^2\left(1 - \frac{\norm{\bar{m}_{j-1}}^2_2}{n}\right)\bar{m}_{j-2}\,,
\]
which remove the contributions from the tilt $y_t$ and the plant $J = \mathbf{1}^n(\mathbf{1}^n)^\sT$.

\subsection{``Hybrid'' local strong convexity} For our use case, the only non-trivial part of this proof is to check that the Schur complement of a $2\times2$-matrix induced by a specific low-dimensional objective induced by the Hessian $\hat{Q}(\cdot)$ around test-points in a compact set $K$ corresponding to the AMP iterates is negative. Once the correct reduction to the low-dimensional optimization problem is setup in our ``hybrid'' model, no change is needed -- the proof is exactly that of \cite[Proof of Theorem 2]{celentano2024sudakov} with just one syntactic change.

To briefly recap the proof, we introduce some notation which closely follows the definitions in \cite[\S 5]{davies2026potential} and allows a modular invocation of \cite[Proof of Theorem 2]{celentano2024sudakov}. Let $(m^*, q^*)$ solve the fixed-point equations in \cite[Theorem 5.1]{davies2026potential}. Let $G \sim \calN(0,\beta^2 q^*)$ and $G_0 \sim \calN(0,1)$ be independent, and denote various parts of the functionals in the fixed-point equations as
\allowdisplaybreaks
\begin{align*}
    t_\infty &:= \beta^2q^* + t\,,\\
    M &:= \tanh(t_\infty + G + \sqrt{t}G_0) = \tanh(Y^*)\,,\\
    b^* &:= \E[M^3] = \E[M^4]\mbox{\footnotemark}\,, \\
    \Delta &:= \E\left[(1-M^2)^2\right] = \E\left[\sech^4(Y^*)\right] = 1-2q^* + b^*\,, \\
    r &:= 1 - \frac{b^*}{q^*}\,, \\
    s &:= 1 -\beta^2\Delta = 1-\beta^2\E[\sech^4(Y^*)]\,.
\end{align*}
\footnotetext{\,This can be proved via a simple extension of the proof of \cite[Lemma 2.1]{li2026overlap} -- see \pref{lem:nishimori-third}.}\hspace{-2mm}
In the notational choices above, the dependencies of various constants on $t$ have been suppressed for convenience\footnote{\,The quantities $m^*$, $q^*$, $Y^*$ and $b^*$ all dependent on $t$.}. The astute reader may observe that the quantity $s$ is a ``self-stability'' term that shows up in the denominator of various Taylor-expanded terms in the cavity estimates conducted in \cite[\S 5.4]{davies2026potential}. For $0 < \beta < 1$ and $0<t\le T(\beta)<\infty$, it is straightforward to see that $s > 0$ and $0<b^*<q^*<1$. The latter immediately implies that $\Delta, r > 0$.

We now state the result, which shows that after sufficiently many runs of the AMP algorithm initialized with tilt $y_t$, the Hessian of the TAP free energy for the SL-tilted planted SK model is PSD in a $\ell_2$-ball around of radius $\Omega_{\beta,t}(\sqrt{n})$ around the final AMP iterate. 
\begin{theorem}[``Hybrid'' local strong convexity {\cite[Syntactic modification of proof of Theorem 2]{celentano2024sudakov}}]\label{thm:hybrid-local-convexity}
    For every $\beta, t > 0$, there exist $c_0(\beta,t), c_1(\beta,t) > 0$, such that, for all $0 < \eps < c_1(\beta,t)$ and $k \ge k_0(\eps,\beta,t)$,
    \[
        \Pr_{A, g_0}\left\{\inf_{\sigma \in B_\square(\bar{m}_{k-1},\eps\sqrt{n})} \nabla^2\calF_{\TAP}(\sigma,\cdot) \succeq c_0(\beta,t)\right\} \ge 1 - o_n(1)\,.
    \]
\end{theorem}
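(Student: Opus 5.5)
We follow Celentano's proof of \cite[Theorem 2]{celentano2024sudakov} essentially line by line. The only new work is to check that the ``hybrid'' Hessian has the structure that argument needs.

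\textbf{Step 1: reduce to a Gordon/Sudakov--Fernique comparison.} Write $\nabla^2\calF_{\TAP}(\sigma,\cdot)$ as $-\beta A$ plus a diagonal term $D(\sigma)$, plus the scalar shift $\beta^2(1-\norm{\sigma}_2^2/n)\Id_n$, minus the rank-two correction $\frac{\beta^2}{n}(x_0x_0^\sT+2\sigma\sigma^\sT)$. We must lower-bound
\[
\inf_{\sigma\in B_\square(\bar m_{k-1},\eps\sqrt n)}\ \inf_{\norm{v}_2=1}\an{v,\nabla^2\calF_{\TAP}(\sigma)v}.
\]
The rank-two term is harmless up to a constant, since $\norm{\sigma}_2^2/n\le 1$ and $\an{v,x_0}^2/n\le 1$. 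To handle it cleanly, we restrict $v$ to the orthogonal complement of $\mathrm{span}\{x_0,\bar m_0,\dots,\bar m_{k-1}\}$ plus a bounded-dimensional correction, treated by the same Schur-complement bookkeeping as in Celentano.

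Conditioning on the AMP iterates, $A$ restricted to the orthogonal complement of the AMP subspace is again GOE-like. Following Celentano, we use the approximate isometry $\mathbf T$ to write $A$ as a projected Gaussian part plus explicit low-rank terms built from the $g_j$. We then apply Sudakov--Fernique to the min--max of the bilinear form over the compact set $K$ of pairs $(\sigma,v)$. This reduces the Gaussian term $\an{v,Av}$ to $2\an{g,v}$ with an independent Gaussian vector, plus AMP-state-evolution terms.

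\textbf{Step 2: the low-dimensional variational problem.} By the empirical-distribution control of late AMP iterates \cite[Proposition 3.8]{celentano2024sudakov}, the coordinates of $\bar m_{k-1}$ have joint law approaching that of $M=\tanh(Y^*)$. Here $Y^*$ is built from $t_\infty+G+\sqrt t G_0$, by the fixed-point equations of \cite[Theorem 5.1]{davies2026potential}. The auxiliary problem then becomes a scalar optimization over perturbation size and direction. Its value at $\eps=0$ is governed by a $2\times 2$ matrix whose entries are the quantities $\Delta$, $b^*$, $q^*$, $r$ and $s=1-\beta^2\Delta$.

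Positivity of the Hessian lower bound is equivalent to a Schur complement having the correct sign. The entries are computed via the Nishimori identities $\E[M]=\E[M^2]=q^*$ and $\E[M^3]=\E[M^4]=b^*$ (\pref{lem:nishimori-third}). With these, the Schur complement reduces to an expression that is positive when $s>0$, $\Delta>0$ and $0<b^*<q^*<1$. All of these hold for $\beta<1$ and finite $t$, as noted above. This gives a constant $c_0(\beta,t)>0$ at $\eps=0$.

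\textbf{Step 3: perturb to the ball and conclude.} The auxiliary objective is continuous in $\sigma$ uniformly over the ball. For $\sigma$ with $\norm{\sigma-\bar m_{k-1}}_2\le\eps\sqrt n$, the diagonal term changes in a controlled averaged sense, and the rank-one terms change by $O(\eps)$. Taking $\eps<c_1(\beta,t)$ and $k\ge k_0$ therefore preserves a margin $c_0/2$. Gaussian concentration of the min--max value (it is Lipschitz in $A$) upgrades the expectation bound to probability $1-o_n(1)$.

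\textbf{Main obstacle.} The main obstacle is Step 2: verifying that the extra planted terms $\frac{\beta}{n}x_0x_0^\sT$ and $-\frac{\beta^2}{n}x_0x_0^\sT$, together with the tilt $y_t$, only shift the state-evolution parameters. They must not change the form of the $2\times2$ Schur complement. The one syntactic change we expect is replacing Celentano's field parameter by $t_\infty$ and $\sqrt t G_0$. We would first check this on the diagonal entry involving $\E[\sech^4(Y^*)]$.
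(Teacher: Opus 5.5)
Your proposal follows the paper's route: Celentano's post-AMP Sudakov--Fernique comparison applied to $-v^{\sT}\nabla^2\FT(u,\cdot)v$ on the AMS transcript, then the empirical-law and Lagrangian reductions to a $2\times2$ Schur complement, and finally an evaluation of its blocks by Stein's lemma in $G$ plus the Nishimori identities. The paper carries this computation out explicitly, obtaining $z^{\sT}S_0(0)z=-\frac{s}{\Delta}(a-\sqrt{q^*}\rho)^2-\bigl(q^*\beta^2s+\frac{s^2}{r}\bigr)\rho^2$, which is negative definite since $s,\Delta,r>0$, and then uses continuity in the multiplier $\alpha_v$ to get the strict bound. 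One correction of emphasis: the tilt and planting on the AMP side are already covered by Celentano's AMS setting, so the genuinely new input is the Hessian itself. Its rank-one terms and $\sigma$-dependent scalar shift are not ``harmless up to a constant'' and are not mere state-evolution shifts; they enter the first block $A_{11,0}$, the only block that depends on the Hessian choice ($A_{12}$ and $A_{22}$ come from the AMS scalar law), so that block is what your Step~2 verification must recompute.
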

\begin{prf}
    The proof is exactly analogous to \cite[Theorem 2]{celentano2024sudakov} with the small modification that it tests the quadratic forms of the TAP Hessian considered in our previous work \cite{davies2026potential}\footnote{\,This is equivalent to the Hessian used by Fan, Mei and Montanari \cite{fan2021tap}.} evaluated at the AMP iterates given above. Using gauge symmetry, we fix $x_0=\mathbf{1}^n$. Throughout the proof, the AMP law is the one corresponding to the iterates which come from \cite[Algorithm 1, (1)--(5)]{el2022sampling} and the Hessian is $\nabla^2\calF_{\TAP}(\sigma,\cdot)$\footnote{\,The original result of Celentano \cite[Theorems 1--2]{celentano2024sudakov} tests AMP iterates designed by Fan et al \cite{fan2021tap} on the TAP Hessian $\nabla^2\calF_\TAP(\sigma,\cdot)$ or the AMP iterates designed by El-Alaoui et al \cite{el2022sampling} on $\bar{Q}^{-1}(\cdot)$. We require testing the AMP iterates of Alaoui et al \cite{el2022sampling} on the TAP Hessian $\nabla^2\calF_{\TAP}(\sigma,\cdot)$ -- this leads to the name ``hybrid'' local convexity.}. For completeness, we briefly recreate the proof of \cite[Theorem 2, Pg 11--18] {celentano2024sudakov} by analyzing the objective $-v^\sT\nabla^2\calF_{\TAP}(u,\cdot)v$ on the AMP transcript $(\bar{m}_0,\dots,\bar{m}_{k-1},g_0,g_1,\dots,g_k)$ and the limiting joint law $\calS^{(2)}_1(\eps) := (V,U,M,G,G_0,\Xi)$ where $\Xi \sim \calN(0,1)$ (independently of $G$ and $G_0$), $V$ and $U$ are the empirical limiting empirical laws of the test vector $v \in B_\square(r,x)$ and the evaluation vector $u$ respectively, and the joint law satisfies the constraints in the definition of the set given in \cite[Pg 14]{celentano2024sudakov}.
    \ppart{Reduction to the scalar objective} For every $v \in \calS^{n-1}(1)$, expanding the Hessian yields
    \allowdisplaybreaks
    \begin{align*}
        -\frac{1}{n}v^{\sT}\nabla^2\calF_{\TAP}(\sigma,\cdot)v
        &= \beta v^{\sT}A v
           +\frac{\beta^2}{n}\langle\mathbf{1},v\rangle^2
           -\sum_{i=1}^n\frac{v_i^2}{1-u_i^2}
           -\lambda^2(1-Q(u))
           +\frac{2\lambda^2}{n}\langle u,v\rangle^2\,,
    \end{align*}
    which is just $f_{\text{FMM}}(v,u)$ as defined on \cite[Pg.\,11]{celentano2024sudakov}. We now apply the post-AMP comparison to the AMS transcript $(\bar{m}_0,\dots,\bar{m}_{k-1})$ and $f_{\text{FMM}}(v,u)$, followed by the empirical-law and fixed-point reductions \cite[Corollary\,1, Lemma\,1 and \S 4.2--4.3]{celentano2024sudakov}. These give
    \begin{equation}\label{eq:hybproof-reduction}
        \limsup_{\epsilon\downarrow0}\limsup_{k\to\infty}
        \mathop{\mathrm{p}\text{-}\limsup}_{n\to\infty}
        T_{n,k}(\epsilon)
        \leq
        \limsup_{\eps\downarrow0}
        \sup_{\mu\in\calS^{(2)}_1(\eps)}\mathcal{F}^{(2)}_0(\mu)\,,
    \end{equation}
    where $T_{n,k}(\eps) := \sup_{u \in B_\square(\bar{m}_{k-1},\eps\sqrt{n}), \norm{v}_2=1} -v^\sT\hat{Q}(u)v$ and $\calF^{(2)}_0(\mu)$ is as defined in \cite[(24)]{celentano2024sudakov}. Carrying the computations in \cite[\S 4.4]{celentano2024sudakov} further by rewriting the Lagrange multiplier to combine with the AMS state evolution and FMM TAP Hesisan gives
    \[
         \sup_{\mu\in\calS^{(2)}_1(\eps)}\mathcal{F}^{(2)}_0(\mu) \le \sup_{\rho,u \in [-1,1]}\min_{(\alpha_\rho,\alpha_u) \in K} \mathsf{L}(\rho,u;\alpha_\rho,\alpha_u,\alpha_v) + C_K\eps\,,
    \]
    for some constant $C_K > 0$. Expanding the Lagrangian as in after \cite[(27)]{celentano2024sudakov} yields
    \[
        \sup_{\mu\in\calS^{(2)}_1(\eps)}\mathcal{F}^{(2)}_0(\mu) \le \sup_{\rho,u \in [-1,1]} (\rho,u)^\sT S_0(\alpha_v) (\rho,u) + C_K\eps\,,
    \]
    where $S_0(\alpha_v) = A_{11,0}(\alpha_v) - A_{12}(\alpha_v)A_{22}(\alpha_v)^{-1}A_{12}(\alpha_v)^\sT$ and these matrices are explicitly derived next\footnote{\,This is done for completeness, though the calculations in \cite[\S 4.4]{celentano2024sudakov} also cover our case almost entirely.}.

\ppart{The hybrid Schur-complement calculation} We compute the blocks at $\alpha_v=0$. We first apply Stein's lemma over $G$ and keep $G_0$ fixed. Since the variance is $\beta^2q^*$, and we have $\partial_GM=1-M^2$, together with the Nishimori moment identities ($m^* = q^*$ and \pref{lem:nishimori-third}), this gives
    \allowdisplaybreaks
    \begin{align*}
        \E[G M(1-M^2)] &=\beta^2q^*\,\E[(1-M^2)(1-3M^2)] =\beta^2q^*\,d_0\,,\\
        \E[G(1-M^2)] &=-2\beta^2q\,\E[M(1-M^2)] =-2\beta^2q^*(q^*-b^*)\,,\\
        \E[G^2(1-M^2)] &=\beta^2q^*\,\E[1-M^2-2GM(1-M^2)] =\beta^2q^*(1-q^*)-2\beta^4(q^*)^2d_0\,,
    \end{align*}
    where $d_0 = 1-4q^* + 3b^*$.
    Substituting $h_0(M)=1-M^2$ yields
    \allowdisplaybreaks
    \begin{align*}
        A_{11,0}(0)
        &=\begin{pmatrix}
            2\beta^2q^*-2\beta^4q^*d_0&0\\
            0&\beta^2
          \end{pmatrix}\,,\\
        A_{12}(0)
        &=\frac12\begin{pmatrix}
            -1+\beta^2d_0&-2\beta^2\sqrt q^*\,(q^*-b^*)\\
            0&-1
          \end{pmatrix}\,,\\
        A_{22}(0)
        &=\frac14\begin{pmatrix}
            r&\sqrt q^*\,r\\
            \sqrt q^*\,r&1-q^*
          \end{pmatrix}\,.
    \end{align*}
    Only the first block depends explicitly on the Hessian choice, while the other blocks use the AMS scalar law. %
    Using $1-q^*=\Delta+rq^*$ gives
    \allowdisplaybreaks
    \begin{align*}
        A_{22}(0)^{-1}
        &=4\begin{pmatrix}
            r^{-1}+q/\Delta&-\sqrt q^*/\Delta\\
            -\sqrt q^*/\Delta&1/\Delta
          \end{pmatrix}\,.
    \end{align*}
    The $(2,2)$ and $(1,2)$ entries of the Schur complement are therefore
    \allowdisplaybreaks
    \begin{align*}
        S_0(0)_{22}&=\beta^2-\Delta^{-1}=-\frac{s}{\Delta}\,,\\
        S_0(0)_{12}&=\frac{\sqrt q^*\,s}{\Delta}\,.
    \end{align*}
    For the remaining scalar Schur complement, evaluate the quadratic form at $(1,\sqrt q)^{\top}$. Substituting the displayed inverse and using $d_0=\Delta-2rq^*$ and $s=1-\beta^2\Delta$ yields
    \allowdisplaybreaks
    \begin{align*}
        S_0(0)_{11}+\frac{sq^*}{\Delta} &=3\beta^2q^*-2\beta^4q^*d_0 -\frac{(s+2\beta^2rq^*)^2}{r} -q\beta^4\Delta =-q\beta^2s-\frac{s^2}{r}\,.
    \end{align*}
    Consequently, for $z=(\rho,a)^{\top}$,
    \begin{equation}\label{eq:hybproof-negative-squares}
        z^{\top}S_0(0)z =-\frac{s}{\Delta}(a-\sqrt q\,\rho)^2 -\left(q\lambda^2s+\frac{s^2}{\kappa}\right)\rho^2\,.
    \end{equation}
    Both coefficients are strictly positive, and the two squares vanish simultaneously only at $z=0$, so $S_0(0)\prec0$. At this point, a simple continuity argument implies that $S_0(\alpha_v) \prec 0$ for small enough $\alpha_v > 0$, and a choice of $\alpha_v \le \min\{s/2,1/2\}$ yields a strict upper bound of $0$ on the objective function. \qedhere    
\end{prf}

 \subsection{Covering the entire SL path via $\ell_2$-balls centered at posterior means}\label{sec:uniform-strong-convexity-proof} The proof of \pref{thm:hybrid-local-convexity} implies that for every fixed $t \in [0,T(\beta)]$, after taking sufficiently many AMP iterates and outputting $\bar{m}_{k-1}$, the TAP Hessian $\nabla^2 \calF_\TAP(u)$ is PSD at any point $u \in B_\square(\bar{m}_{k-1},\Omega_{\beta,t}(\sqrt{n}))$. However, this does not yield \emph{uniform} control over the entire SL trajectory $\{m_t\}_{t \in [0,T(\beta)]}$ which is critical to proving the requisite desideratum. We now ``boost'' the fixed-time, local strong convexity in reasonable-sized $\ell_2$-balls around AMP iterates implied by \pref{thm:hybrid-local-convexity} to uniform local convexity along the SL paths by using these balls to cover a significant measure of the SL paths. Doing this requires using the uniform control over the fluctuations of the SL process inside small intervals supplied by \cite[Lemma 4.9]{el2022sampling} along with the $\ell_2$-bound
\[
    \norm{m_t - \bar{m}_{k-1}} \le \eps\sqrt{n}\,,
\] 
for a large-enough AMP iterate corresponding to an initialization at every $t \in [0,T]$ implied by \cite[Proposition 4.8]{el2022sampling}.   

\begin{theorem}[Uniform local strong convexity along $\{m_t\}_{0\le t \le T(\beta)}$]\label{thm:uniform-local-strong-convexity}
    Let $\{m_t\}_{t \ge 0}$ evolve according to the HD process, and $S_A(c_{\mathrm{TAP}})$ be as in \pref{def:good-points}. Then, 
    \[
         \P_{A,B}\!\left\{\forall t\in[0,T],\quad B_\square(m_t,\rho\sqrt n)\subseteq\mathcal S_A(c_{\mathrm{TAP}})\right\}=1-o_n(1)\,.
    \]
\end{theorem}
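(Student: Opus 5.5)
The plan is a discretize-and-cover argument in time. It combines three ingredients: the fixed-time hybrid convexity of \pref{thm:hybrid-local-convexity}, the AMP-to-SL approximation of \cite[Proposition 4.8]{el2022sampling}, and the short-time stability of the SL path from \cite[Lemma 4.9]{el2022sampling}.

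Fix $T=T(\beta)$. The constants $c_0(\beta,t),c_1(\beta,t)$ in \pref{thm:hybrid-local-convexity} are continuous in $t$ and positive on $[0,T]$, since they come from $s(t)>0$ and $0<b^*(t)<q^*(t)$, which vary continuously. So we can set $c_0:=\inf_{t\le T}c_0(\beta,t)>0$ and choose one $\eps<\inf_t c_1(\beta,t)$.

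Set $\rho:=\eps/4$ and take a grid $0=t_0<t_1<\dots<t_N=T$ with mesh $\eta$, where $N=N(\beta,\eps)$ is independent of $n$. At each grid time $t_i$, we run AMP with tilt $y_{t_i}$ for $k\ge k_0(\eps/4,\beta,t_i)$ iterations. Three events are then needed.
\begin{enumerate}
\item \pref{thm:hybrid-local-convexity} holds at every $t_i$. This gives $\nabla^2\FT(u)\succeq c_0$ for all $u\in B_\square(\bar m^{(i)}_{k-1},\eps\sqrt n)$.
\item By \cite[Proposition 4.8]{el2022sampling}, $\norm{m_{t_i}-\bar m^{(i)}_{k-1}}\le \eps\sqrt n/4$.
\item By \cite[Lemma 4.9]{el2022sampling}, with $\eta$ small enough in terms of $\eps$, $\sup_{t\in[t_i,t_{i+1}]}\norm{m_t-m_{t_i}}\le\eps\sqrt n/4$.
\end{enumerate}
Each event fails with probability $o_n(1)$. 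Because $N$ is fixed, a union bound keeps the total failure probability $o_n(1)$.

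On the intersection of these events, take any $t\in[t_i,t_{i+1}]$ and any $u\in B_\square(m_t,\rho\sqrt n)$. The triangle inequality gives $\norm{u-\bar m^{(i)}_{k-1}}\le 3\eps\sqrt n/4<\eps\sqrt n$, so $\nabla^2\FT(u)\succeq c_0\Id_n$.

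It remains to upgrade $\succeq c_0\Id_n$ to the safe-set condition $\nabla^2\FT(u)\succ c\,D(u)$. $D(u)$ is unbounded near the cube boundary, so this needs an argument. We use the explicit form \eqref{e:Hess-FTAP}: $\nabla^2\FT(u)=D(u)+M(u)$ with $\opnorm{M(u)}\le 3\beta+3\beta^2$ on the event of \pref{prop:goe-norm-tail}. For $\theta\in(0,1)$,
\[
\nabla^2\FT-\theta D=(1-\theta)\nabla^2\FT+\theta(\nabla^2\FT-D)\succeq\bigl((1-\theta)c_0-\theta(3\beta+3\beta^2)\bigr)\Id_n .
\]
Choosing $\theta$ small makes the right side positive, so $\nabla^2\FT\succ\theta D$ and $c_{\TAP}:=\theta$ works.

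One more detail: the HD process is stated without planting, while the AMP analysis uses the planted model with $x_0$. We pass between them using the Nishimori/planting equivalence of the SL tilt $y_t=tx_0+B_t$ with $x_0\sim\mu$, as in \cite{el2022sampling}. The Hessian term $-\frac{\beta^2}{n}x_0x_0^\sT$ in the planted form does not appear in \pref{def:good-points}. I would handle this by noting that \pref{thm:hybrid-local-convexity}'s reduction already absorbs the rank-one plant, or else by treating it as a rank-one perturbation controlled in the same limit.

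The main obstacle is the third event, uniform control of the SL path between grid times. \cite[Lemma 4.9]{el2022sampling} has to yield $\sup_{|t-t'|\le\eta}\norm{m_t-m_{t'}}\le\eps\sqrt n/4$ with high probability, uniformly in $n$. My first attempt is to bound $\opnorm{Q_t}=\opnorm{\Cov(\mu_{y_t})}=O(1)$ along the path, which is itself available from \cite{el2022sampling} at $\beta<1$. Then $m_t=\int Q_s\,dB_s$ has increments of size $O(\sqrt{\eta n})$, and a Doob maximal inequality on each of the $N$ intervals gives the bound. A secondary worry is whether $k_0$ and the $o_n(1)$ rates are uniform over the grid. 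With finitely many grid points this is harmless.
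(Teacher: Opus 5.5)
Your architecture is the paper's: fixed-time convexity balls from \pref{thm:hybrid-local-convexity} together with \cite[Proposition 4.8]{el2022sampling}, short-time stability of $m_t$ from \cite[Lemma 4.9]{el2022sampling}, a finite union bound, and the same L\"owner-order step. That step combines $\nabla^2\FT\succeq D-O(\beta)\Id_n$ with $\nabla^2\FT\succeq c_0\Id_n$ to reach $\nabla^2\FT\succ c_{\TAP}D$.

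\textbf{Uniformity in $t$.} The step that fails as written is the one that makes your grid uniform. \pref{thm:hybrid-local-convexity} is a pointwise-in-$t$ existence statement, valid only for $t>0$, and its constants come from iterated limits in $\eps$, $k$, $n$. Nothing in it gives continuity of $c_0(\beta,t),c_1(\beta,t)$ in $t$ or a positive infimum over $[0,T]$. Your justification is also false at the endpoint: at $t=0$ one has $q^*=b^*=0$, so $0<b^*<q^*$ fails and $r=1-b^*/q^*$ is undefined. Without a $t$-uniform radius you cannot fix the mesh $\eta$ before choosing the grid points, since $\eta$ must be small relative to $\eps(\beta,t_i)$ at each $t_i$.

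The paper avoids this in two ways. It treats $t=0$ by hand: with $m_0=0$ and $\opnorm{A}\le 2+(1-\beta)^2/(8\beta)$, one gets $\nabla^2\FT(u)\succeq\frac{(1-\beta)^2}{2}\Id_n$ on a ball of radius $\frac{(1-\beta)^2}{24\beta^2}\sqrt n$. For $t>0$ it gives each $t$ its own radius $\eps(\beta,t)$ and two-sided window $J_t=(t-\delta_t,t+\delta_t)\cap[0,T]$, with $\delta_t\le(\eps(\beta,t)(1-\beta))^2/128$. Compactness of $[0,T]$ then yields a finite subcover, so only finitely many constants are minimized. Replace your uniform grid with this compactness argument.

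\textbf{The third event.} Your fallback here is not supported. \cite{el2022sampling} gives no $O(1)$ bound on $\opnorm{\Cov(\mu_{y_t})}$ uniformly along the SL path at $\beta<1$. You also cannot import one from \ref{d:Q-error}, since that would presuppose $m_t\in\Safe(c)$, which is what is being proved.

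Even granting such a bound, a plain $L^2$ Doob inequality for $m_t-m_{t_i}$ gives failure probability of order $\eta/\eps^2$ per interval. Summed over the $T/\eta$ intervals, this does not vanish. To get $o_n(1)$ per interval you would need to apply Doob to the martingale part of $\ve{m_t-m_{t_i}}^2$, whose quadratic variation is $O(n\eta)$, rather than to the increment itself.

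The paper instead takes the high-probability bound $\sup_{r\in[a_t,b_t]}\ve{m_r-m_{a_t}}^2/n\le q(b_t)-q(a_t)+\eps(\beta,t)^2/64$ from the proof of \cite[Lemma 4.9]{el2022sampling}. It controls the deterministic overlap curve by $q(b_t)-q(a_t)\le(b_t-a_t)/(1-\beta)^2$ via \cite[(4.24)]{el2022sampling}, so no operator-norm bound on the covariance is needed.
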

\begin{prf}
We first show there are small $\ell_2$-balls around the true means $\{m_t\}_{t\ge 0}$ in which $\hat{Q}(m_t) \succeq c(\beta,t) \Id_n$ at every \emph{fixed} $t$. At $t>0$, this uses contiguity between the planted and original model for $\beta < 1$ \cite[\S 2]{el2022sampling} as well as \cite[Proposition 4.8]{el2022sampling} along with a union bound. At $t=0$, we prove this directly by conditioning on the event that $\opnorm{A} \le 2 + \frac{(1-\beta)^2}{8\beta}$ and the radius of the ball $\eps(\beta,0) \le \frac{(1-\beta)^2}{24\beta^2}$. We then use \cite[Lemma 4.9]{el2022sampling} through Doob's inequality to control the fluctuations of $m_t$ in a small time-interval $(t-\delta_t,t+\delta_t) \subset [0,T]$ and finally construct a ``tube'' composing balls for any finite sub-cover of the interval $[0,T]$ conditioned on the fixed-time convexity estimates and the small-time stability estimates.
\ppart{Fixed-time convexity balls} For every fixed $t>0$, \cite[Proposition 4.8]{el2022sampling} immediately implies that, in the planted model,
\[
    \P_{A, B_t}\left\{\norm{m_t - \hat{m}^{k-1}(A,y_t)}_2 \le \eps'(\beta,t)\right\} \ge 1 - o_n(1)\,,
\]
for any $\eps'(\beta,t) > 0$ and $\hat{m}^{k-1}(A,y_t)$ denoting the $k$-th output of the AMP iteration of AMS initialized with tilt $y_t$. Invoking \pref{thm:hybrid-local-convexity} we obtain parameters $\eps''(\beta,t)$ and $k'(\beta,t)$ so that 
\[
    \P_{A,g_0}\left\{\forall u \in B_\square(\bar{m}_{k'(\beta,t)-1}, \eps''(\beta,t)\sqrt{n}),\,\nabla^2\calF_{\TAP}(u,\cdot) \ge \eps''(\beta,t)\right\} \ge 1 - o_n(1)\,.
\]
Choose $k(\beta,t) \ge \max\{k,k'(\beta,t)\}$ and set $\bar{m}_{k(\beta,t)-1} = \hat{m}^{k-1}(A,y_t)$, along with a choice of radius $\eps(\beta,t) = \frac{1}{2}\min\left\{\eps'(\beta,t), \eps''(\beta,t)\right\}$. Then, a union bound yields 
\[
    \Pr_{A,B_t}\left\{\forall u \in B_\square(\bar{m}_{k(\beta,t)-1}, \eps(\beta,t)\sqrt{n}),\,\nabla^2\calF_{\TAP}(u,\cdot) \succeq c(\beta,t)\Id_n\right\} \ge 1 - o_n(1) \,,
\]
where we invoke contiguity to transfer back to the original SK law from the planted model. \\
To obtain the same statement at $t=0$, use $m_0=0$ and choose $\delta_A = \frac{(1-\beta)^2}{8\beta}$ and $\eps(\beta,0) = \frac{(1-\beta)^2}{24\beta^2}$ so that $\P_A\left\{\norm{A}_{\mathrm{op}}\leq 2+ \frac{(1-\beta)^2}{8\beta}\right\}\ge 1-o_n(1)$. Conditioning on this event, for every $u\in B_\square(0,\eps(\beta,0)\sqrt n)$, the bounds $D(u)\succeq\Id_n$ and $uu^{\top}/n\preceq \frac{\norm{u}^2_2}{n}\Id_n$ yield
\allowdisplaybreaks
\begin{align*}
    \nabla^2\calF_{\TAP}(u,\cdot) &= \beta^2\left(1-\frac{\norm{u}^2_2}{n}\right)\Id_n -\beta A + D(u) -\frac{2\beta^2}{n}uu^\sT \succeq \left(1+\beta^2-\beta\norm{A}_{\mathrm{op}} -\frac{3\beta^2}{n}\norm{u}^2_2\right)\Id_n\\
    &\succeq \left((1-\beta)^2 - \frac{(1-\beta)^2}{8}-\frac{(1-\beta)^2}{8}\right)\Id_n \succeq\frac{(1-\beta)^2}{2}\Id_n\,.
\end{align*}
Set $c(\beta,0)=(1-\beta)^2/2$. Thus, for every fixed $t\in[0,T]$, there is a deterministic tuple $(c(\beta,t),\eps(\beta,t))$ for which $\P_{A,B}\left\{\forall u \in B_\square(\bar{m}_{k(\beta,t)-1},\eps(\beta,t)\sqrt{n}),\,\nabla^2\calF_{\TAP}(u,\cdot) \succeq c(\beta,t)\Id_n\right\}\ge1-o_n(1)$, and we denote these events $\calE_{n,t} := \left\{\forall u \in B_\square(\bar{m}_{k(\beta,t)-1},\eps(\beta,t)\sqrt{n}),\,\nabla^2\calF_{\TAP}(u,\cdot) \succeq c(\beta,t)\Id_n\right\}$.
\ppart{Fluctuations across small time intervals} %
For each $t\in[0,T]$, choose a deterministic $0<\delta_t\leq \frac{\left(\eps(\beta,t)(1-\beta)\right)^2}{128}$ and set
\begin{align*}
    a_t&:=\max\{0,t-\delta_t\}\,,\qquad b_t:=\min\{T,t+\delta_t\}\,,
\end{align*}
and $J_t:=(t-\delta_t,t+\delta_t)\cap[0,T]\,$. Note that $J_t$ is an open neighborhood of $t$ relative to $[0,T]$. By the proof of \cite[Lemma 4.9]{el2022sampling} and \cite[(4.24)]{el2022sampling},
\allowdisplaybreaks
\begin{align*}
    &\P_{A,B_t}\left\{\sup_{r\in[a_t,b_t]}
       \frac{\norm{m_r-m_{a_t}}_2^2}{n} \ge q(b_t)-q(a_t) + \frac{\eps(\beta,t)^2}{64}\right\} \le o_n(1)\,,\\
    &q(b_t)-q(a_t)
    \leq \frac{(b_t-a_t)}{(1-\beta)^2}
     \leq\frac{\delta_t}{(1-\beta)^2}
     \leq\frac{\eps(\beta,t)^2}{64}\,.
\end{align*}
In particular, the event $\mathcal F_{n,t}:=\left\{\sup_{r\in[a_t,b_t]}\frac{\norm{m_r-m_{a_t}}_2^2}{n}\leq \frac{\eps(\beta,t)^2}{32}\right\}$ has probability $1-o_n(1)$. On this event, for every $s\in J_t$,
\allowdisplaybreaks
\begin{align*}
    \norm{m_s-m_t}_2
    &\leq\norm{m_s-m_{a_t}}_2+\norm{m_t-m_{a_t}}_2\\
    &\leq2\sqrt{nZ_{n,t}}
     \leq\frac{\eps(\beta,t)}{2}\sqrt n\,.
\end{align*}
\ppart{Finite sub-cover and a uniform bound} The open sets $\{J_t:t\in[0,T]\}$ cover $[0,T]$ and compactness implies the existence of a finite subcover $J_{t_1},\ldots,J_{t_m}$. Denote the uniform radius and convexity parameters
\allowdisplaybreaks
\begin{align*}
    \eps_M&:=\frac{1}{2}\min_{j \in [M]}\eps(\beta,t_j)>0\,, \\
    c_M&:=\min_{j \in [M]}c(\beta,t_j)>0\,.
\end{align*}
Taking a union bound over the $M$ events $\bigcap_{j=1}^M\calF_{n,t_j}\cap\calE_{n,t_j}$ gives
\allowdisplaybreaks
\begin{align*}
    \P_{A,B}\left\{\mathcal \bigcap_{j=1}^M\calF_{n,t_j}\cap\calE_{n,t_j}\right\} \ge 1-o_n(1)\,.
\end{align*}
Note that each of the $j$ events concern the same disorder and SL path. On this event, fix any $s\in[0,T]$ and choose $j$ with
$s\in J_{t_j}$. For every $u\in B_\square(m_s,\eps_M\sqrt n)$,
\allowdisplaybreaks
\begin{align*}
    \norm{u-m_{t_j}}_2 &\leq\norm{u-m_s}_2+\norm{m_s-m_{t_j}}_2\\
    &\leq\left(\eps_M+\frac{\eps(\beta,t_j)}{2}\right)\sqrt n \leq \eps(\beta,t_j)\sqrt n\,.
\end{align*}
This implies the containment $B_\square(m_s,\eps_M\sqrt n) \subseteq B_\square(m_{t_j},\eps(\beta,t_j)\sqrt n)$, and $\mathcal E_{n,t_j}$ gives $\nabla^2\calF_{\TAP}(u,\cdot)\succeq c_M\Id_n$. Therefore,
\allowdisplaybreaks
\begin{align*}
    \P_{A,B}\!\left\{ \forall s\in[0,T],\ \forall u\in B_\square(m_s,\eps_M\sqrt n), \ \nabla^2\calF_{\TAP}(u,\cdot)\succeq c_M\Id_n\right\}&\ge 1-o_n(1)\,.
\end{align*}
If $T=0$, the same conclusion follows from the first part of the proof with $\eps_M:=\eps(\beta,0)/2$ and $c_M:=c(\beta,0)$.
\ppart{L\"owner order vis-a-vis $D(u)$} Intersect the preceding event $\bigcap_{j=1}^M\calF_{n,t_j}\cap\calE_{n,t_j}$ with $\{\norm{A}_{\mathrm{op}}\leq3\}$, which also has probability $1-o_n(1)$. Since $\norm{u}^2_2/n \le 1$ for any $u \in (-1,1)^n$, this gives the following L\"owner order lower bound
\allowdisplaybreaks
\begin{align*}
    \nabla^2\calF_{\TAP}(u,\cdot) &= \beta^2(1-\|u\|^2_2/n)\Id_n - \beta A + D(u) - \frac{2\beta^2}{n}uu^\sT \succeq D(u)-\beta(2\beta + 3)\Id_n\,.
\end{align*}
Throughout the tube we also have $\nabla^2\calF_{\TAP}(u,\cdot)\succeq c_M\Id_n$, so combining with the previous inequality gives
\allowdisplaybreaks
\begin{align*}
    D(u) &\preceq \nabla^2\calF_{\TAP}(u,\cdot) +\beta(2\beta + 3)\Id_n
     \preceq\left(1+\frac{\beta(2\beta + 3)}{c_M}\right)\nabla^2\calF_{\TAP}(u,\cdot)\,.
\end{align*}
Choose
\allowdisplaybreaks
\begin{align*}
    c_{\mathrm{TAP}}
    &=\frac{c_M}{c_M+\beta(2\beta + 3)}>0\,,
\end{align*}
and note the preceding bound immediately yields $\nabla^2\calF_{\TAP}(u,\cdot)\succeq c_{\mathrm{TAP}}D(u)$ throughout the tube. Since $D(u)$ is PD, it follows that
\allowdisplaybreaks
\begin{align*}
    \P_{A,B_t}\!\left\{\forall t\in[0,T],\, B_\square(m_t,\rho\sqrt n)\subseteq\mathcal S_A(c_{\mathrm{TAP}})\right\}
    &\ge 1-o_n(1)\,. \qedhere
\end{align*}
\end{prf}

\subsection{L\"owner order sandwich on the safe set}\label{sec:lowner-sandwich-safe-set} The fact that $D(m) \succ 0$ plus the fact that $\hat{Q}(m_t) \succeq c(\beta,t)\Id_n$ allows one to conclude the PSD-ness in the primal space (obtained by conjugating by $D(m_t)$).

\begin{lemma}[Desideratum 2(b) -- L\"owner sandwich for dual coordinate chart]\label{lem:tap-convexity}
    Let $m \in \calS_A(c_{\TAP})$ and condition on the ``master'' event $\calE_A$. Then, there exist $0 < c_{\TAP}(\beta,T), C_{\TAP}(\beta) < \infty$, such that
    \[
        C_{\TAP}(\beta)^{-1}D^{-1}(m) \preceq \hat{Q}(m) \preceq c_{\TAP}(\beta,T)^{-1}D^{-1}(m). 
    \]
\end{lemma}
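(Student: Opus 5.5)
The plan is to work directly with $H(m):=\nabla^2\calF_{\TAP}(m)=\hQ(m)^{-1}$ from \eqref{e:Hess-FTAP} and prove the equivalent two-sided bound
\[
c_{\TAP}\,D(m)\preceq H(m)\preceq C_{\TAP}\,D(m).
\]
Inverting reverses L\"owner order for positive definite matrices, so this gives exactly $C_{\TAP}^{-1}D^{-1}(m)\preceq \hQ(m)\preceq c_{\TAP}^{-1}D^{-1}(m)$. The lower bound is immediate from $m\in\Safe(c_{\TAP})$ (\pref{def:good-points}). It gives $H(m)\succ c_{\TAP}D(m)\succ0$, so $H(m)$ is invertible, $\hQ(m)$ is well defined and positive definite, and inverting yields $\hQ(m)\preceq c_{\TAP}^{-1}D(m)^{-1}$. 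Here $c_{\TAP}=c_{\TAP}(\beta,T)$ is the constant produced in \pref{thm:uniform-local-strong-convexity}.

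For the other direction I would take the master event $\calE_A$ to include $\{\opnorm{A}\le3\}$, which has probability $1-2e^{-n/4}$ by \pref{prop:goe-norm-tail}. Then I would bound each term of $H(m)$ from above:
\begin{itemize}
\item $-\beta A\preceq 3\beta\Id_n$;
\item $\beta^2(1-\ve{m}_2^2/n)\Id_n\preceq\beta^2\Id_n$, since $m\in(-1,1)^n$;
\item $-\tfrac{2\beta^2}{n}mm^\sT\preceq0$.
\end{itemize}
Together these give $H(m)\preceq D(m)+(\beta^2+3\beta)\Id_n$. Since every diagonal entry of $D(m)$ is $1/(1-m_i^2)\ge1$, we have $\Id_n\preceq D(m)$, and therefore
\[
H(m)\preceq(1+3\beta+\beta^2)\,D(m).
\]
I would set $C_{\TAP}(\beta):=1+3\beta+\beta^2$. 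It depends only on $\beta$, as the statement requires. Inverting gives $\hQ(m)\succeq C_{\TAP}^{-1}D(m)^{-1}$.

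The argument is essentially routine. The only points needing care are these:
\begin{itemize}
\item Operator monotonicity of inversion on positive definite matrices is what justifies both inversions. On the lower-bound side, positive definiteness comes from the safe-set condition; on the upper-bound side, it comes from the same lower bound.
\item The constants must be tracked so that $C_{\TAP}$ stays independent of $T$, while $c_{\TAP}$ inherits its $(\beta,T)$-dependence from the definition of the safe set.
\end{itemize}
No local information beyond membership in $\Safe(c_{\TAP})$ is needed.
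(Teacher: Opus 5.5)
Your proposal is correct and is essentially the paper's proof. The lower bound $\nabla^2\calF_{\TAP}(m)\succ c_{\TAP}D(m)$ is just membership in $\Safe(c_{\TAP})$ (the paper phrases this via \pref{thm:uniform-local-strong-convexity}); the upper bound $\nabla^2\calF_{\TAP}(m)\preceq(1+3\beta+\beta^2)D(m)$ comes from $\opnorm{A}\le 3$ and $\Id_n\preceq D(m)$; and both are inverted by operator monotonicity. Your intermediate bound $D(m)+(3\beta+\beta^2)\Id_n$ is the correct form of the paper's displayed $D(m)+\beta(\opnorm{A}+\beta^2)\Id_n$, which contains a harmless typo.
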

\begin{prf}
    Conditioning again on $\opnorm{A} \le $ 3, and using the fact that $D(m) \succ \Id_n$ for any $m \in (-1,1)^n$, yields
    \allowdisplaybreaks
    \begin{align*}
        \nabla^2 \calF_{\TAP}(m,\cdot) &= \beta^2(1-\norm{m}^2_2/n)\Id_n - \beta A + D(m) - \frac{2\beta^2}{n}mm^\sT \\
        &\preceq D(m) + \beta\left(\opnorm{A} + \beta^2\right)\Id_n \\
        &\preceq(1+3\beta+\beta^2)D(m)\,.
    \end{align*}
    By \pref{thm:uniform-local-strong-convexity}, $\nabla^2 \calF_{\TAP}(m,\cdot) \succeq c_{\TAP}D(m)$ for any $m \in \calS_A(c_{\TAP})$, conditioned on the desirable event $\calE_A$. Inverting the Hessian and combining the PD-ness with the preceding upper bound yields
    \[
        \frac{1}{1+3\beta + \beta^2}D^{-1}(m) \preceq \hat{Q}(m) \preceq \frac{1}{c_{\TAP}}D^{-1}(m)\, ,
    \]
    for any $m \in \calS_A(c_{\TAP})$. \qedhere
\end{prf}

\section{Resolvent analysis: Regularization, uniform concentration and interpolation}\label{sec:resolvent-analysis}
We now repeat the structure of the argument in \cite[\S B]{davies2026potential} to develop control over the diagonal sub-algebra of $\hat{Q}^2(m_t)$ for all points $m_t \in S_A(c)$. While the idea of repeating the same argument restricted to the safe set $S_A(c)$ (\pref{def:good-points}) is natural, it causes technical problems in the chaining argument used for uniform concentration of $E_{\calD_n}[\hat{Q}^2(m_t)]$ around its expected value (\pref{sec:block-mixed-lipschitzness}) -- this happens because the supremum is taken over a set of points $m_t$ that now depend on $A$ through $S_A(c)$ and any sort of conditioning destroys the structure of the underlying Gaussian randomness. One solution to this is to define a regularization $\phi(\hat{Q}^{-2}(m_t))$ that agrees with $\hat{Q}^2(m_t)$ for every $m_t \in S_A(c)$ and is Lipschitz, bounded and continuously twice-differentiable for every $m_t \in \{-1,1\}^n \setminus S_A(c)$ (\S \ref{sec:block-regularization}). As it turns out, our definition of this regularization $\phi$ as a function of $\hat{Q}^{-2}(m_t)$ merges well with a representation for $\hat{Q}^{-1}(m_t)$ in block-form with each block being an affine function of $A$ --- this representation is the key to a comparatively simple free interpolation argument comparing $E_{\calD_n}[\ph(\hat{Q}^{-2}(m_t))]$ with the ideal free limit (\pref{sec:option-c-free-comparison}). After obtaining uniform concentration and the control of the diagonal of the expected squared resolvent, one combines these estimates via a triangle inequality and restricts to the safe set $m \in S_A(c)$ at the very end to obtain a valid spectral cut-off $\gamma_Y(\beta)$ with which to invoke $\phi$ (\pref{sec:uniform-disagonal-estimate}). 

\subsection{Bounded extension of matrix inverse}
\label{sec:block-regularization}
In \cite[\S B]{davies2026potential}, the authors established a deterministic diagonal approximation to $E_{\mathcal D_n}[D(a\Id_n-\beta A+D)^{-2}D]$, uniformly over $a\in[0,1]$ and $D\in\mathcal D_n([1,\infty))$, with high probability over $A$.
To do this, we required uniform L\"owner lower bounds on $(a\Id_n-\beta A+D)$.

In our current setting, with $1/2 < \beta < 1$, it is possible for $(a\Id_n-\beta A + D)$ to become singular.
Therefore, we replace the weighted squared inverse $D(\cdot)^{-2}D$ with a regularized version $\phi(D^{-1}(\cdot)^2D^{-1})$, where $\phi$ is a bounded $C^2$ extension of the reciprocal $(\cdot)^{-1}$ defined, for fixed $\gamma_Y>0$, as
\begin{equation}
\label{eq:rational-reciprocal-cutoff}
    \phi(y):=
    \left[y+\gamma_Y
       \left(\frac{(\gamma_Y-y)_+}{\gamma_Y+y}\right)^3
    \right]^{-1},\qquad y>-\gamma_Y.
\end{equation}
This scalar function agrees with $y^{-1}$ when $y \ge \gamma_Y$ and has bounded and continuous first and second derivatives on $[0,\infty)$\footnote{\,This particular regularization $\phi$ bears technical and conceptual parallel to the regularization used in \cite[\S 2]{jekel2024pha} to ``smooth'' out the first derivative of the primal Parisi PDE in the event that a PHA iterate's coordinate leaves the solid cube. Both regularizations are of the form $a^{-1} \rightsquigarrow (a + y(a))^{-1}$ where $a = \Phi''(x_\gamma)$ in \cite[\S 2]{jekel2024pha} and $a \in \mathsf{Spec}\left((\Id_n + D^{-1}(a\Id_n - \beta A))(\Id_n+(a\Id_n-\beta A)D^{-1})\right)$ in \pref{sec:block-regularization}. The regularization of the primal Parisi PDE regularizes the (generalized) TAP free energy at the level of coordinate curvature, whereas the current regularization $\phi$ regularizes the Hessian of the TAP free energy at the level of spectral invertibility.}.
Following the standard practice in matrix analysis~\cite{bhatia2013matrix}, we will lift $\phi$ to symmetric matrices $Y$ satisfying $Y \succ -\gamma_YI_n$ so that $\phi(Y)$ is the matrix with the same eigenvectors as $Y$ but with eigenvalues transformed by $\phi$.

Fix $\beta>0$, $a\in[0,1]$, $A\in\mathrm{Sym}_n(\mathbb R)$,
$W\in\mathcal D_n([0,1])$, and $T\in\mathcal D_n(\mathbb R)$. Denote $M:=a\Id_n-\beta A$ and define thee test functional over the diagonal sub-algebra
\begin{equation}
\label{eq:regularized-scalar-functional}
    \Phi_{a,W,T}(A)
    :=\tr_n\!\left[
        T\,\phi\!\left((\Id_n+WM)(\Id_n+MW)\right)\right].
\end{equation}
In \cite[Lemma B.8]{davies2026potential}, the corresponding function was defined as $\tr_n\!\left(T\,E_{\mathcal D_n}\bigl[D\,(a-\beta A + D)^{-2}D\bigr]\right)$, and we will show in \pref{prop:block-regularization-safe} that these two definitions agree on the safe set defined in \pref{def:good-points}. As in \cite[Lemma B.8]{davies2026potential}, since $T$ is diagonal, inserting $E_{\mathcal D_n}$ around the matrix function leaves \pref{eq:regularized-scalar-functional} unchanged.

We continue to use normalized Schatten norms throughout, with v$\Lpnorm[\infty]{\cdot}=\opnorm{\cdot}$.

To obtain dimension-independent bounds for the matrix Fr\'echet derivatives in the required norms, we first prove bounds on derivatives of the function $(\cdot)_+^3$ applied to matrix arguments, using the same Fr\'echet derivative notation defined in~\cite[\S B.2]{davies2026potential}.
\begin{lemma}[Derivative bounds for the cubed positive part]
\label{lem:positive-part-cubic}
The matrix map $Z\mapsto Z_+^3$ is twice continuously Fr\'echet
differentiable on $\mathrm{Sym}_n(\mathbb R)$.
For every symmetric
$Z$ with $\opnorm{Z}\le1$, symmetric directions $H,K$, and
$p\in\{2,\infty\}$,
\begin{equation}
\label{eq:positive-part-cubic-derivatives}
\allowdisplaybreaks
\begin{aligned}
    \Lpnorm[p]{d(Z_+^3)[H]}
    &\le 4\Lpnorm[p]{H},\\
    \schnorm{d^2(Z_+^3)[H,K]}
    &\le 10\schnorm{H}\opnorm{K}.
\end{aligned}
\end{equation}
\end{lemma}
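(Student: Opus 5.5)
The plan is to reduce the map $Z\mapsto Z_+^3$ to pieces whose Fr\'echet derivatives are products of bounded operators and resolvents. For those pieces, H\"older's inequality in the normalized Schatten norms gives dimension-free bounds of the form $\Lpnorm[p]{\,\cdot\,}\le C\Lpnorm[p]{H}$ automatically. I would not try to bound Schur multipliers of divided differences directly in operator norm. For $p=2$ the Daleckii--Krein formula $d(g(Z))[H]=U\bigl(g^{[1]}(\lambda_i,\lambda_j)\circ U^{\sT}HU\bigr)U^{\sT}$ does give $\schnorm{d(Z_+^3)[H]}\le \sup_{|x|\le 1}|3x_+^2|\,\schnorm{H}=3\schnorm{H}$. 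The operator-norm case and the mixed second-derivative bound, however, are not controlled by the supremum of the divided differences, so I want a representation that treats all cases at once.

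\textbf{Step 1 (decomposition).} Write $x_+^3=\tfrac12(x^3+|x|^3)$, so $Z_+^3=\tfrac12\bigl(Z^3+(Z^2)^{3/2}\bigr)$.
\begin{itemize}
\item The polynomial part is handled by hand. Its first derivative is $d(Z^3)[H]=HZ^2+ZHZ+Z^2H$, which has $L^p$ norm at most $3\Lpnorm[p]{H}$ when $\opnorm{Z}\le1$. Its second derivative is a sum of six words of the form $HKZ$, $HZK$, and so on, bounded in $L^2$ by $6\schnorm{H}\opnorm{K}$.
\item For the absolute-value part, set $Y=Z^2\succeq0$, with $\opnorm{Y}\le1$, $dY[H]=ZH+HZ$ and $d^2Y[H,K]=HK+KH$. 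It remains to control the first two derivatives of $Y\mapsto Y^{3/2}$ on $\{0\preceq Y\preceq \Id_n\}$ and then apply the chain rule.
\end{itemize}
Since $Z\mapsto Z^2$ is smooth, twice continuous differentiability of $Z_+^3$ follows from that of $Y^{3/2}$ composed with $Z^2$. Alternatively, it follows from the Daleckii--Krein theorem, because $x_+^3\in C^2$ with $(x_+^3)''=6x_+$ Lipschitz.

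\textbf{Step 2 (resolvent representation of $Y^{3/2}$).} I would use the integral formula
\[
Y^{3/2}=\frac{1}{\pi}\int_0^\infty t^{-1/2}\Bigl(Y-tY(Y+t)^{-1}\Bigr)\,dt ,
\]
equivalently $Y^{3/2}=\frac{1}{\pi}\int_0^\infty t^{-1/2}\,Y^2(Y+t)^{-1}\,dt$; the normalization constant is to be checked against the scalar case. Put $R_t=(Y+t)^{-1}$. Differentiating the integrand and using $K=YR_tK+tR_tK$ twice gives the identity
\[
d\bigl(Y^2R_t\bigr)[K]=YR_tK+tR_tKYR_t+(\text{terms with }Y\text{ and }R_t\text{ on both sides}).
\]
Each factor $YR_t$ and $tR_t$ has operator norm at most $\min\{1,\opnorm{Y}/t\}$. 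Splitting the $t$-integral at $t=1$ then gives an integrable majorant: order $t^{-1/2}$ near $0$ and order $t^{-3/2}$ at infinity. The same computation for the second derivative produces words containing $H$, $K$ and at most three resolvent factors. After placing $\schnorm{\cdot}$ on the $H$ factor and $\opnorm{\cdot}$ on everything else, the integrand again carries an integrable weight.

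\textbf{Step 3 (assembly and constants).} Chaining the estimates gives the bound on $d\bigl((Z^2)^{3/2}\bigr)[H]$ in $L^p$, using $\Lpnorm[p]{ZH+HZ}\le2\Lpnorm[p]{H}$. The second derivative has two contributions: $d^2(Y^{3/2})[dY[H],dY[K]]$ and $d(Y^{3/2})[HK+KH]$. Collecting these with the cubic part yields constants of the advertised size, namely $4$ for the first derivative and $10$ for the second, after halving.

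\textbf{Main obstacle.} The hard part is the small-$t$ end of the second-derivative integral. There $Y$ may be singular, so a crude bound $\opnorm{R_t}\le t^{-1}$ diverges. The fix I would try first is to always pair each resolvent with an adjacent $Y$ or $t$, as in the identity above, so that only the contractions $YR_t$ and $tR_t$ appear. If the constants come out worse than $4$ and $10$, I would instead track the sharp scalar integrals exactly, or fall back on the Daleckii--Krein bound for the $L^2$ first-derivative case.
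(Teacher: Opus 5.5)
\textbf{First derivative.} This part of your plan works. The identity $d(Y^2R_t)[K]=YR_tK+tR_tKYR_t$ is exact. Combined with $\opnorm{YR_t}\le(1+t)^{-1}$ and $\opnorm{tR_t}\le1$, it gives $\Lpnorm[p]{d(Y^{3/2})[K]}\le\frac{2}{\pi}\int_0^\infty\frac{t^{-1/2}}{1+t}\,dt\,\Lpnorm[p]{K}=2\Lpnorm[p]{K}$. Hence $\Lpnorm[p]{d(Z_+^3)[H]}\le\frac12(3+4)\Lpnorm[p]{H}$.

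\textbf{Second derivative: the gap.} Your reduction ``control the first two derivatives of $Y\mapsto Y^{3/2}$ on $\{0\preceq Y\preceq\Id_n\}$, then apply the chain rule'' cannot work at second order. Already for $n=1$, $(y^{3/2})''=\tfrac34y^{-1/2}$ is unbounded as $y\downarrow0$. In your representation $Y^2R_t=Y-t\Id_n+t^2R_t$, so
\[
d^2(Y^2R_t)[K_1,K_2]=t^2\bigl(R_tK_1R_tK_2R_t+R_tK_2R_tK_1R_t\bigr).
\]
This has three resolvents against $t^2$ and no factor of $Y$ at all. The best available bound is $t^{-1}$, and against the weight $t^{-1/2}$ the integral diverges at $t=0$ whenever $Z^2$ is singular. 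So your proposed fix, pairing each resolvent with an adjacent $Y$ or $t$, has nothing to pair with. For the same reason, $C^2$ regularity does not follow by composing with $Y^{3/2}$, which is not $C^2$ near singular $Y$. Your Daleckii--Krein alternative is the argument that actually gives $C^2$.

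\textbf{The missing idea.} The singularity is cancelled by the lone factors of $Z$ in $dY[H]=ZH+HZ$ and $dY[K]=ZK+KZ$. So you must carry out the chain rule inside the integral before estimating. You then need the half-power bounds
\[
\opnorm{R_tZ}\le\tfrac{1}{2\sqrt t},\qquad \opnorm{tR_tZ}\le\tfrac{\sqrt t}{2}\qquad(0<t\le1),
\]
which hold because $\opnorm{Z}\le1$ and $R_t=(Z^2+t\Id_n)^{-1}$ commutes with $Z$.

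For example, consider $t^2R_tZHR_tZKR_t=(tR_tZ)H(R_tZ)K(tR_t)$. It has a single $Z$ on each side of $H$, so no contraction $YR_t$ can be formed, and your two contractions only give $t^{-1}$. The half-power bounds give $\frac14$.

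Treating all eight words this way bounds the integrand by $\frac72t^{-1/2}$ on $(0,1]$ and by $8t^{-3/2}$ on $[1,\infty)$, for a total of $\frac{23}{\pi}$. The term $d(Y^{3/2})[HK+KH]$ adds $4$. Halving and adding the cubic part gives about $8.7\le10$, so your route closes once this estimate is supplied.

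\textbf{How the paper handles it.} The paper gets the square-root gain automatically. Substituting $t=s^2$, it writes $|Z|^3=\frac2\pi\int_0^\infty Z^4(Z^2+s^2\Id_n)^{-1}\,ds$ and differentiates directly in $Z$. It uses the partial fractions $(Z^2+s^2\Id_n)^{-1}=\frac{(Z-is\Id_n)^{-1}-(Z+is\Id_n)^{-1}}{2is}$, whose resolvents have norm $s^{-1}=t^{-1/2}$. For $s\le1$ it also uses the cancellation $Z^4(Z^2+s^2\Id_n)^{-1}=Z^2-s^2\Id_n+s^4(Z^2+s^2\Id_n)^{-1}$.
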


\begin{proof}
For $z\ne0$, the elementary arctangent integral gives
\[
    \int_0^\infty\frac{z^4}{z^2+t^2}\,dt
    =|z|^3\left[\arctan\!\left(\frac{t}{|z|}\right)\right]_0^\infty
    =\frac{\pi}{2}|z|^3.
\]
The same identity holds at $z=0$, since the integrand vanishes for $t>0$.
Thus $z_+^3=(z^3+|z|^3)/2$ and this generalizes to the matrix case (consider a basis where $Z$ is diagonal) as
\begin{equation}
\label{eq:positive-part-cubic-integral}
    Z_+^3
    =\frac{Z^3}{2}
     +\frac1\pi\int_0^\infty Z^4(Z^2+t^2\Id_n)^{-1}\,dt.
\end{equation}
We apply a partial fraction decomposition. For $t>0$,
\[
    (Z^2+t^2\Id_n)^{-1}
    =\frac{(Z-it\Id_n)^{-1}-(Z+it\Id_n)^{-1}}{2it},.
\]
Letting $S_{\pm} = (Z \pm itI_n)^{-1}$, the product and inverse rules and the
non-commutative H\"older inequality yield
\allowdisplaybreaks
\begin{alignat}{2}
\label{eq:quadratic-resolvent-derivative}
    \Lpnorm[p]{d[(Z^2+t^2\Id_n)^{-1}][H]}
    &= \Lpnorm[p]{\frac{-S_-HS_-+S_+HS_+}{2it}}
    &&\le t^{-3}\Lpnorm[p]{H},\\
\label{eq:quadratic-resolvent-derivative-2}
    \schnorm{d^2[(Z^2+t^2\Id_n)^{-1}][H,K]}
    &= \Lpnorm{\frac{S_-HS_-KS_-+S_-KS_-HS_- - \dots}{2it}} &&\le 2t^{-4}\schnorm{H}\opnorm{K}.
\end{alignat}
For $0<t\le1$, the identity $(Z^2+t^2\Id_n)(Z^2+t^2\Id_n)^{-1} = I_n$ gives a cancellation of the integrand of \pref{eq:positive-part-cubic-integral}:
\allowdisplaybreaks
\begin{align*}
    Z^4(Z^2+t^2\Id_n)^{-1}
    &=Z^2-t^2\Id_n+t^4(Z^2+t^2\Id_n)^{-1},
    \\
    d[Z^4(Z^2+t^2\Id_n)^{-1}][H]
    &=ZH + HZ+t^4\cdot d[(Z^2+t^2\Id_n)^{-1}][H],
    \\
    d^2[Z^4(Z^2+t^2\Id_n)^{-1}][H,K]
    &=KH + HK +t^4\cdot d^2[(Z^2+t^2\Id_n)^{-1}][H,K].
\end{align*}
For $t\ge1$, instead differentiate the integrand directly with the product rule, using
$\opnorm{(Z^2+t^2\Id_n)^{-1}}\le t^{-2}$.
Combining both cases therefore gives, when $\opnorm{Z}\le1$,
\allowdisplaybreaks
\begin{align*}
    \Lpnorm[p]{d[Z^4(Z^2+t^2\Id_n)^{-1}][H]}
    &\le
    \begin{cases}
        3\Lpnorm[p]{H},&0<t\le1,\\
        (4t^{-2}+t^{-3})\Lpnorm[p]{H},&t\ge1,
    \end{cases}\\
    \schnorm{d^2[Z^4(Z^2+t^2\Id_n)^{-1}][H,K]}
    &\le
    \begin{cases}
        4\schnorm{H}\opnorm{K},&0<t\le1,\\
        (12t^{-2}+8t^{-3}+2t^{-4})\schnorm{H}\opnorm{K},&t\ge1.
    \end{cases}
\end{align*}

The integrand itself has operator norm at most $\min\{1,t^{-2}\}$.
The same estimates, with larger numerical constants, hold on $\opnorm{Z}<2$. This gives that the integrand, and its first two derivatives, have locally uniform integrable bounds. this justifies differentiation under the integral and continuity of both derivatives. 
Integrating these bounds and adding the product-rule bounds for $Z^3/2$ yields
\allowdisplaybreaks
\begin{align*}
    \Lpnorm[p]{d(Z_+^3)[H]}
    &\le
    \left[
        \frac32+\frac1\pi\left(3+4+\frac12\right)
    \right]\Lpnorm[p]{H}
    \le4\Lpnorm[p]{H},\\
    \schnorm{d^2(Z_+^3)[H,K]}
    &\le
    \left[
        3+\frac1\pi\left(4+12+4+\frac23\right)
    \right]\schnorm{H}\opnorm{K}
    \le10\schnorm{H}\opnorm{K}.
\end{align*}
By positive homogeneity of degree three, rescaling extends this
twice continuous Fr\'echet differentiability to all of
$\mathrm{Sym}_n(\mathbb R)$.
\end{proof}

\begin{proposition}[Regularity and block identities for the reciprocal extension]
\label{prop:block-regularization}
With $\phi$ and $\Phi_{a,W,T}$ as above, the following claims hold.
All constants below are independent of the matrix dimension.
\begin{enumerate}[label=(\alph*), ref=\theproposition(\alph*), leftmargin=*]

\item \label{prop:block-regularization-cutoff}
\textup{Cutoff bounds.}
The cutoff is well-defined and positive on $(-\gamma_Y,\infty)$.
For $y\ge0$, its denominator satisfies
$\phi(y)^{-1}\ge y$ and
$\phi(y)^{-1}\ge(\sqrt6-2)\gamma_Y\ge4\gamma_Y/9$.
Also $\phi(y)=y^{-1}$ for $y\ge\gamma_Y$.
For every symmetric $Z$,
\begin{equation}
\label{eq:cutoff-square-weighted-bounds}
    \opnorm{\phi(Z^2)}\le\frac{9}{4\gamma_Y},
    \qquad
    \opnorm{Z\phi(Z^2)^{1/2}}\le1.
\end{equation}

\item \label{prop:block-regularization-square}
\textup{Regularity of the squared-argument cutoff.}
The following derivatives are of the whole map $Z\mapsto\phi(Z^2)$.
This map is twice continuously Fr\'echet differentiable
on $\mathrm{Sym}_n(\mathbb R)$ and there is a universal constant $C>0$ such that, for all symmetric
$Z,\widetilde Z,H,K$ and $p\in\{2,\infty\}$,
\allowdisplaybreaks
\begin{align}
\label{eq:cutoff-square-first-derivative}
    \Lpnorm[p]{d[\phi(Z^2)][H]}
    &\le C\gamma_Y^{-3/2}\Lpnorm[p]{H},\\
\label{eq:cutoff-square-second-derivative}
    \schnorm{d^2[\phi(Z^2)][H,K]}
    &\le C\gamma_Y^{-2}\schnorm{H}\opnorm{K},\\
\label{eq:cutoff-square-derivative-difference}
    \schnorm{\bigl(d[\phi(Z^2)]-d[\phi(\widetilde Z^2)]\bigr)[H]}
    &\le C\gamma_Y^{-2}\opnorm{Z-\widetilde Z}\schnorm{H}.
\end{align}

\item \label{prop:block-regularization-representation}
\textup{Block representation and normalization.}
\pref{eq:regularized-scalar-functional} has the exact representation
\begin{equation}
\label{eq:block-trace-functional}
    \Phi_{a,W,T}(A)
    =\tr_{2n}\!\left[
      \begin{pmatrix}2T&0\\0&0\end{pmatrix}
      \phi\!\left(
        \begin{pmatrix}0&\Id_n+WM\\\Id_n+MW&0\end{pmatrix}^{\!2}
      \right)\right].
\end{equation}
For every symmetric direction $H$,
\begin{equation}
\label{eq:block-normalizations}
    \schnorm{\begin{pmatrix}2T&0\\0&0\end{pmatrix}}
       =\sqrt2\schnorm{T},
    \qquad
    \schnorm{\begin{pmatrix}0&WH\\HW&0\end{pmatrix}}
       =\schnorm{WH}\le\schnorm{H}.
\end{equation}

\item \label{prop:block-regularization-affine}
\textup{Affine dependence on $A$.}
For every symmetric direction $H$,
\begin{equation}
\label{eq:block-A-derivative}
    d_A\!\left[
      \begin{pmatrix}0&\Id_n+WM\\\Id_n+MW&0\end{pmatrix}
    \right][H]
    =-\beta\begin{pmatrix}0&WH\\HW&0\end{pmatrix}.
\end{equation}
Its normalized Schatten $2$-norm is at most $\beta\schnorm{H}$.

\item \label{prop:block-regularization-safe}
\textup{Agreement on the safe set.}
For $W\succ0$, set $D=W^{-1}$. If
$(\Id_n+WM)(\Id_n+MW)\succeq\gamma_Y\Id_n$, then
\[
    \phi\!\left((\Id_n+WM)(\Id_n+MW)\right)
    =\left((\Id_n+WM)(\Id_n+MW)\right)^{-1}
    =D(M+D)^{-2}D.
\]
\end{enumerate}
\end{proposition}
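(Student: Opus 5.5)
The five parts are largely independent, so I will prove them in the order (a), (c), (d), (e), (b). Part (b) is the only one that needs real analysis; the others are algebra.

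\emph{Part (a).} Write $g(y):=y+\gamma_Y\bigl((\gamma_Y-y)_+/(\gamma_Y+y)\bigr)^3$ for $y>-\gamma_Y$, so that $\phi=1/g$.
\begin{itemize}
\item For $y\ge\gamma_Y$ the correction term vanishes, so $g(y)=y$ and $\phi(y)=y^{-1}$.
\item For $y\ge 0$ the correction term is nonnegative, so $g(y)\ge y$.
\item For $y\in[0,\gamma_Y]$, substitute $y=\gamma_Y u$ and minimize $u+\bigl((1-u)/(1+u)\bigr)^3$ over $u\in[0,1]$. A one-variable calculus check locates the minimizer and gives the value $\sqrt6-2$. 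This yields $g\ge(\sqrt6-2)\gamma_Y\ge 4\gamma_Y/9$ on $[0,\infty)$; positivity on $(-\gamma_Y,0)$ is immediate since the cubic term blows up.
\end{itemize}
For a symmetric $Z$, the matrix $Z^2$ is PSD with eigenvalues $z^2$. The first bound in \eqref{eq:cutoff-square-weighted-bounds} is then $\phi(z^2)\le 9/(4\gamma_Y)$. The second is $z^2\phi(z^2)=z^2/g(z^2)\le1$, which follows from $g(y)\ge y$.

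\emph{Parts (c), (d), (e).} For (c), set $X=\Id_n+MW$, so $X^\sT=\Id_n+WM$, and let $B=\begin{pmatrix}0&X^\sT\\X&0\end{pmatrix}$. Then $B^2=\diag(X^\sT X, XX^\sT)$, so $\phi(B^2)$ is block diagonal. Tracing against $\diag(2T,0)$ with the $1/(2n)$ normalization gives $\tr_n[T\phi(X^\sT X)]$, which is exactly \eqref{eq:regularized-scalar-functional}. The norm identities \eqref{eq:block-normalizations} are direct computations: $\|HW\|_F=\|WH\|_F$, and $\opnorm{W}\le1$ gives $\schnorm{WH}\le\schnorm{H}$.

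For (d), $M$ depends on $A$ only through the affine term $-\beta A$, which gives the derivative \eqref{eq:block-A-derivative}. The norm bound then follows from (c).

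For (e), if $X^\sT X\succeq\gamma_Y\Id_n$, every eigenvalue is at least $\gamma_Y$, so by (a) we get $\phi(X^\sT X)=(X^\sT X)^{-1}$. Writing $\Id_n+MW=(D+M)W$ with $W=D^{-1}$ gives
\[
(X^\sT X)^{-1}=W^{-1}(D+M)^{-2}W^{-1}=D(M+D)^{-2}D.
\]

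\emph{Part (b), the main obstacle.} The difficulty is obtaining dimension-free Schatten bounds for a nonsmooth spectral function. My plan is to decompose $\phi(Z^2)$ into pieces that are each controlled by resolvent calculus or by \pref{lem:positive-part-cubic}.

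First, by homogeneity, rescale $Z\mapsto Z/\sqrt{\gamma_Y}$ to reduce to $\gamma_Y=1$; this produces the stated powers of $\gamma_Y$. Next, write
\[
\phi(Z^2)=\bigl(Z^2+P(Z^2)\bigr)^{-1},\qquad P(Y)=\bigl((\Id_n-Y)_+\bigr)^3(\Id_n+Y)^{-3},
\]
where the two factors commute because both are functions of $Y$. The inverse rule gives
\[
d[\phi]=-\phi\, d[Z^2+P]\,\phi .
\]
The factor $\phi$ is bounded by (a). The term $d[Z^2][H]=ZH+HZ$ is potentially large, but it is always flanked by $\phi$, and I will use the weighted bound $\opnorm{Z\phi^{1/2}}\le1$ from \eqref{eq:cutoff-square-weighted-bounds} to absorb it. For this I split $\phi=\phi^{1/2}\phi^{1/2}$ so that each copy of $Z$ is paired with a $\phi^{1/2}$; this is where the exponent $\gamma_Y^{-3/2}$ comes from.

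For $d[P]$, the ingredients are:
\begin{itemize}
\item \pref{lem:positive-part-cubic}, applied to $\Id_n-Y$, handles the positive-part cube. It applies because $\opnorm{\Id_n-Y}\le1$ on the relevant spectral region $Y\in[0,2]$. Outside that region the positive part vanishes identically, and a spectral cutoff argument shows that only eigenvalues in $[0,1]$ contribute.
\item $(\Id_n+Y)^{-3}$ is handled by standard resolvent derivative bounds, using $\opnorm{(\Id_n+Y)^{-1}}\le1$.
\end{itemize}
A Leibniz rule together with noncommutative Hölder then gives \eqref{eq:cutoff-square-first-derivative} for $p\in\{2,\infty\}$.

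Differentiating the inverse formula once more gives \eqref{eq:cutoff-square-second-derivative}, placing $H$ in $L^2$ and $K$ in operator norm via Hölder. The difference bound \eqref{eq:cutoff-square-derivative-difference} then follows by integrating the second derivative along the segment from $\widetilde Z$ to $Z$. Continuity of the derivatives follows from the $C^2$ statement of \pref{lem:positive-part-cubic} and the smoothness of the inverse map.

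The delicate point I expect is the unbounded factor $Z$ appearing in second-order terms such as $\phi(ZH+HZ)\phi(ZK+KZ)\phi$. I will handle it by the same $\phi^{1/2}$-splitting, so that every $Z$ is paired with an adjacent $\phi^{1/2}$, which keeps all constants universal.
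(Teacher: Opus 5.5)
Your treatment of parts (a), (c), (d) and (e) matches the paper's. There are two small slips in (a), and neither affects the conclusion.
First, the exact minimum of $u+\bigl((1-u)/(1+u)\bigr)^3$ on $[0,1]$ is about $0.456$, not $\sqrt6-2\approx0.449$. The constant $\sqrt6-2$ comes from the tangent-line bound $x_+^3\ge\frac34x-\frac14$ followed by AM--GM, as in the paper.
Second, positivity on $(-\gamma_Y,0)$ holds because $(\gamma_Y-y)/(\gamma_Y+y)>1$ there, so $\phi(y)^{-1}>y+\gamma_Y>0$. It does not follow from the blow-up at $-\gamma_Y$.

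The genuine gap is in part (b), in the correction term $P(Y)=(\Id_n-Y)_+^3(\Id_n+Y)^{-3}$ with $Y=Z^2$.
\pref{lem:positive-part-cubic} supplies the constants $4$ and $10$ only at points of operator norm at most $1$. On $\opnorm{X}\le R$, homogeneity only yields constants growing like $R^2$ and $R$, while $\Id_n-Z^2$ has unbounded norm.
Your reduction ``only eigenvalues of $Y$ in $[0,1]$ contribute'' is false for Fr\'echet derivatives. In an eigenbasis of $Y$, $d[(\Id_n-Y)_+^3][K]$ has entries $F^{[1]}(y_i,y_j)K_{ij}$ with $F(y)=(1-y)_+^3$. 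The cross terms with $y_i<1\le y_j$ have $|F^{[1]}(y_i,y_j)|=(1-y_i)^3/(y_j-y_i)\neq0$.
Moreover, the direction is $K=ZH+HZ$, whose size is not controlled by $H$. It sits inside $d[(\cdot)_+^3]$, where no adjacent factor $\phi^{1/2}$ is available to absorb $Z$. Your $\phi^{1/2}$-splitting only handles the $Z^2$ part of the denominator.
Entrywise, one can check that $(z_i+z_j)F^{[1]}(z_i^2,z_j^2)$ stays bounded, which would rescue the Schatten-$2$ bounds. The $p=\infty$ case of \eqref{eq:cutoff-square-first-derivative}, however, would then need a Schur-multiplier or operator-Lipschitz argument that you do not supply.

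The missing idea is to keep numerator and denominator together. Write $\frac{(\gamma_Y-y)_+}{\gamma_Y+y}=\bigl(\frac{2\gamma_Y}{\gamma_Y+y}-1\bigr)_+$, so that $\phi(Z^2)^{-1}=Z^2+\gamma_Y\bigl(2\gamma_Y(\gamma_Y\Id_n+Z^2)^{-1}-\Id_n\bigr)_+^3$.
The argument of the positive part is then a self-adjoint contraction for every symmetric $Z$, so \pref{lem:positive-part-cubic} applies verbatim.
All dependence on the unbounded $Z$ now passes through the resolvent $(\gamma_Y\Id_n+Z^2)^{-1}$. Its first and second derivatives are bounded by \eqref{eq:quadratic-resolvent-derivative}--\eqref{eq:quadratic-resolvent-derivative-2} with $t=\sqrt{\gamma_Y}$, uniformly in $\opnorm{Z}$, via the partial fractions $(Z\pm it\Id_n)^{-1}$.
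The chain rule then bounds the correction's derivatives by $8\gamma_Y^{1/2}\Lpnorm[p]{H}$ and $56\schnorm{H}\opnorm{K}$. From there, your inverse-rule argument with $\phi^{1/2}$ flanking goes through as in the paper.
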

\begin{proof}
We prove each of the regularity bounds and the block identity and its affine dependence on $A$ in sequence.
\ppart{Part (a): cutoff bounds}
For $y\ge0$, the inequality $x_+^3\ge3x/4-1/4$ gives
\allowdisplaybreaks
\begin{align*}
    \phi(y)^{-1}
    &\ge y+\gamma_Y\left(
       \frac34\frac{\gamma_Y-y}{\gamma_Y+y}-\frac14\right)\\
    &=(\gamma_Y+y)+\frac{3\gamma_Y^2}{2(\gamma_Y+y)}-2\gamma_Y
    \ge(\sqrt6-2)\gamma_Y\ge\frac49\gamma_Y.
\end{align*}
Also $\phi(y)^{-1}\ge y$. For $-\gamma_Y<y<0$, its denominator is
greater than $y+\gamma_Y>0$.
The bounds in \eqref{eq:cutoff-square-weighted-bounds} follow by applying the same bound to each eigenvalue by using the commutativity of $Z$ and $\phi(Z^2)$.
For $y\ge\gamma_Y$, the correction vanishes and $\phi(y)=y^{-1}$.

\ppart{Part (b): regularity of the squared-argument cutoff}
Write
\begin{equation}
\label{eq:cutoff-square-denominator}
    \phi(Z^2)^{-1}
    =Z^2+\gamma_Y
       \left(2\gamma_Y(\gamma_Y\Id_n+Z^2)^{-1}-\Id_n\right)_+^3.
\end{equation}
The argument of the positive part is a self-adjoint contraction.
Apply \eqref{eq:quadratic-resolvent-derivative} and \eqref{eq:quadratic-resolvent-derivative-2} with
$t=\sqrt{\gamma_Y}$ to obtain
\allowdisplaybreaks
\begin{align*}
    \Lpnorm[p]{d[2\gamma_Y(\gamma_Y\Id_n+Z^2)^{-1}-\Id_n][H]}
    &\le2\gamma_Y^{-1/2}\Lpnorm[p]{H},\\
    \schnorm{d^2[2\gamma_Y(\gamma_Y\Id_n+Z^2)^{-1}-\Id_n][H,K]}
    &\le4\gamma_Y^{-1}\schnorm{H}\opnorm{K}.
\end{align*}
Apply \pref{lem:positive-part-cubic} on the matrix
$2\gamma_Y(\gamma_Y\Id_n+Z^2)^{-1}-\Id_n$, and then use the chain rule.
The correction term in \eqref{eq:cutoff-square-denominator} consequently
satisfies
\allowdisplaybreaks
\begin{align*}
    \Lpnorm[p]{d[\phi(Z^2)^{-1}-Z^2][H]}
    &\le8\gamma_Y^{1/2}\Lpnorm[p]{H},\\
    \schnorm{d^2[\phi(Z^2)^{-1}-Z^2][H,K]}
    &\le56\schnorm{H}\opnorm{K}.
\end{align*}
The second constant is $10\cdot2^2+4\cdot4=56$.
Since $d(Z^2)[H]=ZH+HZ$ and $d^2(Z^2)[H,K]=HK+KH$,
\eqref{eq:cutoff-square-weighted-bounds} and H\"older's inequality give
\begin{equation}
\label{eq:cutoff-square-denominator-derivatives}
\begin{aligned}
    \Lpnorm[p]{\phi(Z^2)^{1/2}
        d[\phi(Z^2)^{-1}][H]\phi(Z^2)^{1/2}}
    &\le21\gamma_Y^{-1/2}\Lpnorm[p]{H},\\
    \schnorm{d^2[\phi(Z^2)^{-1}][H,K]}
    &\le58\schnorm{H}\opnorm{K}.
\end{aligned}
\end{equation}
In the first line, the two terms from $ZH+HZ$ contribute
$3\gamma_Y^{-1/2}\Lpnorm[p]{H}$, and the correction contributes at most
$18\gamma_Y^{-1/2}\Lpnorm[p]{H}$, using \pref{prop:block-regularization-cutoff} to bound the factors of $\phi(Z^2)^{1/2}$.

\ppart{Inverting the denominator}
\pref{lem:positive-part-cubic}, \eqref{eq:cutoff-square-denominator}, and positivity
of the denominator imply twice continuous differentiability on all of
$\mathrm{Sym}_n(\mathbb R)$. Applying the inverse rule gives
\begin{align*}
    d[\phi(Z^2)][H]
    &=-\phi(Z^2)\cdot d[\phi(Z^2)^{-1}][H]\cdot \phi(Z^2),\\
    d^2[\phi(Z^2)][H,K]
    &={}\phi(Z^2)\cdot d[\phi(Z^2)^{-1}][K]\cdot \phi(Z^2)
           \cdot d[\phi(Z^2)^{-1}][H]\cdot \phi(Z^2)\\
    &\quad+\;\phi(Z^2)\cdot d[\phi(Z^2)^{-1}][H]\cdot \phi(Z^2)
           \cdot d[\phi(Z^2)^{-1}][K]\cdot \phi(Z^2)\\
    &\quad-\;\phi(Z^2)\cdot d^2[\phi(Z^2)^{-1}][H,K]\cdot \phi(Z^2).
\end{align*}
In each product containing a first derivative of the denominator, insert
its two adjacent factors $\phi(Z^2)^{1/2}$ and apply
\eqref{eq:cutoff-square-denominator-derivatives}.
The remaining outside factors have total operator norm at most
$9/(4\gamma_Y)$ by \pref{prop:block-regularization-cutoff}. In the last term, apply the second line of
\eqref{eq:cutoff-square-denominator-derivatives} directly.
Thus
\begin{align*}
    \Lpnorm[p]{d[\phi(Z^2)][H]}
    &\le\frac94\,21\,\gamma_Y^{-3/2}\Lpnorm[p]{H},\\
    \schnorm{d^2[\phi(Z^2)][H,K]}
    &\le\left(2\cdot\frac94\,21^2+\frac{81}{16}\,58\right)
        \gamma_Y^{-2}\schnorm{H}\opnorm{K}.
\end{align*}
This proves \eqref{eq:cutoff-square-first-derivative} and
\eqref{eq:cutoff-square-second-derivative}.
The square roots here are used only to bound products after differentiation;
no square root is differentiated.
Finally, integrate the second derivative along the segment from
$\widetilde Z$ to $Z$ to obtain
\eqref{eq:cutoff-square-derivative-difference}.

\ppart{Part (c): block representation and normalization}
The symmetric block matrix satisfies
\[
    \begin{pmatrix}0&\Id_n+WM\\\Id_n+MW&0\end{pmatrix}^{\!2}
    =\begin{pmatrix}
       (\Id_n+WM)(\Id_n+MW)&0\\
       0&(\Id_n+MW)(\Id_n+WM)
     \end{pmatrix}.
\]
Applying $\phi$ and taking the normalized
trace against $\left(\begin{smallmatrix}2T&0\\0&0\end{smallmatrix}\right)$
proves \eqref{eq:block-trace-functional}.
The identities in \eqref{eq:block-normalizations} follow by taking the
normalized traces of the corresponding products with their transposes;
the last inequality uses $\opnorm{W}\le1$.

\ppart{Part (d): affine dependence on $A$}
Since $M=a\Id_n-\beta A$, differentiation with $a$ and $W$ fixed gives
\eqref{eq:block-A-derivative}. Its norm bound follows from
\eqref{eq:block-normalizations}.

\ppart{Part (e): agreement on the safe set}
The cutoff agrees with the reciprocal on the spectrum of
$(\Id_n+WM)(\Id_n+MW)$ under the stated assumption.
Since $\Id_n+WM=W(M+D)$, this product is $W(M+D)^2W$.
Its invertibility implies that $M+D$ is invertible, and taking the inverse
gives $D(M+D)^{-2}D$.
\end{proof}

\subsection{Chaining argument for fluctuations around expected value}
\label{sec:block-mixed-lipschitzness}

In \cite[\S B]{davies2026potential}, the uniform approximation bound on $E_{\mathcal D_n}[D(a\Id_n-\beta A+D)^{-2}D]$ was divided into two parts, with a free interpolation argument bounding its expected value over random $A$, then a chaining argument bounding its fluctuations around its expectation. We now tackle the chaining part of the argument.

Since the set of values of $D$ leading to singular or near-singular $(a\Id_n-\beta A+D)$ depends on the chaining variable $A$, we cannot simply perform the same argument in a safe subset of its original domain $(-1,1)^n$. Therefore, we adapt the argument to prove the bound for the regularized extension $\Phi_{a,W,T}(A)$ from \pref{sec:block-regularization}, which is defined over all of $(-1,1)^n$ and agrees with the original quantity on the safe set.

First, we apply the regularity estimates and block identities from
\pref{prop:block-regularization} to compare the gradients of the
regularized scalar functionals.

\begin{lemma}[Mixed Lipschitzness for chaining]
\label{lem:mixed-lipschitz-gradient}
Fix $\beta,\gamma_Y,C_{\mathrm{op}}>0$ and let
$C_1:=1+\beta C_{\mathrm{op}}$.
Let $A\in\mathrm{Sym}_n(\mathbb R)$ satisfy
$\opnorm{A}\le C_{\mathrm{op}}$, let $a,\tilde a\in[0,1]$, and let
$W,\widetilde W\in\mathcal D_n([0,1])$.
Let $T,\widetilde T\in\mathcal D_n(\mathbb R)$ satisfy
$\schnorm{T},\schnorm{\widetilde T}\le1$.
Set
\[
    M:=a\Id_n-\beta A,
    \qquad \widetilde M:=\tilde a\,\Id_n-\beta A,
\]
and let $\Phi_{a,W,T}$ be the functional defined in
\eqref{eq:regularized-scalar-functional}.
Let $\nabla_A\Phi_{a,W,T}(A)$ be the gradient on
$\mathrm{Sym}_n(\mathbb R)$ with respect to
$\langle H,K\rangle=\tr_n(HK)$.
Then there is a universal constant $C>0$ such that
\begin{equation}
\label{eq:mixed-lipschitz-bound}
\begin{aligned}
    \schnorm{\nabla_A\Phi_{a,W,T}(A)
        -\nabla_A\Phi_{\tilde a,\widetilde W,\widetilde T}(A)}%
    \le C\beta\Big[
       \gamma_Y^{-2}|a-\tilde a|
       +\big(C_1\gamma_Y^{-2}+\gamma_Y^{-3/2}\big)
           \opnorm{W-\widetilde W}
       +\gamma_Y^{-3/2}\schnorm{T-\widetilde T}\Big].
\end{aligned}
\end{equation}
Consequently,
\begin{equation}
\label{eq:cutoff-gradient-bound}
    \schnorm{\nabla_A\Phi_{a,W,T}(A)}
    \le C\beta\gamma_Y^{-3/2}\schnorm{T}.
\end{equation}
\end{lemma}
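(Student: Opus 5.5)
We work entirely through the block representation of \pref{prop:block-regularization-representation}. Write
\[
    Z := \begin{pmatrix}0&\Id_n+WM\\\Id_n+MW&0\end{pmatrix},
    \qquad
    \widetilde Z := \begin{pmatrix}0&\Id_n+\widetilde W\widetilde M\\\Id_n+\widetilde M\widetilde W&0\end{pmatrix},
    \qquad
    \mathcal T:=\begin{pmatrix}2T&0\\0&0\end{pmatrix},
\]
so that $\Phi_{a,W,T}(A)=\tr_{2n}[\mathcal T\phi(Z^2)]$.

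By the chain rule and \pref{prop:block-regularization-affine}, for a symmetric direction $H$ we get
\[
    \langle\nabla_A\Phi_{a,W,T}(A),H\rangle
    =\tr_{2n}\!\big[\mathcal T\, d[\phi(Z^2)][\mathcal H_W]\big],
    \qquad \mathcal H_W:=-\beta\begin{pmatrix}0&WH\\HW&0\end{pmatrix}.
\]
We identify the gradient by duality: $\schnorm{\nabla_A\Phi}=\sup_{\schnorm{H}\le1}\langle\nabla_A\Phi,H\rangle$.

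The bound \eqref{eq:cutoff-gradient-bound} follows in one line. We apply H\"older to get $|\tr_{2n}[\mathcal T X]|\le\schnorm{\mathcal T}\schnorm{X}$, then the $p=2$ case of \eqref{eq:cutoff-square-first-derivative}, then \eqref{eq:block-normalizations}. Together these give $\le \sqrt2\schnorm{T}\cdot C\gamma_Y^{-3/2}\cdot\beta\schnorm{H}$.

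For the difference estimate \eqref{eq:mixed-lipschitz-bound}, we telescope the pairing into three pieces:
\begin{align*}
    &\tr\big[(\mathcal T-\widetilde{\mathcal T})\,d[\phi(Z^2)][\mathcal H_W]\big]
    +\tr\big[\widetilde{\mathcal T}\,\big(d[\phi(Z^2)]-d[\phi(\widetilde Z^2)]\big)[\mathcal H_W]\big]\\
    &\qquad+\tr\big[\widetilde{\mathcal T}\,d[\phi(\widetilde Z^2)][\mathcal H_W-\mathcal H_{\widetilde W}]\big].
\end{align*}

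\emph{First piece.} It is bounded exactly as in the gradient bound, giving $C\beta\gamma_Y^{-3/2}\schnorm{T-\widetilde T}$.

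\emph{Third piece.} We use that $\mathcal H_W-\mathcal H_{\widetilde W}$ has the same block form with $W$ replaced by $W-\widetilde W$. Its normalized $2$-norm is therefore at most $\beta\opnorm{W-\widetilde W}\schnorm{H}$. With \eqref{eq:cutoff-square-first-derivative}, this piece gives $C\beta\gamma_Y^{-3/2}\opnorm{W-\widetilde W}$.

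\emph{Second piece.} We apply \eqref{eq:cutoff-square-derivative-difference}, which gives $C\gamma_Y^{-2}\opnorm{Z-\widetilde Z}\schnorm{\mathcal H_W}$. It remains to control $\opnorm{Z-\widetilde Z}$. The off-diagonal blocks differ by $WM-\widetilde W\widetilde M=(W-\widetilde W)M+\widetilde W(M-\widetilde M)$, and $M-\widetilde M=(a-\tilde a)\Id_n$. Since $\opnorm{M}\le 1+\beta C_{\mathrm{op}}=C_1$ and $\opnorm{\widetilde W}\le1$, we get
\[
    \opnorm{Z-\widetilde Z}\le C_1\opnorm{W-\widetilde W}+|a-\tilde a|.
\]
The operator norm of a symmetric off-diagonal block matrix equals that of its block. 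This piece therefore contributes $C\beta\gamma_Y^{-2}\big(|a-\tilde a|+C_1\opnorm{W-\widetilde W}\big)$, using $\schnorm{\widetilde T}\le1$.

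Summing the three pieces gives \eqref{eq:mixed-lipschitz-bound}.

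The only delicate step is the second piece. It needs the mixed-norm derivative-difference estimate \eqref{eq:cutoff-square-derivative-difference}: operator norm in the base point and Schatten-2 norm in the direction. This estimate is already supplied by \pref{prop:block-regularization-square}. So the main care is to confirm that $Z$ and $\widetilde Z$ are symmetric, so that the proposition applies on all of $\mathrm{Sym}_{2n}$ with no lower-bound hypothesis. This is exactly why the regularization was introduced.
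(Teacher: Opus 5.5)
Your proposal is correct and follows essentially the same route as the paper: the block representation, the chain rule through the affine $A$-dependence, Cauchy--Schwarz for $\tr_{2n}$, and a three-term telescoping controlled by \eqref{eq:cutoff-square-first-derivative}, \eqref{eq:cutoff-square-derivative-difference}, and the bound $\opnorm{Z-\widetilde Z}\le|a-\tilde a|+C_1\opnorm{W-\widetilde W}$. The only difference is that you change the evaluation point before the direction, whereas the paper changes the direction first; this is cosmetic and yields the same bounds.
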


\begin{proof} We will use the block representation of the test function. We use the fact that the block representation is an affine function of $A$, whose derivative is easy to compute. Extending this to the full test function is a matter of chain rule, followed by combining standard Fr\'echet derivative computations, Cauchy--Schwarz and the established regularity bounds. 
\ppart{Differences of the block matrix}
With $A$ fixed, let $\Delta$ denote the difference between
$(a,W,T)$ and $(\tilde a,\widetilde W,\widetilde T)$, consistently across
all its uses, so that $\Delta M=(\Delta a)\Id_n$.
Note also that $\opnorm{M},\opnorm{\widetilde M}\le C_1$.
For a symmetric direction $H$, \eqref{eq:block-A-derivative} and
\eqref{eq:block-normalizations} bound the normalized Schatten
$2$-norm of the $A$-derivative by $\beta\schnorm{H}$.
Then the discrete product rule gives
\allowdisplaybreaks
\begin{align}
\label{eq:block-parameter-difference}
    \opnorm{\Delta\!\begin{pmatrix}
        0&\Id_n+WM\\\Id_n+MW&0\end{pmatrix}}
    &=\opnorm{WM-\widetilde W\widetilde M}
      \le|\Delta a|+C_1\opnorm{\Delta W},\\
\label{eq:block-derivative-difference}
    \schnorm{\Delta\!\left(
       d_A\!\left[\begin{pmatrix}
          0&\Id_n+WM\\\Id_n+MW&0\end{pmatrix}\right][H]\right)}
    &\le\beta\opnorm{\Delta W}\schnorm{H}.
\end{align}
Here the off-diagonal blocks are transposes of one another.
In particular, the $A$-derivative does not change when only $a$ changes.
\ppart{The scalar functional}
Apply \pref{prop:block-regularization-square} in dimension $2n$, with
its matrix argument $Z$ replaced by
\[
    Z \;\;\gets\;\; \begin{pmatrix}0&\Id_n+WM\\\Id_n+MW&0\end{pmatrix}.
\]
By the chain rule,
\allowdisplaybreaks
\begin{align*}
    d_A\Phi_{a,W,T}(A)[H]
    &=
    -\beta\,\tr_{2n}\!\left[
        \begin{pmatrix}2T&0\\0&0\end{pmatrix}
        \cdot
            d[\phi(Z^2)]
        \!\left[
            \begin{pmatrix}0&WH\\HW&0\end{pmatrix}
        \right]
    \right].
\end{align*}
By Cauchy--Schwarz for $\tr_{2n}$,
\eqref{eq:cutoff-square-first-derivative} in dimension $2n$, \eqref{eq:block-normalizations},
\eqref{eq:block-A-derivative}, $\opnorm{W} \le 1$, and absorbing a factor of $\sqrt{2}$ into $C$,
\allowdisplaybreaks
\begin{align*}
    |d_A\Phi_{a,W,T}(A)[H]|
    &\le C\beta\gamma_Y^{-3/2}
       \schnorm{\begin{pmatrix}2T&0\\0&0\end{pmatrix}}
       \schnorm{\begin{pmatrix}0&WH\\HW&0\end{pmatrix}}\\
    &=\sqrt2\,C\beta\gamma_Y^{-3/2}\schnorm{T}\schnorm{WH}\\
    &\le C\beta\gamma_Y^{-3/2}\schnorm{T}\schnorm{H}.
\end{align*}
Now keep $A$ and $H$ fixed.
In the chain-rule expression for $d_A\Phi_{a,W,T}(A)[H]$,
first change $T$ to $\widetilde T$, then change the direction
supplied to the derivative from the block matrix containing $W$
to that containing $\widetilde W$, and finally change the
evaluation point of $d[\phi(Z^2)]$ from the block matrix associated with $(a,W)$
to that associated with $(\tilde a,\widetilde W)$.
The discrete product rule and Cauchy--Schwarz
for $\tr_{2n}$ then give
\allowdisplaybreaks
\begin{align*}
    &|\Delta(d_A\Phi_{a,W,T}(A)[H])|\\
    &\quad\le
    \sqrt2\,\beta\schnorm{\Delta T}
    \schnorm{
        d[\phi(Z^2)]
        \!\left[
            \begin{pmatrix}0&WH\\HW&0\end{pmatrix}
        \right]
    }\\
    &\qquad+
    \sqrt2\,\beta\schnorm{\widetilde T}
    \schnorm{
        d[\phi(Z^2)]
        \!\left[
            \begin{pmatrix}
                0&(\Delta W)H\\
                H(\Delta W)&0
            \end{pmatrix}
        \right]
    }\\
    &\qquad+
    \sqrt2\,\beta\schnorm{\widetilde T}
    \schnorm{
        \left[\Delta\!\left(
            d[\phi(Z^2)]
        \right)\right]
        \!\left[
            \begin{pmatrix}0&\widetilde{W}H\\H\widetilde{W}&0\end{pmatrix}
        \right]
    }.
\end{align*}

The $\sqrt2$ factors come from
$\schnorm{\left(\begin{smallmatrix}2T&0\\0&0\end{smallmatrix}\right)}
=\sqrt2\schnorm{T}$.

For the first and second terms, apply
\eqref{eq:cutoff-square-first-derivative}.
For the third term, apply
\eqref{eq:cutoff-square-derivative-difference}.
The required bounds on the block matrices are
\allowdisplaybreaks
\begin{align*}
    \schnorm{\begin{pmatrix}0&WH\\HW&0\end{pmatrix}}
    &\le\opnorm{W}\schnorm{H},\\
    \schnorm{\begin{pmatrix}
        0&(\Delta W)H\\
        H(\Delta W)&0
    \end{pmatrix}}
    &\le\opnorm{\Delta W}\schnorm{H},\\
    \opnorm{\Delta\!\begin{pmatrix}
        0&\Id_n+WM\\
        \Id_n+MW&0
    \end{pmatrix}}
    &\le|\Delta a|+C_1\opnorm{\Delta W},
\end{align*}
by \eqref{eq:block-normalizations},
\eqref{eq:block-derivative-difference}, and
\eqref{eq:block-parameter-difference}.
Substituting these estimates into the three terms above and absorbing
$\sqrt2$ into the universal constant gives
\allowdisplaybreaks
\begin{align*}
    |\Delta(d_A\Phi_{a,W,T}(A)[H])|
    \le C\beta\Big[&
       \gamma_Y^{-3/2}\opnorm{W}\schnorm{\Delta T}\\
       &+\gamma_Y^{-3/2}\schnorm{\widetilde T}
          \opnorm{\Delta W}\\
       &+\gamma_Y^{-2}\opnorm{\widetilde{W}}\schnorm{\widetilde T}
          \bigl(|\Delta a|+C_1\opnorm{\Delta W}\bigr)
       \Big]\,\schnorm{H}.
\end{align*}
Finally, using $\schnorm{\widetilde T}\le1$ and $\opnorm{W} \le 1$ and $\opnorm{\widetilde{W}} \le 1$, we obtain
\allowdisplaybreaks
\begin{align*}
    |\Delta(d_A\Phi_{a,W,T}(A)[H])|
    \le C\beta\Big[&
       \gamma_Y^{-3/2}\schnorm{\Delta T}
       +\gamma_Y^{-2}\bigl(|\Delta a|+C_1\opnorm{\Delta W}\bigr)
       +\gamma_Y^{-3/2}\opnorm{\Delta W}
       \Big]\schnorm{H}.
\end{align*}
Taking the supremum over symmetric $H$ with $\schnorm{H}=1$ proves
\eqref{eq:mixed-lipschitz-bound} and then \eqref{eq:cutoff-gradient-bound} follows from the special case when $(\tilde a,\widetilde W,\widetilde T) = (a, W, 0)$, so that $\nabla_A\Phi_{\tilde a,\widetilde W,\widetilde T}(A) = 0$.
\end{proof}
We now follow the proof of \cite[Theorem B.9]{davies2026potential},
using \pref{lem:mixed-lipschitz-gradient} in place of its mixed-Lipschitz
estimate.

\begin{theorem}[Chaining bound for fluctuations of the reciprocal extension $\Phi_{a,W,T}$]
\label{thm:chaining-square-no-sqrtlog}
Fix $\beta,\gamma_Y,C_{\mathrm{op}}>0$, let
$C_1:=1+\beta C_{\mathrm{op}}$, and let $A\sim\mathsf{GOE}(n)$.
Let $\Pi_{C_{\mathrm{op}}}: \mathrm{Sym}_n(\mathbb R)\to\mathrm{Sym}_n(\mathbb R)$ be the metric projection with respect to
$\schnorm{\cdot}$, onto the closed convex set $\{M:\opnorm{M}\le C_{\mathrm{op}}\}$, and set
$\overline A:=\Pi_{C_{\mathrm{op}}}(A)$.
For the functional in \eqref{eq:regularized-scalar-functional}, define
\allowdisplaybreaks
\label{eq:chaining-centered-process}
\begin{align}
    \mathcal J
    &:= [0,1]\times\mathcal D_n([0,1])
       \times\{T\in\mathcal D_n(\mathbb R):\schnorm{T}=1\},\\
    Z_{a,W,T}
    &:=\Phi_{a,W,T}(\overline A)
       -\E_A[\Phi_{a,W,T}(\overline A)].
\end{align}
Then there are universal constants $C_{\mathrm{ch}},C_{\mathrm{tail}}>0$ such that
\allowdisplaybreaks
\begin{equation}
\label{eq:chaining-expectation}
    \E_A\left[\sup_{(a,W,T)\in\mathcal J}|Z_{a,W,T}|\right]
    \le\frac{C_{\mathrm{ch}}\beta}{\sqrt n}
       \left((1+C_1)\gamma_Y^{-2}+\gamma_Y^{-3/2}\right),
\end{equation}
and, for every $p\in(0,1)$,
\allowdisplaybreaks
\begin{equation}
\label{eq:chaining-highprob}
    \P_A\left\{\sup_{(a,W,T)\in\mathcal J}|Z_{a,W,T}|
    \le\frac{C_{\mathrm{ch}}\beta}{\sqrt n}
       \left((1+C_1)\gamma_Y^{-2}+\gamma_Y^{-3/2}\right)
       +\frac{C_{\mathrm{tail}}\beta\gamma_Y^{-3/2}}{n}
          \sqrt{\log\frac2p}\right\} \ge 1 - p.
\end{equation}
\end{theorem}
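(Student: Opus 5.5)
The plan is to treat $\{Z_{a,W,T}\}$ as a centered process indexed by $\mathcal J$ and bound its supremum with generic chaining (Dudley's entropy integral). Increments will be controlled by Gaussian concentration of Lipschitz functions of $A$, using \pref{lem:mixed-lipschitz-gradient} as the key input. This is the same route as \cite[Theorem B.9]{davies2026potential}, with the projection $\overline A=\Pi_{C_{\mathrm{op}}}(A)$ playing the role of the earlier conditioning on $\opnorm{A}\le C_{\mathrm{op}}$.

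\textbf{Step 1: projection and sub-Gaussian increments.} The map $A\mapsto\overline A$ is $1$-Lipschitz in $\schnorm{\cdot}$, since it is a metric projection onto a closed convex set. Its image satisfies $\opnorm{\overline A}\le C_{\mathrm{op}}$, so \pref{lem:mixed-lipschitz-gradient} applies at every point of that image. Integrating the gradient difference along segments inside the convex set gives a Lipschitz bound in $A$ for the increment map
\[
A\mapsto\Phi_{a,W,T}(\overline A)-\Phi_{\tilde a,\widetilde W,\widetilde T}(\overline A).
\]
In normalized $\schnorm{\cdot}$ the constant is $C\beta[\gamma_Y^{-2}|a-\tilde a|+(C_1\gamma_Y^{-2}+\gamma_Y^{-3/2})\opnorm{W-\widetilde W}+\gamma_Y^{-3/2}\schnorm{T-\widetilde T}]$.

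The GOE entries have variance of order $1/n$, and the normalized norm satisfies $\schnorm{H}=n^{-1/2}\|H\|_F$. Gaussian concentration therefore makes $Z_{a,W,T}-Z_{\tilde a,\widetilde W,\widetilde T}$ sub-Gaussian with variance proxy $n^{-2}\,d((a,W,T),(\tilde a,\widetilde W,\widetilde T))^2$, where $d$ is the metric in brackets above. The same argument applied to a single index, using \eqref{eq:cutoff-gradient-bound}, shows each $Z_{a,W,T}$ is sub-Gaussian with proxy $(C\beta\gamma_Y^{-3/2}/n)^2$.

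\textbf{Step 2: entropy of $\mathcal J$ and Dudley's integral.} This is the step I expect to be the main obstacle. The index set $\mathcal J$ is high dimensional. The $W$-component is the cube $[0,1]^n$ in sup norm, and the $T$-component is the unit sphere of $\R^n$ in normalized $\ell_2$. Their covering numbers are of order $(1/\eps)^n$, so the entropy integral $\int_0^{\mathrm{diam}}\sqrt{\log N(\mathcal J,d,\eps)}\,d\eps$ scales like $\sqrt n$ times the diameter.

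Multiplying by the $1/n$ scale from Step 1 gives the claimed $n^{-1/2}$ rate. The coefficient is $(1+C_1)\gamma_Y^{-2}+\gamma_Y^{-3/2}$, coming from the diameters of the $a$-, $W$- and $T$-components under $d$. The care needed is to avoid a $\sqrt{\log n}$ loss. The $W$-component uses sup norm, whose covering numbers are $(1/\eps)^n$ with no logarithmic factor. For the $T$-component I would first try covering in normalized $\ell_2$, which behaves like a Euclidean ball; if that fails, I would use the linearity of $Z$ in $T$ to reduce it to a supremum of a norm. I would follow the corresponding computation in \cite[Theorem B.9]{davies2026potential}, substituting the constants above, and this yields \eqref{eq:chaining-expectation}.

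\textbf{Step 3: high-probability bound.} The supremum $\sup_{\mathcal J}|Z_{a,W,T}|$ is a supremum of Lipschitz functions of $A$ with common Lipschitz constant $C\beta\gamma_Y^{-3/2}$, by \eqref{eq:cutoff-gradient-bound} composed with the projection. It is therefore itself Lipschitz with that constant, and Gaussian concentration around its mean gives a fluctuation of $C_{\mathrm{tail}}\beta\gamma_Y^{-3/2}n^{-1}\sqrt{\log(2/p)}$. Combined with Step 2, this proves \eqref{eq:chaining-highprob}.
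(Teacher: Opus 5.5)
Your proposal is correct and follows essentially the same route as the paper. The paper obtains sub-Gaussian increments by applying GOE Herbst concentration to $\Phi_\theta-\Phi_{\tilde\theta}$ composed with the $1$-Lipschitz projection, using \pref{lem:mixed-lipschitz-gradient} integrated along segments in the operator-norm ball; it then bounds Dudley's integral via the product cover $\log N(\mathcal J,\rho,u)\le(2n+1)\log(9\kappa_W/u)$, which gives the $\sqrt n\cdot\mathrm{diam}$ scaling with no $\sqrt{\log n}$ loss, and closes with Lipschitz concentration of the supremum. The only cosmetic difference is that the paper converts $\E\sup|Z_\theta|$ to $\E\sup Z_\theta$ via the symmetry $Z_{a,W,-T}=-Z_{a,W,T}$ (after noting separability from continuity), whereas you anchor at a single index using its own sub-Gaussian tail; either works.
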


\begin{proof} The proof will first utilize \pref{lem:mixed-lipschitz-gradient} on the test function induced by the regularized resolvent. After that, the proof is standard by invoking sub-Gaussian concentration via chaining and computing an upper bound on Talagrand's-$\gamma_2$ functional using the Dudley entropy integral.
\ppart{Increment metric}
Define the weights
\[
    \kappa_a:=C\gamma_Y^{-2},\qquad
    \kappa_W:=C\bigl(C_1\gamma_Y^{-2}+\gamma_Y^{-3/2}\bigr),\qquad
    \kappa_T:=C\gamma_Y^{-3/2},
\]
with a common universal prefactor large enough for
\eqref{eq:mixed-lipschitz-bound} and \eqref{eq:cutoff-gradient-bound}.
For $\theta=(a,W,T)$ and
$\tilde\theta=(\tilde a,\widetilde W,\widetilde T)$ in $\mathcal J$, set
\begin{equation}
\label{eq:rho-def}
    \rho(\theta,\tilde\theta)
    :=\kappa_a|a-\tilde a|
      +\kappa_W\opnorm{W-\widetilde W}
      +\kappa_T\schnorm{T-\widetilde T}.
\end{equation}
By \pref{lem:mixed-lipschitz-gradient},
\[
    \schnorm{\nabla_A\Phi_\theta(A)-\nabla_A\Phi_{\tilde\theta}(A)}
    \le\beta\rho(\theta,\tilde\theta)
    \qquad(\opnorm{A}\le C_{\mathrm{op}}).
\]
Integrating along line segments in this convex ball shows that
$\Phi_\theta-\Phi_{\tilde\theta}$ is
$\beta\rho(\theta,\tilde\theta)$-Lipschitz there.

\ppart{Sub-Gaussian increments}
The preceding Lipschitz bound, the 1-Lipschitzness of
$\Pi_{C_{\mathrm{op}}}$, and GOE Herbst concentration
\cite[Lemma B.7]{davies2026potential} give, exactly as in
\cite[proof of Theorem B.9]{davies2026potential},
\begin{equation}
\label{eq:chaining-increment-tail}
    \Pr\{|Z_\theta-Z_{\tilde\theta}|>u\}
    \le2\exp\!\left(
      -\frac{n^2u^2}{4\beta^2\rho(\theta,\tilde\theta)^2}\right)
    =2\exp\!\left(-\frac{u^2}{2d(\theta,\tilde\theta)^2}\right),
    \qquad d:=\frac{\sqrt2\beta}{n}\rho,
\end{equation}
for $u>0$ and $\theta\ne\tilde\theta$.

\ppart{Chaining bound}
Continuity of $\Phi_\theta(\overline A)$ in $\theta$ and its uniform
bound $|\Phi_\theta(\overline A)|\le9/(4\gamma_Y)$ from
\pref{prop:block-regularization-cutoff} imply, by dominated convergence,
that the centered process has continuous sample paths on the compact
set $\mathcal J$ and is therefore separable.
Since $Z_{a,W,-T}=-Z_{a,W,T}$, Talagrand's chaining
\cite[Theorem B.6]{davies2026potential} gives
\[
    \E_A\sup_{\theta\in\mathcal J}|Z_\theta|
    =\E_A\sup_{\theta\in\mathcal J}Z_\theta
    \le C\gamma_2(\mathcal J,d).
\]

\ppart{Entropy integral}
By~\cite[Theorem B.5]{davies2026potential} and
$d=(\sqrt2\beta/n)\rho$,
\begin{equation}
\label{eq:chaining-entropy-integral}
    \gamma_2(\mathcal J,d)
    \le C\frac{\sqrt2\beta}{n}
       \int_0^{\operatorname{diam}(\mathcal J,\rho)}
          \sqrt{\log N(\mathcal J,\rho,u)}\,du,
\end{equation}
where $N$ denotes the covering number.

To bound $N(\mathcal J,\rho,u)$, note that $C_1\ge1$ implies
$\kappa_a+\kappa_T\le\kappa_W$, and hence
\[
    \operatorname{diam}(\mathcal J,\rho)
    \le\kappa_a+\kappa_W+2\kappa_T
    \le3\kappa_W.
\]
For $0<u\le\operatorname{diam}(\mathcal J,\rho)$, cover each parameter
factor at radius $u/(3\kappa_W)\le1$ in its respective norm.
Since $\rho$ is a sum metric and each weight is at most $\kappa_W$,
the resulting product cover has $\rho$-radius at most $u$.
The same small-radius volumetric bounds as in
\cite[proof of Theorem B.9]{davies2026potential} imply
\[
    \log N(\mathcal J,\rho,u)
    \le(2n+1)\log\!\left(\frac{9\kappa_W}{u}\right)\,,
\]
for $0<u\le\operatorname{diam}(\mathcal J,\rho)$. Consequently, extending the integral of this nonnegative upper bound to $9\kappa_W$ and using the same substitution $u=9\kappa_We^{-t}$ as in \cite[proof of Theorem B.9]{davies2026potential}, we obtain
\begin{align*}
    \int_0^{\operatorname{diam}(\mathcal J,\rho)}
       \sqrt{\log N(\mathcal J,\rho,u)}\,du
    &\le
    \sqrt{2n+1}\int_0^{9\kappa_W}
       \sqrt{\log(9\kappa_W/u)}\,du\\
    &=\frac{9\sqrt\pi}{2}\kappa_W\sqrt{2n+1}.
\end{align*}
Substituting this into \eqref{eq:chaining-entropy-integral} yields
\[
    \gamma_2(\mathcal J,d)
    \le C\frac{\beta\kappa_W}{\sqrt n}
    \le\frac{C\beta}{\sqrt n}
       \left(C_1\gamma_Y^{-2}+\gamma_Y^{-3/2}\right),
\]
where the last step uses the definition of $\kappa_W$.
Together with the preceding chaining bound, this proves
\eqref{eq:chaining-expectation}.

\ppart{High-probability bound}
By \eqref{eq:cutoff-gradient-bound} and the same line-segment and
projection argument, each $A\mapsto\Phi_\theta(\overline A)$ is globally
$\beta\kappa_T$-Lipschitz. Centering preserves this constant, and so does
taking the supremum. Thus
\[
    \mathcal Z(A):=\sup_{\theta\in\mathcal J}Z_\theta
                 =\sup_{\theta\in\mathcal J}|Z_\theta|
\]
is $\beta\kappa_T$-Lipschitz.
As in \cite[proof of Theorem B.9]{davies2026potential}, applying GOE Herbst concentration~\cite[Lemma B.7]{davies2026potential} again to this supremum gives
\[
    \Pr\left\{\mathcal Z(A)-\E_A[\mathcal Z(A)]>u\right\}
    \le2\exp\!\left(-\frac{n^2u^2}{4\beta^2\kappa_T^2}\right).
\]
Taking $u=2\beta\kappa_T n^{-1}\sqrt{\log(2/p)}$ and using \eqref{eq:chaining-expectation} proves \eqref{eq:chaining-highprob}.
\end{proof}

\subsection{Expected value from free probability}
\label{sec:option-c-free-comparison}
We compare the unconditional expectation of the reciprocal extension with its free counterpart. Using the block representation from \pref{prop:block-regularization-representation}, we compare GOE to GUE and then apply the matrix-valued resolvent estimate of \cite[Theorem 2.8]{bandeira2023matrix} in dimension $2n$, without a growing identity amplification. Smooth envelopes then recover the same reciprocal extension\footnote{\,This replaces the previous approach of the authors in \cite[\S B.6]{davies2026potential} of taking a real-axis inverse-square limit - see \pref{rem:simpler-free}.}.

We retain the spectral-cutoff $\gamma_Y$, regularized resolvent $\phi$, and diagonal test functional $\Phi_{a,W,T}$ from \eqref{eq:rational-reciprocal-cutoff} and \eqref{eq:regularized-scalar-functional}. Let $S$ be a semicircular element free from
$(M_n(\mathbb C),\tr_n)$ in $(\mathcal M_n,\tau_n)$, and let $E_{M_n}$ be the trace-preserving conditional expectation onto $M_n(\mathbb C)$. Fix deterministic $a\in[0,1]$, $\beta>0$, and $W\in\mathcal D_n([0,1])$. The block identities and reciprocal agreement in \pref{prop:block-regularization-representation}, \pref{prop:block-regularization-affine}, and \pref{prop:block-regularization-safe} are algebraic and hold for Hermitian matrices and $S$. %
We retain the normalized Schatten norms from the preliminaries, except for the explicitly unnormalized Frobenius norm in the covariance-parameter calculation. %

We introduce the GUE as an intermediate ensemble because its coefficient-defined free model has exactly the $M_n$-valued law of $S$, whereas the GOE covariance contains an additional transpose term of order $1/n$. This permits \cite[Theorem 2.8]{bandeira2023matrix} to compare directly with the free limit, while the covariance correction is handled by the GOE--GUE interpolation. This approach is simpler than the previous approach adopted by the authors in \cite[\S B.6]{davies2026potential} where the interpolation for $\sqrt{t}A \ot \Id_k + \sqrt{1-t}B$ to its corresponding free counter-part encounters a covariance structure that cannot be tightly analyzed by the techniques in \cite{bandeira2023matrix,van2025strong}.

\begin{lemma}[Comparison of GOE block resolvents with GUE free limit]
\label{lem:option-c-block-resolvents}
Let $A \sim \mathsf{GOE}(n)$ and $A_{\mathrm{GUE}} \sim \mathsf{GUE}(n)$ independently.
For $t\in[0,1]$ and $z\in\mathbb C\setminus\mathbb R$, set
\[
    A_t=\sqrt t\,A+\sqrt{1-t}\,A_{\mathrm{GUE}}\,,
\]
and
\[
    R_t(z)=\left[z\Id_{2n}-
      \begin{pmatrix}
      0&\Id_n+D^{-1}(a\Id_n-\beta A_t)\\
      \Id_n+(a\Id_n-\beta A_t)D^{-1}&0
      \end{pmatrix}\right]^{-1}\,.
\]
For $b=\Im z\ne0$, we have
\begin{align}
\label{eq:option-c-goe-gue-resolvents}
    &\opnorm{\E R_1(z)-\E R_0(z)}
       \le\frac{2\beta^2}{n|b|^3},\\
\label{eq:option-c-gue-free-resolvents}
    &\opnorm{\E R_0(z)
       -(\mathrm{id}_2\otimes E_{M_n})
       \left[\left(z\Id_{2n}-
       \begin{pmatrix}
       0&\Id_n+D^{-1}(a\Id_n-\beta S)\\
       \Id_n+(a\Id_n-\beta S)D^{-1}&0
       \end{pmatrix}\right)^{-1}\right]}
       \le\frac{2\beta^4}{n|b|^5}.
\end{align}
\end{lemma}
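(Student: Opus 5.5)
For \eqref{eq:option-c-goe-gue-resolvents}, I would run a Gaussian interpolation along $t\mapsto A_t$ and differentiate $\E R_t(z)$ in $t$. Write $X(A):=\left(\begin{smallmatrix}0&\Id_n+D^{-1}(a\Id_n-\beta A)\\ \Id_n+(a\Id_n-\beta A)D^{-1}&0\end{smallmatrix}\right)$. This is affine in $A$, with derivative $-\beta\left(\begin{smallmatrix}0&D^{-1}H\\HD^{-1}&0\end{smallmatrix}\right)=:-\beta L(H)$ as in \pref{prop:block-regularization-affine} with $W=D^{-1}$.

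The resolvent derivative is $d R[H]=-\beta R\,L(H)\,R$. Gaussian integration by parts gives
\[
\frac{d}{dt}\E R_t=\frac{\beta^2}{2}\,\E\Big[\sum_{\alpha,\alpha'}\big(\Cov_{\mathrm{GOE}}-\Cov_{\mathrm{GUE}}\big)_{\alpha\alpha'}\,R_tL(E_\alpha)R_tL(E_{\alpha'})R_t\Big]
\]
up to symmetrization. The GOE covariance $\E A_{ij}A_{kl}=\frac1n(\delta_{il}\delta_{jk}+\delta_{ik}\delta_{jl})$ differs from the GUE one $\frac1n\delta_{il}\delta_{jk}$ only by the transpose term. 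The common term cancels, so the derivative reduces to $\frac{\beta^2}{2n}\E\sum_{i,j}R_tL(e_ie_j^\sT)R_tL(e_ie_j^\sT)R_t$, suitably symmetrized.

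I then bound this ``transpose-contraction'' term in operator norm. Writing $L(e_ie_j^\sT)$ as a sum of two rank-one block pieces $(D^{-1}e_i)\otimes e_j$ placed in off-diagonal blocks, the sum over $i,j$ of $u_iv_j^\sT R\,u_iv_j^\sT$-type products collapses to a partial transpose of $R$ sandwiched between resolvents. Its operator norm is at most $\opnorm{D^{-1}}^2\opnorm{R}^3\le 4|b|^{-3}$ after counting the four block combinations. I expect this to be the main bookkeeping obstacle: verifying that no factor of $n$ appears requires using that $\sum_{i,j}(e_ie_j^\sT) Y (e_ie_j^\sT)=Y^\sT$ has operator norm $\opnorm{Y}$, and not bounding term by term. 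Integrating over $t\in[0,1]$ and using $\opnorm{R_t}\le|b|^{-1}$ gives $2\beta^2/(n|b|^3)$.

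For \eqref{eq:option-c-gue-free-resolvents}, I would view $X(A_{\mathrm{GUE}})=X_0-\beta\sum_{ij}A_{ij}\,L(e_ie_j^*)$ as a Gaussian random matrix in dimension $2n$ with deterministic coefficients. Its free model replaces $A_{\mathrm{GUE}}$ by $S$, precisely because GUE has free-model law equal to $S$ with $E_{M_n}$-valued covariance. Then I apply \cite[Theorem 2.8]{bandeira2023matrix}, which bounds $\opnorm{\E(z-X)^{-1}-(\mathrm{id}\otimes\tau)(z-X_{\mathrm{free}})^{-1}}\le \tilde v(X)^4|b|^{-5}$. Here I compute the matrix parameter $\tilde v(X)^4=\opnorm{\Cov(X)}\,\sigma(X)^2$. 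Using the GUE covariance, $\sigma(X)^2=\opnorm{\E (X-X_0)^2}\le 2\beta^2\opnorm{D^{-1}}^2\le2\beta^2$ and $\opnorm{\Cov(X)}\le\beta^2/n$, since GUE entries have variance $1/n$ and $\opnorm{D^{-1}}\le1$. That yields $2\beta^4/(n|b|^5)$. The conditional expectation onto $M_2\otimes M_n$ is $\mathrm{id}_2\otimes E_{M_n}$, matching the stated form.
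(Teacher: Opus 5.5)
Your overall route is the paper's: a GOE--GUE covariance interpolation in which only the transpose term $\frac1n\delta_{ik}\delta_{jl}$ survives, contracted through $\sum_{i,j}E_{ij}YE_{ij}=Y^{\sT}$. This is followed by \cite[Theorem 2.8]{bandeira2023matrix} with $\widetilde v(X)^4=v(X)^2\sigma(X)^2$.

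\textbf{Second estimate.} Your parameter computation is wrong, and the constant $2\beta^4/(n|b|^5)$ comes out right only because two errors cancel. The claim $\opnorm{\Cov(X)}\le\beta^2/n$ is false, because each GUE coordinate appears in two entries of the block matrix, once in each off-diagonal block. For $D=\Id_n$ one has $X_{i,n+j}=X_{n+i,j}=-\beta(A_{\mathrm{GUE}})_{ij}$. Testing $v(X)^2=\sup_{\|M\|_F=1}\E|\Tr((X-X_0)M)|^2$ on $M_{12}=M_{21}=E_{ji}/\sqrt2$, $M_{11}=M_{22}=0$, gives $v(X)^2=2\beta^2/n$.

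Conversely, your bound $\sigma(X)^2\le2\beta^2$ is loose. The identity $\E[A_{\mathrm{GUE}}HA_{\mathrm{GUE}}]=\tr_n(H)\Id_n$ gives $\E(X-X_0)^2=\beta^2\begin{pmatrix}D^{-2}&0\\0&\tr_n(D^{-2})\Id_n\end{pmatrix}$, so $\sigma(X)^2\le\beta^2$. Pairing the corrected $v$ with your $\sigma$ would only yield $4\beta^4/(n|b|^5)$. You need the split $\sigma(X)^2\le\beta^2$, $v(X)^2\le2\beta^2/n$, which is exactly what the paper uses.

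Your one-line identification of the coefficient-defined free model with $(\mathrm{id}_2\otimes E_{M_n})$ of the $S$-resolvent is correct in substance. It should be justified by noting that $E_{M_n}[SHS]=\tr_n(H)\Id_n$ matches the GUE covariance, so the $M_n$-valued semicircular laws agree.

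\textbf{First estimate.} The idea is right but the constant bookkeeping is muddled. The $\frac12$ of the interpolation formula cancels against the two terms of $d^2R$. This gives $\frac{d}{dt}\E R_t=\frac{\beta^2}{n}\E[R_t\Psi(R_t)R_t]$, where $\Psi(Y)=\sum_{i,j}L(E_{ij})YL(E_{ij})$ is the block matrix of weighted block transposes displayed in the paper.

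Since $\Psi$ is a partial transpose, $\opnorm{\Psi(Y)}\le\opnorm{Y}$ can fail for general $Y$. So a bound of the form $\opnorm{D^{-1}}^2\opnorm{R}^3$ for this term is not available. The correct estimate $\opnorm{\Psi(Y)}\le2\opnorm{Y}$ comes from splitting $\Psi(Y)$ into its block-diagonal and block-off-diagonal parts. Summing all four block norms with the correct prefactor $\beta^2/n$ would instead give $4\beta^2/(n|b|^3)$, not the stated bound.
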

\begin{proof}
We use the characterization of the $M_n(\C)$-valued covariances $\eta_1$ and $\eta_2$ of the $\mathsf{GOE}(n)$ and $\mathsf{GUE}(n)$ ensembles \cite[Remark 1.6]{jekel2025strong} to identify the difference between them, and then invoke \cite[Proposition 1.8]{jekel2025strong} and \cite[Theorem 2.8]{bandeira2023matrix} to get quantitative strong convergence of $A_{\mathrm{GUE}}$ to its $(M_n(\C),\eta)$-semicircular limit. After that, one can use the characterization of $R_t(z)$ as an affine function of $A$ to compute its time derivative, and apply the standard operator-norm bound on the resolvent to compute the error between the $\mathsf{GOE}(n)$ and $\mathsf{GUE}(n)$ ensemble.
\ppart{The free $\mathsf{GUE}(n)$ model}
For every $H\in M_n(\mathbb C)$, the normalized GUE covariance is
\begin{equation}
\label{eq:option-c-gue-covariance}
    \eta_2 = \E[A_{\mathrm{GUE}}HA_{\mathrm{GUE}}]
       =\frac1n\sum_{i,j=1}^nE_{ij}HE_{ji}
       =\tr_n(H)\Id_n,
\end{equation}
where $E_{ij}$ are the standard basis elements for $M_n(\C)$. The Hermitian coefficient matrices multiplying independent standard real Gaussians are
\[
    \calA := \left\{\frac{E_{ii}}{\sqrt n}\right\}_{1\le i\le n} \bigcup \left\{\frac{E_{ij}+E_{ji}}{\sqrt{2n}}\right\}_{1 \le i < j \le n} \bigcup \left\{\frac{i(E_{ij}-E_{ji})}{\sqrt{2n}}\right\}_{1\le i < j \le n}\,.
\]
Replacing the standard Gaussians in front of the coefficient matrices by copies of freely independent standard semicirculars gives an $M_n$-valued semicircular variable with covariance $\eta_2$ \eqref{eq:option-c-gue-covariance} that describes the free limit via \cite[Proposition 1.8]{jekel2025strong}. By the proof of \cite[Propositon 1.8]{jekel2025strong}, all cumulants of $M_n(\C)$ but the first two vanish under the non-commutative conditional expectation $\mathrm{id}\ot\tau$. The
affine (in $A$) block expression of $R_t(z)$ preserves this agreement \cite[Proposition 9.3.13]{mingo2017free}. The block resolvents consequently agree for sufficiently large $|z|$ by their moment series, and then on both half-planes by analytic continuation. This identifies the coefficient-defined free model in \cite[Theorem 2.8]{bandeira2023matrix} exactly with the free resolvent in \eqref{eq:option-c-gue-free-resolvents}.

For the block affine representation at $t=0$, the relevant coefficients vis-a-vis the representation in \cite[(1.1)]{bandeira2023matrix} are $A_0 = \begin{pmatrix}
       0&\Id_n+aD^{-1}\\
       \Id_n+aD^{-1}&0
       \end{pmatrix}$ and $A_{ij} := -\beta 
       \begin{pmatrix}
       0&D^{-1}\calA_{ij} \\
       \calA_{ij}D^{-1}&0
       \end{pmatrix}$.
For these matrices, the parameters $\sigma(A_{\mathrm{GUE}})$ and $v(A_{\mathrm{GUE}})$ can be straightforwardly bounded using the fact that $\opnorm{D^{-1}} \le 1$ along with the operator covariance calculation for $\eta_2$ in \eqref{eq:option-c-gue-covariance} with $H = D^{-2}$ and $H=\Id_n$ to be 
\allowdisplaybreaks
\begin{align*}
    \sigma^2(A_{\mathrm{GUE}}) &= \opnorm{\sum_{\calA_{ij}\in\calA}A^2_{ij}} = \beta^2\opnorm{\sum_{ij}\begin{pmatrix}
       D^{-1}\calA^2_{ij}D^{-1}&0 \\
       0&\calA_{ij}D^{-2}\calA_{ij}
       \end{pmatrix}} \\
       &= \beta^2\opnorm{\begin{pmatrix}
       D^{-2}&0 \\
       0&\tr_n[D^{-2}]\Id_n
       \end{pmatrix}} \le_{\opnorm{D^{-1}}\le 1, \tr_n[D^{-1}] \le 1} \le \beta^2\,,
\end{align*}
and
\allowdisplaybreaks
\begin{align*}
    v^2(A_{\mathrm{GUE}}) &= \opnorm{\Cov(A_{\mathrm{GUE}})} = \sup_{M\in M_n(\C)\,,\norm{M}^2_F=1}\sum_{ij}\left|\Tr\left[A_{ij}M\right]\right|^2 \\
    &= \beta^2\sup_{M\in M_n(\C)\,,\norm{M}^2_F=1}\sum_{ij}\left|\Tr\left[\begin{pmatrix}
       0&D^{-1}\calA_{ij} \\
       \calA_{ij}D^{-1}&0
       \end{pmatrix}\begin{pmatrix}
       M_{11}&M_{12} \\
       M_{21}&M_{22}
       \end{pmatrix}\right]\right|^2 \\
       &= \beta^2\sup_{M\in M_n(\C)\,,\norm{M}^2_F=1}\sum_{ij}\left|\Tr\left[D^{-1}\calA_{ij}M_{21}\right] + \Tr\left[A_{ij}D^{-1}M_{12}\right]\right|^2 \\
       &\le_{\sum_{ij}|\Tr[\calA_{ij}H]|^2 = 1/n\norm{H}^2_F, \norm{M_{12}}_F^2 \le 1, \norm{M_{21}}^2_F \le 1} \frac{2\beta^2}{n}\left|\opnorm{D^{-1}}\right|^2  \le \frac{2\beta^2}{n}\,. 
\end{align*}
Noting that $\tilde{v}^4(A_{\mathrm{GUE}}) = v^2(A_{\mathrm{GUE}})\sigma^2(A_\mathrm{GUE})\le\frac{2\beta^4}{n}$ and invoking \cite[Theorem 2.8]{bandeira2023matrix} proves \eqref{eq:option-c-gue-free-resolvents}.

\ppart{The GOE--GUE correction} The underlying entry covariances, without complex conjugation, differ by
\[
    \E[A_{ij}A_{kl}]
       -\E[(A_{\mathrm{GUE}})_{ij}(A_{\mathrm{GUE}})_{kl}]
       =\frac1n\delta_{ik}\delta_{jl}.
\]
Apply the Gaussian covariance interpolation lemma \cite[Lemma 4.11]{bandeira2023matrix} entrywise to every two Gaussians in $R_t(z)$, in real Hermitian coordinates. The variance differences in the directions $E_{ii}$, $E_{ij}+E_{ji}$, and $i(E_{ij}-E_{ji})$ are $1/n$, $1/(2n)$, and $-1/(2n)$. Combining the latter two directions gives the matrix-unit contraction below. By the affine dependence on $A$ provided by \pref{prop:block-regularization-affine}, the two second-derivative terms from the inverse rule cancel the factor $1/2$ in covariance interpolation. Consequently,
\begin{equation}
\label{eq:option-c-covariance-interpolation}
    \frac{d}{dt}\E R_t(z)
    =\frac{\beta^2}{n}\E\!\left[
      R_t(z)\left\{\sum_{i,j}
       \begin{pmatrix}0&D^{-1}E_{ij}\\E_{ij}D^{-1}&0\end{pmatrix}
       R_t(z)
       \begin{pmatrix}0&D^{-1}E_{ij}\\E_{ij}D^{-1}&0\end{pmatrix}
      \right\}R_t(z)\right].
\end{equation}
The integrand is well-defined with bounded derivatives for fixed $b\ne0$. Using $\sum_{i,j}E_{ij}HE_{ij}=H^{\mathsf T}$, the matrix in braces equals
\[
    \begin{pmatrix}
       D^{-1}(R_t)_{22}^{\mathsf T}D^{-1}&(D^{-1})^2(R_t)_{21}^{\mathsf T}\\
       (R_t)_{12}^{\mathsf T}(D^{-1})^2&D^{-1}(R_t)_{11}^{\mathsf T}D^{-1}
    \end{pmatrix}\,.
\]
Transposition preserves operator norm, and each block has norm at most $\opnorm{R_t}$. A $2\times2$ block matrix has operator norm at most twice its largest block norm, and since $\opnorm{R_t(z)}\le|b|^{-1}$, we have
\[
    \opnorm{\frac{d}{dt}\E R_t(z)}
       \le\frac{2\beta^2}{n}\E\opnorm{R_t(z)}^3
       \le\frac{2\beta^2}{n|b|^3}\,.
\]
Integrating $R_z(t)$ over $t\in[0,1]$ gives \eqref{eq:option-c-goe-gue-resolvents}. The lower-half-plane version of the GUE--free estimate also follows from the upper-half-plane version by taking adjoints.
\end{proof}

\begin{remark}\label{rem:simpler-free}
The application of \cite[Theorem 2.8]{bandeira2023matrix} and \cite[Lemma 5.11]{bandeira2023matrix} was obstructed in the previous argument of \cite[\S B.6]{davies2026potential} by the use of $A\otimes\Id_k$, which has $v(A\otimes\Id_k)=\sqrt{k}\,v(A)$ by
\cite[Lemma 8.1]{bandeira2023matrix}.
In the present approach, using the regularizer $\phi$ applied to the two-block resolvent, one only incurs a factor $2$ in $v^2$.
While the previous tensor interpolation argument could've been adapted to this new regularized version of the functional $\Phi_{a,W,T}(A)$, this simplification allows for a much shorter argument.
\end{remark}

The lemma below will be invoked to justify comparison between the application of $\phi$ to the linear block resolvent and the ideal free limit, by using a smooth envelope that will agree under \pref{prop:block-regularization-safe} and will be shown to have negligible error outside it. 
\begin{lemma}[Smooth test transfer]
\label{lem:option-c-smooth-transfer}
Under the hypotheses of \pref{lem:option-c-block-resolvents}, every real $f \in L^1(\R)$ that is six times weakly differentiable, with all derivatives $f^{(j)} \in L^1(\R)$, satisfies
\begin{equation}
\label{eq:option-c-smooth-transfer}
\begin{aligned}
 &\left\|\E_A f\!\left(\gamma_Y^{-1/2}
       \begin{pmatrix}0&\Id_n+D^{-1}(a\Id_n-\beta A)\\
       \Id_n+(a\Id_n-\beta A)D^{-1}&0\end{pmatrix}\right)\right.\\[-2pt]
 &\qquad\left.-(\mathrm{id}_2\otimes E_{M_n})
       \left[f\!\left(\gamma_Y^{-1/2}
       \begin{pmatrix}0&\Id_n+D^{-1}(a\Id_n-\beta S)\\
       \Id_n+(a\Id_n-\beta S)D^{-1}&0\end{pmatrix}\right)\right]
       \right\|_{\mathrm{op}}\\
 &\quad\le\frac{C}{n}\left[
      \beta^2\gamma_Y^{-1}\sum_{j=0}^4\|f^{(j)}\|_{L^1(\mathbb R)}
     +\beta^4\gamma_Y^{-2}\sum_{j=0}^6\|f^{(j)}\|_{L^1(\mathbb R)}\right]\, ,
\end{aligned}
\end{equation}
where $C>0$ is a universal constant.
\end{lemma}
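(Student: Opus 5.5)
We transfer the resolvent estimates of \pref{lem:option-c-block-resolvents} to smooth test functions through a Fourier or Helffer--Sj\"ostrand representation. Write $X$ for the rescaled GOE block matrix $\gamma_Y^{-1/2}\bigl(\begin{smallmatrix}0&\Id_n+D^{-1}M\\ \Id_n+MD^{-1}&0\end{smallmatrix}\bigr)$, $X_0$ for its GUE version, and $X_S$ for the free version. Rescaling the spectral variable gives $(z-X)^{-1}=\gamma_Y^{1/2}(\gamma_Y^{1/2}z-Y)^{-1}$, with $Y$ the unscaled block matrix. Since $\Im(\gamma_Y^{1/2}z)=\gamma_Y^{1/2}b$, \pref{lem:option-c-block-resolvents} yields
\[
\opnorm{\E(z-X)^{-1}-\E(z-X_0)^{-1}}\le \frac{2\beta^2}{n\gamma_Y|b|^3},
\qquad
\opnorm{\E(z-X_0)^{-1}-(\mathrm{id}_2\otimes E_{M_n})(z-X_S)^{-1}}\le\frac{2\beta^4}{n\gamma_Y^{2}|b|^5}.
\]
These are exactly the prefactors $\beta^2\gamma_Y^{-1}$ and $\beta^4\gamma_Y^{-2}$ in \eqref{eq:option-c-smooth-transfer}. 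We split the total error by the triangle inequality into the GOE--GUE part and the GUE--free part, and treat each separately.

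For each part we use the Helffer--Sj\"ostrand formula with an almost-analytic extension of order $k$:
\[
f(X)=\frac1\pi\int_{\C}\bar\partial\tilde f(x+ib)\,(x+ib-X)^{-1}\,dx\,db,
\qquad
\tilde f(x+ib)=\chi(b)\sum_{j=0}^{k}\frac{(ib)^j}{j!}f^{(j)}(x),
\]
where $\chi$ is a smooth cutoff equal to $1$ on $[-1,1]$. The formula is linear and holds for self-adjoint elements of any $C^*$-probability space, so it applies to $X_S$ as well. Because $E_{M_n}$ and the expectation are linear and norm-contractive, they can be moved inside the integral. Then $\bar\partial\tilde f=\frac{i}{2}\chi(b)f^{(k+1)}(x)\frac{(ib)^k}{k!}+\frac i2\chi'(b)\sum_{j\le k}f^{(j)}(x)\frac{(ib)^j}{j!}$. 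We pair the $|b|^k$ factor against the resolvent singularity $|b|^{-3}$ or $|b|^{-5}$.
\begin{itemize}
\item For the GOE--GUE part, take $k=3$. Near the axis the integrand is $|f^{(4)}(x)||b|^3\cdot|b|^{-3}$, which is integrable against $db$ over $|b|\le 2$. The $\chi'$ terms live on $1\le|b|\le2$, where the resolvent difference is bounded. This produces $\sum_{j\le4}\|f^{(j)}\|_{L^1}$.
\item For the GUE--free part, take $k=5$. This produces $\sum_{j\le6}\|f^{(j)}\|_{L^1}$.
\end{itemize}
Weak differentiability together with $L^1$ derivatives suffices, since the formula only needs $f^{(k+1)}\in L^1$. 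We first prove it for Schwartz $f$ and then pass to general $f$ by mollification. In the limit, $f_\epsilon(X)\to f(X)$ in norm because the mollified functions converge uniformly: $f$ is continuous and bounded, as $f'\in L^1$.

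The main obstacle is making the Helffer--Sj\"ostrand step rigorous for the free element inside $(\mathrm{id}_2\otimes E_{M_n})$. This means justifying Fubini for the operator-valued integral, including near $b=0$, where $(z-X_S)^{-1}$ blows up. The integrand is absolutely integrable because $|b|^k$ dominates the $|b|^{-1}$ bound on each individual resolvent. So we will first establish the identity for each operator separately and only then take differences; the $|b|^{-3}$ and $|b|^{-5}$ bounds are used only at that last stage.
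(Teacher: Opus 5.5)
Your argument is correct, but it is organized differently from the paper's. The paper works with scalar measures. For each unit vector $u\in\C^{2n}$ it reads the quadratic forms $u^*(\cdot)u$ of the three expected resolvents as Cauchy--Stieltjes transforms of $u$-weighted spectral probability measures. On the free side this is $B\mapsto u^*(\mathrm{id}_2\otimes E_{M_n})[\one_B(\cdot)]u$, a probability measure because $E_{M_n}$ is positive and unital. It feeds the rescaled bounds $2\beta^2/(n\gamma_Y|b|^3)$ and $2\beta^4/(n\gamma_Y^2|b|^5)$, which agree with yours, into the cited transfer lemma \cite[Lemma 5.11]{bandeira2023matrix} with $m=3$ and $m=5$. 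It then takes the supremum over $u$, which recovers the operator norm because the compared matrices are Hermitian: $f$ is real, and $\E_A$ and $E_{M_n}$ preserve self-adjointness.

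You instead prove the transfer directly at the operator level via Helffer--Sj\"ostrand. You move $\E_A$ and $\mathrm{id}_2\otimes E_{M_n}$ inside an absolutely convergent Bochner integral, and pair the factor $|b|^k$ in $\overline\partial\tilde f$ against the $|b|^{-3}$ or $|b|^{-5}$ singularity. On $1\le|b|\le2$ you still use the $O(1/n)$ bounds, not merely $|b|^{-1}$. In effect you reprove the cited lemma in operator-valued form.

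The trade-off is this. Your route is self-contained, needs no weighted spectral measures, positivity, or Hermitian reduction, and gives explicit constants. The paper's route is shorter because it delegates to the cited lemma exactly the Fubini and regularity issues you identify as the main obstacle.

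Three harmless slips, none of which affects the bound:
\begin{itemize}
\item With $\overline\partial=\frac12(\partial_x+i\partial_b)$, the first term of $\overline\partial\tilde f$ has prefactor $\frac12$, not $\frac i2$.
\item The representation should read $f(X)=-\frac1\pi\int\overline\partial\tilde f(z)(z-X)^{-1}\,dx\,db$; you have the sign reversed.
\item The mollified $f_\epsilon$ is smooth with integrable derivatives but not Schwartz. Either prove the formula directly for such functions (Green's formula goes through verbatim), or add a truncation $f_\epsilon\,\psi(\cdot/R)$, whose derivatives converge in $L^1$.
\end{itemize}
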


\begin{proof}
For a self-adjoint operator $K$, scaling its resolvent gives
\[
    (z\Id-\gamma_Y^{-1/2}K)^{-1}
       =\sqrt{\gamma_Y}\,(\sqrt{\gamma_Y}z\Id-K)^{-1}.
\]
Therefore, the two resolvent bounds from \pref{lem:option-c-block-resolvents} become $2\beta^2/(n\gamma_Yb^3)$ and $2\beta^4/(n\gamma_Y^2b^5)$ for the block matrices rescaled by the spectral cutoff $\gamma_Y$. Now, choose any $u \in \calS_\C^{2n-1}(1)$ and consider the (average) $u$-weighted empirical eigendensities with $A$ and $A_{\mathrm{GUE}}$
\[
    u^\sT\left(\E_A[R_{1}(z)]-\E_{\mathrm{GUE}}[R_{0}(z)]\right)u = g_{\mu_A,u} - g_{\mu_{A_{\mathrm{GUE},u}}} \le_{u^\sT C u \le \opnorm{C}} \opnorm{\E_A[R_{1}(z)]-\E_{\mathrm{GUE}}[R_{0}(z)]} \le_{\text{\pref{eq:option-c-goe-gue-resolvents}}} \frac{2\beta^2}{nb^3}\,,
\]
and the $u$-weighted spectral measure with $S$ and $A_\mathrm{GUE}$ 
\[
    u^\sT \left(\E_{\mathrm{GUE}}[R_{0}(z) ]- (\id_2\ot E_{M_n})R_S(z)\right) u = g_{\mu_{A_{\mathrm{GUE},u}}} - g_{\mu_S,u} \le \opnorm{\E_{\mathrm{GUE}}[R_{0}(z) ]- (\id_2\ot E_{M_n})R_S(z)} \le_{\text{\pref{eq:option-c-gue-free-resolvents}}} \frac{2\beta^4}{nb^5}\,,
\]
where $g_{\mu_{A_{\mathrm{GUE},u}}}, g_{\mu_{A_{u}}}, g_{\mu_S,u}$ denote the Cauchy--Stieltjes transforms for the subscripted operators with measure given by the empirical eigendensity reweighed by $u$\footnote{\,To see this, apply the spectral theorem to the respective operators, and take the quadratic form. Then, use the fact that $u$ is unit-norm to construct the $u$-reweighed empirical eigendensity.}. By \cite[Lemma 5.11]{bandeira2023matrix}, a Stieltjes-transform difference bounded by $Kb^{-m}$ gives a smooth-test difference at most $C_mK\sum_{j=0}^{m+1}\|f^{(j)}\|_1$. We apply this twice---once to the GOE--GUE pair with $m=3$, and the second time to the GUE--free pair with $m=5$. We conclude with a triangle inequality, followed by taking the supremum over unit-vectors $u$, which proves \eqref{eq:option-c-smooth-transfer}.
\end{proof}

\begin{proposition}[Expectation comparison for the reciprocal extension]
\label{prop:option-c-expectation}
Fix $D\in\mathcal D_n(\mathbb R)$ with $D\succeq\Id_n$ and let $a\in[0,1]$, $\beta>0$, $\gamma_Y>0$. Suppose that the spectral gap condition in the free limit
\begin{equation}
\label{eq:option-c-free-buffer}
    (\Id_n+D^{-1}(a\Id_n-\beta S))(\Id_n+(a\Id_n-\beta S)D^{-1})
       \succeq4\gamma_Y\Id_n
\end{equation}
holds in $\mathcal M_n$. Then
\begin{equation}
\label{eq:option-c-expectation-bound}
\begin{aligned}
 &\Big\|\E_A\phi\!\left(
       (\Id_n+D^{-1}(a\Id_n-\beta A))(\Id_n+(a\Id_n-\beta A)D^{-1})\right)\\
 &\qquad- E_{M_n}\!\left[D(a\Id_n-\beta S+D)^{-2}D\right]
       \Big\|_{\mathrm{op}}
       \le\frac{C}{n}\left(\beta^2\gamma_Y^{-2}
                             +\beta^4\gamma_Y^{-3}\right).
\end{aligned}
\end{equation}
In particular, for every diagonal $T$,
\begin{equation}
\label{eq:option-c-scalar-expectation}
    \left|\E_A\Phi_{a,W,T}(A)
       -\tau_n\!\left[TD(a\Id_n-\beta S+D)^{-2}D\right]\right|
       \le\frac{C}{n}\left(\beta^2\gamma_Y^{-2}
                             +\beta^4\gamma_Y^{-3}\right)\schnorm{T},
\end{equation}
where $W:=D^{-1}$.
\end{proposition}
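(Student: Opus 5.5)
The approach is to write the regularized resolvent as a fixed universal function of the rescaled block matrix, split that function into a smooth part (handled by \pref{lem:option-c-smooth-transfer}) and a compactly supported remainder (handled by a bump-function domination plus the free buffer \eqref{eq:option-c-free-buffer}), and then read off the top-left block.

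\emph{Reduction to a $\gamma_Y$-free profile.} Write
\[
K_A:=\begin{pmatrix}0&\Id_n+D^{-1}(a\Id_n-\beta A)\\ \Id_n+(a\Id_n-\beta A)D^{-1}&0\end{pmatrix},
\]
and let $K_S$ be the analogous operator with $A$ replaced by $S$. By \pref{prop:block-regularization-representation}, $K_A^2$ is block diagonal. Its top-left block is the argument of $\phi$ in \eqref{eq:option-c-expectation-bound}, so the top-left block of $\phi(K_A^2)$ is the matrix we want.

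Set $g(x):=\gamma_Y\phi(\gamma_Y x^2)$. Substituting into \eqref{eq:rational-reciprocal-cutoff} gives
\[
g(x)=\Bigl[x^2+\Bigl(\frac{(1-x^2)_+}{1+x^2}\Bigr)^3\Bigr]^{-1},
\]
which does not depend on $\gamma_Y$, and $\phi(K_A^2)=\gamma_Y^{-1}g(\gamma_Y^{-1/2}K_A)$. The prefactor $\gamma_Y^{-1}$ is what turns the bound of \pref{lem:option-c-smooth-transfer}, namely $\beta^2\gamma_Y^{-1}+\beta^4\gamma_Y^{-2}$, into the claimed $\beta^2\gamma_Y^{-2}+\beta^4\gamma_Y^{-3}$. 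By \pref{prop:block-regularization-cutoff}, $g(x)=x^{-2}$ for $|x|\ge1$ and $g$ is bounded by $9/4$. However, $g$ is only $C^2$, so it cannot be fed directly into \pref{lem:option-c-smooth-transfer}, which needs six weak derivatives.

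\emph{Smooth envelope split.} Fix once and for all a $C^\infty$ even function $h$ with $h(x)=x^{-2}$ for $|x|\ge1$ and $h$ bounded. All derivatives of $h$ lie in $L^1(\R)$, since $x^{-2}$ and its derivatives are integrable away from the origin. Put $r:=g-h$, which is bounded and supported in $[-1,1]$.

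Fix also a smooth bump $\chi\ge0$ supported in $(-3/2,3/2)$ with $\chi\ge1$ on $[-1,1]$, and let $c_0:=\|r\|_\infty$. Then $-c_0\chi\le r\le c_0\chi$ pointwise, and functional calculus of a single self-adjoint operator gives
\[
-c_0\chi(\gamma_Y^{-1/2}K_A)\preceq r(\gamma_Y^{-1/2}K_A)\preceq c_0\chi(\gamma_Y^{-1/2}K_A).
\]
Taking expectations preserves this order, so
\[
\opnorm{\E r(\gamma_Y^{-1/2}K_A)}\le c_0\,\opnorm{\E\chi(\gamma_Y^{-1/2}K_A)}.
\]

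On the free side, the buffer \eqref{eq:option-c-free-buffer} together with the block-diagonal form of $K_S^2$ puts the spectrum of $\gamma_Y^{-1/2}K_S$ inside $\{|x|\ge2\}$. I expect the second diagonal block $(\Id_n+(a\Id_n-\beta S)D^{-1})(\Id_n+D^{-1}(a\Id_n-\beta S))$ to need a short separate check; it should follow because $PP^*$ and $P^*P$ have the same nonzero spectrum and $P$ is invertible. Consequently $\chi(\gamma_Y^{-1/2}K_S)=0$ and $r(\gamma_Y^{-1/2}K_S)=0$, while $h(\gamma_Y^{-1/2}K_S)=\gamma_Y K_S^{-2}$.

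\emph{Combining.} Apply \pref{lem:option-c-smooth-transfer} twice with universal test functions. Applied to $h$, it gives the comparison of $\E h(\gamma_Y^{-1/2}K_A)$ with its free counterpart. Applied to $\chi$, it gives $\opnorm{\E\chi(\gamma_Y^{-1/2}K_A)}\le\frac{C}{n}(\beta^2\gamma_Y^{-1}+\beta^4\gamma_Y^{-2})$, because the free counterpart of $\chi$ vanishes. Summing and multiplying by $\gamma_Y^{-1}$ yields
\[
\opnorm{\E\phi(K_A^2)-(\mathrm{id}_2\otimes E_{M_n})[K_S^{-2}]}\le\frac{C}{n}\bigl(\beta^2\gamma_Y^{-2}+\beta^4\gamma_Y^{-3}\bigr).
\]
Compressing to the top-left block does not increase operator norm. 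On that block, the algebraic identity of \pref{prop:block-regularization-safe}, valid for $S$, gives $D(a\Id_n-\beta S+D)^{-2}D$, which proves \eqref{eq:option-c-expectation-bound}. For \eqref{eq:option-c-scalar-expectation}, pair with $T$: since $\tr_n(TX)\le\schnorm{T}\schnorm{X}\le\schnorm{T}\opnorm{X}$, the scalar bound follows.

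\emph{Main obstacle.} The delicate point is the non-smooth $C^2$ region of $g$. The domination by $\chi$ has to be genuinely an operator inequality, so that the expectation of the remainder is controlled in operator norm rather than only in trace. This also requires checking that both diagonal blocks of the free block matrix are bounded away from $0$ by $4\gamma_Y$, since the lemma sees the full $2n$-dimensional block spectrum and not just the top-left block.
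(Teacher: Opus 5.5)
Your argument is correct and is essentially the paper's. The paper sandwiches $\gamma_Y\phi(\gamma_Y x^2)$ between two smooth envelopes $f_\pm$ that equal $x^{-2}$ for $|x|\ge 2$, applies \pref{lem:option-c-smooth-transfer} to each, and concludes via functional calculus, positivity of expectation, and the free buffer, including the same check that both diagonal blocks of the squared free block matrix are $\succeq 4\gamma_Y\Id_n$. Your split into a smooth main term $h$ plus a remainder dominated by $c_0\chi$ is the same sandwich, with envelopes $h\pm c_0\chi$.
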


\begin{proof} The main goal of the proof is to ``envelope'' (or sandwich) the function $\phi$ inside its range by two functions that satisfy the differentiability requirements of \pref{lem:option-c-smooth-transfer}. We then show that both the enveloping functions will lead to the the same free limit using the spectral-gap condition in \eqref{eq:option-c-free-buffer}. After that, it is a simple matter of invoking \pref{lem:option-c-smooth-transfer} on the sandwiching functions to ``lift'' them to sandwich the operator-valued function.

\ppart{Smooth envelopes}
To apply \pref{lem:option-c-smooth-transfer}, which requires derivatives through order six, we sandwich the reciprocal extension $\sqrt{\gamma_Y}x$ by smooth ``envelopes''. Basic algebra shows that the rescaled function
\[
    x\longmapsto\gamma_Y\phi\left((\sqrt{\gamma_Y}x)^2\right)
      =\left(x^2+
        \left(\frac{(1-x^2)_+}{1+x^2}\right)^3\right)^{-1}
\]
is even and independent of $\gamma_Y$. By \pref{prop:block-regularization-cutoff}, the image of $\gamma_T\phi(\cdot)$ lies in $(0,9/4]$ and equals $x^{-2}$ for $|x|\ge1$.
We set $f_{-}(x)$ to be a smooth function that is equal to $0$ on $[-1,1]$ and to $x^{-2}$ for $|x|\ge2$, and is the lower sandwich for $\gamma_Y\phi(\cdot)$. Similarly, $f_{+}(x)$ is a smooth function that is equal to $9/4$ on $[-1,1]$ and to $x^{-2}$ for $|x|\ge2$, and is the upper sandwich for $\gamma_Y\phi(\cdot)$. On $1<|x|<2$, interpolate smoothly between the respective inner formula and $x^{-2}$ using a weight in $[0,1]$ that is constant near the endpoints\footnote{\,An explicit such choice comes from taking any fixed small $0<\delta<1/2$, and a smooth and even function $g: \R \to [0,1]$ that is $0$ whenever $|x| \le 1+\delta$ and $1$ whenever $|x| \ge 1+2\delta$. Then, set $f_{-}(x) = g(x)x^{-2}$ for $x\neq 0$ (and $0$ otherwise) and $f_{+}(x) = f_{-}(x) + (1-g(x))9/4$.}. The function $f_{-}(x)$ is at most $x^{-2}$ there, and the function $f_{+}(x)$ is at least $x^{-2}$. These two functions, therefore, sandwich $\gamma_Y\phi(\gamma_Yx^2)$ on $\mathbb R$ while being even and smooth.
Since $(-2,2)$ is a closed set on which both functions are bounded, and outside that they are equal to $x^{-2}$, both these functions are integrable. Furthermore, all their derivatives derivatives exist, are $0$ inside $[-1,1]$, are some bounded smooth function in $(-2,-1] \cup [1,2)$, and are equal to $ f^{(k)} = d/dx^{k} x^{-2} = \prod_{i=1}^k (-i+1) x^{-2-k}$ for $\R \setminus (-2,2)$, and so they are integrable. Consequently, $\sum_{j=0}^6 \norm{f^{(j)}}_1 \le C$ for some $C>0$. Lastly, note that the rescalings $\gamma_Y^{-1}f_{\pm}(x/\sqrt{\gamma_Y})$ sandwich $\phi(x^2)$ and equal $x^{-2}$ for $|x| \geq 2 \sqrt{\gamma_Y}$.

\ppart{The common free limit} \eqref{eq:option-c-free-buffer} implies that $a\Id_n-\beta S+D$ is invertible, since, conjugating by $D$ gives
\[
    (a\Id_n-\beta S+D)^2\succeq4\gamma_YD^2\succeq_{D \succ I_n}4\gamma_Y\Id_n\,.
\]
Note that \eqref{eq:option-c-free-buffer} also implies that $\Id_n+D^{-1}(a\Id_n-\beta S)$ is invertible since $D\succeq \Id_n$, and its two squared singular-value operators  are unitarily conjugate\footnote{\,This simply means that there exists a unitary $U$, such that, for any invertible operator $Y$ in the algebra $YY^* = U(Y^*Y)U^*$.}.
This immediately implies that both diagonal blocks in the square of the free block matrix consequently
have spectrum in $[4\gamma_Y,\infty)$, since
\[
    (a\Id_n-\beta S +D)D^{-2}(a\Id_n-\beta S +D) = U(D^{-1}(a\Id_n-\beta S + D)^{2}D^{-1})U^* \succeq U(4\gamma_Y \Id_n)U^* \ge 4\gamma_Y\Id_n\,.
\]
This means that $\mathsf{Spec}\left(\text{free block}\right) \notin (-2\sqrt{\gamma_Y},2\sqrt{\gamma_Y})$.
Both $\gamma^{-1}_Yf_{\pm}(x/\sqrt{\gamma_Y})$ agree outside the spectrum with $x^{-2}$ (which is $\phi(x^2)$ in this range) -
this means their (non-commutative) free expectations are identical.

\ppart{Operator sandwich}
We now invoke \pref{lem:option-c-smooth-transfer} on $f_{\pm}(x/\sqrt{\gamma_Y})$ and
multiply by $\gamma_Y^{-1}$, which gives
\[
    \opnorm{\E_A[f_{\pm}(\gamma_Y^{-1/2}\calB(A))]-\id_2\ot E_{M_n}[f_{\pm}(\gamma^{-1/2}\calB(S))]} \le \frac{C}{n}\left(\beta^2\gamma^{-1}_Y + \beta^4\gamma^{-2}_Y\right)\,,
\]
where $\calB(\cdot)$ is the block matrix taken as input to $f$ in \pref{lem:option-c-smooth-transfer}. Multiplying by $\gamma_Y^{-1}$, the finite expected value of each rescaled envelope $\gamma^{-1}_Yf(\gamma^{-1/2}_Yx)$ is within $Cn^{-1}(\beta^2\gamma_Y^{-2}+\beta^4\gamma_Y^{-3})$ in operator norm of that common free conditional expectation.
Spectral functional calculus at each fixed block matrix, followed by
positivity of expectation, places the expected reciprocal extension
between these two matrix bounds. It therefore satisfies the same
operator-norm estimate. This uses pointwise ordering of scalar functions,
not operator monotonicity with respect to different matrix arguments.
Taking the upper-left block and applying the block identity and reciprocal
agreement from \pref{prop:block-regularization-representation} and
\pref{prop:block-regularization-safe} at the free endpoint proves
\eqref{eq:option-c-expectation-bound}.
The definition of $\Phi_{a,W,T}$, trace duality, and
$\Lpnorm[1]{T}\le\schnorm{T}$ then give
\eqref{eq:option-c-scalar-expectation}, using that $E_{M_n}$ is
trace-preserving. We defer diagonal norm duality and the restriction of
the realized parameters to the section-final corollary.
\end{proof}

\subsection{Uniform estimate for the diagonal}\label{sec:uniform-disagonal-estimate} We combine the chaining estimate with the expectation estimate established by the free interpolation between the expected regularized resolvent and the ideal free object, as in \cite[Corollary B.35]{davies2026potential}, restricting to the safe set in the final step\footnote{\,As stated before, the free interpolation in \S \ref{sec:option-c-free-comparison} is a simplification of the original interpolation argument used in \cite[\S B]{davies2026potential}. The key observation that allows for this is the representation of the input to the regularized resolvent $\phi(\cdot)$ having a block representation with entries that are affine functions of $A$ (\pref{prop:block-regularization-affine}). This allows the results of \cite[\S 2]{bandeira2023matrix} to directly control the error between the ideal free object and (average) regularized resolvent.}. This allows us to control the deviation between the diagonal of the squared resolvent from its ideal free-probabilistic limit with high probability over $A$.

\begin{corollary}[Diagonal of the squared resolvent on the safe set]\label{cor:diagonal-Q2-controlled-in-l2}
Let $A \sim \mathsf{GOE}(n)$ and fix $0<\beta<1$, $0<\gamma_Y\le\frac{(1-\beta)^4}{4(1+2\beta^2)^2}$. For all $n \ge n_0(\beta,\gamma_Y)$, uniformly over all
\begin{equation}
\label{eq:diagonal-safe-set-conditions}
    \left\{D\in\mathcal D_n([1,\infty)) \mid D^{-1}(a\Id_n-\beta A+D)^2D^{-1}\succeq\gamma_Y\Id_n\right\}\, ,
\end{equation}
the following holds
\begin{equation}
\label{eq:diagonal-DQ2D-controlled-in-l2}
    \P_A\left\{\schnorm{
      E_{\mathcal D_n}\!\left[
        D(\beta^2\tr_n(D^{-1})\Id_n-\beta A+D)^{-2}D\right]
      -\frac{\Id_n}{1-\beta^2\tr_n(D^{-2})}
    }
    \le\frac{C\beta}{\gamma_Y^2\sqrt n}\right\} \ge 1-4e^{-n/4},
\end{equation}
for a universal constant $C>0$. In particular, on the same event, \eqref{eq:diagonal-DQ2D-controlled-in-l2} holds uniformly for $D=D(m)$ with $m\in\calS_A(c)$ from \pref{def:good-points}, for any fixed $c>0$ such that
\begin{equation}
\label{eq:option-c-finite-calibration}
    \gamma_Y\le\left(1+\frac{1+3\beta}{c}\right)^{-2}.
\end{equation}
\end{corollary}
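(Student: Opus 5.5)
We combine the chaining bound of \pref{thm:chaining-square-no-sqrtlog} with the expectation comparison of \pref{prop:option-c-expectation} by a triangle inequality, mirroring \cite[Corollary B.35]{davies2026potential}. We specialize the parameters to $a:=\beta^2\tr_n(D^{-1})\in[0,1]$ and $W:=D^{-1}\in\mathcal D_n([0,1])$, and choose $C_{\mathrm{op}}=3$. Then the high-probability event is the intersection of $\{\opnorm{A}\le 3\}$, which has probability at least $1-2e^{-n/4}$ by \pref{prop:goe-norm-tail}, with the chaining event taken at $p=2e^{-n/4}$. On $\{\opnorm{A}\le3\}$ we have $\overline A=A$, so $Z_{a,W,T}=\Phi_{a,W,T}(A)-\E_A\Phi_{a,W,T}(\overline A)$. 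We also need to replace $\E_A\Phi(\overline A)$ by $\E_A\Phi(A)$. The two functionals are bounded by $9/(4\gamma_Y)$ and differ only on an event of probability $2e^{-n/4}$, so this costs at most $O(\gamma_Y^{-1}e^{-n/4})$, which is absorbed for $n\ge n_0$. With $\sqrt{\log(2/p)}\asymp\sqrt n$, the chaining tail term is $O(\beta\gamma_Y^{-3/2}/\sqrt n)$. Hence on this event
\[
\sup_{a,W,\schnorm{T}=1}\bigl|\Phi_{a,W,T}(A)-\E_A\Phi_{a,W,T}(A)\bigr|\le \frac{C\beta}{\gamma_Y^2\sqrt n},
\]
using $\gamma_Y\le1$ to absorb the $\gamma_Y^{-3/2}$ terms into $\gamma_Y^{-2}$.

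Next we identify the free limit. \pref{prop:option-c-expectation} requires the free spectral gap \eqref{eq:option-c-free-buffer}. For the specific choice $a=\beta^2\tr_n(D^{-1})$, the subordination/Schwinger--Dyson equation for $S$ with diagonal $D$ gives $E_{M_n}[(a-\beta S+D)^{-1}]=D^{-1}$ (the TAP fixed point). Differentiating, or as in \cite[\S B]{davies2026potential}, it gives $E_{\mathcal D_n}E_{M_n}[D(a-\beta S+D)^{-2}D]=(1-\beta^2\tr_n(D^{-2}))^{-1}\Id_n$. I will take this identity from the prior work. To check the gap, note that $a+D-\beta S\succeq (a+D)-2\beta$ in the free algebra. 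Conjugating by $D^{-1}$ and using $D\succeq\Id$ yields $D^{-1}(a-\beta S+D)^2D^{-1}\succeq(1-2\beta)^2$ when $\beta<1/2$. This fails for $\beta\ge1/2$, so the argument must instead use the explicit free Hessian lower bound: the norm of $S$ relative to $D$ is controlled via the subordination resolvent, giving a gap of order $(1-\beta)^2/(1+2\beta^2)$. The hypothesis $\gamma_Y\le(1-\beta)^4/(4(1+2\beta^2)^2)$ is calibrated so that the squared gap is at least $4\gamma_Y$. \textbf{This verification of \eqref{eq:option-c-free-buffer} for all $D\succeq\Id$ at $\beta$ up to $1$ is the main obstacle.} I would first try the scalar-then-free comparison: bound $\Spec(a-\beta S+D)$ via Brown measure/subordination, $\omega(z)$ with $D$ entries in $[1,\infty)$, and show that the smallest spectral point of $D^{-1/2}(a+D-\beta S)D^{-1/2}$ exceeds $(1-\beta)^2/(1+2\beta^2)$.

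Given the gap, \eqref{eq:option-c-scalar-expectation} yields $|\E_A\Phi_{a,W,T}-\tau_n[T(1-\beta^2\tr_n D^{-2})^{-1}]|\le C n^{-1}(\beta^2\gamma_Y^{-2}+\beta^4\gamma_Y^{-3})\schnorm{T}$, which is $o(\beta\gamma_Y^{-2}n^{-1/2})$ for $n\ge n_0$. By \pref{prop:block-regularization-safe}, on the set \eqref{eq:diagonal-safe-set-conditions} we have $\Phi_{a,W,T}(A)=\tr_n(T\,E_{\mathcal D_n}[D(a-\beta A+D)^{-2}D])$. Taking the supremum over diagonal $T$ with $\schnorm{T}=1$ converts the scalar bound into the normalized Schatten-$2$ bound \eqref{eq:diagonal-DQ2D-controlled-in-l2} by duality, and this holds uniformly since both steps were uniform. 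The total failure probability is at most $4e^{-n/4}$.

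Finally, for $m\in\calS_A(c)$ with $D=D(m)$, we take $a=\beta^2\tr_n(D^{-1})$. Then $\hQ(m)^{-1}$ differs from $a-\beta A+D$ by the $\beta^2(\tr_n(D^{-1})-\|m\|^2/n)$ shift and the rank-one term. The shift vanishes identically since $1-m_i^2=D_{ii}^{-1}$, so $1-\|m\|_2^2/n=\tr_n(D^{-1})$. The rank-one term is handled separately, or it is absent in this statement. The condition $cD\preceq\nabla^2\FT$ together with the upper bound $\nabla^2\FT\preceq(1+3\beta+\beta^2)D$ from \pref{lem:tap-convexity} gives $D^{-1}(a-\beta A+D)^2D^{-1}\succeq c'^2$. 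Relating $c'$ to $c$ up to the rank-one correction yields exactly the calibration \eqref{eq:option-c-finite-calibration}.
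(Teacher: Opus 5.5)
Your skeleton matches the paper's: chaining with $C_{\mathrm{op}}=3$ at $p=2e^{-n/4}$, the $O(\gamma_Y^{-1}e^{-n/4})$ projection correction, \pref{prop:option-c-expectation}, agreement via \pref{prop:block-regularization-safe}, and diagonal norm duality. There is a genuine gap, though. The step you flag as the main obstacle, the free buffer \eqref{eq:option-c-free-buffer} for $1/2\le\beta<1$, is what makes the corollary true in this range, and you leave it unproven. The free target you borrow depends on the same fact, since evaluating the subordination relation at $z=a$ requires $a$ to lie outside the free spectrum.

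The missing idea is to look for an absolute gap for $X_S:=a\Id_n-\beta S+D$, coming from the TAP value $a=\beta^2\tr_n(D^{-1})$, rather than a gap relative to $D$. Consider the real inverse-subordination map $u\mapsto u+\beta^2\tr_n[(D+u\Id_n)^{-1}]$. For $u>\beta-1$ its derivative is at least $1-\beta^2/(1+u)^2>0$, and it maps this half-line into the right exterior of $\Spec(\beta S-D)$. Letting $u\downarrow\beta-1$ and using $D\succeq\Id_n$ gives
\[
a-\sup\Spec(\beta S-D)\ \ge\ (1-\beta)\bigl[1-\beta^2\tr_n\bigl(D^{-1}(D-(1-\beta)\Id_n)^{-1}\bigr)\bigr]\ \ge\ (1-\beta)^2 .
\]
Then $\opnorm{X_S^{-1}D}=\opnorm{\Id_n-X_S^{-1}(a\Id_n-\beta S)}\le 1+(a+2\beta)/(1-\beta)^2\le(1+2\beta^2)/(1-\beta)^2$. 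So the product in \eqref{eq:option-c-free-buffer} is $\succeq(1-\beta)^4(1+2\beta^2)^{-2}\Id_n\succeq 4\gamma_Y\Id_n$, which is exactly where the hypothesis on $\gamma_Y$ comes from. The same map shows $F(a)=0$ and $F'(a)=(1-\beta^2\tr_n(D^{-2}))^{-1}$, which yields the free target. The paper isolates all of this in \pref{lem:option-c-tap-free-endpoint}. Your proposed intermediate, a lower bound on $D^{-1/2}X_SD^{-1/2}$, is not the quantity that controls the buffer. What is needed is $\opnorm{X_S^{-1}D}$, which is controlled by $\lambda_{\min}(X_S)$ together with boundedness of $a\Id_n-\beta S$.

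Your restriction to $\calS_A(c)$ fails for a similar reason. Set $X:=a\Id_n-\beta A+D$. Two-sided relative bounds $cD\preceq X\preceq C'D$ do not by themselves give a dimension-free lower bound on $D^{-1}X^2D^{-1}=(D^{-1}X)(D^{-1}X)^{\sT}$, because $D$ is unbounded. From $X\succeq cD$ alone you only get $\succeq c^2D^{-1}$, and adding the upper bound from \pref{lem:tap-convexity} does not repair this. What works is the bounded-perturbation structure $X=D+M$ with $M=a\Id_n-\beta A$ and $\opnorm{M}\le 1+3\beta$ on $\{\opnorm{A}\le 3\}$:
\begin{itemize}
\item The rank-one term is PSD, so it helps rather than needing separate treatment: $X=\nabla^2\FT(m)+\frac{2\beta^2}{n}mm^{\sT}\succ cD\succeq c\Id_n$.
\item Then $\opnorm{(D^{-1}X)^{-1}}=\opnorm{\Id_n-X^{-1}M}\le 1+(1+3\beta)/c$.
\end{itemize}
This is exactly the calibration \eqref{eq:option-c-finite-calibration}.
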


\begin{proof}
We first control the deviation of the regularized resolvent. Set $C_{\mathrm{op}}=3$ and $\overline A=\Pi_3(A)$ in \pref{thm:chaining-square-no-sqrtlog}, so that $C_1=1+3\beta\le4$ and invoke \pref{prop:goe-norm-tail}.  Then, invoke \eqref{eq:chaining-highprob} with $p=2e^{-n/4}$. Since $\gamma_Y\le1$ and $\sqrt{\log(2/p)}=\sqrt n/2$, 
\[
    \P_{A}\left\{\sup_{(a,W,T)\in\mathcal J}|Z_{a,W,T}| \le\frac{C\beta}{\gamma_Y^2\sqrt n}\right\} \ge 1-2e^{-n/4}\,.
\]

Set a deterministic $W=D\succeq\Id_n$ and $a=\beta^2\tr_n(D^{-1})\in[0,1]$. \pref{lem:option-c-tap-free-endpoint} provides a spectral cut-off via the TAP subordination so that \pref{eq:option-c-free-buffer} holds, yielding \eqref{eq:option-c-scalar-expectation} as the free target. Consequently, for every diagonal $T$,
\[
    \left|\E_A\Phi_{a,W,T}(A)
      -\frac{\tr_n(T)}{1-\beta^2\tr_n(D^{-2})}\right|
    \le\frac{C}{n}
       \left(\beta^2\gamma_Y^{-2}+\beta^4\gamma_Y^{-3}\right)\schnorm{T}.
\]
Also, \pref{prop:block-regularization-cutoff} and \pref{prop:goe-norm-tail} give
\begin{equation}
\label{eq:option-c-projection-mean}
    \left|\E_A\Phi_{a,W,T}(\overline A)-\E_A\Phi_{a,W,T}(A)\right|
    \le\frac{9}{\gamma_Y}e^{-n/4}\schnorm{T},
\end{equation}
since the two observables agree when $\opnorm{A}\le3$ and each has absolute value at most $9\schnorm{T}/(4\gamma_Y)$. %
For fixed $\beta,\gamma_Y$, these two expectation errors are $o(\beta\gamma_Y^{-2}n^{-1/2})$ and so, on the chaining event, the triangle inequality yields
\[
    \left|\Phi_{a,W,T}(\overline A)
       -\frac{\tr_n(T)}{1-\beta^2\tr_n(D^{-2})}\right|
    \le\frac{C\beta}{\gamma_Y^2\sqrt n}\,,
\]
uniformly over valid $D$ and all diagonal $T$ with $\schnorm{T}=1$, for all $n \ge n_0(\beta,\gamma_Y)$. Intersecting the chaining event with $\{\opnorm{A}\le3\}$ gives probability at least $1-4e^{-n/4}$. On this intersection $\overline A=A$, and \pref{prop:block-regularization-safe} makes the reciprocal extension equal to $D(a\Id_n-\beta A+D)^{-2}D$ for every realized $D$ satisfying \eqref{eq:diagonal-safe-set-conditions}. Taking the supremum over $T$ and applying diagonal norm duality (see \cite[Proof of Lemma 4.25]{jekel2024pha}) proves
\eqref{eq:diagonal-DQ2D-controlled-in-l2}.

Finally, to restrict to the good set, choose $m\in\calS_A(c)$ and set $D=D(m)$. Then, $a=\beta^2\tr_n(D^{-1})=\beta^2(1-\norm{m}_2^2/n)$. Since $\opnorm{a\Id_n-\beta A}\le1+3\beta$ on the same event, \eqref{e:FTAP} and \pref{def:good-points} give
\begin{align*}
    a\Id_n-\beta A+D
    &=\nabla_m^2\calF_{\TAP}(m,\cdot)+\frac{2\beta^2}{n}mm^{\mathsf T}
      \succ cD\succeq c\Id_n,\\
    \opnorm{(\Id_n+D^{-1}(a\Id_n-\beta A))^{-1}}
    &=\opnorm{\Id_n-(a\Id_n-\beta A+D)^{-1}(a\Id_n-\beta A)}
      \le1+\frac{1+3\beta}{c}.
\end{align*}
Therefore, \eqref{eq:option-c-finite-calibration} implies the product condition \eqref{eq:diagonal-safe-set-conditions} uniformly over $m\in\calS_A(c)$.
\end{proof}

\section{Properties of the algorithmic stochastic process}
\label{sec:alg-properties}
To prove the desiderata \ref{d:drift-error}, \ref{d:Q-Lip}, and \ref{d:JE-lip}, we need the analog of \cite[\S6]{davies2026potential}, but restricted to the good set $\calS_A(c)$ defined in \pref{def:good-points}, as opposed to the uniform bounds over $(-1,1)^n$ that can be obtained when $\beta < 1/2$.
Recall the coefficient functions
\begin{equation}
\label{eq:DahQ}
\begin{aligned}
    D(m)&:=(\Id_n-\diag(m)^2)^{-1},
    &a(m)&:=\beta^2\tr_n(D(m)^{-1}),\\
    \bar Q(m)&:=(a(m)\Id_n-\beta A+D(m))^{-1},
    &\hat Q(m)&:=\left(a(m)\Id_n-\beta A+D(m)
                       -\frac{2\beta^2}{n}mm^{\sT}\right)^{-1}.
\end{aligned}
\end{equation}
Recall also the It\^o correction to ASL-TAP
\begin{equation}
\label{eq:alg-f-definition}
 f(m):=\left(E_{\mathcal D_n}[D(m)\hat Q(m)^2D(m)]
       -\beta^2\tr_n(\hat Q(m)^2)\Id_n
       -\frac{2\beta^2}{n}\hat Q(m)^2\right)m.
\end{equation}
Nearly all of the argument structure in \cite[\S6]{davies2026potential} is reused, so we deliberately maintain as much of the same document structure as possible: establishing basic resolvent bounds, controlling the rank-one
correction and diagonal error, and then deducing regularity of the drift
and properties of the trajectories.

The only new difficulty is that the set $\calS_A(c)$ that we work in is no longer convex.
Thus we can no longer bound Lipschitz constants by integrating bounds on their derivatives along straight line segments.
The main workaround is to use the resolvent identity to directly calculate the differences between the endpoints, and \pref{lem:resolvent-difference} collects those calculations, replacing \cite[Lemma B.4]{davies2026potential}.
To obtain the transportation inequality and therefore sub-Gaussian concentration of Lipschitz path functionals as in \cite[\S6.6]{davies2026potential}, we use the Kirszbraun extension theorem to fill in values for the SDE coefficients outside of $\calS_A(c)$.
Finally, the main new argument is in~\pref{lem:omega-safe-Lipschitz}, where we provide a Lipschitz bound and extension of the JE weight integrand, this being the only place where the resolvent identity doesn't suffice to establish the needed Lipschitz bounds.

As before, vector norms are Euclidean and $\schnorm{\cdot}$ is the normalized Schatten $2$-norm, so that
$\lVert\cdot\rVert_F=\sqrt n\schnorm{\cdot}$, and
all Lipschitz constants for matrix-valued maps in this section use
$\schnorm{\cdot}$ unless another norm is specified.

\subsection{High-probability events}
\label{sec:alg-events}
Throughout this section, we work on the event $\{\opnorm{A}\le3\}$, which we recall for GOE $A$ holds with probability at least $1-2e^{-n/4}$ by \pref{prop:goe-norm-tail}.

Fix $0<\beta<1$ and $c>0$, and choose
\begin{equation}
\label{eq:alg-cutoff-choice}
 \gamma_Y:=\min\left\{
       \frac{(1-\beta)^4}{4(1+2\beta^2)^2},\,
       \left(1+\frac{1+3\beta}{c}\right)^{-2}\right\}.
\end{equation}
Let 
\begin{equation}
\label{eq:alg-def-calSD}
    \calS_D
    :=\left\{
       D\in\mathcal D_n([1,\infty)):
       \beta^2\tr_n(D^{-1})\Id_n-\beta A+D\succeq c\Id_n
    \right\}.
\end{equation}
For all sufficiently large $n$, depending only on $\beta,c$, let
$\Omega_{\beta,c}$ be the conclusion of
\pref{cor:diagonal-Q2-controlled-in-l2}, so that there is a universal constant $C$ so that
\begin{equation}
\label{eq:alg-def-Omega}
    \Omega_{\beta,c} := 
    \left\{\forall D \in \calS_D \mathrel{.}\schnorm{
      E_{\mathcal D_n}\!\left[
        D(\beta^2\tr_n(D^{-1})\Id_n-\beta A+D)^{-2}D\right]
      -\frac{\Id_n}{1-\beta^2\tr_n(D^{-2})}
    }
    \le\frac{C\beta}{\gamma_Y^2\sqrt n}
    \right\}.
\end{equation}
Then $\Pr_A(\Omega_{\beta,c} \cap \{\opnorm{A}\le3\})\ge1-4e^{-n/4}$ by the proof of \pref{cor:diagonal-Q2-controlled-in-l2}.
The choice \eqref{eq:alg-cutoff-choice} ensures that this diagonal bound
also applies to every $m\in(-1,1)^n$ satisfying
$a(m)\Id_n-\beta A+D(m)\succeq c\,\Id_n$.
Constants denoted $C_{\beta,c}$ may increase between occurrences
and are independent of $n$ and of $A$.

\subsection{Basic resolvent and Lipschitzness bounds}
\label{sec:alg-basic-bounds-lipschitz}

The resolvent operator norm and Lipschitzness bounds exactly echo those of \cite[\S 6.2]{davies2026potential}, with the exception of an additional pair of bounds established for $\bar{Q}$ to assist in the Lipschitz extension of the JE weights.

\begin{lemma}[Basic resolvent bounds on the good set]
\label{lem:alg-resolvent-bounds}
Let $D,a,\bar Q,\hat Q$ be as in \eqref{eq:DahQ}.
Assume $\opnorm{A}\le3$.
Then for every $m\in\calS_A(c)$, both $\bar{Q}$ and $\hat{Q}$ are positive definite and
satisfy
\begin{enumerate}[label=(\alph*),ref=\thetheorem(\alph*),itemsep=0.1em]
\item \label{lem:alg-resolvent-bounds-Q}
\[ \opnorm{\bar Q(m)},\ \opnorm{\hat Q(m)}\le c^{-1}. \]
\item \label{lem:alg-resolvent-bounds-DQ}
\begin{equation}
\label{eq:alg-weighted-Q-bounds}
 \opnorm{D(m)\bar Q(m)},\ \opnorm{D(m)\hat Q(m)}
       \le1+\frac{3\beta+3\beta^2}{c}.
\end{equation}
\item \label{lem:alg-resolvent-bounds-DQ2}
\[ \opnorm{D(m)\bar Q(m)^2},\ \opnorm{D(m)\hat Q(m)^2}
       \le c^{-1}\bigl(1+(3\beta+3\beta^2)/c\bigr). \]
\item \label{lem:alg-resolvent-bounds-DQ2D}
\[ \opnorm{D(m)\bar Q(m)^2D(m)},\ \opnorm{D(m)\hat Q(m)^2D(m)}
       \le\bigl(1+(3\beta+3\beta^2)/c\bigr)^2. \]
\end{enumerate}
\end{lemma}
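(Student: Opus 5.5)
The plan is to derive everything from the defining inequality of the safe set, $\nabla^2\calF_{\TAP}(m)\succ c\,D(m)$, together with $D(m)\succeq\Id_n$ and $\opnorm{A}\le3$.

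\textbf{Part (a).} By \eqref{e:Hess-FTAP}, $\hat Q(m)^{-1}=\nabla^2\calF_{\TAP}(m)\succ cD(m)\succeq c\Id_n$. Hence $\hat Q(m)$ is positive definite and $\opnorm{\hat Q(m)}\le c^{-1}$. Since $\bar Q(m)^{-1}=\hat Q(m)^{-1}+\frac{2\beta^2}{n}mm^{\sT}\succeq\hat Q(m)^{-1}\succ cD(m)$, the same argument gives $\bar Q(m)\succ0$ and $\opnorm{\bar Q(m)}\le c^{-1}$. Inversion is operator monotone, so both resolvents are also dominated by $c^{-1}D(m)^{-1}$.

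\textbf{Part (b).} For $\hat Q$, write $D=\hat Q^{-1}-E$ with $E:=a\Id_n-\beta A-\frac{2\beta^2}{n}mm^{\sT}$. Since $a\le\beta^2$ and $\frac{2\beta^2}{n}\ve{m}^2\le2\beta^2$, we have $\opnorm{E}\le\beta^2+3\beta+2\beta^2=3\beta+3\beta^2$. Then
\[
D\hat Q=\Id_n-E\hat Q,\qquad\text{so}\qquad \opnorm{D\hat Q}\le1+\opnorm{E}\,c^{-1}\le1+\frac{3\beta+3\beta^2}{c}.
\]
For $\bar Q$ the argument is identical with $E=a\Id_n-\beta A$, which has $\opnorm{E}\le\beta^2+3\beta$, a stronger bound. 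These are exactly the constants in the statement.

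\textbf{Parts (c) and (d).} These follow by submultiplicativity. For (c), $\opnorm{D\hat Q^2}\le\opnorm{D\hat Q}\,\opnorm{\hat Q}$. For (d), $D\hat Q^2D=(D\hat Q)(D\hat Q)^{\sT}$ because $\hat Q$ and $D$ are symmetric, so $\opnorm{D\hat Q^2D}=\opnorm{D\hat Q}^2$. The same computations apply to $\bar Q$.

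No step is a real obstacle. The only care needed is in the rank-one term when bounding $\opnorm{E}$ (using $\ve{m}^2\le n$), and in noting that the safe-set inequality gives strict positive definiteness of both $\hat Q^{-1}$ and $\bar Q^{-1}$, so both resolvents exist.
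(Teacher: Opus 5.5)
Your proposal is correct and follows essentially the paper's route. The gap $\hat Q(m)^{-1}\succ cD(m)\succeq c\Id_n$ (with $\bar Q(m)^{-1}\succeq\hat Q(m)^{-1}$) comes straight from the safe-set definition, the identity $DR=\Id_n-MR$ with $\opnorm{M}\le 3\beta+3\beta^2$ gives the weighted bounds, and submultiplicativity handles (c)--(d); the paper delegates these last steps to its earlier Lemma B.1, while you carry them out by hand. The only quibble is your side remark: inversion is order-reversing rather than operator monotone, although the conclusion $\hat Q(m)\preceq c^{-1}D(m)^{-1}$ you draw is correct (and not needed for the lemma).
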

\begin{proof}
The proof is the same as \cite[Lemma 6.1]{davies2026potential}, except
that the gap now follows from
\[
 \hat Q(m)^{-1}\succ cD(m)\succeq c\Id_n,\qquad
 \bar Q(m)^{-1}=\hat Q(m)^{-1}+\frac{2\beta^2}{n}mm^{\sT}
                       \succ cD(m).
\]
In \cite[Lemma B.1]{davies2026potential}, use $W=D(m)^{-1}$ and,
respectively, $M=a(m)\Id_n-\beta A$ or
$M=a(m)\Id_n-\beta A-2\beta^2mm^{\sT}/n$.
Both have $\opnorm{M}\le3\beta+3\beta^2$.
The identities $DR=\Id_n-MR$ and $RD=\Id_n-RM$ give the weighted bounds.
\end{proof}

\begin{lemma}[Basic resolvent Lipschitzness on the good set]
\label{lem:alg-resolvent-lipschitz}
In the setting of \pref{lem:alg-resolvent-bounds}, the following
Lipschitz bounds hold for the following functions of $m$ on $\calS_A(c)$, with Euclidean distance on the
input $m \in \calS_A(c)$ and normalized Schatten $2$-norm $\schnorm{\cdot}$ on the output:
\allowdisplaybreaks
\begin{enumerate}[label=(\alph*),ref=\thetheorem(\alph*),itemsep=0.1em]
\item \label{lem:alg-resolvent-lipschitz-Q}
\begin{equation}
\label{eq:alg-resolvent-endpoint-Lipschitz}
 \lipnorm{\hat Q},\ \lipnorm{\bar Q}
                         \le\frac{C_{\beta,c}}{\sqrt n}.
\end{equation}
\item \label{lem:alg-resolvent-lipschitz-Q2}
\[ \lipnorm{\hat Q^2}\le\frac{C_{\beta,c}}{\sqrt n}. \]
\item \label{lem:alg-resolvent-lipschitz-DQ}
\[ \lipnorm{D\hat Q}\le\frac{C_{\beta,c}}{\sqrt n}. \]
\item \label{lem:alg-resolvent-lipschitz-DQQD}
\[ \lipnorm{D\hat Q^2D}\le\frac{C_{\beta,c}}{\sqrt n}. \]
\end{enumerate}
\end{lemma}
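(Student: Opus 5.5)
The plan is to avoid integrating derivatives along segments, since $\calS_A(c)$ need not be convex. Instead, I would compare endpoints directly with the resolvent identity $R_1-R_2=R_1(R_2^{-1}-R_1^{-1})R_2$. For $m,w\in\calS_A(c)$, write $\hat Q(m)^{-1}-\hat Q(w)^{-1}=(a(m)-a(w))\Id_n+(D(m)-D(w))-\frac{2\beta^2}{n}(mm^{\sT}-ww^{\sT})$. Each term is handled separately.

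\begin{itemize}
\item The scalar term satisfies $|a(m)-a(w)|=\frac{\beta^2}{n}\bigl|\lpnorm{m}^2-\lpnorm{w}^2\bigr|\le \frac{2\beta^2}{\sqrt n}\lpnorm{m-w}$. Its normalized Schatten 2-norm after sandwiching is at most $c^{-2}\cdot 2\beta^2\lpnorm{m-w}/\sqrt n$, by \pref{lem:alg-resolvent-bounds-Q}.
\item The rank-two term has Frobenius norm $\lesssim \frac{\beta^2}{n}\sqrt n\lpnorm{m-w}$, so its normalized Schatten 2-norm is $O(\lpnorm{m-w}/n)$.
\item The diagonal term is the delicate one, because $D(m)-D(w)$ is unbounded near the cube boundary. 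Use $D(m)-D(w)=D(m)\bigl(\diag(m^2-w^2)\bigr)D(w)$ and regroup: $\hat Q(m)(D(m)-D(w))\hat Q(w)=[\hat Q(m)D(m)]\diag(m^2-w^2)[D(w)\hat Q(w)]$. The bracketed factors are bounded in operator norm by \pref{lem:alg-resolvent-bounds-DQ}. The middle factor has $\|\diag(m^2-w^2)\|_F\le 2\lpnorm{m-w}$, which gives $\schnorm{\cdot}\le C_{\beta,c}\lpnorm{m-w}/\sqrt n$.
\end{itemize}

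Combining the three estimates gives (a) for $\hat Q$. The same computation without the rank-one term gives (a) for $\bar Q$, using $\opnorm{D\bar Q}$ from \pref{lem:alg-resolvent-bounds-DQ}.

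For (b), write $\hat Q(m)^2-\hat Q(w)^2=\hat Q(m)(\hat Q(m)-\hat Q(w))+(\hat Q(m)-\hat Q(w))\hat Q(w)$. Then apply Hölder with the operator bound $c^{-1}$ and (a). A naive use of (a) here is fine, since only Schatten-2 on the difference is needed.

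For (c), expand $D(m)\hat Q(m)-D(w)\hat Q(w)=(D(m)-D(w))\hat Q(w)+D(m)(\hat Q(m)-\hat Q(w))$. This cannot be bounded termwise, because $D$ is unbounded. Instead I would use the identity $D\hat Q=\Id_n-M\hat Q$ from the proof of \pref{lem:alg-resolvent-bounds}, where $M(m)=a(m)\Id_n-\beta A-\frac{2\beta^2}{n}mm^{\sT}$. Then $D(m)\hat Q(m)-D(w)\hat Q(w)=-(M(m)-M(w))\hat Q(m)-M(w)(\hat Q(m)-\hat Q(w))$. Since $\opnorm{M}\le 3\beta+3\beta^2$ and $M(m)-M(w)$ consists only of the scalar and rank-two pieces above, this is $C_{\beta,c}\lpnorm{m-w}/\sqrt n$ by (a).

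For (d), write $D\hat Q^2D=(D\hat Q)(\hat QD)$ and telescope, using (c), its transpose, and the operator bounds of \pref{lem:alg-resolvent-bounds-DQ}.

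I expect the main obstacle to be the diagonal term in (a). The factorization of $D(m)-D(w)$ through $D(m)$ and $D(w)$, together with the weighted bound $\opnorm{D\hat Q}$ at both endpoints, is what makes the estimate uniform near the boundary without any path inside the safe set. Every step uses only quantities evaluated at the endpoints, so non-convexity of $\calS_A(c)$ plays no role.
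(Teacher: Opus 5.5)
Your proposal is correct and is essentially the paper's proof. The paper invokes the endpoint bounds of \pref{lem:resolvent-difference} with $W\gets\Id_n-\diag(m)^2$ and $M\gets a(m)\Id_n-\beta A-\frac{2\beta^2}{n}mm^{\sT}$; that lemma's proof uses exactly your factorization $\Delta R=-R_1(\Delta M)R_0+(RD)_1(\Delta W)(DR)_0$, the identity $DR=\Id_n-MR$ for the weighted difference, and product-rule telescoping for $R^2$ and $DR^2D$, with the same input bounds on $\Delta W$ and $\Delta M$ that you derive (your scalar, rank-two and diagonal split is just a regrouping of its $\Delta M$ and $\Delta W$ terms).
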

\begin{proof}
Follow \cite[Lemma 6.2]{davies2026potential}, replacing its derivative
bounds by parts (a)--(e) of \pref{lem:resolvent-difference} with
\[
 W\gets\Id_n-\diag(m)^2,\qquad
 M\gets a(m)\Id_n-\beta A-\frac{2\beta^2}{n}mm^{\sT},
\]
at each of the two values of $m$ required for the Lipschitzness bound, omitting the rank-one term for
$\bar Q$. The required input
bounds are
\allowdisplaybreaks
\begin{align*}
 \schnorm{W(m)-W(w)}&\le\frac2{\sqrt n}\lpnorm{m-w},\\
 \schnorm{M(m)-M(w)}&\le
      \left(\frac{2\beta^2}{\sqrt n}+\frac{4\beta^2}{n}\right)
                  \lpnorm{m-w}. \qedhere
\end{align*}
\end{proof}

We skip Lemma \thesection.3 because the argument has been moved to \pref{lem:tap-convexity}.
The next lemma restarts numbering at \thesection.4 in order to maintain the same lemma numbering as in \cite[\S 6]{davies2026potential}.
\stepcounter{proposition}

\subsection{Diagonal error from rank-1 term of TAP Hessian}
\label{sec:alg-diagonal-error-rank-1}
The definition of the event $\Omega_{\beta,c}$ from \pref{cor:diagonal-Q2-controlled-in-l2} is in terms of $\bar Q$, whereas the algorithm uses
$\hat Q$. The following analog of
\cite[Lemma 6.4]{davies2026potential} bounds the difference, substantially simplifying the earlier version of the proof.
\begin{lemma}[Small diagonal error from rank-1 terms]
\label{lem:alg-resolvent-bounds-DQ2-hQ2D}
In the setting of \pref{lem:alg-resolvent-bounds}, for all
$m\in\calS_A(c)$,
\[
 \schnorm{E_{\mathcal D_n}[D(m)(\hat Q(m)^2-\bar Q(m)^2)D(m)]}
                         \le\frac{C_{\beta,c}}{\sqrt n}.
\]
\end{lemma}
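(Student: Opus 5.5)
The plan is to write $\hat Q$ as a rank-one perturbation of $\bar Q$ and exploit that a rank-two error contributes only $O(1/\sqrt n)$ in normalized Schatten $2$-norm. Set $u:=\sqrt{2}\,\beta m/\sqrt n$, so that $\hat Q^{-1}=\bar Q^{-1}-uu^{\sT}$ and $\lpnorm{u}\le\sqrt2\beta$. The resolvent identity $\hat Q-\bar Q=\bar Q\,uu^{\sT}\hat Q$ gives the exact representation
\[
 \hat Q^2-\bar Q^2=(\hat Q-\bar Q)\hat Q+\bar Q(\hat Q-\bar Q)
 =\bar Q u\,(\hat Q^2u)^{\sT}+\bar Q^2u\,(\hat Q u)^{\sT}.
\]
Conjugating by $D=D(m)$ then yields
\[
 D(\hat Q^2-\bar Q^2)D=(D\bar Q u)(D\hat Q^2u)^{\sT}+(D\bar Q^2u)(D\hat Qu)^{\sT},
\]
a sum of two rank-one matrices. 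Because every factor is written via the resolvent identity at a single point $m$, no path or segment in $\calS_A(c)$ is required, and so non-convexity plays no role here.

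Next, each vector factor is bounded using \pref{lem:alg-resolvent-bounds}, which applies since $m\in\calS_A(c)$ and $\opnorm{A}\le3$. The weighted bounds \ref{lem:alg-resolvent-bounds-DQ} and \ref{lem:alg-resolvent-bounds-DQ2} give $\opnorm{D\bar Q},\opnorm{D\hat Q}\le 1+(3\beta+3\beta^2)/c$ and $\opnorm{D\bar Q^2},\opnorm{D\hat Q^2}\le c^{-1}(1+(3\beta+3\beta^2)/c)$. With $\lpnorm{u}\le\sqrt2\beta$, every vector appearing above therefore has Euclidean norm at most $C_{\beta,c}$.

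For a rank-one matrix $xy^{\sT}$, the Frobenius norm is $\lpnorm{x}\lpnorm{y}$, so its normalized Schatten $2$-norm is $\lpnorm{x}\lpnorm{y}/\sqrt n$. Hence $\schnorm{D(\hat Q^2-\bar Q^2)D}\le C_{\beta,c}/\sqrt n$. To conclude, note that $E_{\mathcal D_n}$ is the orthogonal projection onto diagonal matrices with respect to the trace inner product, and is therefore a contraction in $\schnorm{\cdot}$. (This is visible directly: zeroing off-diagonal entries can only decrease the Frobenius norm.) Applying it to the bound above proves the lemma.

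I expect no step to be a serious obstacle. The only point needing care is that $\bar Q$ appears on the left and $\hat Q$ on the right in the resolvent identity, so bounds for both resolvents are needed. Both are positive definite on $\calS_A(c)$ by the proof of \pref{lem:alg-resolvent-bounds}, since $\bar Q^{-1}\succeq\hat Q^{-1}\succ cD$, so these bounds are available.
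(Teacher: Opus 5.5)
Your proposal is correct and follows essentially the same route as the paper. The paper applies part (e) of \pref{lem:resolvent-difference} with $\Delta W=0$ and $\Delta M=-2\beta^2mm^{\sT}/n$ (so $\schnorm{\Delta M}\le 2\beta^2/\sqrt n$), then uses contractivity of $E_{\mathcal D_n}$; the proof of that part is exactly the discrete product rule plus resolvent identity combined with the weighted bounds $\opnorm{D\hat Q},\opnorm{D\hat Q^2}$, which you carry out explicitly for the rank-one case and which gives the same constant $2C_2^2c^{-1}\cdot 2\beta^2/\sqrt n$.
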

\begin{proof}
Apply part (e) of \pref{lem:resolvent-difference}, keeping $W=D(m)^{-1}$
fixed and changing $M$ by $-2\beta^2mm^{\sT}/n$.
The $\min\Spec(|M + W^{-1}|) > c$ condition is satisfied at both endpoints and $\schnorm{\Delta M} = \schnorm{-2\beta^2mm^{\sT}/n} \le 2\beta^2/\sqrt n$.
Then contractivity of $E_{\mathcal D_n}$ concludes the bound.
\end{proof}

\subsection{Control of the diagonal}
\label{sec:alg-diagonal-control}
As in \cite[\S6.4]{davies2026potential}, define
\begin{equation}
\label{eq:delta-diag}
 \delta_{\diag}(m):=E_{\mathcal D_n}[\hat Q(m)^2]D(m)^2
                    -(1+\beta^2\tr_n(\hat Q(m)^2))\Id_n.
\end{equation}
Since $D$ is diagonal, the first term is also $E_{\mathcal D_n}[D\hat Q^2D]$.

\begin{lemma}[Basic diagonal properties on the good set]
\label{lem:alg-diag-basic}
In the setting of \pref{lem:alg-resolvent-bounds}, for $m,w\in\calS_A(c)$,
\allowdisplaybreaks
\begin{enumerate}[label=(\alph*),ref=\thetheorem(\alph*),itemsep=0.1em]
\item \label{lem:alg-resolvent-bounds-delta}
\[ \opnorm{\delta_{\diag}(m)}\le C_{\beta,c}. \]
\item \label{lem:alg-resolvent-lipschitz-delta}
\begin{equation}
\label{eq:alg-diagonal-difference}
 \schnorm{\delta_{\diag}(m)-\delta_{\diag}(w)}
           \le\frac{C_{\beta,c}}{\sqrt n}\lpnorm{m-w}.
\end{equation}
\end{enumerate}
\end{lemma}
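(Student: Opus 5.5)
Both parts follow from bounds already established on the good set; the only thing to watch is that no step integrates along a segment.

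\emph{Part (a).} I split $\delta_{\diag}(m)=E_{\mathcal D_n}[D\hat Q^2D]-\Id_n-\beta^2\tr_n(\hat Q^2)\Id_n$. Since $E_{\mathcal D_n}$ is an $\opnorm{\cdot}$-contraction, \pref{lem:alg-resolvent-bounds-DQ2D} gives
\[
\opnorm{E_{\mathcal D_n}[D\hat Q^2D]}\le\bigl(1+(3\beta+3\beta^2)/c\bigr)^2 .
\]
By \pref{lem:alg-resolvent-bounds-Q}, $\opnorm{\hat Q}\le c^{-1}$, so $0\le\tr_n(\hat Q^2)\le c^{-2}$. The triangle inequality then yields $\opnorm{\delta_{\diag}(m)}\le C_{\beta,c}$. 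This part needs no probabilistic input beyond $\opnorm{A}\le3$.

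\emph{Part (b).} I avoid line segments entirely, since $\calS_A(c)$ is not convex, and argue through endpoint differences. First, because $D$ is diagonal, $E_{\mathcal D_n}[\hat Q^2]D^2=E_{\mathcal D_n}[D\hat Q^2D]$. Hence
\[
\delta_{\diag}(m)-\delta_{\diag}(w)=E_{\mathcal D_n}\bigl[D\hat Q^2D(m)-D\hat Q^2D(w)\bigr]-\beta^2\bigl(\tr_n\hat Q(m)^2-\tr_n\hat Q(w)^2\bigr)\Id_n .
\]
\begin{itemize}
\item The first term is bounded by contractivity of $E_{\mathcal D_n}$ in $\schnorm{\cdot}$ together with \pref{lem:alg-resolvent-lipschitz-DQQD}, giving $C_{\beta,c}n^{-1/2}\lpnorm{m-w}$.
\item For the second term, $\schnorm{\Id_n}=1$ and $|\tr_n X|\le\schnorm{X}$, so it is at most $\beta^2\schnorm{\hat Q(m)^2-\hat Q(w)^2}$. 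By \pref{lem:alg-resolvent-lipschitz-Q2} this is $\le C_{\beta,c}n^{-1/2}\lpnorm{m-w}$.
\end{itemize}
Adding the two bounds gives \eqref{eq:alg-diagonal-difference}.

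\emph{Where the difficulty lies.} Essentially all of it sits upstream in \pref{lem:alg-resolvent-lipschitz}, and in particular in the $D\hat Q^2D$ bound. That bound must hold for arbitrary pairs $m,w\in\calS_A(c)$ without a connecting segment in the safe set, so it comes from the resolvent-identity calculations of \pref{lem:resolvent-difference}. Here it is simply invoked, so the only point to check is that both $m$ and $w$ lie in the good set, where all the quoted bounds apply. The event $\Omega_{\beta,c}$ is not needed for this lemma.
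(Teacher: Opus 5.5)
Your proposal is correct and follows essentially the same route as the paper. The paper's proof also just combines \pref{lem:alg-resolvent-bounds} and \pref{lem:alg-resolvent-lipschitz} (the $D\hat Q^2D$ and $\hat Q^2$ bounds) with the fact that the diagonal projection and $X\mapsto\tr_n(X)\Id_n$ are contractions in the relevant norms, and, as you note, it needs only $\opnorm{A}\le3$, not $\Omega_{\beta,c}$.
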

\begin{proof}
The proof is the same as \cite[Lemma 6.5]{davies2026potential}, using
\pref{lem:alg-resolvent-bounds} and
\pref{lem:alg-resolvent-lipschitz}. Both the diagonal projection and
$X\mapsto\tr_n(X)\Id_n$ are contractions in the relevant norms.
\end{proof}

\begin{lemma}[Uniform diagonal control]
\label{lem:alg-resolvent-bounds-delta-frob}
\label{lem:alg-diagonal-control-safe}
In the setting of \pref{lem:alg-resolvent-bounds}, assume also that we are in the event $\Omega_{\beta,c}$.
Then for every $m\in\calS_A(c)$,
\begin{equation}
\label{eq:alg-diagonal-small}
       \schnorm{\delta_{\diag}(m)}\le\frac{C_{\beta,c}}{\sqrt n}.
\end{equation}
The same bound, with every $\hat Q$ replaced by $\bar Q$, holds for all
$m\in(-1,1)^n$ such that $D(m)\in\calS_D$.
\end{lemma}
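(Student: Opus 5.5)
The plan is to compare $\delta_{\diag}(m)$ with the deterministic free target supplied by the event $\Omega_{\beta,c}$, using only the triangle inequality and a self-consistency identity for the trace. First I would check that every $m\in\calS_A(c)$ gives $D(m)\in\calS_D$. As in the proof of \pref{lem:alg-resolvent-bounds}, we have $\bar Q(m)^{-1}=a(m)\Id_n-\beta A+D(m)\succ cD(m)\succeq c\Id_n$. So the bound defining $\Omega_{\beta,c}$ in \eqref{eq:alg-def-Omega} applies at $D=D(m)$ (with $a(m)=\beta^2\tr_n(D^{-1})$), giving
\[
\schnorm{E_{\mathcal D_n}[D\bar Q^2D]-\kappa\,\Id_n}\le\frac{C_{\beta,c}}{\sqrt n},\qquad \kappa:=\frac{1}{1-\beta^2\tr_n(D^{-2})}.
\]
Here $\kappa\le(1-\beta^2)^{-1}$, since $\opnorm{D^{-1}}\le1$. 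This settles the diagonal term for $\bar Q$.

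Next I would show that the scalar $1+\beta^2\tr_n(\bar Q^2)$ is close to $\kappa$. The idea is to test the above estimate against the diagonal matrix $D^{-2}$. Since $D$ is diagonal, $\tr_n(D^{-1}E_{\mathcal D_n}[D\bar Q^2D]D^{-1})=\tr_n(\bar Q^2)$. Cauchy--Schwarz with $\schnorm{D^{-2}}\le1$ then gives
\[
\bigl|\tr_n(\bar Q^2)-\kappa\tr_n(D^{-2})\bigr|\le\frac{C_{\beta,c}}{\sqrt n}.
\]
The algebraic identity $1+\beta^2\kappa\tr_n(D^{-2})=\kappa$ turns this into $|1+\beta^2\tr_n(\bar Q^2)-\kappa|\le C_{\beta,c}/\sqrt n$. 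Combining this with the first display proves the $\bar Q$-version of the lemma for every $m$ with $D(m)\in\calS_D$. This is the second claim of the lemma.

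For $\hat Q$, two error terms are needed. The diagonal term $E_{\mathcal D_n}[D(\hat Q^2-\bar Q^2)D]$ is controlled by \pref{lem:alg-resolvent-bounds-DQ2-hQ2D}. For the trace term I would use $|\tr_n(\hat Q^2-\bar Q^2)|\le\schnorm{\hat Q^2-\bar Q^2}$. This is bounded by the same resolvent-difference estimate (part (e) of \pref{lem:resolvent-difference}, now without the $D$ weights, or simply $\opnorm{\hat Q},\opnorm{\bar Q}\le c^{-1}$ together with $\schnorm{2\beta^2mm^{\sT}/n}\le2\beta^2/\sqrt n$ and the resolvent identity). That gives $C_{\beta,c}/\sqrt n$. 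The triangle inequality then yields \eqref{eq:alg-diagonal-small}.

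The only delicate point is the self-consistency step: it must be the exact trace against $D^{-2}$ that reproduces $\tr_n(\bar Q^2)$, and the denominator $1-\beta^2\tr_n(D^{-2})$ must stay bounded away from zero. Both hold by diagonality of $D$ and by $\beta<1$. I do not expect any further obstacle.
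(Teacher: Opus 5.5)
Your proposal is correct and is essentially the paper's argument. The paper reruns \cite[Lemma 6.6]{davies2026potential}, feeding in two inputs: the bound from $\Omega_{\beta,c}$ at $D=D(m)\in\calS_D$, and the rank-one comparison \pref{lem:alg-resolvent-bounds-DQ2-hQ2D}, which turns it into the initial diagonal estimate for $\hat Q$ (that comparison is skipped for $\bar Q$). The only difference is order: the paper does the rank-one comparison first and then extracts the trace term directly from $\hat Q$, so it never needs your separate bound on $|\tr_n(\hat Q^2-\bar Q^2)|$, though that bound is also correct.
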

\begin{proof}
The proof of \cite[Lemma 6.6]{davies2026potential} is unchanged after
using the definition of $\calS_A(c)$ and
\pref{lem:alg-resolvent-bounds-DQ2-hQ2D} for the initial diagonal
estimate.
For $\bar Q$, skip the application of \pref{lem:alg-resolvent-bounds-DQ2-hQ2D}.
\end{proof}

\subsection{Error between ASL-TAP and PHD is small}
\label{sec:alg-phd-asl-tap-closeness}
Recall that ASL-TAP and PHD differ by the drift $\hat Q(m)(m-f(m))$, with $f$ given in \pref{eq:alg-f-definition}.
We obtain the analog of \cite[Lemma 6.7]{davies2026potential}, showing that this drift term is small.
\begin{lemma}[Closeness of ASL-TAP and PHD]
\label{lem:alg-safe-drift}
In the setting of \pref{lem:alg-resolvent-bounds}, assuming the event $\Omega_{\beta,c}$ defined in \pref{eq:alg-def-Omega} and with $f$ as in \eqref{eq:alg-f-definition}, the following bounds hold on $m \in \calS_A(c)$ in Euclidean norm:
\allowdisplaybreaks
\begin{enumerate}[label=(\alph*),ref=\thetheorem(\alph*),itemsep=0.1em]
\item \label{lem:alg-resolvent-lipschitz-f}
\begin{equation}\label{eq:alg-f-Lipschitz}
                 \lipnorm{f}\le C_{\beta,c}.
\end{equation}
\item \label{lem:alg-resolvent-bounds-m-f}
\begin{equation}\label{eq:alg-mf-bound}
                 \lpnorm{m-f(m)}\le C_{\beta,c}.
\end{equation}
\item \label{lem:alg-resolvent-bounds-drift}
\begin{equation}\label{eq:alg-drift-bounded}
                 \lpnorm{\hat Q(m)(m-f(m))}\le C_{\beta,c}.
\end{equation}
\item \label{lem:alg-resolvent-bounds-dual-drift-1}
\[ \lpnorm{D(m)\hat Q(m)(m-f(m))}\le C_{\beta,c}. \]
\item \label{lem:alg-resolvent-lipschitz-drift}
\begin{equation}\label{eq:alg-drift-Lipschitz}
       \lipnorm{m\mapsto\hat Q(m)(m-f(m))}\le C_{\beta,c}.
\end{equation}
\end{enumerate}
\end{lemma}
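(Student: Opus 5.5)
The plan is to follow \cite[Lemma 6.7]{davies2026potential}, rewriting $f$ in terms of the diagonal error $\delta_{\diag}$. Everything rests on the pointwise bounds of \pref{lem:alg-resolvent-bounds}, the Lipschitz bounds of \pref{lem:alg-resolvent-lipschitz}, and \pref{lem:alg-diag-basic}--\pref{lem:alg-diagonal-control-safe}. Because $\calS_A(c)$ need not be convex, every Lipschitz claim will be proved as an endpoint estimate for an arbitrary pair $m,w\in\calS_A(c)$, never by integrating a derivative along a segment.

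\textbf{Rewriting $f$.} Using \eqref{eq:delta-diag}, we have $E_{\calD_n}[D\hat Q^2D]=(1+\beta^2\tr_n(\hat Q^2))\Id_n+\delta_{\diag}(m)$. Substituting into \eqref{eq:alg-f-definition}, the trace terms cancel and
\[
 f(m)-m=\delta_{\diag}(m)\,m-\frac{2\beta^2}{n}\hat Q(m)^2m.
\]
This identity drives parts (a)--(b).

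\textbf{Part (b).} Since $\lpnorm{m}\le\sqrt n$ and $\opnorm{\hat Q}\le c^{-1}$, the rank-one term has norm at most $2\beta^2c^{-2}/\sqrt n$. For the first term, $\delta_{\diag}(m)$ is diagonal, so $\lpnorm{\delta_{\diag}(m)m}\le\sqrt n\,\schnorm{\delta_{\diag}(m)}\,\max_i|m_i|\le\sqrt n\schnorm{\delta_{\diag}(m)}$. By \pref{lem:alg-diagonal-control-safe} this is $O(C_{\beta,c})$ on $\Omega_{\beta,c}$.

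\textbf{Part (a).} I would write $f(m)=m+\delta_{\diag}(m)m-\frac{2\beta^2}{n}\hat Q(m)^2m$ and difference each term separately. For the $\delta_{\diag}$ term, split
\[
\delta_{\diag}(m)m-\delta_{\diag}(w)w=(\delta_{\diag}(m)-\delta_{\diag}(w))m+\delta_{\diag}(w)(m-w).
\]
The first piece is at most $\sqrt n\schnorm{\delta_{\diag}(m)-\delta_{\diag}(w)}\le C_{\beta,c}\lpnorm{m-w}$ by \eqref{eq:alg-diagonal-difference}, using again that a diagonal matrix acting on a vector in $[-1,1]^n$ is controlled by its Frobenius norm. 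The second piece is at most $\opnorm{\delta_{\diag}(w)}\lpnorm{m-w}\le C_{\beta,c}\lpnorm{m-w}$ by \pref{lem:alg-resolvent-bounds-delta}. For the rank-one term, $\frac1n\lpnorm{(\hat Q(m)^2-\hat Q(w)^2)m}\le\frac1{\sqrt n}\opnorm{\cdot}\le\frac1{\sqrt n}\sqrt n\schnorm{\cdot}\le C_{\beta,c}/\sqrt n\cdot\lpnorm{m-w}$ by \pref{lem:alg-resolvent-lipschitz-Q2}. The remaining factor is handled by $\opnorm{\hat Q^2}\le c^{-2}$.

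\textbf{Parts (c)--(e).} Part (c) follows from (b) and $\opnorm{\hat Q}\le c^{-1}$. Part (d) follows from (b) and \eqref{eq:alg-weighted-Q-bounds}. For part (e), split
\[
\hat Q(m)(m-f(m))-\hat Q(w)(w-f(w))=(\hat Q(m)-\hat Q(w))(m-f(m))+\hat Q(w)\bigl((m-f(m))-(w-f(w))\bigr).
\]
The second term is bounded by $c^{-1}(1+C_{\beta,c})\lpnorm{m-w}$ using (a). For the first term, note that $m-f(m)=-\delta_{\diag}(m)m+\frac{2\beta^2}{n}\hat Q^2m$ has Euclidean norm $O(1)$, but a Schatten-$2$ Lipschitz bound on $\hat Q$ only controls $\opnorm{\hat Q(m)-\hat Q(w)}\le\sqrt n\schnorm{\cdot}\le C_{\beta,c}\lpnorm{m-w}$. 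That operator-norm bound suffices here, since we multiply by an $O(1)$ vector.

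\textbf{Main obstacle.} The delicate point is the vector-norm transfer $\lpnorm{\Delta m}\le\sqrt n\schnorm{\Delta}$ for diagonal $\Delta$ and $m\in(-1,1)^n$, together with $\opnorm{X}\le\sqrt n\schnorm{X}$. I would verify that these losses exactly absorb the $1/\sqrt n$ in \pref{lem:alg-diagonal-control-safe} and \pref{lem:alg-resolvent-lipschitz}. If the operator-norm transfer in (e) proves too lossy, the fallback is to apply the resolvent identity directly, via part (a) of \pref{lem:resolvent-difference}, to $(\hat Q(m)-\hat Q(w))v$.
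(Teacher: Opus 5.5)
Your proposal is correct and takes the same approach as the paper. The paper also rewrites $f(m)=\bigl(\Id_n+\delta_{\diag}(m)-\frac{2\beta^2}{n}\hat Q(m)^2\bigr)m$. It closes every estimate with the same two norm transfers, $\lpnorm{\Delta m}\le\sqrt n\,\schnorm{\Delta}$ for diagonal $\Delta$ and $\opnorm{\cdot}\le\sqrt n\,\schnorm{\cdot}$, applied to the endpoint bounds from the preceding lemmas; as you note, the operator-norm transfer in (e) already suffices, so the resolvent-identity fallback is unnecessary.
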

\begin{proof}
Use the proof of \cite[Lemma 6.7]{davies2026potential}, with all of the references to intermediate lemmas replaced with their analogs written above, with the same section-internal numbering. The identity expressing $f$ in terms of $\delta_{\diag}(m)$ is
\[
 f(m)=\left(\Id_n+\delta_{\diag}(m)
                         -\frac{2\beta^2}{n}\hat Q(m)^2\right)m.
\]
In particular,
$\lpnorm{\delta_{\diag}(m)m}\le\sqrt n\schnorm{\delta_{\diag}(m)}$, and
the same inequality applies to the difference of the diagonal errors
acting on any $w\in(-1,1)^n$.
All other matrix differences are bounded using
$\opnorm{\cdot}\le\sqrt n\schnorm{\cdot}$.
The remaining product estimates are unchanged.
\end{proof}

\subsection{Lipschitz extension of the JE weight integrand}
\label{sec:alg-je-extension}
The next two lemmata provide the additional ingredients to
adapt the JE-weight argument in
\cite[Corollary 6.14(d5)]{davies2026potential} to a nonconvex good set.
We start by constructing short paths preserving the absolute spectral
gap of the resolvent without the rank-one term, so that derivative
bounds hold along these paths and can be used to prove Lipschitz bounds.

\begin{lemma}[Short paths preserving the spectral gap]
\label{lem:two-segment-W-path}
Fix $0<\beta<1$, $c>0$, and $A\in\mathrm{Sym}_n(\mathbb R)$.
Let $W_0,W_1\in\mathcal D_n((0,1])$ satisfy
\[
    W_j^{-1} \in \calS_D,
    \qquad j=0,1,
\]
where $\calS_D$ was defined in \pref{eq:alg-def-calSD}.
Then there exists a piecewise $C^1$ path
$W:[0,1]\to\mathcal D_n((0,1])$ with $W(0)=W_0$ and $W(1)=W_1$
such that
\begin{equation}
\label{eq:alg-baromega-W-path-gap}
    W(t)^{-1} \in \calS_D,
    \qquad 0\le t\le1,
\end{equation}
and its normalized Schatten $2$-length satisfies
\begin{equation}
\label{eq:alg-baromega-W-path-length}
    \int_0^1\schnorm{W'(t)}\,dt
    \le\frac{2}{1-\beta^2}\schnorm{W_1-W_0}.
\end{equation}
\end{lemma}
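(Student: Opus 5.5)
The plan is to build a two-segment path through an intermediate diagonal matrix $W_*$ that lies coordinatewise below both endpoints. Each segment is then a monotone straight line in $W$-coordinates. The reason this is the right shape is the following. Write $G(W):=\beta^2\tr_n(W)\Id_n-\beta A+W^{-1}$, so that $W^{-1}\in\calS_D$ iff $G(W)\succeq c\Id_n$. Along a segment on which every coordinate of $W$ is nonincreasing, two things happen. First, the scalar shift $\beta^2\tr_n(W)$ drops linearly in $t$, by exactly $t\beta^2\tr_n(W_0-W_*)$. Second, every diagonal entry of $W^{-1}$ increases, and since $w\le1$ we have $1/w_t-1/w_0=t(w_0-w_*)/(w_0w_t)\ge t(w_0-w_*)$. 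Because $-\beta A$ is fixed, $G(W_t)-G(W_0)$ is a diagonal matrix. It is therefore $\succeq0$ as soon as every coordinate gain dominates the common scalar loss:
\[
  w_{0,i}-w_{*,i}\ \ge\ \beta^2\tr_n(W_0-W_*)\qquad\text{for all } i .
\]
When this holds, $G(W_t)\succeq G(W_0)\succeq c\Id_n$ along the whole first segment. The same argument, run from the other endpoint, handles the segment from $W_*$ to $W_1$ (traversed in reverse).

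For the intermediate point I will take $W_*:=(W_0\wedge W_1)-\eta\Id_n$, truncated at a small positive floor, with the uniform margin
\[
  \eta:=\frac{\beta^2}{1-\beta^2}\schnorm{W_1-W_0}.
\]
For $j=0,1$ this gives
\[
  \tr_n(W_j-W_*)\le \tr_n\bigl|W_1-W_0\bigr|+\eta\le\schnorm{W_1-W_0}+\eta,
\]
while each coordinate drops by at least $\eta$. The required inequality therefore reduces to $\eta(1-\beta^2)\ge\beta^2\schnorm{W_1-W_0}$, which is exactly how $\eta$ was chosen.

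For the length, the path is piecewise linear, so its normalized Schatten-$2$ length is $\schnorm{W_0-W_*}+\schnorm{W_*-W_1}$. Each term is at most (the relevant part of) $\schnorm{W_1-W_0}$ plus $\eta$. Summing gives $(1+2\beta^2/(1-\beta^2))\schnorm{W_1-W_0}\le\frac{2}{1-\beta^2}\schnorm{W_1-W_0}$, which is \eqref{eq:alg-baromega-W-path-length}. I would check this bookkeeping with the triangle inequality, using that the two positive parts $(W_0-W_0\wedge W_1)$ and $(W_1-W_0\wedge W_1)$ have disjoint supports, so their squared norms sum to $\schnorm{W_1-W_0}^2$.

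The step I expect to be the main obstacle is keeping $W_*$ inside $\mathcal D_n((0,1])$. On coordinates where $w_{0,i}\wedge w_{1,i}\le\eta$, subtracting $\eta$ would destroy positivity. My first attempt is to set $w_{*,i}:=\tfrac12(w_{0,i}\wedge w_{1,i})$ on those coordinates. There the gain in $W^{-1}$ is not controlled by $w_0-w_*$ but by $1/w_t-1/w_0\ge t(w_0-w_*)/(w_0w_t)\ge t/2$. This is still at least $t\eta$ provided $\eta\le1/2$. The case $\eta>1/2$ forces $\schnorm{W_1-W_0}\gtrsim(1-\beta^2)/\beta^2$, and there the length bound is nearly trivial: any monotone path through a point with $W_*\preceq\tfrac12(W_0\wedge W_1)$ has length $O(1)$. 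Such a path keeps the gap because each coordinate gain is at least $t$ times the coordinate's own value of $d_0$, and $d_0\ge1\ge\beta^2\tr_n(W_0)$ dominates the total possible scalar loss. If this casework does not close cleanly, the fallback is to parametrize the segments harmonically, linearly in $D=W^{-1}$. That makes the gain exactly linear, and the concave loss in $\tr_n(W)$ can then be bounded using $|\Delta w_i|\le|\Delta d_i|$. This route yields the same $\frac{1}{1-\beta^2}$ factor, at the cost of a slightly more careful length estimate.
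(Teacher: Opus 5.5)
Your core mechanism is sound: a coordinatewise nonincreasing segment, gain in $W^{-1}$ at least the drop in $W$ because $w\le 1$, and a shift $\eta=\frac{\beta^2}{1-\beta^2}\schnorm{W_1-W_0}$ chosen so that the gain beats $\beta^2$ times the trace loss. The branch $\eta\le\frac12$ of your construction also closes. There $W_*>0$, unmodified coordinates gain at least $t\eta$, modified ones gain at least $t/2\ge t\eta$, the trace loss is at most $t\beta^2(\schnorm{W_1-W_0}+\eta)=t\eta$, and the length is at most $\sqrt2\,\schnorm{W_1-W_0}+2\eta$. Note that the disjoint-support remark yields $\sqrt2$, not $1$, in front of $\schnorm{W_1-W_0}$; this still fits under $\frac{2}{1-\beta^2}$.

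\textbf{The gap is the branch $\eta>\frac12$.} It occurs whenever $\beta^2>1/3$ and the endpoints are far apart, so it cannot be avoided for $\beta$ near $1$. Your justification there is false. On a linear segment from $W_0$ toward $W_*$, the gain $1/w_t-1/w_0=t(w_0-w_*)/(w_0w_t)$ is convex in $t$ with initial slope $d_0(1-w_*/w_0)$. That slope is only $d_0/2$ when $w_*=\frac12 w_0$. The trace loss, by contrast, is linear with slope $\beta^2\tr_n(W_0-W_*)$, which can approach $\beta^2$.

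Here is a concrete counterexample. Take $\beta^2=0.9$ and $A$ diagonal with $\beta A_{11}=1.9-c$ and $\beta A_{kk}=1-c$ for $k\ge 2$. Let $W_0=\Id_n$, and let $W_1$ equal $\frac12$ in coordinate $1$ and a small $\delta$ elsewhere. Both endpoints lie in $\calS_D$: the $(1,1)$ entry is tight at $W_0$ and equals $c+0.1+0.9\tr_n(W_1)$ at $W_1$. Here $\eta\approx 9$. Along the segment from $W_0$ to $\frac12(W_0\wedge W_1)$, coordinate $1$ gains $\frac{3t/4}{1-3t/4}<t=t\,d_{0,1}$ for $t<1/3$. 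The $(1,1)$ entry of $\beta^2\tr_n(W)\Id_n-\beta A+W^{-1}$ has initial derivative $\frac34-0.9(1-\frac\delta2)+O(1/n)<0$, so the path leaves $\calS_D$ immediately.

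The harmonic fallback, with the same intermediate point, fails on this example even worse. The coordinates whose $d$ grows from $1$ to $2/\delta$ collapse in $W$ almost at once, front-loading the trace loss, while coordinate $1$ gains only $3t$. More generally, the coordinatewise comparison with $W_*=\epsilon(W_0\wedge W_1)$ closes only when $\epsilon\le(1+\eta)^{-1}$, so the factor $\frac12$ covers only $\eta\le 1$.

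\textbf{The missing idea is to shrink multiplicatively rather than additively:} take $W_*=(1+\eta)^{-1}(W_0\wedge W_1)$ with your $\eta$. This is the paper's construction, and it removes all casework because positivity is automatic. On $W_j(t)=(1-t)W_j+tW_*$ you get $-W_j'=\frac{\eta W_j+(W_j-W_0\wedge W_1)}{1+\eta}\succeq\frac{\eta}{1+\eta}W_j$. Hence $\frac{d}{dt}W_j(t)^{-1}=W_j(t)^{-2}(-W_j')\succeq\frac{\eta}{1+\eta}W_j^{-1}\succeq\frac{\eta}{1+\eta}\Id_n$. Meanwhile $\tr_n(-W_j')=\frac{\eta\tr_n(W_j)+\tr_n(W_j-W_0\wedge W_1)}{1+\eta}\le\frac{\eta+\schnorm{W_1-W_0}}{1+\eta}$.

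With your choice of $\eta$, these rates balance exactly: $\frac{\eta}{1+\eta}-\beta^2\frac{\eta+\schnorm{W_1-W_0}}{1+\eta}=0$. The same bound on $\schnorm{W_j'}$ gives total length $\frac{2(\eta+\schnorm{W_1-W_0})}{1+\eta}\le\frac{2}{1-\beta^2}\schnorm{W_1-W_0}$. The point is that the part of the shrink proportional to $W_j$ costs only $\eta\tr_n(W_j)\le\eta$ in trace while buying a gain proportional to $d_j\ge 1$ on every coordinate. This is exactly where $\tr_n(W_j)\le 1$ must be used, and neither a uniform additive shift nor a fixed factor $\frac12$ exploits it.
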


\begin{proof}
Let $\min(W_0,W_1)$ denote their coordinatewise minimum.
The path will proceed in a line segment from $W_0$ to $(1+s)^{-1}\min(W_0,W_1)$ for some scalar $s$ and then in another line segment to $W_1$.
Accordingly, denote the two line segments as
\[
    s:=\frac{\beta^2}{1-\beta^2}\schnorm{W_1-W_0},
    \qquad
    W_j(t):=(1-t)W_j+\frac{t}{1+s}\min(W_0,W_1),
    \quad 0\le t\le1,\quad j=0,1.
\]
Both segments end at $(1+s)^{-1}\min(W_0,W_1)$ and satisfy
$0\prec W_j(t)\preceq W_j\preceq\Id_n$.
We now proceed with a few path derivative calculations to show that \pref{eq:alg-baromega-W-path-gap} holds throughout the line segments.
First,
\[
    -W_j'(t)=\frac{sW_j+W_j-\min(W_0,W_1)}{1+s}
    \succeq\frac{s}{1+s}W_j.
\]
By diagonality and $0\prec W_j(t)\preceq W_j\preceq\Id_n$,
\[
    \frac{d}{dt}W_j(t)^{-1}
    =W_j(t)^{-2}(-W_j'(t))
    \succeq\frac{s}{1+s}W_j(t)^{-2}W_j
    \succeq\frac{s}{1+s}\Id_n.
\]
Since $\tr_n(W_j)\le1$, Cauchy--Schwarz for the normalized trace gives
\begin{align*}
    \tr_n[-W_j'(t)]
    &=\frac{s\tr_n(W_j)+\tr_n[W_j-\min(W_0,W_1)]}{1+s}\\
    &\le\frac{s+\schnorm{W_j-\min(W_0,W_1)}}{1+s}
     \le\frac{s+\schnorm{W_1-W_0}}{1+s}.
\end{align*}
Combining these estimates yields
\begin{align*}
    \frac{d}{dt}\left(
        W_j(t)^{-1}+\beta^2\tr_n(W_j(t))\Id_n-\beta A
    \right)
    &\;\succeq\;
    \frac{(1-\beta^2)s-\beta^2\schnorm{W_1-W_0}}{1+s}\Id_n
    \;=\;0.
\end{align*}
Integrating from $0$ to $t$ and using the $W_j^{-1} \in \calS_D$ assumption gives
\[
    W_j(t)^{-1}+\beta^2\tr_n(W_j(t))\Id_n-\beta A
    \;\;\succeq\;\; W_j^{-1}+\beta^2\tr_n(W_j)\Id_n-\beta A
    \;\;\succeq\;\; c\Id_n,
\]
showing \eqref{eq:alg-baromega-W-path-gap} for both of the two segments.
Concatenate $W_0(t)$ with $W_1(t)$ traversed in reverse, and rescale the
parameter interval to $[0,1]$ to obtain the required piecewise $C^1$
path $W(t)$.
Since $\schnorm{W_j}\le1$ and
$\schnorm{W_j-\min(W_0,W_1)}\le\schnorm{W_1-W_0}$, its length satisfies
\begin{align*}
    \int_0^1\schnorm{W'(t)}\,dt
    &=\sum_{j=0}^1\int_0^1\schnorm{W_j'(t)}\,dt\\
    &\le\frac{2\bigl(s+\schnorm{W_1-W_0}\bigr)}{1+s}\\
    &=\frac{2}{(1-\beta^2)(1+s)}\schnorm{W_1-W_0}
     \le\frac{2}{1-\beta^2}\schnorm{W_1-W_0},
\end{align*}
which proves \eqref{eq:alg-baromega-W-path-length}.
\end{proof}

\begin{lemma}[Lipschitzness of the JE integrand]
\label{lem:omega-safe-Lipschitz}
In the setting of \pref{lem:alg-safe-drift}, let
\begin{equation}
\label{eq:alg-omega}
 \omega(m):=\frac12\bigl(\Tr[\hat{Q}(m)]+\lpnorm{m}^2\bigr),
                  \qquad (\forall m\in\calS_A(c))
\end{equation}
as in \ref{d:JE-lip}.
Then there is an $L_\omega=C_{\beta,c}<\infty$ such that
\[
       |\omega(m)-\omega(w)|\le L_\omega\lpnorm{m-w}
                       \qquad(\forall m,w\in\calS_A(c)).
\]
\end{lemma}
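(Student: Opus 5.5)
The trace term is the only difficulty. The squared-norm part of $\omega$ is $\tfrac12(\lpnorm{m}^2-\lpnorm{w}^2)$, which is bounded by $\sqrt n\lpnorm{m-w}$. Since $\lpnorm{m}\le\sqrt n$, this is not $O(1)$-Lipschitz by itself. So I will not split $\omega$ naively, and I will first check how \ref{d:JE-lip} is used. The JE weight only needs $\omega$ to be Lipschitz with a constant independent of $n$ up to what the $T_2$ argument tolerates. I will still aim to show that $\Tr[\hat Q]$ is $C_{\beta,c}$-Lipschitz and treat $\lpnorm{m}^2$ jointly, as in \cite[Corollary 6.14(d5)]{davies2026potential}.

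\textbf{Reduction to $\bar Q$.} By Sherman--Morrison, $\Tr\hat Q-\Tr\bar Q = \frac{2\beta^2}{n}\frac{m^\sT\bar Q^2 m}{1-\frac{2\beta^2}{n}m^\sT\bar Q m}$. The denominator equals $\det(\hat Q^{-1})/\det(\bar Q^{-1})>0$ on $\calS_A(c)$. Using \pref{lem:alg-resolvent-bounds} and \pref{lem:alg-resolvent-lipschitz}, this rank-one correction is $O(1)$ and has Lipschitz constant $C_{\beta,c}$. A lower bound on the denominator should come from $\hat Q^{-1}\succ cD$ tested against the appropriate vector.

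\textbf{Main term $\bar\omega(m):=\Tr\bar Q(m)$.} Since $\calS_A(c)$ is not convex, I use \pref{lem:two-segment-W-path}. For $m,w\in\calS_A(c)$, set $W_0=\Id_n-\diag(m)^2$ and $W_1=\Id_n-\diag(w)^2$. Then $W_j^{-1}=D\in\calS_D$, because $\bar Q^{-1}\succ cD\succeq c\Id_n$. The lemma supplies a path $W(t)$ along which $G(t):=W(t)^{-1}+\beta^2\tr_n(W(t))\Id_n-\beta A\succeq c\Id_n$. The path has normalized length at most $\frac{2}{1-\beta^2}\schnorm{W_1-W_0}\le\frac{4}{(1-\beta^2)\sqrt n}\lpnorm{m-w}$. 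Along it,
\[
\frac{d}{dt}\Tr G(t)^{-1}=-\Tr\!\left[G^{-2}\left(-W^{-1}W'W^{-1}+\beta^2\tr_n(W')\Id_n\right)\right].
\]
The second term is at most $n\beta^2c^{-2}\schnorm{W'}$.

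\textbf{Main obstacle.} The first term is $\Tr[W^{-1}G^{-2}W^{-1}W']$, and it requires a bound on $\opnorm{W^{-1}G^{-2}W^{-1}}=\opnorm{DG^{-2}D}$ along the path. By \pref{lem:alg-resolvent-bounds-DQ2D}, $\opnorm{DM^{-1}}\le 1+\opnorm{\,\cdot\,}/c$ for $D=W(t)^{-1}$ and $M=G(t)$ gives $\opnorm{DG^{-1}}\le 1+(3\beta+\beta^2)/c$. The same argument as in \pref{lem:alg-resolvent-bounds} applies, since it only uses $G\succeq c\Id$ and $G-D$ bounded. Then $|\frac{d}{dt}\Tr G^{-1}|\le C_{\beta,c}\,n\,\tr_n|W'|\le C_{\beta,c}\,n\schnorm{W'}$. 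Integrating gives $|\bar\omega(m)-\bar\omega(w)|\le C_{\beta,c}\sqrt n\lpnorm{m-w}$.

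This shows that the trace term is also only $\sqrt n$-Lipschitz, which matches $\lpnorm{m}^2$ at scale. The $O(1)$ bound must therefore come from a cancellation between the two terms. The centered derivative is
\[
\partial_{m_i}\omega=\bigl((D\bar QD)^2\bigr)_{ii}\cdot\frac{-m_i}{\ldots}+m_i+\cdots.
\]
Using $W^{-1}W'W^{-1}=2\diag(m)\,dm\cdot D^2$, the $i$th component is $m_i\bigl(1-(D\bar Q^2D)_{ii}+\beta^2\tr_n(\bar Q^2)\bigr)$ up to the rank-one correction. This is exactly $-\delta_{\diag}$ applied to $m$. So the plan is to differentiate the combined $\omega$ along the path, including $\lpnorm{m(t)}^2$ with $m(t)$ defined from $W(t)$. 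The derivative is then $\langle\delta_{\diag}(t)m(t),m'(t)\rangle$, and \pref{lem:alg-diagonal-control-safe} (the $\bar Q$ version on $\calS_D$) gives $\lpnorm{\delta_{\diag}m}\le C_{\beta,c}$. It remains to bound the Euclidean length of $m(t)$ by $C\lpnorm{m-w}$; since the path is monotone in $W$, this needs care near $|m_i|\to1$. That length control is the step I expect to be hardest.
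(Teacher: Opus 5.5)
\textbf{Verdict.} Your mechanism is right, but the proof is not closed. The $\sqrt n$-sized parts of $\frac{d}{dt}\Tr\bar Q$ and $\frac{d}{dt}\lpnorm{m}^2$ do cancel down to the diagonal error, and \pref{lem:two-segment-W-path} with the $\bar Q$ version of \pref{lem:alg-diagonal-control-safe} on $\calS_D$ are the right inputs. The step you leave open, however, is false as formulated, not merely hard.

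\textbf{Why the $m$-path length bound fails.} You lift the path to $m(t)$ with $W(t)=\Id_n-\diag(m(t))^2$ and aim to bound $\int_0^1\lpnorm{m'(t)}\,dt\le C\lpnorm{m-w}$.
\begin{itemize}
\item The lift need not exist. Along the path every $W_i(t)<1$, so $m_i(t)$ cannot change sign. This could be patched, since $\bar\omega$ depends only on the $m_i^2$.
\item The length bound itself fails. The singularity of $m_i=\pm\sqrt{1-W_i}$ is at $m_i=0$, not at $|m_i|\to1$. The path moves every coordinate, since it passes through $(1+s)^{-1}\min(W_0,W_1)$ with $s=\frac{\beta^2}{1-\beta^2}\schnorm{W_1-W_0}$.
\item Concretely, take $m,w$ both supported on the first coordinate. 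Each of the other $n-1$ coordinates travels from $0$ to $\sqrt{s/(1+s)}$ and back. The lifted path then has Euclidean length at least of order $\sqrt{ns}\asymp n^{1/4}\lVert W_1-W_0\rVert_F^{1/2}$, which is not $O(\lpnorm{m-w})$.
\end{itemize}
The loss comes from estimating $\langle\delta m,m'\rangle$ by $\lpnorm{\delta m}\,\lpnorm{m'}$. That discards the fact that only the products $m_im_i'=-W_i'/2$ appear, and these are regular.

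\textbf{The fix: stay in $W$-coordinates.} Since $\lpnorm{m}^2=n-\Tr W$ and $\bar Q$ depends on $m$ only through $W$, the map $\bar\omega$ is a function of $W$ alone. Your own computation gives $\frac{d}{dt}\bigl(\Tr\bar Q-\Tr W\bigr)=\Tr[\delta(t)W'(t)]$, where $\delta=E_{\mathcal D_n}[D\bar Q^2D]-(1+\beta^2\tr_n\bar Q^2)\Id_n$. By Cauchy--Schwarz this is at most $\lVert\delta\rVert_F\lVert W'\rVert_F$.
\begin{itemize}
\item $\lVert\delta\rVert_F=\sqrt n\,\schnorm{\delta}\le C_{\beta,c}$ holds along the entire path, because \eqref{eq:alg-baromega-W-path-gap} keeps $W(t)^{-1}\in\calS_D$.
\item $\int_0^1\lVert W'\rVert_F\,dt\le\frac{4}{1-\beta^2}\lpnorm{m-w}$ is exactly the length bound you already wrote down.
\end{itemize}
This is the paper's argument, and no $m$-path is needed.

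\textbf{The rank-one correction.} Your Sherman--Morrison route can be completed. Use $1-\frac{2\beta^2}{n}m^{\sT}\bar Qm=\bigl(1+\frac{2\beta^2}{n}m^{\sT}\hat Qm\bigr)^{-1}\ge(1+2\beta^2/c)^{-1}$ together with $\hat Q-\bar Q=\frac{2\beta^2}{n}\hat Q mm^{\sT}\bar Q$. The paper's route is shorter. It notes that $(\hat Q-\bar Q)(m)-(\hat Q-\bar Q)(w)$ has rank at most two, so its trace is at most $\sqrt2$ times its Frobenius norm, and then applies \pref{lem:alg-resolvent-lipschitz-Q}.
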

\begin{proof}
Fix $A$. 
We will first prove the Lipschitz bound for
\[
 \bar\omega(m):=\frac12(\Tr\bar Q(m)+\lpnorm{m}^2)
       =\frac n2+\frac12(\Tr\bar Q(m)-\Tr W),
       \qquad W=D(m)^{-1},
\]
and then add the rank-one term back in at the end.
With $A$ held fixed, we use the right-hand expression to regard
$\bar\omega$ as a function of $W\in\mathcal D_n((0,1])$ wherever
the following inverse exists:
\[
 D=W^{-1},\qquad a=\beta^2\tr_n(W),\qquad
 \bar Q=\bigl(W^{-1}+\beta^2\tr_n(W)\Id_n-\beta A\bigr)^{-1}.
\]
Differentiating with respect to $W$ in a diagonal direction $H$ gives
\begin{equation}
\label{eq:alg-baromega-W-differential}
 d_W\!\left[\bar{\omega}\right][H] = 
 d_W\!\left[\frac12(\Tr\bar Q-\Tr W)\right][H]
 =\frac12\Tr\!\left[
   \left(E_{\mathcal D_n}[D\bar Q^2D]
       -(1+\beta^2\tr_n(\bar Q^2))\Id_n\right)H\right].
\end{equation}
For every $W$ satisfying \eqref{eq:alg-baromega-W-path-gap},
\pref{lem:alg-diagonal-control-safe} gives
\[
 \left\|E_{\mathcal D_n}[D\bar Q^2D]
       -(1+\beta^2\tr_n(\bar Q^2))\Id_n\right\|_F
 \le C_{\beta,c}.
\]
Cauchy--Schwarz for the unnormalized trace in
\eqref{eq:alg-baromega-W-differential} therefore yields, for all $W$ satisfying \eqref{eq:alg-baromega-W-path-gap} and diagonal $H$,
\begin{equation}
\label{eq:alg-baromega-W-derivative-bound}
 \left|d_W\!\left[\bar{\omega}\right][H]\right|
 \le C_{\beta,c}\|H\|_F.
\end{equation}
Fix $m,w\in\calS_A(c)$ and set
$W_0=D(m)^{-1}$ and $W_1=D(w)^{-1}$, which implies $W_0^{-1},W_1^{-1} \in \calS_D$.
Then let $W(t)$ be the path supplied by \pref{lem:two-segment-W-path} between those two endpoints,
so that \eqref{eq:alg-baromega-W-derivative-bound} applies along this path.
Since $\bar\omega$ depends only on $D(m)^{-1}$, integrating that bound
and using \eqref{eq:alg-baromega-W-path-length} gives
\begin{align}
    |\bar\omega(m)-\bar\omega(w)|
    &\le C_{\beta,c}\sqrt n\int_0^1\schnorm{W'(t)}\,dt \notag\\
    &\le\frac{2C_{\beta,c}\sqrt n}{1-\beta^2}\schnorm{W_1-W_0}
     \le C_{\beta,c}\lpnorm{m-w},
\label{eq:alg-baromega-endpoint}
\end{align}
where the last inequality uses
\[
    \sqrt n\schnorm{W_1-W_0}
    =\left(\sum_{i=1}^n(m_i^2-w_i^2)^2\right)^{1/2}
    \le2\lpnorm{m-w}.
\]

Finally, each $\hat Q-\bar Q$ has rank at most one, so their difference
between two endpoints has rank at most two. Therefore
\begin{align*}
 &\left|\Tr\bigl[(\hat Q(m)-\bar Q(m))
                       -(\hat Q(w)-\bar Q(w))\bigr]\right|\\
 &\quad\le\sqrt2\left(
       \|\hat Q(m)-\hat Q(w)\|_F
       +\|\bar Q(m)-\bar Q(w)\|_F\right)
       \le C_{\beta,c}\lpnorm{m-w},
\end{align*}
by \pref{lem:alg-resolvent-lipschitz-Q}, applied to $\hat Q$ and
$\bar Q$ at $m,w\in\calS_A(c)$.
Combining this with \eqref{eq:alg-baromega-endpoint} proves the claim.
\end{proof}

\subsection{Implications for the sampling algorithm}
\label{sec:alg-desiderata}

\Needspace{9\baselineskip}
\begin{corollary}[Algorithmic desiderata]
\label{cor:alg-desiderata}
Assume we are in the events $\{\opnorm{A} \le 3\}$ and $\Omega_{\beta,c}$ as defined in \pref{eq:alg-def-Omega}.
Then the following hold with constants independent
of $n$ and $A$.
\begin{enumerate}[itemsep=0.45em,label=\textup{(d\arabic*)},
                 ref={\thetheorem\,(d\arabic*)},start=0]
\setcounter{enumi}{2}
\item \label{cor:alg-desiderata-d3}
For every $m\in\calS_A(c)$,
$\lpnorm{f(m)-m}^2\le C_{\beta,c}^2$.
\item \label{cor:alg-desiderata-d4}
\begin{enumerate}[itemsep=0.1em]
\item
There is a constant $L\le C_{\beta,c}$ such that
$\|\hat Q(m)-\hat Q(w)\|_F\le L\lpnorm{m-w}$ for $m,w\in\calS_A(c)$.
        \item There exists a constant $L_{\mathrm{drift}}$  such that for all $\mg, w\in \calS_A(c_{\TAP})$, with $f$ defined as in \pref{eq:alg-f-definition},
    \[
    \ve{\hat{Q}(m)(m-f(m)) - \hat{Q}(w)(w-f(w))}_2 \le L_{\mathrm{drift}}\ve{m - w}_2.
    \]
\end{enumerate}
\item \label{cor:alg-desiderata-d5}
 Let
        \[\omega(m):=\frac12\bigl(\Tr[\hat{Q}(m)]+\lpnorm{m}^2\bigr).\]
        Then there is an $L_\omega<\infty$ such that
\[
       |\omega(m)-\omega(w)|\le L_\omega\lpnorm{m-w}
                       \qquad(\forall m,w\in\calS_A(c)).
\]
\end{enumerate}
\end{corollary}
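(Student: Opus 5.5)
The corollary is an assembly of the lemmas of this section, so the plan is to check each item against the corresponding lemma and convert normalized norms into Euclidean and Frobenius norms where needed. Throughout we work on $\{\opnorm{A}\le3\}\cap\Omega_{\beta,c}$. The calibration \eqref{eq:alg-cutoff-choice} of $\gamma_Y$ ensures that every $m\in\calS_A(c)$ has $D(m)\in\calS_D$. This is the hypothesis under which \pref{lem:alg-diagonal-control-safe}, and hence \pref{lem:alg-safe-drift}, were stated.

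\textbf{Item (d3).} This is \pref{lem:alg-resolvent-bounds-m-f} squared. Through the identity $f(m)=(\Id_n+\delta_{\diag}(m)-\frac{2\beta^2}{n}\hat Q(m)^2)m$ we get
\[
\lpnorm{m-f(m)}\le\sqrt n\,\schnorm{\delta_{\diag}(m)}+\tfrac{2\beta^2}{n}c^{-2}\sqrt n.
\]
By \pref{lem:alg-diagonal-control-safe} the first term is $O(1)$, and the second term is trivially $O(1)$.

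\textbf{Item (d4)(a).} We convert \pref{lem:alg-resolvent-lipschitz-Q}, which is stated in the normalized norm, using $\|\cdot\|_F=\sqrt n\schnorm{\cdot}$:
\[
\|\hat Q(m)-\hat Q(w)\|_F\le\sqrt n\cdot\frac{C_{\beta,c}}{\sqrt n}\lpnorm{m-w}=C_{\beta,c}\lpnorm{m-w}.
\]
This is exactly the required bound with $L=C_{\beta,c}$. The underlying lemma was proved from endpoint resolvent identities (\pref{lem:resolvent-difference}) rather than by integrating along segments, so the nonconvexity of $\calS_A(c)$ causes no problem here.

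\textbf{Item (d4)(b).} This is \pref{lem:alg-resolvent-lipschitz-drift}, with $L_{\mathrm{drift}}=C_{\beta,c}$. To justify it, write
\[
\hat Q(m)(m-f(m))-\hat Q(w)(w-f(w))=(\hat Q(m)-\hat Q(w))(m-f(m))+\hat Q(w)\big[(m-w)-(f(m)-f(w))\big].
\]
For the first term, bound $\opnorm{\hat Q(m)-\hat Q(w)}\le\sqrt n\,\schnorm{\cdot}\le C\lpnorm{m-w}$ and use $\lpnorm{m-f(m)}\le C$ from (d3). For the second term, use $\opnorm{\hat Q}\le c^{-1}$ together with \pref{lem:alg-resolvent-lipschitz-f}.

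The one delicate point is the Lipschitz bound on $f$, specifically the term $\delta_{\diag}(m)m-\delta_{\diag}(w)w$. We split it as
\[
(\delta_{\diag}(m)-\delta_{\diag}(w))m+\delta_{\diag}(w)(m-w).
\]
For the first piece, since $\delta_{\diag}$ is diagonal and $\|m\|_\infty\le1$, its Euclidean norm is at most $\sqrt n\,\schnorm{\delta_{\diag}(m)-\delta_{\diag}(w)}$, and \pref{lem:alg-resolvent-lipschitz-delta} makes this $\le C\lpnorm{m-w}$. For the second piece, we use the operator bound \pref{lem:alg-resolvent-bounds-delta}.

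\textbf{Item (d5).} This is precisely \pref{lem:omega-safe-Lipschitz}. The main obstacle sits there: $\calS_A(c)$ is not convex, so there is no straight segment along which to integrate. That lemma handles it with the gap-preserving two-segment path of \pref{lem:two-segment-W-path} for $\bar\omega$, and then adds the rank-two correction $\hat Q-\bar Q$, whose Lipschitz constant comes from item (d4)(a).

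Finally, all constants depend only on $\beta$ and $c$, since they come from \pref{lem:alg-resolvent-bounds} and the choice of $\gamma_Y$, which completes the corollary.
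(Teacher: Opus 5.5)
Your proposal is correct and follows the paper's own assembly. As in the paper, (d3), (d4)(a) and (d4)(b) come directly from \pref{lem:alg-resolvent-bounds-m-f}, \pref{lem:alg-resolvent-lipschitz-Q} (after converting $\schnorm{\cdot}$ to $\|\cdot\|_F$) and \pref{lem:alg-resolvent-lipschitz-drift}, (d5) is \pref{lem:omega-safe-Lipschitz}, and your unpacking of the drift Lipschitz bound is accurate. One attribution fix: $D(m)\in\calS_D$ for $m\in\calS_A(c)$ is immediate from \pref{def:good-points}, since $a(m)\Id_n-\beta A+D(m)=\nabla^2\FT(m)+\frac{2\beta^2}{n}mm^{\sT}\succ cD(m)\succeq c\Id_n$; the calibration \eqref{eq:alg-cutoff-choice} of $\gamma_Y$ is instead what makes $\Omega_{\beta,c}$ a high-probability event, and it is not needed once $\Omega_{\beta,c}$ is assumed.
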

\begin{proof}
\ppart{Parts (d3)--(d4)}
As in \cite[Corollary 6.14]{davies2026potential}, these follow in order
from \pref{lem:alg-resolvent-bounds-m-f},
\pref{lem:alg-resolvent-lipschitz-Q},
and \pref{lem:alg-resolvent-lipschitz-drift}.
.
For (d4), convert normalized Schatten $2$-norm to Frobenius norm.

\ppart{Part (d5)} This is \pref{lem:omega-safe-Lipschitz}. \qedhere

\end{proof}

\section{Sampling from the time-\texorpdfstring{$T$}{T} localized distribution}\label{sec:sample-localized-distribution}

Using a result of Kumar, Sarkar, Tian and Zhu \cite[Theorem 4 and Corollary 5]{kumar2026high} we give a thresholding time $T(\beta) > 0$ to inform how long the ASL-TAP-JE procedure should run to obtain a warm-start for the annealing-type sampler in \cite[Algoirithm 2]{kumar2026high} run on a small wedge around a fixed tilt $y_T$. This samples from the time-$T(\beta)$ localized distribution in $\poly(n)$-time with $o_n(1)$-TVD error. This approach replaces the polarized--walk based sampler for the time-$T$ localized distribution that was used in \cite[\S 7]{davies2026potential}. The reason for this replacement is that the prior approach uses the needle-decomposition \cite{eldan2022spectral} before an application of the HS transform, which introduces a shift of $2\beta\Id_n$ to make the interactions PSD, after which a union bound over PSD sub-matrices constrained to have small-enough operator norm forces an upper bound of $\beta < 1/2$ due to the shift. The approach of Kumar, Sarkar, Tian and Zhu bypasses the obstacle in the prior approach of the authors \cite[\S 7.2]{davies2026potential} by reasoning about a Dobrushin-type condition straight away on the wedge around the tilt $y_T$ (which the localized measure will concentrate on with high probability for any $\beta > 0$, see \cite[\S 7]{davies2026potential}).

We invoke \cite[Corollary 5]{kumar2026high} with $J := A$\footnote{\,Note that since we are fixing the plant $y_T$, we don't need to invoke contiguity to the planted model. Since we will choose $T$ large enough so that the localized measure concentrates on a desirably small wedge, we will argue about sampling in that edge \emph{uniformly} across the choice of tilts $\sigma_0 = \sign(y_T)$.} and  $h := y_{T(\beta)}=T(\beta)x_0+\sqrt{T(\beta)}\,B$. We know that with probability $1 - e^{-c(\beta)n}$ over the SL process, the localized measure concentrates on a wedge $B(\sign(y_T), \ep(\beta,T)n)$ \cite[Lemma 7.43]{davies2026potential}. The following theorem then shows efficient sampling for this wedge.

\begin{theorem}%
[Sampling on small wedges via {\cite[Algorithm 2]{kumar2026high}}]
\label{thm:localized-sampler}
Fix $\be>0$, $\eps(\beta) = \frac{1}{C\bar{\beta}^2\log(e\bar{\beta})}$ with $C>0$ some large enough universal constant and $\bar{\beta} = \max\{1,\beta\}$, and $k\le \eps(\beta)n$.
Then, with probability $1-o_n(1)$, 
for any $ 0 < \beta' < \beta$ and $0<\varepsilon < 1/2$, uniformly over $y\in \R^n$, letting $\si_0=\sign(y) \in \{-1,1\}^n$, \cite[Algorithm 2]{kumar2026high} run for $2\log\left(\frac{16}{\eps^2(\beta')}\right)$ annealing levels, with $\poly(n)\log(1/\ep)$ Markov chain steps for each level,
gives a sample that is $\ep$ close in TV distance from $\mu_{\be' A, y}|_{B(\si_0, \ep n)}$.
\end{theorem}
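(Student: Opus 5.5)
The plan is to reduce the statement to \cite[Theorem 4 and Corollary 5]{kumar2026high}, which give a Dobrushin-type contraction for Glauber dynamics on $\mu_{J,h}$ restricted to a Hamming ball around a fixed $\sigma_0$. The contraction holds once the ball radius is at most $\eps(\beta)n$ with $\eps(\beta)=1/(C\bar\beta^2\log(e\bar\beta))$. We take $J=\beta' A$ and $h=y$. The argument has three steps:
\begin{enumerate}
\item Fix a single high-probability event on $A$ that does not depend on $y$, $\beta'$, or $\sigma_0$.
\item Verify on that event the hypotheses of the cited corollary uniformly over all centers $\sigma_0\in\{-1,1\}^n$ and all $\beta'\in(0,\beta)$.
\item Chain the per-level mixing guarantees of the annealing schedule into a total variation bound.
\end{enumerate}

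\textbf{Step 1 (the $A$-event).} I would take the event to be the intersection of $\{\opnorm{A}\le3\}$ (\pref{prop:goe-norm-tail}) with the event of \cite{kumar2026high} controlling the restricted interaction. Concretely, that second event says that for every subset $S\subseteq[n]$ with $|S|\le 2\eps(\beta)n$, the principal submatrix satisfies $\opnorm{A_{S}}\lesssim \sqrt{\eps(\beta)\log(e/\eps(\beta))}$. This follows from a union bound over $\binom{n}{\eps n}\le e^{\eps n\log(e/\eps)}$ subsets together with Gaussian concentration of $\opnorm{A_S}$. The resulting probability is $1-o_n(1)$, and because no tilt enters, the bound holds uniformly in $y$.

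Monotonicity in $\beta'\le\beta$ is immediate: every interaction bound scales linearly in $\beta'$, and $\eps(\beta)\le\eps(\beta')$. So a single event serves all $\beta'$ at once. The same reasoning covers all temperatures in the annealing schedule and hence simulated annealing in Part 6 of the proof of \pref{t:main}.

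\textbf{Step 2 (uniformity in the tilt).} The key observation is that the external field $y$ enters the restricted measure only through single-site terms $\langle y,\sigma\rangle$. A Dobrushin or influence-matrix bound for Glauber dynamics is insensitive to external fields, because each pairwise influence is bounded by $\beta'|A_{ij}|$ times a derivative of $\tanh$, which is at most $1$.

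Restricting to $B(\sigma_0,k)$ makes the chain live on flips within the ball. There the relevant quantity is the restricted operator norm from Step 1, so the contraction constant does not depend on $y$ or $\sigma_0$. I would invoke \cite[Corollary 5]{kumar2026high} at each annealing level to obtain $\poly(n)\log(1/\ep)$ mixing from the previous level's output, used as a warm start.

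\textbf{Step 3 (annealing bookkeeping).} With $2\log(16/\eps^2(\beta'))$ levels, consecutive restricted measures have bounded density ratio, which gives warm starts. Summing the per-level TV errors and running each level for $\log(\text{levels}/\ep)$ times the mixing time then yields error $\ep$.

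The main obstacle is Step 2: confirming that the hypotheses of \cite[Corollary 5]{kumar2026high} are deterministic consequences of the Step-1 event for arbitrary centers $\sigma_0$ and fields $y$. The cited statement is phrased for a random field, and the ball is centered at $\sign(y)$ rather than at a planted point. My first attempt would be to re-read their proof to isolate exactly where randomness of $h$ is used. I expect that randomness to enter only in showing that the measure concentrates on the wedge, which we instead import from \cite[Lemma 7.43]{davies2026potential}, and not in the mixing bound on the wedge.
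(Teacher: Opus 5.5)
Your outer structure matches the paper's. Mixing on the wedge comes from \cite{kumar2026high}, concentration on the wedge is imported separately from \cite[Lemma 7.43]{davies2026potential}, and $\eps(\beta)\le\eps(\beta')$ lets one event on $A$ serve every $\beta'<\beta$. The gap is the step you flag yourself and leave open. \cite{kumar2026high} works with configurations having few plus signs (a highly-magnetized slice), not a Hamming ball around an arbitrary, possibly $A$-dependent, center $\sign(y)$, and your Step~2 does not bridge this. The paper's proof consists of exactly the missing idea, a gauge transformation. Take $R_y=\diag(\sign(y))$ and $x=-R_y\sigma$. Then $\sigma\in B(\sign(y),k)$ if and only if $x$ has at most $k$ plus signs, and
\[
\tfrac{\beta'}{2}\langle\sigma,A\sigma\rangle+\langle y,\sigma\rangle
=\tfrac{\beta'}{2}\langle x,R_yAR_y\,x\rangle-\langle R_yy,x\rangle .
\]
So the wedge measure becomes a measure on configurations with at most $k$ plus signs, with interaction $R_yAR_y$ and a deterministic field. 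This is the setting of \cite{kumar2026high}, and the proof of \cite[Theorem 7]{kumar2026high}, through the Dobrushin-type bounds of \cite[Lemma 8]{kumar2026high}, applies verbatim. No re-reading is needed to find where randomness of the field enters.

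Your stated mechanism for field-insensitivity is also not the right one. Entrywise influences $\beta'|A_{ij}|$ are indeed uniform in the field. But any criterion built from them involves sums $\sum_{j\in S}|A_{ij}|$, which are of order $\eps\sqrt n$ even for $|S|\le\eps n$. Such a criterion therefore cannot use the signed bound $\opnorm{A_S}\lesssim\sqrt{\eps\log(e/\eps)}$ from your Step~1.

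The transformation explains why the signed bound is the relevant quantity. Write $x=-\mathbf 1+2\mathbf 1_P$, where $P$ is the set of flipped spins. Then both $y$ and the vector $R_yA\sign(y)$ enter only as one-body potentials on $P$. The only pair interaction is $2\beta'\,\mathbf 1_P^{\top}R_yAR_y\mathbf 1_P$, which is controlled by principal submatrices of $R_yAR_y$ of size $O(\eps n)$, and these have the same norms as those of $A$.

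Once this step is added, your Step~1 event becomes an asset. It is invariant under $A\mapsto RAR$ for every diagonal sign matrix $R$, since $\opnorm{(RAR)_S}=\opnorm{A_S}$. Hence it delivers the hypotheses for all $2^n$ centers simultaneously, which is what uniformity over $y$ actually requires. The paper's appeal to $R_yAR_y\overset{d}{=}\mathsf{GOE}(n)$ by itself only gives the good event for each fixed $y$ chosen independently of $A$. Your Step~3 (annealing bookkeeping) is internal to \cite[Algorithm 2]{kumar2026high}, and the paper does not redo it.
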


\begin{proof}
    Define the diagonal matrix $R_{y} = \diag(\sign(y))$ and set $x = -R_{y}\sigma$ for any $\sigma \in B(\sign(y),\eps(\beta')n)$. Clearly, then, $\sigma$ can be in the wedge around $y$ if, and only if, the number of positive signs in $x$ are no more than $\eps(\beta')n$. This bijection between centering on $y$ and $0^n$ allows the measure on the wedge to be rewritten as
    \[
        \mu_{y}(x) \propto \exp\left\{\frac{\beta}{2}\left(x^\sT(R_y JR_y)x - (R_yy)^\sT x\right)\right\}\,,
    \]
    where we use the distributional equivalence $R_yJR_y \overset{d}{=} \mathsf{GOE}(n)$. Denote the new parameters $J = \frac{1}{2}R_y AR_y$ for $A\sim\mathsf{GOE}(n)$ and $h = -\frac{2R_yy}{\beta}$. Note that the \cite[Proof of Theorem 7]{kumar2026high} applies verbatim with the interaction matrix $R_y J R_y$ due to the distributional equivalence before since the Dobrushin-type bounds based on \cite[Lemma 8]{kumar2026high} remain unchanged. Note that we choose $\eps(\beta) = \frac{1}{C\bar{\beta}^2\log(e\bar{\beta})} \le \frac{1}{C\bar{\beta'}^2\log(e\bar{\beta'})} = \eps(\beta')$ so that the validity of \cite[Theorem 4]{kumar2026high} holds when invoking \cite[Corollary 5]{kumar2026high}, and we can get the mixing to hold simultaneously on the same event for $A$ for every $0<\beta'<\beta$. 
\end{proof}

\newpage

\addtocontents{toc}{\protect\setcounter{tocdepth}{-1}}

\section*{Acknowledgements}
\addtocontents{toc}{\protect\setcounter{tocdepth}{1}}

\iffocs{}{
    JSS thanks David Jekel for introducing him to the coefficient-matrix representation for Gaussian random matrices used in the work of Bandeira, Boedihardjo and van Handel~\cite{bandeira2023matrix}, the operator-valued convergence established in the work of Jekel, Lee, Nelson and Pi~\cite{jekel2025strong}, and the idea of the block matrix representation used in  \pref{eq:block-trace-functional} which led to the simplification of the free interpolation in \pref{sec:option-c-free-comparison} compared to the authors' original approach based on \cite[\S B.6]{davies2026potential}. 
}

\textbf{AI Disclosure:} We used generative AI to assist with verification. 
AI was used to help choose from different valid choices of \pref{l:je-survival}, \pref{eq:rational-reciprocal-cutoff}, and short-path constructions in \pref{lem:two-segment-W-path}, to aid the authors in selecting the most readable options.

\textbf{Declaration:} The authors of this article do not consent to having the contents of this article be used for the training or development of LLMs/AI models.

\vspace{-1mm}

\addtocontents{toc}{\protect\setcounter{tocdepth}{-1}}

{
    \small\hypersetup{urlcolor=Black}
    \bibliographystyle{alpha_beta_doi}
    \bibliography{main.bib}
}

\addtocontents{toc}{\protect\setcounter{tocdepth}{1}}

\newpage 

\appendix
\normalsize

\section{Resolvent product regularity and subordination at the TAP value}\label{app:deformed-wigner-resolvent}
This section has two goals -- the first is a series of bounds on finite differences of matrices made of resolvent products, and the second is a proof of a spectral gap with respect to a particular value for the free convolution of a semicircular operator and a diagonal operator with explicit empirical density. 
\subsection{Regularity estimates for resolvent products} 
\newcommand{\tM}{\widetilde{M}}
\newcommand{\tW}{\widetilde{W}}
The bounds here are finite-difference analogues of those presented in \cite[Lemma B.4]{davies2026potential}. They are quantitative estimates for finite differences between various matrices involving products of resolvents, when the minimum of the resolvent's spectrum is bounded to be greater than $0$.
\begin{lemma}[{Bounds on differences of resolvent products (analog of \cite[Lemma B.4]{davies2026potential})}]
    \label{lem:resolvent-difference}
Let $W_0,W_1\in\R^{n\times n}$ be invertible symmetric matrices,
and let $M_0,M_1\in\C^{n\times n}$.
Suppose that, for $j=0,1$,
\[
    \min\Spec(|M_j+W_j^{-1}|)\ge\gamma,
    \qquad
    \opnorm{M_j}\le C_1,
\]
where $\gamma>0$ and $C_1\ge0$.
Let $C_2:=(\gamma+C_1)\gamma^{-1}$.

    Let $D(W) := W^{-1}$, $X(W,M) := M + D(W)$, and $R(W,M) := X(W,M)^{-1}$ whenever the stated inverses exist.
    
    Let $\Delta$ be the difference operator operating on functions of $W$ and $M$, so that $\Delta f := f(W_1, M_1) - f(W_0,M_0)$.
    Let $T \in \C^{n \times n}$ be fixed and take all exponents $k$ below to be integers.
    Then:
    \begin{enumerate}[label=(\alph*), ref=\thetheorem(\alph*)]
        \item \[\Lpnorm{\Delta R} \le \gamma^{-2}\Lpnorm{\Delta M} + C_2^2\Lpnorm{\Delta W}.\]
        \label{lem:resolvent-difference-dR}
        \item For $k \ge 1$, \[\Lpnorm{\Delta(R^k)} \le k\gamma^{-(k+1)}\Lpnorm{\Delta M} + kC_2^2\gamma^{-(k-1)}\Lpnorm{\Delta W}.\]
        \label{lem:resolvent-difference-dRk}
        \item \[\Lpnorm{\Delta(DR)} \le C_2\gamma^{-1}\Lpnorm{\Delta M} + C_1C_2^2\Lpnorm{\Delta W}.\]
        \label{lem:resolvent-difference-dDR}
        \item For $k \ge 1$,
        \[\Lpnorm{\Delta(DR^k)} \le kC_2\gamma^{-k}\Lpnorm{\Delta M} + (kC_2-1)C_2^2\gamma^{-(k-2)}\Lpnorm{\Delta W}.\]
        \label{lem:resolvent-difference-dDRk}
        \item For $k \ge 2$, \[\Lpnorm{\Delta(DR^kD)} \le kC_2^2\gamma^{-(k-1)}\Lpnorm{\Delta M} + (kC_2-2)C_2^3\gamma^{-(k-3)}\Lpnorm{\Delta W}.\]
        \label{lem:resolvent-difference-dDRkD}
    \end{enumerate}
\end{lemma}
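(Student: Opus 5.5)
The whole lemma rests on the second resolvent identity together with a telescoping product rule for finite differences. I would begin by recording the basic operator bounds at each endpoint $j=0,1$. Write $X_j=M_j+W_j^{-1}$ and $R_j=X_j^{-1}$. The hypothesis $\min\Spec(|X_j|)\ge\gamma$ gives $\opnorm{R_j}\le\gamma^{-1}$. The identity $D_jR_j=W_j^{-1}R_j=\Id_n-M_jR_j$ then gives $\opnorm{D_jR_j}\le 1+C_1\gamma^{-1}=C_2$. Symmetrically, $\opnorm{R_jD_j}\le C_2$. For the two-sided weighted resolvent, $D_jR_jD_j=D_j-M_j+M_jR_jM_j$, and I expect this (or the factorization $(\Id_n-M_jR_j)D_j$) to give a bound of order $C_2^2\gamma$. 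These are exactly the finite-difference counterparts of the pointwise bounds used in \cite[Lemma B.4]{davies2026potential}.

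For part (a), I would apply $R_1-R_0=-R_1(X_1-X_0)R_0$ with $X_1-X_0=\Delta M+(W_1^{-1}-W_0^{-1})$. The $\Delta M$ contribution is bounded by Hölder, $\opnorm{R_1}\Lpnorm{\Delta M}\opnorm{R_0}\le\gamma^{-2}\Lpnorm{\Delta M}$. For the $W$ contribution, I use $W_1^{-1}-W_0^{-1}=-W_1^{-1}(\Delta W)W_0^{-1}$. This yields the term $R_1D_1(\Delta W)D_0R_0$, which is bounded by $C_2^2\Lpnorm{\Delta W}$ using the weighted bounds $\opnorm{R_1D_1},\opnorm{D_0R_0}\le C_2$. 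This is the key point: the possibly unbounded $D$'s are always absorbed into the adjacent resolvents.

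Part (b) follows from telescoping, $\Delta(R^k)=\sum_{i=0}^{k-1}R_1^{i}(\Delta R)R_0^{k-1-i}$. Here I must not apply (a) blindly, since each summand should carry its factors of $\gamma$ correctly. For the $\Delta W$ piece, I expand $\Delta R$ as in (a) and obtain $R_1^{i+1}D_1\Delta W D_0R_0^{k-i}$. Bounding $\opnorm{R_1^{i+1}D_1}\le\gamma^{-i}C_2$ and $\opnorm{D_0R_0^{k-i}}\le C_2\gamma^{-(k-1-i)}$ then gives $kC_2^2\gamma^{-(k-1)}$. The $\Delta M$ piece gives $k\gamma^{-(k+1)}$.

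For (c)--(e), I write $DR=\Id_n-MR$, so that $\Delta(DR)=-(\Delta M)R_1-M_0\Delta R$. Expanding $\Delta R$ by (a) again keeps the $D$'s adjacent to resolvents, which is where the $C_1C_2^2$ factor comes from. Similarly, I use $DR^k=(DR)R^{k-1}$ and $DR^kD=(DR)R^{k-2}(RD)$, and telescope with the discrete product rule $\Delta(fg)=(\Delta f)g_1+f_0\Delta g$. In each term I distribute the outer $D$'s onto neighbouring resolvents using $\opnorm{DR},\opnorm{RD}\le C_2$. I then collect terms. The specific coefficients $(kC_2-1)$ and $(kC_2-2)$ should emerge from exploiting $DR=\Id_n-MR$ for the outermost factor, rather than bounding it crudely by $C_2$.

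I expect the main obstacle to be this bookkeeping in (d)--(e). The requirement is that every occurrence of $D_j$, including those generated by $\Delta(W^{-1})$, is paired with a resolvent on the correct side. At the same time, each product must retain exactly the stated powers of $\gamma$, since the sign of the exponent $k-3$ in (e) matters. I would first verify the case $k=2$ of (e) by hand, and then induct on $k$ via $R^{k+1}=R^kR$. If the exact constants prove stubborn, I would settle for the same structure with a universal constant, which is all that is needed downstream in \pref{lem:alg-resolvent-lipschitz}. Complex $M$ causes no difficulty, since only operator norms and Hölder's inequality are used.
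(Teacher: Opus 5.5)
Your proposal is correct and is essentially the paper's argument: the resolvent identity with $\Delta D=-D_1(\Delta W)D_0$, the discrete product and power rules, and absorbing every $D$ into an adjacent resolvent via $DR=\Id_n-MR$ and $RD=\Id_n-RM$. Your three-factor split $(DR)R^{k-2}(RD)$ in (e), instead of the paper's $(DR)\cdot(R^{k-1}D)$, gives exactly the stated coefficients $kC_2^2\gamma^{-(k-1)}$ and $(kC_2-2)C_2^3\gamma^{-(k-3)}$. In (b), simply applying (a) to each telescoped summand already gives the stated bound, so that caution was unnecessary.

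One side remark is false, though you never use it: $\opnorm{DRD}$ is not $O(C_2^2\gamma)$ in general. Since $DRD=D-M+MRM$, it grows with $\opnorm{D}$; for example, take $n=1$, $M=0$, $\gamma=1$ and $D\to\infty$.
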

\begin{proof}
    For $f$ a matrix-valued function of $W$ and $M$, we define the evaluation operators $f_1 := f(W_1, M_1)$ and $f_0 := f(W_0, M_0)$ so that $\Delta f = f_1 - f_0$.
    Then we have the discrete non-commutative sum, product, power, and inverse rules.
 Explicitly,
 \begin{itemize}
     \item $\Delta(f+g) = \Delta f + \Delta g$,
     \item $\Delta(fg) = f_1(\Delta g) + (\Delta f)g_0$,
     \item $\Delta(f^k) = \sum_{i = 1}^{k}f_1^{i-1}(\Delta f)f_0^{k-i}$,
     \item $\Delta(f^{-1}) = -f_1^{-1}(\Delta f)f_0^{-1}$ (also known as the resolvent identity).
 \end{itemize}

    Note by the resolvent identity, $\Delta D = -D_1(\Delta W)D_0$ and $\Delta X = \Delta M - D_1(\Delta W)D_0$.
    By the argument of \cite[Lemma B.1]{davies2026potential}, we have $\opnorm{R^k} \le \gamma^{-k}$ and $\opnorm{DR^{k+1}} \le C_2\gamma^{-k}$, which we will use in every following part.
    We will also repeatedly use the non-commutative H\"older inequality.

    \ppart{Part \ref{lem:resolvent-difference-dR}} We apply the just-mentioned operator norm bounds after computing the derivative
    \[\Delta R = -R_1(\Delta X)R_0 = -R_1(\Delta M)R_0 + (RD)_1(\Delta W)(DR)_0.\]

    \ppart{Part \ref{lem:resolvent-difference-dRk}}
    We use the discrete non-commutative power rule
    \[\Delta(R^k) = \sum_{j=1}^{k} R_1^{j-1}(\Delta R)R_0^{k-j},\]
    then apply \pref{lem:resolvent-difference-dR} and the same operator norm bounds from before.

    \ppart{Part \ref{lem:resolvent-difference-dDR}}
    By the resolvent identity, $DR = \Id - MR$. Hence $\Delta(DR) = - M_1(\Delta R) - (\Delta M)R_0 $. then apply \pref{lem:resolvent-difference-dR} and $\opnorm{M} \le C_1$ and $C_2 = 1 + C_1\gamma^{-1}$.

    \ppart{Part \ref{lem:resolvent-difference-dDRk}}
    Use the discrete non-commutative product rule to compute 
    \[\Delta(DR^k) = (DR)_1\Delta(R^{k-1}) + \Delta(DR)R_0^{k-1}.\]
    Then apply \pref{lem:resolvent-difference-dDR} and \pref{lem:resolvent-difference-dRk} and, for the $\Lpnorm{\Delta W}$ term, $C_1C_2^2\gamma^{-(k-1)} + (k-1)C_2^3\gamma^{-(k-2)} = (kC_2-1)C_2^2\gamma^{-(k-2)}$.

    \ppart{Part \ref{lem:resolvent-difference-dDRkD}} 
    Use the discrete non-commutative product rule to compute 
    \[\Delta(DR^kD) = (DR)_1\Delta(R^{k-1}D) + \Delta(DR)(R^{k-1}D)_0,\]
    then apply \pref{lem:resolvent-difference-dDR} and \pref{lem:resolvent-difference-dDRk} and collect terms.
\end{proof}

\subsection{Free analysis at the TAP value of $a=\beta^2\tr_n[D^{-1}(m)]$} The following deterministic lemma gives the diagonal control and spectral gap in the free limit used in \pref{cor:diagonal-Q2-controlled-in-l2}. For the following lemma, $D$ and $S$ are free from each other.

\begin{lemma}[The TAP free limit for $\beta<1$]
\label{lem:option-c-tap-free-endpoint}
Let $0<\beta<1$, $D\succeq\Id_n$, and $a=\beta^2\tr_n(D^{-1})$. Then
\begin{equation}
\label{eq:option-c-tap-free-gap}
    a\Id_n-\beta S+D\succeq(1-\beta)^2\Id_n.
\end{equation}
For all
\begin{equation}
\label{eq:option-c-tap-cutoff}
    0<\gamma_Y\le\frac{(1-\beta)^4}{4(1+2\beta^2)^2},
\end{equation}
it is true uniformly in $D\succeq\Id_n$ that
\begin{equation*}
    (\Id_n+D^{-1}(a\Id_n-\beta S))(\Id_n+(a\Id_n-\beta S)D^{-1})
       \succeq4\gamma_Y\Id_n.
\end{equation*}
For every such $D$ and $a$,
\[
    E_{M_n}\!\left[D(a\Id_n-\beta S+D)^{-2}D\right]
       =\frac{\Id_n}{1-\beta^2\tr_n(D^{-2})}.
\]
\end{lemma}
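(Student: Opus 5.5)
The plan is to use operator-valued subordination for the free additive convolution of the diagonal operator $D$ with the $M_n$-valued semicircular $-\beta S$, whose covariance is $\eta_2(H)=\tr_n(H)\Id_n$ by \eqref{eq:option-c-gue-covariance}. For $z$ outside the spectrum, the conditional expectation of the resolvent $G(z):=E_{M_n}[(z\Id_n-(D-\beta S))^{-1}]$ satisfies
\[
G(z)=\bigl(z\Id_n-D-\beta^2\tr_n(G(z))\Id_n\bigr)^{-1}.
\]
Thus $G$ is diagonal and determined by the scalar $g(z)=\tr_n G(z)$ through $g=\tr_n[(z-\beta^2 g-D)^{-1}]$. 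We are interested in the operator $a\Id_n-\beta S+D$, that is, in the point $z=-a$ for $D-\beta S$ (after $S\mapsto -S$, which leaves the law unchanged). At that point we want to show that the self-consistent solution is $g=-\tr_n(D^{-1})$. Indeed, if $g=-\tr_n(D^{-1})$ then $z-\beta^2 g=-a+\beta^2\tr_n(D^{-1})=0$, and the right-hand side becomes $\tr_n[(-D)^{-1}]=-\tr_n(D^{-1})$. The equation is therefore satisfied exactly, giving $E_{M_n}[(a\Id_n-\beta S+D)^{-1}]=D^{-1}$. This is the TAP choice of $a$.

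The next step is the spectral gap \eqref{eq:option-c-tap-free-gap}. I would show that $-a$ lies strictly below the spectrum of $D-\beta S$, at distance at least $(1-\beta)^2$. Consider the real subordination map $\omega(g)=z-\beta^2 g$, and use the standard criterion for the lower edge of the spectrum: for $z$ below the support, the solution lies on the branch where $|\partial_z g|$ is finite. Equivalently, the stability condition $1-\beta^2\tr_n[(\omega-D)^{-2}]>0$ holds at the point. At $\omega=0$ this condition reads $1-\beta^2\tr_n(D^{-2})\ge 1-\beta^2>0$, and since $D\succeq\Id_n$ it is uniform in $D$. To get the quantitative distance, I would follow the real branch $x\mapsto \omega(x)$ as $x$ increases from $-a$. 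The edge is reached exactly when $\beta^2\tr_n[(\omega-D)^{-2}]=1$. Since $\omega\le 1-\beta$ keeps $\tr_n[(\omega-D)^{-2}]\le(1-\omega)^{-2}\le\beta^{-2}$, the edge corresponds to some $\omega_e\ge 1-\beta$. From $x=\omega+\beta^2\tr_n[(\omega-D)^{-1}]$ and $dx/d\omega=1-\beta^2\tr_n[(\omega-D)^{-2}]\ge 1-\beta^2/(1-\omega)^2$, integrating from $0$ to $1-\beta$ gives $x_e-(-a)\ge\int_0^{1-\beta}\bigl(1-\beta^2(1-\omega)^{-2}\bigr)d\omega=(1-\beta)^2$. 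This yields \eqref{eq:option-c-tap-free-gap}. I expect this edge characterization, namely justifying that the spectral edge of the operator-valued free convolution is where the real branch of the subordination function stops being monotone, to be the main obstacle. The cleanest route is to cite the analytic subordination theory for operator-valued semicirculars with diagonal (commutative) covariance.

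With the gap in hand, the block condition follows by a short computation. Set $X=a\Id_n-\beta S+D\succeq(1-\beta)^2$, so that $\Id_n+D^{-1}(a\Id_n-\beta S)=D^{-1}X$. The product then equals $D^{-1}X^2D^{-1}$. We have $\opnorm{D^{-1}}\le 1$, but we also need $D$ bounded from above relative to $X$. Writing $D=X-(a-\beta S)$ with $\opnorm{a\Id_n-\beta S}\le\beta^2+2\beta\le 1+2\beta^2$, we get $\opnorm{DX^{-1}}\le 1+(1+2\beta^2)/(1-\beta)^2\le (1+2\beta^2)/(1-\beta)^2\cdot 2$, or a similar bound. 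Hence $D^{-1}X^2D^{-1}\succeq \opnorm{XD^{-1}\cdot}^{-2}$ in the sense $\sigma_{\min}(D^{-1}X)\ge 1/\opnorm{X^{-1}D}\ge (1-\beta)^2/(1+2\beta^2)\cdot c$, which gives $\succeq4\gamma_Y$ under \eqref{eq:option-c-tap-cutoff}. The constants may need adjusting to match exactly.

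For the squared resolvent, I would differentiate the subordination identity in $z$. Using $\partial_z G=-E_{M_n}[(z-D+\beta S)^{-2}]$ and $G=(\omega(z)-D)^{-1}$ with $\omega'=1-\beta^2\tr_n\partial_z G$ gives $\partial_zG=-(\omega-D)^{-2}\omega'$. Taking traces, $\omega'=1/(1-\beta^2\tr_n[(\omega-D)^{-2}])$. At $\omega=0$ this yields $E_{M_n}[X^{-2}]=D^{-2}/(1-\beta^2\tr_n(D^{-2}))$. Conjugating by the diagonal $D$, which commutes with $E_{M_n}$ as a bimodule map, gives the final identity.
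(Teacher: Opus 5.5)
Your proposal follows the paper's proof closely, in mirror image. Your map $\omega\mapsto\omega+\beta^2\tr_n[(\omega\Id_n-D)^{-1}]$ for the lower edge of $D-\beta S$ becomes the paper's inverse-subordination map $u\mapsto u+\beta^2\tr_n[(D+u\Id_n)^{-1}]$ for the upper edge of $\beta S-D$ under $u=-\omega$. Integrating its derivative over $[0,1-\beta]$ gives the same bound as the paper's evaluation at $u=\beta-1$. The target identity then comes from $F(a)=0$ plus one differentiation, exactly as in the paper. Both arguments rest on the same cited support and branch characterization (the exterior branch is where the derivative of the inverse subordination map is positive). So the gap and the squared-resolvent identity are fine.

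The step that does not work as written is the constant in the product bound. You replace $\opnorm{a\Id_n-\beta S}\le\beta^2+2\beta$ by $1+2\beta^2$ before adding the $1$, and then bound the result by $2(1+2\beta^2)/(1-\beta)^2$. That only gives $D^{-1}X^2D^{-1}\succeq(1-\beta)^4/(4(1+2\beta^2)^2)\,\Id_n$. This is $\succeq\gamma_Y\Id_n$, but not $\succeq4\gamma_Y\Id_n$, when $\gamma_Y=(1-\beta)^4/(4(1+2\beta^2)^2)$ is at the top of the allowed range. Even the undoubled bound $1+(1+2\beta^2)/(1-\beta)^2$ is too weak there.

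The fix is to keep the sharper quantity. Since $a=\beta^2\tr_n(D^{-1})\le\beta^2$ and $(1-\beta)^2+\beta^2+2\beta=1+2\beta^2$, you get
\[
\opnorm{X^{-1}D}=\opnorm{\Id_n-X^{-1}(a\Id_n-\beta S)}\le 1+\frac{\beta^2+2\beta}{(1-\beta)^2}=\frac{1+2\beta^2}{(1-\beta)^2}.
\]
Hence $D^{-1}X^2D^{-1}\succeq\opnorm{X^{-1}D}^{-2}\Id_n\succeq(1-\beta)^4(1+2\beta^2)^{-2}\Id_n\succeq4\gamma_Y\Id_n$ uniformly in $D\succeq\Id_n$. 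This is exactly the paper's computation.
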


\begin{proof}
We first show that the range of the subordination function allows one to conclude a gap between $a=\beta^2\tr_n[D^{-1}]$ and the supremum of the bulk spectrum of the free convolution of $\beta S$ and $-D$ when $\beta < 1$. We then use the inverse identity and analyticity of the subordination function at $z=a$ to explicitly evaluate $F$ and its derivative to compute the free target.
\ppart{The free gap} 
The real inverse-subordination map for $\beta S-D$ \cite[Lemma B.18]{davies2026potential} is
\[
    u\longmapsto u+\beta^2\tr_n[(D+u\Id_n)^{-1}].
\]
For $u>\beta-1$, it is defined to the right of $\Spec(-D)$, and its derivative is strictly positive since
$\beta^2\tr_n[(D+u\Id_n)^{-2}]<1$.
The support characterization in
\cite[Proposition 2.2]{capitaine2011free} identifies this interval with part of the right exterior branch. Letting $u\downarrow\beta-1$ gives
\[
    \sup\Spec(\beta S-D)
       \le\beta-1+\beta^2\tr_n[(D+(\beta-1)\Id_n)^{-1}].
\]
Indeed, the image is an interval connected to $+\infty$ and disjoint from the free support. Subtracting this upper bound from $a$ yields
\begin{align*}
    a-\sup\Spec(\beta S-D)
    &\ge(1-\beta)\left[
       1-\beta^2\tr_n\!\left(D^{-1}(D+(\beta-1)\Id_n)^{-1}\right)
       \right]\\
    &\ge(1-\beta)^2,
\end{align*}
because $D^{-1}(D+(\beta-1)\Id_n)^{-1}\preceq\beta^{-1}\Id_n$.
Faithfulness of the trace identifies the free support with the operator spectrum, proving \eqref{eq:option-c-tap-free-gap}.

\ppart{The product cutoff and the target}
The inverse identity and the free gap imply
\begin{align*}
    \opnorm{(\Id_n+D^{-1}(a\Id_n-\beta S))^{-1}}
       &=\opnorm{(a\Id_n-\beta S+D)^{-1}D}\\
       &\le1+\frac{a+2\beta}{(1-\beta)^2}
        \le\frac{1+2\beta^2}{(1-\beta)^2}.
\end{align*}
Thus the free product is at least
$(1-\beta)^4(1+2\beta^2)^{-2}\Id_n$, and
\eqref{eq:option-c-tap-cutoff} implies the required buffer.

Let $F=F_{\beta,-D}$ be the subordination function. The inverse-subordination map above has positive derivative at $u=0$ and takes $0$ to $a$. This implies its (exterior) inverse satisfies
\[
    F(a)=0,\qquad
    F'(a)=\frac{1}{1-\beta^2\tr_n(D^{-2})}.
\]
Subordination under a non-commutative conditional expectation \cite[Theorem 3.1]{biane1998processes} gives $E_{M_n}[(z\Id_n-\beta S+D)^{-1}]=(F(z)\Id_n+D)^{-1}$. Note that $z=a$ is outside the free spectrum by \eqref{eq:option-c-tap-free-gap}, and so $\partial_z F(z)$ exists at $z=a$. Evaluating the derivative and conjugating by $D$ proves the asserted free target. %
\end{proof}

\section{Self-stability for the cavity interpolation of \texorpdfstring{\cite[\S 5]{davies2026potential}}{} at \texorpdfstring{$\beta < 1$}{beta < 1}}\label{sec:cavity-interpolation}
\paragraph{Extension of cavity interpolation theory in \cite[\S 5]{davies2026potential} to $\beta < 1$}
To see that desideratum (1) holds for the full range $ \beta < 1$ (as opposed to the stated bound of $\beta < 1/2$ in \cite{davies2026potential}) an inspection of the arguments in \cite[\S 4,\,\S 5,\,\S A]{davies2026potential} make it clear only two statements need to be strengthened:
\begin{enumerate}[itemsep=0.2em]
    \item \textbf{Nishimori condition up to $\beta < 1$:} The fixed-point equations for $(m^*,q^*)$ stated in \cite[Theorem 5.1]{davies2026potential} have unique solutions for $\beta < 1$ which satisfy the Nishimori condition that $m^* = q^*$. 
    \item \textbf{Spectral self-stability for $\mathbf M$ up to $\beta < 1$:} The operator $\mathbf M$ in \cite[\S 5]{davies2026potential} should satisfy the spectral condition that $\beta^2\rho(\mathbf M) < 1$ for every $\beta < 1$, as opposed to $\beta < 1/2$ as shown in \cite[\S 5.5]{davies2026potential}. 
\end{enumerate}
As it turns out, both these conditions are rather elementary to show. The first condition is a direct consequence of \cite[Lemma 2.1]{li2026overlap}. The second condition follows by avoiding the convenient (but lossy) step of using the Frobenius norm to bound the operator norm in the proof of \cite[Lemma 5.31]{davies2026potential} and replacing it with the use of the Nishimori identity ($m^*=q^*$) to simplify the entries of operator $\mathbf X$ followed by conjugation with a diagonal matrix to obtain a similar matrix $\mathbf K(z)$ which is anti-symmetric and whose largest singular value is bounded by $1$. This leads to the bound $\rho(\mathbf M) < 1$ from which it straightforwardly follows that $\beta^2\rho(\mathbf M) < 1$ for $\beta < 1$. 

\begin{corollary}[Overlap and magnetization concentration for $\beta < 1$]\label{cor:overlap-mag-conc}
    For $0<\beta<1$ and any fixed time $0 \le t \le T(\beta) < \infty$, and every positive integer $k \in \Z_{+}$, 
    \[
        \E_{A,B_t}\an{(R_{12}-q^*)^{2k}}_t \le \frac{C(k)}{n^k} + C(\beta,k)e^{-c(k,t)n}\,,
    \]
    and
    \[
        \E_{A,B_t}\an{(M-m^*)^{2k}}_t \le \frac{C'(k)}{n^k} + C'(\beta,k)e^{-c(k,t)n}
    \]
\end{corollary}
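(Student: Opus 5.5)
The plan is to rerun the cavity/interpolation argument of \cite[\S 4--5, \S A]{davies2026potential} essentially verbatim. Only the two inputs singled out just above change, since they were the only places where $\beta<1/2$ was used. The overall structure is a moment recursion for the centered overlap $R_{12}-q^*$ and the centered magnetization $M-m^*$ in the planted SK model with SL tilt at time $t$. We work first in the planted model and then transfer to $\E_{A,B_t}$ by contiguity for $\beta<1$ \cite[\S 2]{el2022sampling}. The exponentially small additive term $C(\beta,k)e^{-c(k,t)n}$ absorbs the bad events from this step and from the operator-norm bound of \pref{prop:goe-norm-tail}.

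\textbf{Step 1: the fixed point.} I will show that the fixed-point system of \cite[Theorem 5.1]{davies2026potential} has a unique solution for every $\beta<1$, $t\in[0,T]$, and that it satisfies $m^*=q^*$. This follows from the Nishimori identity \cite[Lemma 2.1]{li2026overlap}: in the planted model $\E\langle\sigma_1\cdot\sigma_2\rangle=\E\langle\sigma\cdot x_0\rangle$. Together with \pref{lem:nishimori-third}, it also gives $b^*=\E M^3=\E M^4$. These identities let every entry of the cavity linearization be written in terms of $q^*$ and $b^*$ alone.

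\textbf{Step 2: spectral self-stability.} I will redo \cite[Lemma 5.31]{davies2026potential} without the Frobenius bound. First, substitute $m^*=q^*$ into the linearized cavity operator $\mathbf X$. Second, conjugate $\mathbf M$ by a positive diagonal matrix, built from $\sqrt{q^*}$, $\sqrt{1-q^*}$ and $\sqrt{\Delta}$ type weights, to obtain a similar matrix $\mathbf K(z)$. The goal is to exhibit $\mathbf K$ as a contraction plus an antisymmetric part, check directly that its largest singular value is at most $1$, and conclude $\rho(\mathbf M)\le\opnorm{\mathbf K}\le1$. Then $\beta^2\rho(\mathbf M)<1$ for every $\beta<1$, and the self-stability quantity $s=1-\beta^2\Delta>0$ appears as the relevant gap. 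I expect this to be the main obstacle. It requires finding the right diagonal weights from the Nishimori-simplified entries, and checking that the antisymmetric structure is exact rather than approximate, including at $t=0$ where $q^*=0$ and some weights degenerate; that case I would treat separately by continuity or directly.

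\textbf{Step 3: closing the recursion.} With both inputs available, the second-moment cavity computation of \cite[\S 5.4]{davies2026potential} gives a linear system $(\Id-\beta^2\mathbf M)v=O(n^{-1})$ for the vector $v$ of centered second moments. This system is now invertible with norm bounded by $C(\beta,t)$ uniformly on $[0,T]$, by compactness and continuity of $(q^*,b^*)$ in $t$, so $v=O(1/n)$. Higher moments $2k$ follow by the same induction on $k$ as in \cite[\S 5, \S A]{davies2026potential}. At each level the Taylor remainders are controlled by lower moments via H\"older, giving $C(k)n^{-k}$. Collecting the contiguity and norm-event errors yields both stated bounds.
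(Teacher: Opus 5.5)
There is a genuine gap in Step 3, and Step 3 is what has to carry the extension past $\beta=1/2$. A cavity or smart-path expansion of $\E\an{(R_{12}-q^*)^{2k}}_t$ produces remainder terms of \emph{higher} order in $R_{12}-q^*$ and $M-m^*$, not lower order. So your claim that the Taylor remainders are ``controlled by lower moments via H\"older'' does not hold.

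Without an a priori estimate, the standard way to absorb these remainders is the crude bound $|R_{12}-q^*|\le 2$ with a small prefactor, and that closes only for small $\beta$. The linear system $(\Id-\beta^2\mathbf M)v=O(n^{-1})$ correctly describes the second moments only once you already know the cubic and higher terms are of lower order. This is the same situation as the classical second-moment computations near the de Almeida--Thouless line: a condition like $\beta^2\E[\sech^4(Y^*)]<1$ describes the linearized system but does not by itself prove concentration. So $\beta^2\rho(\mathbf M)<1$ cannot be the input that produces concentration.

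Your citation is also circular. As the paper notes in the proof of \pref{thm:covar-estimate}, the \S 5.4 cavity estimates of \cite{davies2026potential} assume that overlaps and magnetizations already fluctuate at the $O(1/n)$ scale. That is an input of the same type as the statement you are proving.

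The missing ingredient is an a priori concentration estimate valid for all $\beta<1$. The paper gets it from an argument driven by the replica-symmetric potential rather than a cavity linearization. It reruns the argument of \cite[Theorem 5.1]{davies2026potential}, whose requirements (1)--(7) need only $q^*(\lambda)=q^*$ and $\beta^2\le\lambda$.
\begin{itemize}
\item The first is supplied by \cite[Lemma 2.1]{li2026overlap}, which gives a unique fixed point in $[0,1)^2$ with $m^*=q^*$.
\item The second uses the fact that the potential $F(\lambda,q)$ of \cite{LM19} still has a unique maximizer in $q$ for $\lambda\le 1$.
\item Magnetization concentration then follows from the proof of \cite[Theorem 5.2]{davies2026potential} once the fixed point is unique.
\end{itemize}
Your Step 1 matches this. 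Your Step 2 is not needed for this statement: \pref{lem:spectral-self-stability} is used only downstream, in the covariance estimate \pref{thm:covar-estimate}.

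Finally, contiguity is qualitative: it transfers $o(1)$-probability events between laws. It cannot carry an $n^{-k}$ moment bound from one law to another or produce an $e^{-cn}$ remainder, and the paper's proof invokes no such transfer.
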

\begin{prf}
    This follows straightforwardly after invoking \cite[Lemma 2.1]{li2026overlap}, which implies that there is a unique pair $(m^*,q^*) \in [0,1)^2$ that solve the fixed-point equations of \cite[Theorem 5.1]{davies2026potential} for $\beta < 1$. An immediate consequence of this, is that the proof of \cite[Theorem 5.2]{davies2026potential} implies magnetization concentration for $\beta < 1$ as well. For overlap concentration, note that the entire argument of \cite[Theorem 5.1]{davies2026potential} which verifies requirements (1) through (7) in the proof are valid provided that $q^*(\lambda) = q^*$ and $\beta^2 \le \lambda$ even when $\beta < 1$. For the first, we replace the use of \cite[\S 5.6]{davies2026potential} with \cite[Lemma 2.1]{li2026overlap}, and for the latter, observe that $F(\lambda,q)$ \cite[\S 2.3]{LM19} continues to have a unique supremum over $q$ for $\lambda \le 1$. \qedhere
\end{prf}

\begin{lemma}[$\beta^2\rho(\mathbf M) < 1$ for $\beta < 1$, {\cite[Minor generalization of \S 5.5]{davies2026potential}}]\label{lem:spectral-self-stability}
    Let $Y^*$ be a random variable that is almost-surely finite. Set $T:=\tanh(Y^*)\in[-1,1]$.  Define
    \[
        \mu_r:=\E[T^r],\qquad r\in\{1,2,3,4\},
    \]
    and
    \[
        \mathbf M:=\begin{pmatrix}
            1-4\mu_2+3\mu_4 & 2(\mu_1-\mu_3)\\
            \mu_3-\mu_1 & 1-\mu_2
        \end{pmatrix}.
    \]
    Then for every $\beta < 1$,
    \[
        \rho(\beta^2\mathbf M)\;<\;1.
    \]
\end{lemma}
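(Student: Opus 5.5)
The plan is to show the stronger, $\beta$-free statement $\rho(\mathbf M)\le 1$. The claim then follows from $\rho(\beta^2\mathbf M)=\beta^2\rho(\mathbf M)\le\beta^2<1$. We do not need the Nishimori identity here; only $T\in[-1,1]$ is used.

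\textbf{Step 1 (symmetrize).} Following the remark before the lemma, conjugate $\mathbf M$ by the diagonal matrix $S=\diag(\sqrt2,1)$. This gives the similar matrix
\[
\mathbf K=S^{-1}\mathbf M S=\begin{pmatrix} p & b\\ -b & q\end{pmatrix},
\qquad p:=1-4\mu_2+3\mu_4,\quad q:=1-\mu_2,\quad b:=\sqrt2(\mu_1-\mu_3).
\]
Thus $\mathbf K$ has the form ``diagonal plus antisymmetric'', and its eigenvalues are
\[
\frac{p+q}{2}\pm\sqrt{\Bigl(\frac{p-q}{2}\Bigr)^2-b^2}.
\]
Since similarity preserves the spectrum, it suffices to bound $\rho(\mathbf K)$. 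We split according to whether these eigenvalues are real or complex.

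\textbf{Step 2 (real case).} If the discriminant is nonnegative, both eigenvalues lie within distance $|p-q|/2$ of $(p+q)/2$. Hence they lie in $[\min(p,q),\max(p,q)]$, and it suffices to show $|p|,|q|\le1$.

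For $q$, we have $q=\E[1-T^2]\in[0,1]$.

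For $p$, write $p=\E[g(T^2)]$ with $g(x)=(1-x)(1-3x)$. On $[0,1]$ this quadratic attains its maximum $1$ at $x=0$ and its minimum $-1/3$ at $x=2/3$. Therefore $p\in[-1/3,1]$.

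\textbf{Step 3 (complex case).} If the discriminant is negative, the two eigenvalues are conjugate, and
\[
|\lambda|^2=\det\mathbf K=pq+b^2=pq+2\bigl(\E[T(1-T^2)]\bigr)^2 .
\]
Apply Cauchy--Schwarz to $T\sqrt{1-T^2}\cdot\sqrt{1-T^2}$:
\[
\bigl(\E[T(1-T^2)]\bigr)^2\le \E[T^2(1-T^2)]\;\E[1-T^2]=(\mu_2-\mu_4)\,q .
\]
Substituting this bound gives
\[
\det\mathbf K\le q\,(p+2\mu_2-2\mu_4)=q\,\E[(1-T^2)^2].
\]
Since $0\le(1-T^2)^2\le 1-T^2$, we have $\E[(1-T^2)^2]\le q$, so $\det\mathbf K\le q^2\le 1$.

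\textbf{Conclusion and main obstacle.} Combining the two cases gives $\rho(\mathbf M)=\rho(\mathbf K)\le1$, which yields the claim. The only nontrivial point is the complex-eigenvalue case, where the off-diagonal term $2(\mu_1-\mu_3)^2$ must be absorbed. I expect the Cauchy--Schwarz step above to be the crux: it turns $\det\mathbf K$ into $q\,\E[(1-T^2)^2]$. If that bound turned out to be too lossy, I would instead bound $\opnorm{\mathbf K}$ directly, but the computation above indicates this is unnecessary.
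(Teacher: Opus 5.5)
Your proof is correct, and it takes a genuinely different route from the paper's.

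Both arguments symmetrize in the same way. Your $\mathbf K$ is exactly $\E[\mathbf K(T)]$, where the paper uses the pointwise matrix $\mathbf K(z)=(1-z^2)\bigl(\begin{smallmatrix}1-3z^2&\sqrt2\,z\\-\sqrt2\,z&1\end{smallmatrix}\bigr)$ obtained by conjugating $\mathbf X(z)$ with $S=\diag(1,\sqrt2)$.

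The difference is whether one bounds before or after averaging. The paper bounds the operator norm pointwise. It checks that $I-\mathbf K(z)^\top\mathbf K(z)\succeq0$ for every $z\in[-1,1]$: the $(2,2)$ entry is $u^2(3-2u)$ and the determinant is $u^4(u^2-6u+6)$, with $u=z^2$. Jensen's inequality for the operator norm then gives $\|S\mathbf MS^{-1}\|_{\mathrm{op}}\le1$.

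You instead work only with the eigenvalues of the averaged matrix.
\begin{itemize}
\item In the real case, the ranges $q\in[0,1]$ and $p\in[-1/3,1]$ suffice.
\item In the complex case, Cauchy--Schwarz across the law of $T$ gives $|\lambda|^2=\det\mathbf K\le q\,\E[(1-T^2)^2]\le q^2$.
\end{itemize}
All of your steps check out.

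What each route buys:
\begin{itemize}
\item Your argument avoids the explicit polynomial computation. It also shows clearly which moment inequality stops the off-diagonal coupling $2(\mu_1-\mu_3)^2$ from pushing the eigenvalues out of the unit disk. As a bonus, it gives $|\lambda|\le1-\mu_2$ in the complex case.
\item The paper's argument proves something stronger than the spectral radius bound. It shows that $\mathbf M$ is a contraction in a fixed weighted norm, so $\|(\beta^2\mathbf M)^k\|_{\mathrm{op}}\le\sqrt2\,\beta^{2k}$ for all $k$. That is convenient for Neumann-series or resolvent bounds with explicit constants, but more than the lemma requires.
\end{itemize}

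Your fallback of bounding $\|\mathbf K\|_{\mathrm{op}}$ directly is essentially the paper's proof, carried out before averaging rather than after.
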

\begin{prf}
The proof of \cite[Lemma 5.31]{davies2026potential} implies that, with $T = \tanh(Y^*) \in [-1,1]$,
\begin{align*}
    \mathbf{M} = \E[\mathbf{X}(T)]\,,
\end{align*}
where
\[
    \mathbf{X}(z) := (1-z^2)\begin{pmatrix}
        1-3z^2 & 2z \\
        -z & 1
    \end{pmatrix}\,.
\]
Set $\mathbf{S}:=\diag(1,\sqrt{2})$ and define 
\begin{align*}
    \mathbf{K}(z)
    &:= \mathbf{S}\mathbf{X}(z)\mathbf{S}^{-1} = (1-z^2) \begin{pmatrix}
        1-3z^2 & \sqrt{2}\,z \\
        -\sqrt{2}\,z & 1
    \end{pmatrix}\,,
\end{align*}
which is similar to $\mathbf X(z)$. We show that $\|\mathbf{K}(z)\|_{\mathrm{op}}\leq 1$ for every $z\in[-1,1]$ by proving that the slack matrix $\mathbf R(z) := \Id - \mathbf K(z)^\sT \mathbf K(z) \succeq 0$. To see this, let
$u:=z^2\in[0,1]$ and note that a direct computation gives
\begin{align*}
    \mathbf R (z)_{22} &= u^2(3-2u)\,, \\
    \det\left(\mathbf R (z)\right) &= u^4(u^2-6u+6)\,.
\end{align*}
For $0<u\leq 1$, both quantities are positive, since
$3-2u\geq 1$ and $u^2-6u+6\geq 1$. The Schur-complement criterion, therefore, gives $\mathbf{R}(z)\succeq 0$ (since $\mathbf{K}(0)=I_2$ and $\mathbf{R}(0)=0$). This yields 
\[
    \mathbf{K}(z)^{\top}\mathbf{K}(z)\preceq I_2\,.
\]
$\mathbf{S}$ is deterministic, and convexity of the operator norm gives
\begin{align*}
    \|\mathbf{S}\mathbf{M}\mathbf{S}^{-1}\|_{\mathrm{op}}
    = \|\E[\mathbf{K}(T)]\|_{\mathrm{op}} \leq_{\text{Jensen's}} \E\bigl[\|\mathbf{K}(T)\|_{\mathrm{op}}\bigr]
    \leq 1\,.
\end{align*}
Finally, similarity preserves the spectral radius, giving
\[
    \rho(\beta^2\mathbf{M}) = \beta^2\rho(\mathbf{S}\mathbf{M}\mathbf{S}^{-1}) \leq \beta^2 \|\mathbf{S}\mathbf{M}\mathbf{S}^{-1}\|_{\mathrm{op}} \leq \beta^2 <_{\beta < 1} 1\,. \qedhere
\]
\end{prf}

We now state a lemma which uses \pref{cor:overlap-mag-conc} and \pref{lem:spectral-self-stability} to replace invocations of overlap and magnetization concentration, and the validity of the cavity estimates of \cite[\S 5]{davies2026potential}, in the proof of \cite[Theorem 4.1]{davies2026potential}, to work all the way for $\beta < 1$. Consequently, the same covariance estimate as in \cite[Theorem 4.1]{davies2026potential} is implied for the range of $0 < \beta < 1$.

\begin{theorem}[Error of covariance estimate for $\beta < 1$]\label{thm:covar-estimate}
    Assume overlap and magnetization concentration as given in \pref{cor:overlap-mag-conc} and the spectral radius bound for the self-stability operator $\mathbf M$ defined in \pref{lem:spectral-self-stability}. Then, for $0 < \beta < 1$,
    \[
        \E_{A,B_t}\norm{\hat{Q}^{-1}(m_t)Q(m_t) - \Id_n}_F^2 \le \eps(\beta,t)\, ,
    \]
    for every $t \in [0,T(\beta)]$ with $\sup_{t\in[0,T(\beta)]}\eps(\beta, t) < \infty$.
\end{theorem}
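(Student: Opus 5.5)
The plan is to rerun the proof of \cite[Theorem 4.1]{davies2026potential} essentially line by line, and to isolate the two places where $\beta<1/2$ was used. Both are replaced by results proved above: the concentration inputs by \pref{cor:overlap-mag-conc}, and the invertibility of the linearized cavity system by \pref{lem:spectral-self-stability}.

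\textbf{Step 1: reduce to cavity moments.} Write $\hat{Q}^{-1}(m_t)Q(m_t)-\Id_n$ entrywise using \eqref{e:Hess-FTAP}. Its squared Frobenius norm expands into sums of Gibbs correlations under the SL-tilted (planted) measure $\langle\cdot\rangle_t$. These are the quantities $\E\langle(\sigma_i-m_i)(\sigma_j-m_j)\rangle$ contracted against $-\beta A+D(m_t)+\beta^2(1-\|m_t\|^2/n)\Id_n-\tfrac{2\beta^2}{n}m_tm_t^{\sT}$. By contiguity between the planted and original models for $\beta<1$ \cite[\S 2]{el2022sampling}, and gauge symmetry, we may compute in the planted model with $x_0=\mathbf 1$.

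\textbf{Step 2: cavity expansion with error terms.} For each term, apply the cavity interpolation of \cite[\S 5]{davies2026potential}, removing one or two spins. This expresses the needed moments through overlap and magnetization fluctuations $(R_{12}-q^*)$ and $(M-m^*)$, up to Taylor remainders. The remainders are controlled by the $2k$-th moment bounds of \pref{cor:overlap-mag-conc}, with $k\le 4$ sufficient as in the original argument. This gives $O(1/n)$ per entry, and hence $O(1)$ after summing over $n^2$ entries with the $1/n$ normalization. The $e^{-cn}$ terms are absorbed because all quantities are polynomially bounded, using $\opnorm{A}\le3$ off an $e^{-n/4}$ event from \pref{prop:goe-norm-tail}.

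\textbf{Step 3: close the linear system.} The leading-order terms satisfy the self-consistent linear relation from \cite[\S 5.4]{davies2026potential}. There, the vector of rescaled two-point quantities equals $\beta^2\mathbf M$ applied to itself plus $O(1)$ sources. Solving it requires inverting $\Id-\beta^2\mathbf M$, with $\mathbf M$ evaluated at $Y^*$ from the fixed point. Here the Nishimori relation $m^*=q^*$ (via \cite[Lemma 2.1]{li2026overlap}) gives uniqueness, and \pref{lem:spectral-self-stability} gives $\rho(\beta^2\mathbf M)\le\beta^2<1$. The resulting resolvent norm is bounded by a constant depending on $\beta$ and $t$. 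Continuity of $(m^*,q^*)$ in $t$ on the compact interval $[0,T(\beta)]$ makes this bound, and the constants in Step 2, uniform, so $\sup_t\eps(\beta,t)<\infty$.

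\textbf{Main obstacle.} I expect the hard part to be Step 3. The bound $\rho(\beta^2\mathbf M)<1$ only controls the spectral radius, whereas the argument needs a bound on $\|(\Id-\beta^2\mathbf M)^{-1}\|$ and convergence of the iterated cavity expansion. I would handle this by working in the similar coordinates $\mathbf S\mathbf M\mathbf S^{-1}$, where the operator norm is at most $1$. Then $\|(\Id-\beta^2\mathbf S\mathbf M\mathbf S^{-1})^{-1}\|\le(1-\beta^2)^{-1}$, and conjugating back by $\mathbf S=\diag(1,\sqrt2)$ costs only a factor $\sqrt2$.
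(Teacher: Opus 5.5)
Your proposal follows the paper's proof: both rerun the argument of \cite[Theorem 4.1]{davies2026potential} and replace its two $\beta<1/2$ inputs with \pref{cor:overlap-mag-conc} and \pref{lem:spectral-self-stability}, and your bound $\|(\Id-\beta^2\mathbf M)^{-1}\|\le\sqrt2/(1-\beta^2)$ via $\mathbf S=\diag(1,\sqrt2)$ makes explicit, uniformly in $t$, what the proof of \pref{lem:spectral-self-stability} already gives. One small caution on Step 1: contiguity transfers high-probability statements, not expectations, so a planted-model expectation bound reaches the original model only after applying Markov's inequality to $\E[\,\cdot\mid A]$; this works because the conditional law of $(x_0,B_t)$ given $A$ is the same in both models, and it is all that \pref{l:m-error} actually uses.
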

\begin{prf}
    The proof follows by observing that, in the proof of \cite[Theorem 4.1]{davies2026potential}, all estimates in \cite[Lemmata 4.11--4.16]{davies2026potential} rely only on, after application of Stein's lemma and algebraic identities involving independent replicas, three things:
    \begin{enumerate}[itemsep=0.2em]
        \item Magnetization and overlap concentration (cf. \cite[Theorems 5.1--5.2]{davies2026potential}).
        \item Enhanced concentration for linear and cubic observables of bulk deviations (cf. \cite[\S 5.4]{davies2026potential}).
        \item Exact cavity estimates for the ``heavy'' cancellations (cf. \cite[\S 5.4]{davies2026potential}).
    \end{enumerate}
    The first point is directly valid for $0 < \beta < 1$ due to \pref{cor:overlap-mag-conc}. The second and third point rely on the fact that magnetization and overlaps have first moments which fluctuate on the $O(1/n)$ scale \cite[\S 5.4]{davies2026potential} and that various denominators in the second-order Taylor expansions used to compute exact cavity estimates via self-reducibility do not diverge \cite[\S 5.4]{davies2026potential}. A simple inspection of the argument in \cite[\S 5]{davies2026potential} with the strengthened spectral radius bound $\beta^2 \rho\mathbf M < 1$ and the fact that $\beta^2 \E_{Y*}[\sech^{2k}(Y^*)] < 1$ for $0 < \beta < 1$ immediately implies the validity of all estimates in \cite[\S 5]{davies2026potential}. \qedhere 
\end{prf}

\paragraph{Nishimori identity for $\E[\tanh^3(Y^*)] = \E[\tanh^4(Y^*)]$} We write a brief identity that shows the Nishimori condition for the third and fourth moments in the presence of a replica--symmetric cavity field $Y^*$ that is induced by the SL process.

\begin{lemma}[Nishimori identity for $3^{\text{rd}}$ and $4^{\text{th}}$ moments]\label{lem:nishimori-third} 
    Fix $t \ge 0$ and let $z \sim \calN(0,1)$. Denote $Y^* = \beta^2m^* + t + \sqrt{\beta^2q^* + t}\,h$ where $(m^*,q^*)$ solve the fixed-point equations in \cite[Theorem 5.1]{davies2026potential}. Then, 
    \[
        \E_z\left[\tanh^3(Y^*)\right] = \E_z\left[\tanh^4(Y^*)\right]\,.
    \]
\end{lemma}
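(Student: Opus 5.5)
The plan is to reduce the identity to the Gaussian ``mean equals variance'' symmetry that underlies every Nishimori identity. The first step uses the Nishimori condition $m^*=q^*$ for the fixed-point pair of \cite[Theorem 5.1]{davies2026potential}. As discussed before \pref{cor:overlap-mag-conc}, this holds for all $\beta<1$ by \cite[Lemma 2.1]{li2026overlap}. Setting $s:=\beta^2q^*+t\ge 0$, it gives $Y^*=s+\sqrt{s}\,h$ with $h$ (the variable written $z$ in the statement) standard Gaussian. Thus $Y^*\sim\calN(s,s)$: its mean equals its variance. If $s=0$, then $Y^*\equiv 0$ and both sides vanish, so I will assume $s>0$.

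The second step exploits the form of the density. Write the density of $Y^*$ as
\[
p(y)=\frac{1}{\sqrt{2\pi s}}\exp\!\Big(-\frac{(y-s)^2}{2s}\Big)=p_e(y)\,e^{y},\qquad p_e(y):=\frac{e^{-s/2}}{\sqrt{2\pi s}}\,e^{-y^2/(2s)}.
\]
Here $p_e$ is even, so $p(y)/p(-y)=e^{2y}$. Splitting $e^{y}=\cosh y+\sinh y$ then gives, for any bounded odd function $g$,
\[
\E[g(Y^*)]=\int g(y)\,p_e(y)\sinh(y)\,dy .
\]
This holds because $g\,p_e\cosh$ is odd and integrates to zero. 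For the same $g$, the function $g\tanh$ is even, so
\[
\E[g(Y^*)\tanh(Y^*)]=\int g(y)\tanh(y)\,p_e(y)\cosh(y)\,dy=\int g(y)\,p_e(y)\sinh(y)\,dy .
\]
Consequently $\E[g(Y^*)]=\E[g(Y^*)\tanh(Y^*)]$ for every bounded odd $g$.

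The final step applies this with the bounded odd function $g=\tanh^3$, which yields $\E[\tanh^3(Y^*)]=\E[\tanh^4(Y^*)]$ directly. As consistency checks, taking $g=\tanh$ recovers $m^*=q^*$, and the same argument with $g=\tanh$ gives $\E[\tanh]=\E[\tanh^2]$, the relation used in defining $b^*$ in \pref{sec:uniform-tap-convexity}. Integrability is trivial, since $|\tanh|\le1$ and $p_e$ has Gaussian tails.

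The only nontrivial input is that the mean of $Y^*$ equals its variance, namely $\beta^2m^*+t=\beta^2q^*+t$. That is exactly the Nishimori condition at the fixed point, and it is taken from \cite[Lemma 2.1]{li2026overlap}. Once it is granted, the rest is the elementary density computation above. I therefore expect no real obstacle beyond making sure that the fixed-point parametrization in \cite[Theorem 5.1]{davies2026potential} indeed places $\beta^2m^*+t$ in the mean and $\beta^2q^*+t$ in the variance.
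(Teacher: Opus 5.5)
Your proposal is correct and is essentially the paper's proof. Both invoke $m^*=q^*$ from \cite[Lemma 2.1]{li2026overlap} to get $Y^*\sim\calN(r,r)$, then use the symmetry $p(-y)=e^{-2y}p(y)$ to turn $\E[\tanh^3(Y^*)]$ into $\E[\tanh^4(Y^*)]$; your even-part and $\cosh$--$\sinh$ splitting, stated for any bounded odd $g$, just repackages the paper's pairing of $y$ with $-y$ (though your side remark is slightly off, since the relation defining $b^*$ in \pref{sec:uniform-tap-convexity} is the $3^{\text{rd}}$/$4^{\text{th}}$ moment identity itself, not $\E[\tanh]=\E[\tanh^2]$).
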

\begin{prf}
    By \cite[Lemma~2.1]{li2026overlap}, we have $m^*=q^*$ for $0<\beta<1$. Following the same notation and strategy as in \cite[Lemma 2.1]{li2026overlap} and \cite[\S 5.6]{davies2026potential}, let $r:=t+\beta^2q^*\geq0$ so that $Y^*\sim\mathcal{N}(r,r)$. If $r=0$, then $Y^*=0$ almost surely
    and both expectations vanish. Therefore, assume that $r>0$ for the rest of the proof.
    
    Let $p_r$ denote the density of $Y^*$. As in the proof of \cite[Lemma 2.1]{li2026overlap} a direct computation gives
    \begin{align*}
        p_r(-y)&=e^{-2y}p_r(y)\,.
    \end{align*}
    Since $\tanh(y)=(1-e^{-2y})/(1+e^{-2y})$, it follows that
    \begin{align*}
        p_r(y)-p_r(-y) &=\tanh(y)\bigl(p_r(y)+p_r(-y)\bigr)\,.
    \end{align*}
    Pairing the contributions from $y$ and $-y$, and using the oddness
    of $\tanh^3$ and the evenness of $\tanh^4$, yields
    \begin{align*}
        \E[\tanh^3(Y^*)]
        &=\int_0^\infty \tanh^3(y)
          \bigl(p_r(y)-p_r(-y)\bigr)\,dy \\
        &=\int_0^\infty \tanh^4(y)
          \bigl(p_r(y)+p_r(-y)\bigr)\,dy \\
        &=\E[\tanh^4(Y^*)]\,. \qedhere
    \end{align*}
\end{prf}

\section{Algorithmic error analysis}
\label{s:alg-error}

The following is an algorithm and standard guarantee for rejection sampling with unknown normalizing constant.
\begin{algorithm}[!ht]
\caption{Approximate rejection sampler with unknown normalization %
}
\begin{algorithmic}[1]
\INPUT Oracle for sampling from $\td Q\approx Q$, (possibly randomized) function $\td R\approx R$ where $\dd{P}{Q} \propto R$, cutoff parameters $C_1$, $C_2$, failure probability $\de'$ %
\State Draw $N$ samples from $\td Q$, where $N=\Om(C_1^2\log(1/\de'))$ for appropriate constant.
\State Let $R_0$ be the $p$th quantile of the samples, where $p=1-\rc{6C_1}$. 
\Repeat{}
    \State Draw $X\sim \td Q$.
    \State Draw $U\sim \mathsf{Uniform}([0,1])$.
\Until{$U\le \fc{\td R(X)}{C_3R_0}$, where $C_3=8C_2$.}
\OUTPUT $X$.
\end{algorithmic}
\label{a:ars}
\end{algorithm}
\begin{lemma}[{Approximate rejection sampling with unknown normalization, \cite[Lemma 3.8]{davies2026potential}}]
\label{l:ars}
Suppose that \pref{a:ars} is run with $\td Q$, $\td R$, and parameters $C_1, C_2$  satisfying the following.
\begin{enumerate}
    \item \label{i:esm}
    (Sampling error) $\td Q$ is a distribution such that $\TV(\td Q, Q)\le \esm$.
    \item \label{i:ewt}
    (Tails of ratio)
    Let 
    \[
    C(\ep) = \inf_C \set{C}{\E_Q\pa{\dd PQ \wedge C}\ge 1-\ep}, 
    \]
    and suppose $C_1\ge C(1/2)\vee 1$, $C_2\ge C(\ewt)$. 
    \item \label{i:ert}
    (Ratio error) 
    Let $G_1$, $G_2$ be events such that 
    \begin{align*}
    G_1 &\subeq \bc{\fc{\td R}{R}\in [0,e^{\ert}]}\\
G:&= G_1\cap G_2 \subeq 
\bc{ \fc{\td R}{R} \in [e^{-\ert}, e^{\ert}] }
    \end{align*}
    and $\td Q(G_1^c)\le \de_1$, $P(G_2^c)\le \de_2$. (Note these probabilities can involve randomness in the algorithm.)
\end{enumerate}
Then letting $\hat P$ be the output distribution, with probability $\ge 1-\de'$ over the $N$ initial samples,
\[
\TV(\hat P, P) \le \ewt + \de_2 + 384C_1C_2(\de_1 + \ert + \esm) %
\]
and if 
$\ewt + \de_2 + 96C_1C_2(\de_1 + \ert + \esm)\le \rc2$, the acceptance probability is at least $\rc{192C_1C_2}$.
\end{lemma}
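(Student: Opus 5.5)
This is the standard rejection-sampling argument, cited as \cite[Lemma 3.8]{davies2026potential}. I would reprove it in three stages. The first stage estimates the threshold $R_0$ from the $N$ initial samples. The second analyzes an idealized rejection sampler that uses the exact $Q$ and $R$ with a truncated acceptance ratio. The third stage transfers the result to $\td Q,\td R$, paying for the sampling error $\esm$ and the ratio error in TV.

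\emph{Step 1 (quantile estimate).} Write $r=\dd PQ$, normalized so that $R=Z r$. Using $C_1\ge C(1/2)\vee 1$, the $Q$-mass where $r> C_1$ is controlled. By Markov's inequality, $Q(r\ge x)\le 1/x$. These two facts show that the $p=1-\frac1{6C_1}$ quantile of $r$ under $Q$ lies in a window $[c/C_1,\,6C_1]$ up to constants.

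Next I replace $Q$ by $\td Q$. This moves quantiles by at most $\esm$ in probability level. Multiplying by $\td R/R\in[0,e^{\ert}]$ on $G_1$ (which fails with probability $\le\de_1$) perturbs levels by $\de_1$. Provided $\de_1+\esm\lesssim 1/C_1$, the case where the ratio errors are large is absorbed because the final bound then exceeds $1$. A DKW/Chernoff bound with $N=\Omega(C_1^2\log(1/\de'))$ then gives, with probability $1-\de'$, that $R_0/Z\in[\Omega(1/C_1),\,O(C_1)]$. The key output is that the threshold $C_3R_0/Z = 8C_2 R_0/Z$ dominates the truncation level $C_2$, while the acceptance probability is at least of order $1/(C_1C_2)$.

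\emph{Step 2 (ideal sampler).} With exact $Q$ and $R$, the output law is $\propto Q\,(r\wedge K)$, where $K=C_3R_0/Z\ge C_2$. By the definition of $C(\ewt)$, $\E_Q[r\wedge K]\ge 1-\ewt$. Hence the output is within $\ewt$ of $P$ in TV, using the standard bound that normalizing a subprobability density of mass $1-\ewt$ costs $\le\ewt$. Its acceptance probability is $\E_Q[r\wedge K]/K\gtrsim 1/(C_1C_2)$.

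\emph{Step 3 (perturbation).} The per-trial acceptance measure becomes $\td Q(dx)\min(\td R/(C_3R_0),1)$. I compare it to the ideal one pointwise. On $G$ the ratio is within $e^{\pm\ert}$, costing $O(\ert)$ relative mass. On $G_1\setminus G$ the proposal can only under-accept; this is a $P$-mass loss of at most $\de_2$, which gives the additive $\de_2$ term. On $G_1^c$ the proposal may over-accept, but this has $\td Q$-mass $\le\de_1$. Finally, $\td Q$ versus $Q$ costs $\esm$.

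Each of these discrepancies is measured in unnormalized acceptance mass. Dividing by the acceptance probability $\gtrsim 1/(C_1C_2)$ converts them into TV error on the output with a factor $O(C_1C_2)$, giving $384C_1C_2(\de_1+\ert+\esm)$. The same bookkeeping with constant $96$ shows the perturbed acceptance probability is at least half the ideal one, i.e.\ $\ge 1/(192C_1C_2)$, under the stated smallness condition.

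The main obstacle is Step 1: the empirical quantile is computed from the \emph{perturbed} ratios $\td R$ under $\td Q$, so I must show it still brackets the true scale $Z$ within $O(C_1)$ factors despite the $\de_1$ and $\esm$ contamination. I would handle this by sandwiching the perturbed level-$p$ quantile between the true quantiles at levels $p\pm(\de_1+\esm+1/(12C_1))$.
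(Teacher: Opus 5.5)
Your Steps 2 and 3 are the right bookkeeping. Step 1, however, has a genuine gap, at exactly the point you flag, and your proposed fix does not close it.

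\emph{The lower end of your window is too weak.} Even with no perturbation, you need $K=8C_2R_0/Z\ge C_2$, i.e.\ $\hat q:=R_0/Z\ge\frac18$, an absolute constant. A lower bound of order $c/C_1$ does not give this. The constant lower bound is what $C_1\ge C(1/2)$ actually supplies; it does not control the mass where $r>C_1$. Indeed, if $Q(r>\hat q)\le\frac1{4C_1}$, then $\frac12\le\E_Q[r\wedge C_1]\le\hat q+C_1Q(r>\hat q)\le\hat q+\frac14$.

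\emph{The quantile sandwich only works from above.} Bracketing the empirical quantile of $\td R$ between true quantiles at levels $p\pm(\de_1+\esm+\frac1{12C_1})$ is fine on the upper side, since $\td R\le e^{\ert}R$ on $G_1$. On the lower side it fails. On $G_1\setminus G_2$ the hypotheses allow $\td R/R$ to be anywhere in $[0,e^{\ert}]$. The set $G_2^c$ is controlled only through $P(G_2^c)\le\de_2$, so its $\td Q$-mass can be close to $1$. Then the empirical quantile, and hence $K$, can collapse, and the truncation $\min(r,K)$ erases the shape of $P$ on $G$.

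The lower bound has to go through $P$-mass rather than quantile levels:
\[
\tfrac12\le\E_Q[r\wedge C_1]\le e^{\ert}\hat q+C_1\bigl(Q(\td r>\hat q)+Q(G_1^c)\bigr)+P(G_2^c),\qquad \td r:=\td R/Z .
\]
This gives $\hat q\ge e^{-\ert}/8$ only if $\de_2+C_1(\de_1+2\esm)\le\frac18$. The $\de_1,\esm$ part holds automatically whenever the claimed bound is nontrivial. But $\de_2$ enters the bound with coefficient $1$, so it is never absorbed this way.

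No better argument can remove this condition, because the statement as written fails for moderately large $\de_2$. Take three points $b,g_1,g_2$.
\begin{itemize}
\item Let $Q=(0.9,\,0.01,\,0.09)$ and $P=(0.45,\,0.5,\,0.05)$ be the masses on $(b,g_1,g_2)$, with $\td Q=Q$.
\item Let $\td R=R$ except $\td R(b)=10^{-3}R(b)$.
\item Let $G_1$ be the whole space and $G_2=\{g_1,g_2\}$.
\end{itemize}
Then $\de_1=\ert=\esm=0$ and $\de_2=0.45$. Also $\E_Q[r\wedge 1]=0.51$, so $C_1=1$ is allowed, and $\ewt=0$ holds with $C_2=50$.

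Since $Q(b)=0.9>\frac56$, with probability $1-e^{-\Omega(N)}$ we get $R_0=\td R(b)$. Then $b$ is accepted with probability $\frac1{400}$, and $g_1,g_2$ with probability $1$. The output law is approximately $(0.022,\,0.098,\,0.880)$, at TV distance about $0.83$ from $P$. The lemma claims $\ewt+\de_2=0.45$, and even its condition $\ewt+\de_2\le\frac12$ holds.

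So you must add a hypothesis such as $\de_2\le\frac18$. This is harmless where the paper uses the lemma, since $\de_2=\ep$ in the proof of \pref{l:je-rs}. With it, your Steps 2--3 go through using $e^{-\ert}C_2\le K=O(C_1C_2)$. The upper bound on $K$ comes from your Markov bound $\hat q=O(C_1)$, and the factor $e^{-\ert}$ costs only an extra $O(\ert)$.
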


\begin{prf}[Proof of \pref{l:je-rs}]
    Define the good sets on path space (1) the ratio between the ideal and approximate distribution is bounded, (2) the ratio between the scaffolding and approximate distribution is not too small, and (3) the weights are bounded,
    \begin{align*}
        G_1 &= \bc{\dd{\bP_T^s}{\hat{\bP}_T}(\ga_T)\le L}&
        G_2 &= \bc{\fc{e^{\hw_T}}{\hat Z_T}\ge c_3}&
        G_3 &= \bc{\fc{e^{\hw_T}}{\hat Z_T}\le C_1}
    \end{align*}
    and let their complements be $B_1,B_2,B_3$, respectively. 
    Now
    \begin{multline}
        \E\ba{\fc{e^{\hw_T}}{\hat Z_T} \fc{R_2(\hx_T)}{Z_2} \one_{S_T\cap (B_1\cup B_2 \cup B_3)}}\\
        \le 
        \E\ba{\fc{e^{\hw_T}}{\hat Z_T}\fc{R_2(\hx_T)}{Z_2}\one_{S_T\cap B_1} }
        + \E\ba{\fc{e^{\hw_T}}{\hat Z_T}\fc{R_2(\hx_T)}{Z_2}\one_{S_T\cap G_1\cap B_2} }
        + \E\ba{\fc{e^{\hw_T}}{\hat Z_T}\fc{R_2(\hx_T)}{Z_2}\one_{S_T\cap G_1\cap G_2\cap B_3} }.
        \label{e:3-bad}
    \end{multline}
    We bound each term separately. 
    Note that if $\mu\ll \nu$, then 
    \begin{align}
    \nonumber
        \KL(\mu\|\nu)& = \E_\mu \pa{\log \dd{\mu}{\nu}}\one_{\dd{\mu}{\nu}\ge 1} + \E_\mu\pa{\log \dd{\mu}{\nu}} \one_{\dd{\mu}{\nu}< 1} 
        \ge  \E_\mu \pa{\log \dd{\mu}{\nu}}\one_{\dd{\mu}{\nu}\ge 1} - 1
        \ge \mu\pa{\dd{\mu}{\nu}\ge L}(\log L)-1\\
        \implies
        \mu\pa{\dd{\mu}{\nu}\ge L} & \le \fc{\KL(\mu\|\nu) + 1}{\log L}.
        \label{e:kl-markov}
    \end{align}
    To apply to our setting, we define the stopped process by
    \begin{align*}
        dx_t\stp &= \one_{t<\tau'} f_t\,dt + dB_t 
    \end{align*}
    and similarly define $\hx_t\stp$. Let their laws up to time $T$ be $\bP_t\stp$ and $\hat \bP_t\stp$, respectively. 
    Then
    \begin{align}
    \nonumber
        \KL(\bP_T\stp \|\hat \bP_T\stp )
        & = \E_{\bP_T\stp} \log \dd{\bP_T\stp}{\hat \bP_T\stp}\\
    \nonumber
        &= -\E  
    \int_0^{T\wedge \tau'} \ip{\hat f_t - f_t}{dB_t} + \rc 2 \E
    \int_0^{T\wedge \tau'} \ve{\hat f_t - f_t}^2dt\\
    &= \rc 2 
    \int_0^{T\wedge \tau'} \E\ve{\hat f_t - f_t}^2\,dt \le \rc 2 \int_0^T\ep(t)^2\,dt.
    \label{e:KL-stop}
    \end{align}
    by assumption 1 and Girsanov's theorem.
    
    For the first term of \eqref{e:3-bad}, noting that $x_{[0,T]}= x_{[0,T]}\stp$ if $T<\tau'$ so that $\bP_T^s = \bP_T\stp$ restricted to $\{T<\tau'\}$,
    \begin{align*}
        \E\ba{\fc{e^{\hw_T}}{\hat Z_T}\fc{R_2(\hx_T)}{Z_2}\one_{S_T\cap B_1} }
        &= \E \ba{\dd{\bP_T^s}{\hat{\bP}_T}(\hx_{[0,T]})\one_{S_T\cap B_1}(\hx_{[0,T]})}= \E\ba{\one_{S_T\cap B_1}(x_{[0,T]})}\\
        &\le \P\ba{\dd{\bP_T^s}{\hat\bP_T}(x_{[0,T]})>L}
        \le \P\ba{\dd{\bP_T\stp}{\hat\bP_T\stp}(x_{[0,T]}\stp)>L} + \bP_T[\tau'\le T]\\
        &\le_{\eqref{e:kl-markov}\text{, assumption 1}} \fc{\KL( \bP_T\stp\|\hat\bP_T\stp)+1}{\log L} + \de'
        \le_{\eqref{e:KL-stop}} \fc{\rc 2 \int_0^T \ep(t)^2dt + 1}{\log L} + \de'\le %
        \fc{\ep}3 + \de'
    \end{align*}
    by choosing $L=\exp\pa{\fc{3}{\ep}\pa{\fc 12 \int_0^T \ep(t)^2dt+1}}$. 
    Next, 
    note that directly by assumption 3, $\P[S_T\cap B_2]\le \ep_3$.
    Hence we can bound the second term
    \begin{align*}
        \E\ba{\fc{e^{\hw_T}}{\hat Z_T}\fc{R_2(\hx_T)}{Z_2}\one_{S_T\cap G_1\cap B_2} }
        &= \E\ba{\dd{\bP_T^s}{\hat \bP_T}(\hx_{[0,T]})\one_{S_T\cap G_1\cap B_2}} \le L \P[S_T\cap B_2] \le L\ep_3.
    \end{align*}
    Finally, note that under $S_T\cap G_1\cap G_2$, 
    $\fc{R_2(\hx_T)}{Z_2}= \dd{\bP_T^s}{\hat\bP_T}\big/ \fc{e^{\hw_T}}{\hat Z_T} \le \fc{L}{c_3}$, so 
    for the third term, by assumption 2,
    \begin{align*}
        \E\ba{\fc{e^{\hw_T}}{\hat Z_T}\fc{R_2(\hx_T)}{Z_2}\one_{S_T\cap G_1\cap G_2\cap B_3} }
        &\le 
        \fc{L}{c_3}\E\ba{\fc{e^{\hw_T}}{\hat Z_T}\one_{S_T\cap B_3}}
        \le \fc{L\ep_1}{c_3}.
    \end{align*}
    Putting everything together, 
    \begin{align*}
        \E\ba{\fc{e^{\hw_T}}{\hat Z_T} \fc{R_2(\hx_T)}{Z_2}\one_{S_T\cap (B_1\cup B_2 \cup B_3)}}
        &\le \fc{\ep}3 + \de'+ L\ep_3 + \fc{L\ep_1}{c_3}=\ep+\de'
    \end{align*}
    by choosing $\ep_3 = \fc{\ep}{3L}$ and $\ep_1 = \fc{c_3\ep}{3L}$. Hence 
    \begin{align*}
        \E\ba{\one_{S_T}
            \fc{e^{\hw_T}}{\hat Z_T} \fc{R_2(\hx_T)}{Z_2}\wedge \fc{C_1L}{c_3}
        } &\ge \E\ba{\fc{e^{\hw_T}}{\hat Z_T} \fc{R_2(\hx_T)}{Z_2}\one_{S_T\cap G_1\cap G_2 \cap G_3}}\\
        &= \E\ba{\one_{S_T}\dd{\bP_T^s}{\hat  \bP_T}(\hx_T)} - 
        \E\ba{\fc{e^{\hw_T}}{\hat Z_T} \fc{R_2(\hx_T)}{Z_2}\one_{S_T\cap (B_1\cup B_2\cup B_3)}}\\
        &= \E\ba{\one_{S_T}(x_{[0,T]})} - \E\ba{\fc{e^{\hw_T}}{\hat Z_T} \fc{R_2(\hx_T)}{Z_2}\one_{S_T\cap (B_1\cup B_2\cup B_3)}}
        \ge 1-\de-\de' - \ep.
    \end{align*}  
    The first part now follows from \Cref{l:ars} applied to path measures $Q = \hat \bP_T$ and $P=\bP_T$, with $\ewt = \ep+\de+\de'$, $\de_1 = \de_2 = \ert = \esm =0$, and then taking the marginal at time $T$. 
    Note we allow probability $\ep$ of failure (in the quantile estimate or rejection sampling taking too many trials).

    For the second part, let $\td Q = \td  \bP_{T}$ be the path measure of $(\td x_t)_{t\in [0,T]}$. 
    Let 
    $\td R = e^{\td w_T}\td R_2$, and $R = e^{\hw_T}R_2$. 
    Note that assumption \ref{i:je-path-error} gives assumption \ref{i:esm} of \Cref{l:ars} with $\esm=\ep_4$. Consider the events
    \begin{align*}
        G_1' &= \bc{\fc{\td R_2}{R_2}\in [0,e^{\ep_4/2}]} \cap 
        \bc{|\tw_T - \hw_T|\le \fc{\ep_4}2}
        \subeq \bc{\fc{\td R}{R}\in [0,e^{\ep_4}]}.
        \\
        G_2' &= \bc{\fc{\td R_2}{R_2}\in [e^{-\ep_4/2}, e^{\ep_4/2}]},
    \end{align*}
    and note $G_1'\cap G_2'\subeq \bc{\fc{\td R}{R}\in [e^{-\ep_4},e^{\ep_4}]}$. It remains to check assumption \ref{i:ert} of \Cref{l:ars}. We check
    \begin{align*}
        \td \bP_T({G_1'}^c)\le \td \bP_T^s({G_1'}^c) + \hat\de   
        &\le \P\pa{\fc{\td R_2(\tx_T)}{R_2(\td x_T)}\nin [0,e^{\ep_4/2}]}
        + \td\bP_T^s\pa{|\tw_T - \hw_T|>\fc{\ep_4}2} + \hat \de
        \le_{\eqref{i:je-ewt}, \eqref{i:je-ert}(a)} \fc{\ep_4}2+\fc{\ep_4}2 + \hat\de = \ep_4 + \hat\de\\
        \bP_T({G_1'}^c)
        &= \P \pa{\fc{\td R_2(\tx_T)}{R_2(\td x_T)}\nin [e^{-\ep_4/2},e^{\ep_4/2}]
        } \le_{\eqref{i:je-ert}(b)} \ep.
    \end{align*}
    Thus the assumption holds with $\ewt = \ep+\de+\de'$, $\de_2=\ep$,  $\de_1=\esm=\ep_4+\hat\de$, and  $\ert=\ep_4$. The final TV error of the sampled distribution $\hat P$ is
    \begin{align*}
\TV(\hat P, P)
&\le \ewt + \de_2 + 384 C_1'C_2'(\de_1+\ert+ \esm)\\
&\le \ep +\de+\de'+\ep + 384 {C_1'}^2(\ep_4+ \hat \de+\ep_4+\ep_4+\hat\de) = 5\ep + \de +\de'+  768{C_1'}^2\hat \de.
    \end{align*}
    Allowing probability $\ep$ of failure then gives the result.
\end{prf}

\end{document}